\newcommand{\fm}{\mathfrak{m}}
\newcommand{\fn}{\mathfrak{n}}
\newcommand{\fp}{\mathfrak{p}}
\newcommand{\fq}{\mathfrak{q}}
\DeclareMathOperator{\Char}{char}
\DeclareMathOperator{\src}{Src}
\DeclareMathOperator{\spr}{Spr}
\DeclareMathOperator{\arr}{Arrow}
\title{Gluing theory for slc surfaces and threefolds in positive characteristic}
\author{Quentin Posva}
\date{}
\address{University of Utah, Department of Mathematics, JWB 311, 84112 Salt Lake City, Utah, USA}
\email{posva@math.utah.edu}
\begin{document}

\maketitle

\begin{quote}
\textsc{Abstract.} We develop a gluing theory in the sense of Koll\'{a}r for slc surfaces and threefolds in positive characteristic. For surfaces we are able to deal with every positive characteristic $p$, while for threefolds we assume that $p>5$. Along the way we study nodes in characteristic $2$ and establish a theory of sources and springs \emph{à la} Koll\'{a}r for threefolds. We also give applications to the topology of lc centers on slc threefolds, and to the projectivity of the moduli space of stable surfaces in characteristic $p>5$.
\end{quote}

{\let\thefootnote\relax\footnote{{ 
\emph{MSC numbers:} 14J17, 14J29, 14J30.
}
}}

\tableofcontents

\section{Introduction}
Varieties with semi-log canonical (slc) singularities were first introduced by Koll\'{a}r and Shepherd-Barron in \cite{Kollar_Shepherd_Barron_3folds_and_deformations_of_surfaces_singularities} to study the moduli functor of surfaces of general type over the complex numbers. They showed that slc singularities appear on the special fibers of stable degenerations of smooth surfaces of general type, and that the moduli functor of complex slc surfaces of general type is coarsely represented by a separated algebraic space. It was proved a few years later that this coarse moduli space is in fact projective \cite{Kollar_Projectivity_complete_moduli, Alexeev_Boundedness_and_K2_for_log_surfaces}. These results showed the importance of canonically polarized slc varieties (also called stable varieties) for a modular meaningful compactification of the moduli functor of smooth varieties of general type. Indeed, it has been shown that the moduli functor $\overline{\sM}_{n,v}$ of stable varieties (of any fixed dimension $n$ and volume $v$) over a field $k$ of characteristic $0$, is coarsely represented by a projective algebraic space of finite type over $k$
\cite{Viehweg_Qproj_Moduli_for_polarized_manifolds,
Hassett_Kovacs_Reflexive_pullbacks_and_base_extension,
Karu_Minimal_models_and_boundedness_of_stable_varieties,
Abramovich_Hassett_Stable_varieties_with_a_twist,
Kollar_Singularities_of_the_minimal_model_program,
Fujino_Semipositivity_theorems_for_moduli_problems,
Hacon_McKernan_Xu_Boundedness_of_moduli_of_varieties_of_general_type,
Kovacs_Patakfalvi_Projectivity_of_moduli_space_of_stable_log_varieties}.
See the forthcoming book \cite{Kollar_Families_of_varieties_of_general_type} for an exposition of the proof and of many related results.

The study of slc singularities and the construction of the compact coarse moduli space of $\overline{\sM}_{n,v}$ owe much to the recent development of the Minimal Model Program (MMP), see e.g. \cite{Karu_Minimal_models_and_boundedness_of_stable_varieties,
Hacon_McKernan_Xu_Boundedness_of_moduli_of_varieties_of_general_type,
Kollar_Singularities_of_the_minimal_model_program}. Here a technical issue seems to arise: the methods of the MMP work best for normal varieties while they might fail for normal crossing singularities \cite{Kollar_Two_examples_of_nc_surfaces}, and slc singularities are typically not normal as they might have nodes in codimension one (the precise definition is recalled in \autoref{section: demi-normal schemes}). An effective way to solve this issue is to study the normalization, which has log canonical (lc) singularities, and then descend the information along the normalization morphism. This idea is exploited to prove the valuative criterion of properness of $\overline{\sM}_{n,v}$, see \autoref{section:applications_to_moduli_spaces} for details, and has been fruitful to study abundance of stable varieties, \cite[\S 12]{Flips_and_abundance_for_3folds} and \cite{Hacon_Xu_Existence_of_lc_closures, Hacon_Xu_Finiteness_of_B_representations_and_slc_abundance,
Fujino_Gongyo_Log_pluricanonical_representations_and_abundance}. These results rely on a theory that allows one to go back and forth between slc varieties and their normalization in a systematic way. The main goal of this article is to establish such a theory for surfaces and threefolds in positive characteristic.

\subsection{Gluing theory for slc varieties}

Let $X$ be an slc variety, with normalization $\nu\colon \bar{X}\to X$, and conductor divisors $D\subset X$ and $\bar{D}\subset \bar{X}$. The morphism $\bar{D}\to D$ generically looks like the normalization of the node, hence it is generically Galois of degree $2$ (at least in characteristic $\neq 2$). This induces a Galois involution $\tau$ on the normalization of $\bar{D}$. Then one should be able to reconstruct $X$ by gluing together the points that belong to the same $\tau$-equivalence classes. This is rather easy to accomplish with the theory of quotients by finite equivalence relations. What is difficult is to decide which triplets $(\bar{X},\bar{D},\tau)$ arise as normalization of slc varieties. In the case of stable varieties over a field of characteristic $0$, the answer is given by Koll\'{a}r's gluing theory, developed in \cite[\S 5,\S 9]{Kollar_Singularities_of_the_minimal_model_program}. An overview of this theory is given in \autoref{section: demi-normal schemes}, \autoref{section:Quotients_by_finite_equivalence_relations} and \autoref{section:Kollar_gluing_theory}.

In this article, we establish a gluing theory for slc surfaces and threefolds in positive characteristic. As the case of surfaces and the case of threefolds are quite different, with respect to both methods and results, we shall present them separately.

\subsubsection{Gluing for surfaces}
For surfaces we prove the following theorem:
\begin{theorem_intro}[\autoref{theorem:gluing_for_surfaces_p_different_from_2} and \autoref{theorem:gluing_for_surfaces_in_char_2}, see also \autoref{theorem:gluing_for_germs_of_surfaces}]
Let $k$ be a field of positive characteristic.
	\begin{enumerate}
		\item If $\Char k> 2$, then normalization gives a bijection
		\begin{equation*}
		\begin{pmatrix}
		\text{Slc surface pairs } (S,\Delta)\\
		\text{of finite type over }k
		\end{pmatrix}
		\overset{1:1}{\longrightarrow}
		\begin{pmatrix}
		\text{Lc surface pairs } (\bar{S},\bar{D}+\bar{\Delta})  \\
		\text{of finite type over }k \\
		\text{plus an involution } \tau\text{ of the pair }(\bar{D}^n,\Diff_{\bar{D}^n}\bar{\Delta})\\
		\text{that is generically fixed point free on every component }.
		\end{pmatrix}
		\end{equation*}
		
		\item If $\Char k=2$, then normalization gives a bijection
		\begin{equation*}
		\begin{pmatrix}
		\text{Slc surface pairs }(S,\Delta)\\
		\text{of finite type over }k
		\end{pmatrix}
		\overset{1:1}{\longrightarrow}
		\begin{pmatrix}
		\text{Lc surface pairs } (\bar{S},\bar{D}_\emph{Gal}+\bar{D}_\emph{ins}+\bar{\Delta})  \\
		\text{of finite type over }k \\
		\text{where }\bar{D}_\emph{Gal}, \bar{D}_\emph{ins} \text{ and }\bar{\Delta}\text{ have no common component,}\\
		\text{plus an involution } \tau\text{ of the pair }(\bar{D}_\emph{Gal}^n,\Diff_{\bar{D}_\emph{Gal}^n}(\bar{\Delta}+\bar{D}_\emph{ins})) \\
		\text{that is generically fixed point free on every component.}
		\end{pmatrix}
		\end{equation*}
	\end{enumerate}
Here it is understood that the divisors $\bar{D},\bar{D}_\emph{Gal},\bar{D}_\emph{ins}$ are reduced and that the divisors $\bar{\Delta}$ have rational coefficients in $(0;1]$.
\end{theorem_intro}

We emphasize that the surface pairs in the above theorem are not assumed to be proper over the base field.  Actually, we also prove a semi-local version of the above theorem in \autoref{theorem:gluing_for_germs_of_surfaces}. This seems to be a special feature of the lc surface case: compare with \cite[5.13]{Kollar_Singularities_of_the_minimal_model_program} and the counterexamples in \cite[\S 9.4]{Kollar_Singularities_of_the_minimal_model_program}.

If $\Char k>2$ the proof follows from the results of \cite[\S 5.3]{Kollar_Singularities_of_the_minimal_model_program} and the observation that the equivalence relation on $\bar{S}$ generated by $\tau$ is finite by adjunction and dimensional reasons. Let me mention that in the semi-local case, the construction of the inverse map to the normalization is slightly indirect: see the paragraph before \autoref{theorem:gluing_for_germs_of_surfaces} for some comments. The case $\Char k=2$ is more interesting, as a special case of normalization appears: the morphism between the conductors $\bar{D}\to D$ might be purely inseparable, see \autoref{example: inseparable nodes}. We are able to analyse these cases in detail and in every dimension: see \autoref{section:geometry_of_demi_normal_varieties} below.

\subsubsection{Gluing for threefolds}
Gluing theory for threefold pairs $(X,\Delta)$ is more challenging. We follow the strategy of Koll\'{a}r in \cite[\S 4,\S 5]{Kollar_Singularities_of_the_minimal_model_program}. The $\tau$-equivalence classes preserve the lc stratification of $(X,\Delta)$, and become more and more complicated as we look at higher codimension lc centers. To keep track of the gluing process, it is therefore desirable to understand the structure of these high codimension lc centers (see \autoref{remark:proof_in_dim_2}). Instead of looking directly at the lc centers on $X$, Koll\'{a}r's idea is to consider a crepant $\bQ$-factorial dlt blow-up of $(X,\Delta)$: the advantange is that the lc centers are nicer on this biraitonal model (see \autoref{proposition: lc centers on dlt 3-folds}). The following theorem describes precisely the relation between lc centers on $X$ and on the model:

\begin{theorem_intro}[\autoref{theorem:spring_and_sources} and \autoref{cor:adjunction_for_reduced_boundary}]\label{theorem_intro:spring_and_sources}
Let $f\colon (Y,\Delta_Y)\to (X,\Delta=\Delta^{=1}+\Delta^{<1})$ be crepant $\bQ$-factorial dlt blow-up of a quasi-projective lc threefold pair over a perfect field of characteristic $>5$. Let $Z\subset X$ be a lc center contained in $\Delta^{=1}$ with normalization $Z^n\to Z$.

Let $(S,\Delta_S:=\Diff^*_{S}\Delta_Y)\subset Y$ be a minimal lc center over $Z$, with Stein factorization $f^n_S\colon S\to Z_S\to Z^n$. 
Then:
	\begin{enumerate}
		\item The crepant birational equivalence class of $(S,\Delta_S)$ over $Z$ does not depend on the choice of $S$ or $Y$. We call it the \textbf{source} of $Z$, and denote it by $\src(Z, X,\Delta)$.
		\item The isomorphism class of $Z_S$ over $Z$ does not depend on the choice of $S$ or $Y$. We call it the \textbf{spring} of $Z$, and denote it by $\spr(Z,X,\Delta)$.
		\item $(S,\Delta_S)$ is dlt, $K_S+\Delta_S\sim_{\bQ,Z}0$ and $(S,\Delta_S)$ is klt on the general fiber above $Z$.
		\item The field extension $k(Z)\subset k(Z_S)$ is Galois and $\Bir^c_Z(S,\Delta_S)\twoheadrightarrow\Gal(Z_S/Z)$.
		\item For $m>0$ divisible enough, there are well-defined Poincaré isomorphisms
				$$\omega_Y^{[m]}(m\Delta_Y)|_S\cong \omega_S^{[m]}(m\Delta_S).$$
		\item If $W\subset (\Delta^{=1})^n$ is an irreducible closed subvariety such that $n(W)=Z$, where $n\colon (\Delta^{=1})^n\to \Delta^{=1}$ is the normalization, then
				$$\src(W,(\Delta^{=1})^n,\Diff_{(\Delta^{=1})^n}\Delta^{<1})\overset{\emph{cbir}}{\sim} \src(Z, X,\Delta)$$
			and
				$$\spr(W,(\Delta^{=1})^n,\Diff_{(\Delta^{=1})^n}\Delta^{<1})\cong \spr(Z, X,\Delta).$$
	\end{enumerate}
\end{theorem_intro}

This theorem is analog to \cite[4.45]{Kollar_Singularities_of_the_minimal_model_program} and indeed our proof is similar to Koll\'{a}r's one. Notice however that we only consider lc centers that are contained in the reduced boundary $\Delta^{=1}$: see \autoref{remark:sources_for_general_lc_centers} for what we can say about the other lc centers. The main step of the proof is to study the relation between the lc centers of $(Y,\Delta_Y)$ that are minimal above $Z$. This is done in \autoref{section: geometry of lc centers}, using the notion of weak $\bP^1$-links (see \autoref{definition:weak-P1_link}), which is a slight generalization of \cite[4.36]{Kollar_Singularities_of_the_minimal_model_program}. The proofs are once again similar to those of Koll\'{a}r, but some complications arise: for example the analog of \cite[4.37]{Kollar_Singularities_of_the_minimal_model_program} in positive characteristic requires some thought, since the torsion-freeness of some higher pushforward is not available in positive characteristic. This is bypassed in the proof of \autoref{prop:minimal_centers_are_linked} using the MMP and connectedness theorems.  

Once the theory of higher codimension adjunction is established, we apply it to the gluing problems for lc threefolds. We obtain:
\begin{theorem_intro}[\autoref{theorem:gluing_for_threefolds}]
Let $k$ be a perfect field of characteristic $> 5$. Then normalization gives a bijection
	\begin{equation*}
		\begin{pmatrix}
		\text{Proper slc threefold pairs}\\
		(X,\Delta) \text{ such that}\\
		K_X+\Delta\text{ is ample}
		\end{pmatrix}
		\overset{1:1}{\longrightarrow}
		\begin{pmatrix}
		\text{Proper lc threefold pairs } (\bar{X},\bar{D}+\bar{\Delta})  \\
		\text{plus an involution }\tau\text{ of }(\bar{D}^n,\Diff_{\bar{D}^n}\bar{\Delta}) \\
		\text{that is generically fixed point free on each component}\\
		\text{such that }K_{\bar{X}}+\bar{D}+\bar{\Delta} \text{ is ample.}
		\end{pmatrix}
		\end{equation*}
\end{theorem_intro}
Contrarily to the surface case, the properness assumption is necessary. Otherwise the involution $\tau$ might generate infinite equivalence classes: see for example \cite[9.57]{Kollar_Singularities_of_the_minimal_model_program}.

The method of proof is similar to \cite[5.33, 5.37, 5.38]{Kollar_Singularities_of_the_minimal_model_program}. Let us emphasize that quotients are actually easier to construct in positive characteristic than in characteristic $0$, thanks to \cite[Theorem 6]{Kollar_Quotients_by_finite_equivalence_relations}: one only has to prove that the gluing relation is finite. The theory of sources and springs gives an appropriate set-up to study the gluing problem, but contrarily to the surface case, finiteness is non-trivial. In characteristic $0$, the proof of finiteness relies on the theory of pluricanonical representations (also called B-representations by some authors), see \cite[\S 10.5]{Kollar_Singularities_of_the_minimal_model_program}. This is well-understood in characteristic $0$, but to my knowledge completely open in general over a field of positive characteristic. Fortunately the gluing problem for threefolds involves only simple pluricanonical representations, which are discussed in \autoref{section:pluricanonical_representations}. 

Before presenting some applications of this theory, let us say a word about the generalization to higher dimensions. The theory of sources and springs for threefolds (except for the Galois property) ultimately relies on existence of crepant $\bQ$-factorial blow-up for threefolds and on the MMP for surfaces: recent developments in birational geometry, e.g.
\cite{Waldron_MMP_for_3_folds_in_char_>5,
Hacon_Witaszek_On_the_relative_MMP_for_threefolds_in_low_char, Bernasconi_Kollar_vanishing_theorems_for_threefolds_in_char_>5},
make us confident that these tools will soon be available in one dimension higher at least. Two other aspects of the proof are more problematic: the Galois property and pluricanonical representations. For threefolds, we prove the Galois property using the classification of surface lc singularities to make sure that inseparable extensions do not appear (see Step 3 in the proof of \autoref{theorem:spring_and_sources}). A finer approach is needed in higher dimension. As for pluricanonical representations, already the case of surfaces of Kodaira dimension $1$ is challenging. 

\subsection{Applications}
Let us present some applications of our theorems. 

\subsubsection{Geometry of demi-normal varieties}\label{section:geometry_of_demi_normal_varieties}
The study of nodal singularities leads to several interesting consequence for the geometry of demi-normal schemes.

In characteristic $2$, the normalization of a demi-normal scheme $\bar{X}\to X$ might create a conductor morphism $\bar{D}\to D$ that is purely inseparable. To understand these cases, which we baptise \emph{inseparable nodes}, we analyse in detail the normalization theory of nodes in \autoref{section:normalization_of_nodes}. It turns out that inseparable nodes in any dimension are completely determined by the extension of functions fields given by $\bar{D}\to D$, namely:

\begin{theorem_intro}[\autoref{theorem: bijection for inseparable nodes}]
Let $k$ be a field of characteristic $2$. Then normalization gives a bijection
	\begin{equation*}
		\begin{pmatrix}
		\text{Demi-normal varieties over }k\\
		\text{with only inseparable nodes}
		\end{pmatrix}
		\overset{1:1}{\longrightarrow}
		\begin{pmatrix}
		\text{Triples }\big(X,\sum_i D_i, k\subset k_i'\subset k(D_i)\big)\\
		\text{where }X\text{ is a normal variety over }k,\\
		 \sum_iD_i \text{ is a reduced Weil divisor},\\ 
		 k_i'\subset k(D_i)\text{ are degree }2\\
		 \text{inseparable extensions over }k
		\end{pmatrix}
	\end{equation*}
whose inverse is given by \autoref{construction: construction of inseparable node}.
\end{theorem_intro}
We also study the \'{e}tale theory of inseparable nodes in \autoref{section:nodal_is_etale} and show that the distinction between separable and inseparable nodes holds \'{e}tale-locally.

\bigskip
The study of the gluing formalism in \autoref{section:construction_of_separable_nodes} and \autoref{section:inseparable_nodes} can be applied to the following compactification property:
\begin{theorem_intro}[\autoref{thm:embedding_in_demi_normal}]
Let $X$ be a demi-normal scheme that is separated and of finite type over a field $k$ of positive characteristic. Then $X$ embedds as an open subset of a demi-normal scheme that is proper over $k$.
\end{theorem_intro}
This is a positive characteristic analog of the main result of \cite{Berquist_Embedding_of_demi_normal_varieties}, and the proof is similar in spirit.

\bigskip
It is possible to study slc surfaces by means of partial resolutions, as it is done in \cite[\S 4]{Kollar_Shepherd_Barron_3folds_and_deformations_of_surfaces_singularities} for complex surfaces. While we do not use this approach, we record that gluing theory can be used to prove the existence of partial resolutions in positive characteristic $\neq 2$:

\begin{theorem_intro}[\autoref{theorem:semi_resolution_for_surfaces}]
Let $S$ be a demi-normal surface over an arbitrary field of characteristic $\neq 2$. Then $S$ has a good semi-resolution with slc singularities (see \autoref{definition:semi_resolution}).
\end{theorem_intro}
This result was part of the folklore: it is stated in \cite[4.2]{Kollar_Projectivity_complete_moduli} without restriction on the characteristic with a reference to \cite[1.4.3]{van_Straten_weakly_normal_surfaces}, where only the complex case is treated. In the complex case, a sketch of proof using gluing theory is given in \cite[12.2.3]{Flips_and_abundance_for_3folds}. Our contribution is to make this result precise and available in a greater generality. The restriction on the characteristic is made to avoid inseparable nodes: see \autoref{remark:semi_resolution_in_char_2} and \autoref{proposition:semi_resolution_in_char_2} for some partial results in characteristic $2$.

\subsubsection{Lc centers of threefolds}
The theory of sources and springs has the following consequence for the topology of lc centers:
\begin{theorem_intro}[\autoref{corollary:intersection_of_lc_centers} and \autoref{proposition:minimal_lc_centers_are_normal_up_to_homeo}]
Let $(X,\Delta)$ be a quasi-projective slc threefold over a perfect field of characteristic $>5$. Then:
	\begin{enumerate}
		\item Intersections of lc centers are union of lc centers. 
		\item Minimal lc centers are normal up to universal homeomorphism.
	\end{enumerate}
\end{theorem_intro}

\begin{remark}
Filipazzi and Waldron have independently obtained a more general result for $3$-folds in characteristic $>5$ in \cite[Corollary 1.5]{Filipazzi_Waldron_Connectedness_principle_for_3folds_pos_char}.\end{remark}

Let us mention that the question of normality of plt centers (a special case of minimal lc centers) has been studied extensively. There are examples of non-normal plt centers on threefolds in characteristic $2$ \cite{Cascini_Tanaka_Plt_threefolds_with_non_normal_centres_in_char_2}. More generally, if the characteristic is too small compared to the dimension, there are examples of non-normal plt centers in arbitrary big dimensions \cite[Theorem 1.1]{Bernasconi_Non_normal_plt_centers_in_pos_char}. On the positive side, plt centers on threefolds are normal in characteristic $>5$ \cite[Theorem 3.11, Proposition 4.1]{Hacon_Xu_On_the_3dim_MMP_in_pos_char} and normal up to universal homeomorphism in general \cite[Theorem 1.2]{Hacon_Witaszek_On_the_relative_MMP_for_threefolds_in_low_char}.

\subsubsection{Moduli space of stable surfaces}
We also give an application of the gluing theory of threefolds, to the moduli theory of stable surfaces in positive characteristic. By contrast to the characteristic $0$ theory, not much is known about the moduli functor of stable varieties in positive characteristic beyond the classical case of stable curves. For example, the existence of a coarse moduli space in arbitrary dimension is not known. Nevertheless, the moduli functor $\overline{\sM}_{2,v,k}$ of stable surfaces over an algebraically closed field $k$ of characteristic $>5$ is known to be represented by a separated algebraic space of finite type over $k$ \cite[Corollary 9.8]{Patakfalvi_Projectivity_moduli_space_of_surfaces_in_pos_char}. Modulo a weak version of semi-stable reduction and a technical adjunction condition (denoted respectively by \textbf{(SSR)} and \textbf{(S2)}, see \autoref{section:applications_to_moduli_spaces} for details), we prove the valuative criterion of properness for $\overline{\sM}_{2,v,k}$. Combining with the main result of \cite{Patakfalvi_Projectivity_moduli_space_of_surfaces_in_pos_char} we obtain:
\begin{theorem_intro}[\cite{Patakfalvi_Projectivity_moduli_space_of_surfaces_in_pos_char} and \autoref{theorem:Properness_of_moduli_of_stable_surfaces}]
Let $k$ be an algebraically closed field of characteristic $>5$ and $v$ a rational number. Assume that conditions \textbf{(S2)} and \textbf{(SSR)} hold. Then $\overline{\sM}_{2,v,k}$ admits a projective coarse moduli space.
\end{theorem_intro}

\subsection{Acknowledgements}
I would like to thank to my supervisor Zsolt Patakfalvi for his time, his support and several fruitful conversations. I am also thankful to the members of the CAG group at EPFL for moral support during the academic year 2020--2021, and to J\'{a}nos Koll\'{a}r for kindly answering my questions about the theory of sources and springs. Financial support was provided by the European Research Council under grant number 804334.

\section{Preliminaries}

\subsection{Conventions and notations}
We work over a field $k$ of positive characteristic. The assumptions on $k$ vary through the paper, and will be precised at the appropriate places. 
%For the main results, we usually assume that $k$ is perfect.

A \emph{variety} is a connected separated reduced equidimensional scheme of finite type over $k$. Note that a variety in our sense might be reducible. A \emph{curve} (resp. a \emph{surface}, resp. a \emph{threefold}) is a variety of dimension one (resp. two, resp. three).

Let $X$ be a scheme. A coherent $\sO_X$-module $\sF$ is $S_i$ if it satisfies the condition $\depth_{\sO_{X,x}}\sF_x\geq \min\{i,\dim\sF_x\}$ for all $x\in X$.

If $X$ is a reduced Noetherian scheme, its \emph{normalization} is defined to be its relative normalization along the structural morphism $\bigsqcup_\eta\Spec(k(\eta))\to X$ where $\eta$ runs through the generic points of $X$. Recall that $X$ is normal if and only if it is regular in codimension one and $\sO_X$ is $S_2$.

If $X$ is normal, a torsion-free coherent module $\sF$ is $S_2$ if and only if it is reflexive, if and only if $\sF=j_*\sF_U$ for any dense open subset $j\colon U\subset X$ satisfying $\codim X\setminus U\geq 2$. If $X$ is only $S_1$ (for example if it is reduced) then for an $S_1$ coherent $\sF$, being $S_2$ is equivalent to the equality $\sF=j_*\sF_U$ for any $U$ as above \cite[1.8]{Hartshorne_Generalized_divisors_and_biliaison}. Note that if $X$ is $S_1$ then any reflexive sheaf is also $S_1$ \cite[0AV5]{Stacks_Project}.

Given a normal variety $X$, we denote by $K_X$ any Weil divisor on $X$ associated to the invertible sheaf $\omega_{X_\text{reg}}$. A $\bQ$-Weil divisor $D$ on $X$ is \emph{$\bQ$-Cartier} if for some $m>0$ the reflexive sheaf $\sO_X(mD)$ is invertible.

A \emph{pair} $(X,\Delta)$ is the data of a demi-normal variety $X$ together with a $\bQ$-Weil divisor $\Delta$ whose coefficients belongs to $[0;1]$ and without components in common with $\Sing(X)$, such that $K_X+\Delta$ is $\bQ$-Cartier. The divisor $\Delta$ is sometimes called the \emph{boundary} of the pair. Given $\Delta$, we often consider the following divisors:
		$$\Delta^{=1}=\lfloor \Delta \rfloor:=\sum_{E \; : \; \coeff_E\Delta=1}E,\qquad \Delta^{<1}:=\Delta-\Delta^{=1}.$$
		
Let $(X,\Delta)$ and $(X',\Delta')$ be two pairs. A \emph{log isomorphism} of these pairs is an isomorphism $\varphi\colon X\cong X'$ such that $\varphi(\Delta)=\Delta'$ as $\bQ$-divisors. If $(X',\Delta')=(X,\Delta)$ and $\varphi$ has order $2$, we say that $\varphi$ is a \emph{log involution} of $(X,\Delta)$. In this case, if no component of $X$ belongs to the pointwise fixed locus of $\varphi$, we say that $\varphi$ is \emph{generically fixed point free}.

We follow the standard terminology of \cite[\S 2.1]{Kollar_Singularities_of_the_minimal_model_program} for the birational geometry of pairs $(X,\Delta)$. In particular, we refer the reader to \emph{loc. cit.} for the notions of \emph{discrepancy} of divisors, and for those of \emph{log canonical} (\emph{lc}), \emph{Kawamata log terminal} (\emph{klt}), \emph{divisorial log terminal} (\emph{dlt}), \emph{canonical} and \emph{terminal} pairs. In these cases, $X$ is actually normal. The definition of \emph{semi-log canonical pair} (\emph{slc}) will be recalled in the next section. All these notions are preserved by \'{e}tale base-changes \cite[2.15]{Kollar_Singularities_of_the_minimal_model_program} (the slc case follows from the lc case, since normalization commutes with \'{e}tale base-change).

In dimension two, there is a (seemingly) more general notion of \emph{numerically lc surface pair}, defined in \cite[2.27]{Kollar_Singularities_of_the_minimal_model_program}.

Let $(X,\Delta)$ be a pair. An \emph{lc center} of $(X,\Delta)$ is a closed subset $Z\subset X$ that is the image of a divisor of discrepancy $-1$ with respect to $(X,\Delta)$. More precisely, there exists a proper birational morphism from a normal variety $f\colon Y\to X$ and a prime divisor $E\subset Y$ such that $a(E;X,\Delta)=-1$ and $f(E)=Z$. It is convenient to allow $X$ itself to be considered as a lc center.

Let $(X,\Delta+D)$ be a pair, where $D$ is a reduced divisor with normalisation $D^n$. Then there is a canonically-defined $\bQ$-divisor $\Diff_{D^n}\Delta$ on $D^n$ such that restriction on $D^n$ induces an isomorphism $\omega_X^{[m]}(m\Delta+mD)|_{D^n}\cong \omega_{D^n}(m\Diff_{D^n}\Delta)$ for $m$ divisible enough. Singularities of $(X,\Delta+D)$ along $D$ and singularities of $(D^n,\Diff_{D^n}\Delta)$ are related by so-called adjunction theorems. We refer to \cite[\S 4.1]{Kollar_Singularities_of_the_minimal_model_program} for fundamental theorems of adjunction theory.

Let $X$ be a variety and $C\subset X$ a proper curve. For a Cartier divisor $L$ on $X$, we define the intersection number $L\cdot_k C=\deg_k\sO_C(L|_C)$. By linearity, this definition extends to $\bQ$-Cartier divisors.

Linear equivalence of Weil divisors is denoted by $D\sim D'$, and $\bQ$-linear equivalence of $\bQ$-Weil divisors is denoted by $D\sim_{\bQ}D'$. If $X$ is endowed with a dominant morphism $f\colon X\to S$, we write $D\sim_{\bQ,f}D'$ if there is a $\bQ$-Cartier divisor $E$ on $S$ such that $D\sim_{\bQ}D'+f^*E$.

An \'{e}tale morphism of pointed schemes $(Y,y)\to (X,x)$ is called \emph{elementary} if it induces an isomorphism $k(y)\cong k(x)$ (here we follow the convention of \cite[02LE]{Stacks_Project}).

\subsection{Quotients by finite equivalence relations}\label{section:Quotients_by_finite_equivalence_relations}
The theory of quotients by finite equivalence relations is developed in \cite{Kollar_Quotients_by_finite_equivalence_relations} and \cite[\S 9]{Kollar_Singularities_of_the_minimal_model_program}. For convenience, we recall the basic definitions and constructions that we will need.

Let $S$ be a base scheme, and $X,R$ two reduced $S$-schemes. An $S$-morphism $\sigma=(\sigma_1,\sigma_2)\colon R\to X\times_S X$ is a \textbf{set theoretic equivalence relation} if, for every geometric point $\Spec K\to S$, the induced map
		$$\sigma(\Spec K)\colon \Hom_S(\Spec K,R)\to \Hom_S(\Spec K,X)\times \Hom_S(\Spec K,X)$$
	is injective and an equivalence relation of the set $\Hom_S(\Spec K,X)$. We say in addition that $\sigma\colon R\to X\times_S X$ is \textbf{finite} if both $\sigma_i\colon R\to X$ are finite morphisms.
	
	Assume that $G$ is a groupoid, that is a category where all the arrows are isomorphisms. An action of $G$ on $X$ over $S$ is a functor $F\colon G\to *_{\Aut_SX}$, where the target is the groupoid with one element induced by the abstract group $\Aut_SX$. Given such an action, for each $g\in \arr(G)$ we let $\Gamma(g)\subset X\times_S X$ be the graph of the $S$-automorphism $F(g)$. Then the union $\bigcup_g\Gamma(g)\subset X\times_S X$ is a set theoretic equivalence relation. Conversely, a set theoretic equivalence relation $R\subset X\times_S X$ is called a \textbf{groupoid} if it is of this form.
	
	Suppose that $\sigma\colon R\hookrightarrow X\times_S X$ is a reduced closed subscheme. Then there is a minimal set theoretic equivalence relation generated by $R$: see \cite[9.3]{Kollar_Singularities_of_the_minimal_model_program}. Even if both $\sigma_i\colon R\to X$ are finite morphisms, the resulting relation may not be finite: achieving transitivity can create infinite equivalence classes. 
	
	An important case we will discuss is the following: let $X$ be a variety over a field $k$, $D\subset X$ a reduced divisor with normalization $n\colon D^n\to D$ and $\tau\colon D^n\cong D^n$ an involution. The \textbf{equivalence relation induced by $\tau$} is the smallest set theoretic equivalence relation $R(\tau)\to X\times_k X$ induced by the closure of the set of those $(x,y)\in X\times_k X$ such that
			$$\exists x',y'\in D^n \text{ such that } n(x')=x,\ n(y')=y,\ \tau(x')=y'.$$
A central task will be to identify conditions on $(X,D,\tau)$ that guarantee that $R(\tau)$ is a finite equivalence relation.

	Let $(\sigma_1,\sigma_2)\colon R\to X\times_S X$ be a finite set theoretic equivalence relation. A \textbf{geometric quotient} of this relation is an $S$-morphism $q\colon X\to Y$ such that
		\begin{enumerate}
			\item $q\circ \sigma_1=q\circ \sigma_2$,
			\item $(Y,q\colon X\to Y)$ is initial in the category of algebraic spaces for the property above: if $p\colon X\to Z$ is such that $p\circ \sigma_1=p\circ\sigma_2$ there exists a unique $\phi\colon Y\to Z$ such that $p=\phi\circ q$; and
			\item $q$ is finite.
		\end{enumerate}
Clearly the quotient $(Y,q)$ is unique (up to unique isomorphism) if it exists. It may happen that $Y$ exists only as an algebraic space. 
		
The most important result for us is that quotients by finite equivalence relation usually exist in positive characteristic: if $X$ is essentially of finite type over a field $k$ of positive characteristic and $R\rightrightarrows X$ is a finite set theoretic equivalence relation, then the geometric quotient $X/R$ exists and is a $k$-scheme \cite[Theorem 6, Corollary 48]{Kollar_Quotients_by_finite_equivalence_relations}.

\subsection{Demi-normal schemes}\label{section: demi-normal schemes}
Let us recall the definition of a node, following \cite[1.41]{Kollar_Singularities_of_the_minimal_model_program}.

\begin{definition}\label{definition: node}
A one-dimensional Noetherian local ring $(R,\fm)$ is called a \textbf{node} if there exists a ring isomorphism $R\cong S/(f)$, where $(S,\fn)$ is a regular two-dimensional local ring and $f\in \fn^2$ is an element that is not a square in $\fn^2/\fn^3$.
\end{definition}

\begin{definition}
A locally Noetherian reduced scheme (or ring) is called \textbf{nodal} if its codimension one local rings are regular or nodal. It is called \textbf{demi-normal} if it is $S_2$ and nodal.
\end{definition}

Let $X$ be a reduced scheme with normalization $\pi\colon \bar{X}\to X$. The conductor ideal of the normalization is defined as
		$$\fI:=\im\left(\ev_1\colon \sHom_X(\pi_*\sO_{\bar{X}},\sO_X)\\longrightarrow\sO_X\right).$$
It is an ideal in both $\sO_X$ and $\sO_{\bar{X}}$. We let
		$$D:=\Spec_X\sO_X/\fI,\quad \bar{D}:=\Spec_{\bar{X}}\sO_{\bar{X}}/\fI,$$
and call them the \textit{conductor subschemes}.

\begin{lemma}\label{lemma: basic properties of normalization of demi-normal scheme}
Notations as above. Assume that $X$ is essentially of finite type over a field. Then:
	\begin{enumerate}
		\item $D$ and $\bar{D}$ are reduced of pure codimension $1$.
		\item If $\eta\in D$ is a generic point such that $\Char k(\eta)\neq 2$, the morphism $\bar{D}\to D$ is \'{e}tale of degree $2$ in a neighborhood of $\eta$. If $\Char k(\eta)=2$, then $\bar{D}\to D$ might be purely inseparable (see \autoref{example: inseparable nodes}).
		\item $X$ has a dualizing sheaf which is invertible in codimension one. In particular, it has a well-defined canonical divisor $K_X$.
		\item Let $\Delta$ be a $\bQ$-divisor on $X$ with no component supported on $D$, and such that $K_X+\Delta$ is $\bQ$-Cartier. If $\bar{\Delta}$ denotes the divisorial part of $\pi^{-1}(\Delta)$, then there is a canonical isomorphism
				$$\pi^*\omega_X^{[m]}(m\Delta)\cong \omega_{\bar{X}}^{[m]}(m\bar{D}+m\bar{\Delta})$$
		for $m$ divisible enough.
	\end{enumerate}
\end{lemma}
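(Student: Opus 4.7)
The four assertions are codimension-one statements, so I would prove them by reducing to a computation at the generic points of the conductor locus. For (1), demi-normality of $X$ (i.e.\ $S_2$ plus nodal in codimension one) together with Serre's normality criterion implies that the non-normal locus of $X$ has pure codimension one and coincides with the support of the conductor. At a generic point $\eta$ of this locus, the local ring $\sO_{X,\eta}$ is a node $R = S/(f)$ in the sense of \autoref{definition: node}, and a direct computation (which will be spelled out in \autoref{section:normalization_of_nodes}) shows that the conductor equals $\fm_R$; hence $D$ and $\bar{D}$ are generically reduced. Since they are divisorial subschemes of $S_2$ schemes, they are $S_1$, so generic reducedness propagates to reducedness of pure codimension one.

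For (2), I examine the conductor morphism at a generic point $\eta$ of $D$. When $\Char k(\eta) \neq 2$ the quadratic form $\bar{f} \in \fn^2/\fn^3$ is diagonalizable, so after a coordinate change $f$ takes the form $xy$; the normalization of $R = S/(xy)$ then splits into two branches, and the induced conductor morphism is the étale degree-two map $k(\eta)\times k(\eta) \to k(\eta)$. In residue characteristic $2$, diagonalization fails in general, and one can construct nodes for which the conductor morphism is purely inseparable; a systematic classification is deferred to \autoref{section:normalization_of_nodes}, so here we content ourselves with exhibiting an example.

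For (3), since $X$ is essentially of finite type over a field, $S_2$, and equidimensional, Grothendieck duality provides a coherent dualizing sheaf $\omega_X$. Regular local rings and nodal local rings are both Gorenstein (the latter because a node is a hypersurface singularity by \autoref{definition: node}), so $\omega_X$ is invertible in codimension one; this in turn yields a Weil divisor class $K_X$ via the correspondence between reflexive rank-one sheaves and divisorial sheaves on $S_2$ schemes recalled in the conventions.

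For (4), I first observe that both $\pi^*\omega_X^{[m]}(m\Delta)$ and $\omega_{\bar{X}}^{[m]}(m\bar{D} + m\bar{\Delta})$ are reflexive coherent sheaves on the normal scheme $\bar{X}$: the first because, for $m$ divisible enough by the $\bQ$-Cartier index of $K_X + \Delta$, the sheaf $\omega_X^{[m]}(m\Delta)$ is invertible and invertibility is preserved by pullback; the second by construction. Hence it suffices to construct a canonical isomorphism on an open subset of $\bar{X}$ whose complement has codimension at least two. Outside $\bar{D}$ the map $\pi$ is an isomorphism onto its image and the two sides agree tautologically. At a generic point of $\bar{D}$ I run the local node computation: using the identification $\sHom_X(\pi_*\sO_{\bar{X}}, \sO_X) = \fI$ of the conductor together with Grothendieck duality for the finite birational morphism $\pi$, one obtains $\pi^*\omega_X \cong \omega_{\bar{X}}(\bar{D})$ in codimension one. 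Taking reflexive $m$-th powers and twisting by $m\bar{\Delta}$ (unaffected by $\pi$ since $\Delta$ has no component in $D$) yields the desired isomorphism, whose canonicity stems from duality. The most delicate point is the characteristic-two phenomenon in (2), but the lemma only claims that purely inseparable behavior \emph{can} occur, so a single example suffices and the detailed theory is postponed to \autoref{section:normalization_of_nodes}.
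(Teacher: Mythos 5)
Your overall route is the same as the paper's, just written out: the paper disposes of the lemma in two lines by citing Koll\'{a}r [5.2, 5.7] for (1), (2), (4) and its own node/Gorenstein lemma for (3), and all of those references reduce, exactly as you do, to the local computation at a generic point of the conductor. Your treatment of (1), (3) and (4) is sound: the $S_1$ argument for the conductor subschemes via the depth of $\fI$, the observation that a node is a hypersurface singularity hence Gorenstein, and the extension of the codimension-one isomorphism $\pi^*\omega_X\cong\omega_{\bar X}(\bar D)$ across a big open subset using reflexivity on the normal scheme $\bar X$ are all the intended arguments.

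There is, however, a genuine inaccuracy in your proof of (2). You claim that for $\Char k(\eta)\neq 2$ the quadratic leading form of $f$ can be brought to the shape $xy$ by a coordinate change, so that the node splits into two branches and the conductor map is the diagonal $k(\eta)\hookrightarrow k(\eta)\times k(\eta)$. Diagonalizability in odd characteristic does not give you a split form: the residue field at a generic point of $D$ is a function field, never algebraically closed, and the binary quadratic $\bar a+\bar bZ+\bar cZ^2$ may well be irreducible over $k(\eta)$ (this is precisely the trichotomy of \autoref{corollary: topology of normalization of nodes}). In that case the normalization is local, $\bar D\to D$ is bijective, and the conductor extension is a quadratic field extension. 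The conclusion of (2) survives because in characteristic $\neq 2$ the polynomial $\bar a+\bar bZ+\bar cZ^2$ (with $\bar c$ a unit) is automatically separable --- its derivative $\bar b+2\bar cZ$ cannot vanish identically --- so the extension is Galois and $\bar D\to D$ is still finite \'{e}tale of degree $2$; but the argument must go through separability of this quadratic, as in \autoref{proposition: normalization of nodes}, rather than through a splitting of the branches. Since the entire point of this part of the paper is the behaviour of nodes over imperfect and non-closed residue fields, this case cannot be waved away.
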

\begin{proof}
Since $X$ is essentially of finite type over a field, it has a dualizing sheaf $\omega_X$ and \autoref{lemma: commutative algebra definition of a node} below shows that it is invertible in codimension one. The rest follows from \cite[5.2,5.7]{Kollar_Singularities_of_the_minimal_model_program}.
\end{proof}

\begin{definition}
We say that $(X,\Delta)$ is an \textbf{slc pair} if: $X$ is demi-normal, $\Delta$ is a $\bQ$-divisor with no components along $D$, $K_X+\Delta$ is $\bQ$-Cartier, and the normalization $(\bar{X}, \bar{D}+\bar{\Delta})$ is an lc pair.
\end{definition}

In most cases the conductor $\bar{D}\subset \bar{X}$ comes with an additional structure that remembers the morphism $\bar{D}\to D$:

\begin{lemma}\label{lemma:normalization_gives_involution}
Let $X$ be a demi-normal scheme with normalization $\pi\colon \bar{X}\to X$. Assume that the morphism of conductors $\bar{D}\to D$ is \'{e}tale over every generic point. Then:
	\begin{enumerate}
		\item the induced morphism of normalizations $\bar{D}^n\to D^n$ is the geometric quotient by a Galois involution $\tau$;
		\item if $K_X+\Delta$ is $\bQ$-Cartier, then $\tau$ is a log involution of $(\bar{D}^n,\Diff_{\bar{D}^n}\bar{\Delta})$.
	\end{enumerate}
\end{lemma}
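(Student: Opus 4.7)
The plan is to construct $\tau$ first at generic points using the generic Galois structure, then extend it globally by normality, and finally verify the geometric quotient condition. By hypothesis, $\bar{D}\to D$ is \'{e}tale of degree~$2$ at each generic point of $D$, so the induced residue field extensions are separable of degree~$2$, hence Galois. Since normalization is an isomorphism at generic points of a reduced scheme, the same generic picture persists for $\bar{D}^n\to D^n$. For each component $D_i^n\subset D^n$, the components of $\bar{D}^n$ mapping to $D_i^n$ either consist of two copies mapping birationally, or of a single component with a non-trivial quadratic Galois extension of function fields. In both cases the generic involution $\tau_\eta$ of $\bar{D}^n$ over $D^n$ is canonically determined. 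The resulting automorphism of $k(\bar{D}^n)$ fixes $k(D^n)$, and since $\bar{D}^n$ is normal and finite over $D^n$, it stabilizes the integral closure of $\sO_{D^n}$ in $k(\bar{D}^n)$; hence $\tau_\eta$ extends to a global involution $\tau\in\Aut_{D^n}(\bar{D}^n)$. Writing $f\colon\bar{D}^n\to D^n$, the quotient condition $D^n=\bar{D}^n/\langle\tau\rangle$ reduces to $\sO_{D^n}=(f_*\sO_{\bar{D}^n})^{\tau}$, which holds because the right-hand side is an integral $\sO_{D^n}$-subalgebra of $f_*\sO_{\bar{D}^n}$ with fraction field $k(\bar{D}^n)^\tau=k(D^n)$, and therefore equals $\sO_{D^n}$ by normality.

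For (2), I would exploit the canonicity of the Poincar\'{e} isomorphism. Combining the canonical identification $\pi^*\omega_X^{[m]}(m\Delta)\cong \omega_{\bar{X}}^{[m]}(m\bar{D}+m\bar{\Delta})$ from the previous lemma with the Poincar\'{e} isomorphism $\omega_{\bar{X}}^{[m]}(m\bar{D}+m\bar{\Delta})|_{\bar{D}^n}\cong \omega_{\bar{D}^n}^{[m]}(m\Diff_{\bar{D}^n}\bar{\Delta})$ identifies the latter sheaf with the pullback $f^*\bigl(\omega_X^{[m]}(m\Delta)|_{D^n}\bigr)$. This pullback is canonically $\tau$-equivariant, and $\omega_{\bar{D}^n}^{[m]}$ is canonically $\tau$-equivariant as well (being the canonical sheaf of a scheme on which $\tau$ acts as an automorphism), so $\sO_{\bar{D}^n}(m\Diff_{\bar{D}^n}\bar{\Delta})$ is $\tau$-equivariant; in particular $m\tau^*\Diff_{\bar{D}^n}\bar{\Delta}\sim m\Diff_{\bar{D}^n}\bar{\Delta}$. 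To promote this to an honest equality of divisors, I would work at each prime divisor $E\subset\bar{D}^n$: the coefficient of $\Diff_{\bar{D}^n}\bar{\Delta}$ at $E$ is determined by the local geometry of $(\bar{X},\bar{D}+\bar{\Delta})$ at the codimension-two image $\xi_E\in\bar{X}$ of the generic point of $E$. Since $\pi(\xi_E)=\pi(\xi_{\tau(E)})$ and the local rings $\sO_{\bar{X},\xi_E}$ and $\sO_{\bar{X},\xi_{\tau(E)}}$ are the two branches of the semi-local normalization of $\sO_{X,\pi(\xi_E)}$ canonically swapped by $\tau$, the local data computing the different at $E$ and at $\tau(E)$ are isomorphic, forcing equality of coefficients.

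The step I expect to be most delicate is this final one: upgrading the abstract $\tau$-equivariance of the line bundle $\sO_{\bar{D}^n}(m\Diff_{\bar{D}^n}\bar{\Delta})$ to the divisorial identity $\tau^*\Diff_{\bar{D}^n}\bar{\Delta}=\Diff_{\bar{D}^n}\bar{\Delta}$. Making this rigorous requires tracking the canonical, not merely abstract, nature of all isomorphisms in play, together with the codimension-one-on-$\bar{D}^n$ versus codimension-two-on-$\bar{X}$ bookkeeping that links the $\tau$-action to the branch-swapping of $\pi$.
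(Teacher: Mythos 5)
Your proposal is correct and follows essentially the same route as the paper's (very terse) proof: part (1) via the generic degree-$2$ Galois structure extended to the normalization and the identification $\sO_{D^n}=(f_*\sO_{\bar{D}^n})^\tau$, and part (2) via $\tau$-equivariance of the pullback of $K_X+\Delta$ combined with adjunction. You correctly flag, and then handle, the one step the paper leaves implicit, namely upgrading equivariance of the reflexive sheaf $\omega_{\bar{D}^n}^{[m]}(m\Diff_{\bar{D}^n}\bar{\Delta})$ to the divisorial identity $\tau^*\Diff_{\bar{D}^n}\bar{\Delta}=\Diff_{\bar{D}^n}\bar{\Delta}$ by computing the different locally at codimension-one points of $\bar{D}^n$.
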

\begin{proof}
Since $\bar{D}\to D$ is generically \'{e}tale, the field extension $k(D)\subset k(\bar{D})$ is separable of degree $2$, hence Galois. The non-trivial field automorphism gives a well-defined morphism $\tau$ on $\bar{D}^n$, and it is easy to see that $D^n=\bar{D}^n/\tau$ (see also \autoref{corollary:Crepant_map_induces_crepant_map_on_lc_centers} below). This proves the first point. 

Assume that $K_X+\Delta$ is Cartier. Since $\tau\colon \bar{D}^n\to \bar{D}^n$ commutes with the projection to $D$, the pullback of $K_X+\Delta$ to $\bar{D}^n$ is $\tau$-invariant. We conclude using \autoref{lemma: basic properties of normalization of demi-normal scheme} and adjunction.
\end{proof}

\subsection{Introduction to Koll\'{a}r's gluing theory}\label{section:Kollar_gluing_theory}
We give a short introduction to Koll\'{a}r's gluing theory. Let $X$ be a demi-normal variety defined over a field $k$ of arbitrary characteristic, with normalization $\pi\colon \bar{X}\to X$. Let $D\subset X$ and $\bar{D}\subset \bar{X}$ be as in \autoref{section: demi-normal schemes}. If $\Char k\neq 2$, we have seen in \autoref{lemma:normalization_gives_involution} that $\bar{D}^n$ comes with an involution $\tau$ and that the different $\Diff_{\bar{D}^n}(0)$ is $\tau$-invariant. Hence normalization gives a map
		\begin{equation}\label{eqn: gluing equivalence I}
		(\Char k\neq 2) \quad
		\begin{pmatrix}
		X \text{ demi-normal and}\\
		\text{proper over }k
		\end{pmatrix}
		\longrightarrow
		\begin{pmatrix}
		\text{Proper normal pair }(\bar{X},\bar{D})\text{ with}\\
		\text{an involution }\tau\text{ of } (\bar{D}^n,\Diff_{\bar{D}^n}(0))
		\end{pmatrix}
		\end{equation}
Koll\'{a}r's gluing theory constructs, under additional assumptions, an inverse map to \autoref{eqn: gluing equivalence I}. More precisely, Koll\'{a}r shows \cite[5.13]{Kollar_Singularities_of_the_minimal_model_program} that normalization of demi-normal proper varieties induces a bijection	
	\begin{equation}\label{eqn: gluing equivalence II}
		(\Char k = 0) \quad
		\begin{pmatrix}
		\text{Proper slc pairs}\\
		(X,\Delta) \text{ such that}\\
		K_X+\Delta\text{ is ample}
		\end{pmatrix}
		\overset{1:1}{\longrightarrow}
		\begin{pmatrix}
		\text{Proper lc pairs } (\bar{X},\bar{D}+\bar{\Delta}) \text{ plus} \\
		\text{a generically fixed point free}\\
		\text{involution }\tau\text{ of }(\bar{D}^n,\Diff_{\bar{D}^n}\bar{\Delta}) \\
		\text{such that }K_{\bar{X}}+\bar{D}+\bar{\Delta} \text{ is ample.}
		\end{pmatrix}
		\end{equation}
We sketch how the inverse map is constructed. Let $(\bar{X},\bar{D},\tau)$ be a triplet such as in the right-hand side of \autoref{eqn: gluing equivalence II} (we take $\bar{\Delta}=0$ for simplicity). There are two questions to solve: how to construct $X$, and how to show that $K_X$ is $\bQ$-Cartier.
	\begin{enumerate}
		\item \textbf{Construction of $X$.} Similarly to a nodal curve that can be reconstructed from its normalization by gluing two points, the variety $X$ should be obtained by identifying the points of $\bar{D}$ that are conjugate under $\tau$. Since the normalization $\bar{X}\to X$ ought to be finite, the induced equivalence relation $R(\tau)\subset \bar{X}\times \bar{X}$ should be finite. However $\tau$ is defined on $\bar{D}^n$ and it is not clear that the equivalence classes on $\bar{D}\subset \bar{X}$ are finite. Indeed, they need not be if one drops the hypothesis that $(\bar{X},\bar{D})$ is log canonical or that $\tau$ respects the log canonical stratification of $(\bar{D}^n,\Diff_{\bar{D}^n}0)$ (see \cite[5.17]{Kollar_Singularities_of_the_minimal_model_program}). 
		
		Koll\'{a}r's solution is to take advantage of the log canonical stratification and to proceed by induction on the lc centers of $(\bar{X},\bar{D})$. While the singularities of the lc centers $Z\subset \bar{X}$ of high codimension can be complicated, the picture is more transparent if we take a dlt crepant blow-up $(Y,D_Y)\to (\bar{X},\bar{D})$. Then the lc centers of $(\bar{X},\bar{D})$ are images of the strata of $D_Y^{=1}$, and one shows \cite[4.45]{Kollar_Singularities_of_the_minimal_model_program} that the crepant birational class of a minimal stratum $(S,D_S)$ above a fixed lc center $Z$ of $(\bar{X},\bar{D})$, is independant of the choice of the resolution $Y$ and of the choice of $S$. Then one observes \cite[5.36--37]{Kollar_Singularities_of_the_minimal_model_program} that the equivalence classes generated by $\tau$ on $Z$ are governed by the group of birational crepant self-maps of $(S,D_S)$, more precisely by the representation of this group on the global sections of the invertible sheaf associated to $K_S+\Delta_S$. Then one uses the theory of pluricanonical representations \cite[\S 10.5]{Kollar_Singularities_of_the_minimal_model_program} to obtain finiteness.
		
		The relation between $(S,D_S)$ and the Stein factorization $Z_S$ of $S\to Z$ is subtle. One can think of $(S,D_S)$ as a higher codimension version of adjunction for divisors. Koll\'{a}r coined the term \emph{source} for the crepant birational class of $(S,D_S)$ and the term \emph{spring} for the isomorphism class of $Z_S$, and studied them extensively in \cite{Kollar_Singularities_of_the_minimal_model_program}. 
		
		In characteristic $0$, finiteness of $R(\tau)$ is not sufficient to guarantee the existence of the quotient $\bar{X}/R(\tau)$. However it is sufficient in positive characteristic \cite[Theorem 6]{Kollar_Quotients_by_finite_equivalence_relations}, so we will not elaborate on this issue.

		\item \textbf{Descent of $K_{\bar{X}}+\bar{D}$.} Once $X$ is constructed, we would like to descend the $\bQ$-Cartier divisor $K_{\bar{X}}+\bar{D}$. Koll\'{a}r's strategy is to descend the total space $T$ of $K_{\bar{X}}+\bar{D}$. Indeed, the equivalence relation $R(\tau)$ lifts to an equivalence relation $R_T$ on $T$, which is shown to be finite using a strategy similar to the one explained above (see \cite[5.38]{Kollar_Singularities_of_the_minimal_model_program}). 
		
	\end{enumerate}

\section{Nodes}
In this section, we study nodal singularities, with a particular emphasis on nodes in characteristic $2$.

\subsection{Normalization of nodes}\label{section:normalization_of_nodes}
We take a look at the normalization of nodes, following \cite[\S 3]{Tanaka_Abundance_for_slc_surfaces}.
Let $(R,\fm)$ be a nodal Noetherian local ring, and pick a presentation $R\cong S/(f)$ as in \autoref{definition: node}. We can choose a set of local parameters $\fn=(x,y)$ for $S$ such that
		$$f=ax^2+bxy+cy^2+g$$
for $a,b\in S$, $c\in S^\times$ and $g\in \fn^3$ \cite[3.2, 3.3]{
Tanaka_Abundance_for_slc_surfaces}. Denote by $\bar{x},\bar{y}$ the images of $x,y$ in $R$.

\begin{proposition}\label{proposition: normalization of nodes}
Notations as above. Assume that $(R,\fm)$ is not an integral domain. Then:
	\begin{enumerate}
		\item the normalization of $R$ is a product $T_1\times T_2$ of local regular one-dimensional rings;
		\item $\fm$ is the conductor of the normalization;
		\item $R\hookrightarrow T_1\times T_2$ induces the diagonal embedding
				$$k(\fm)\hookrightarrow \left( T_1/\fm T_1\right) \times \left(T_2/\fm T_2\right)\cong k(\fm)\oplus k(\fm).$$
	\end{enumerate}
Assume now that $(R,\fm)$ is an integral domain. Then:
	\begin{enumerate}
		\item the normalization of $(R,\fm)$ is given by
				$$R\hookrightarrow R\left[\frac{\bar{y}}{\bar{x}}\right]=:T.$$
				Moreover we have $T=R+R\cdot \frac{\bar{y}}{\bar{x}}$.
		\item the conductor of the normalization is $\fm$, and $\fm T=\bar{x}T$.
		%\item $T$ is a regular semi-local ring.
		\item We have
				$$T/\fm T\cong \frac{k(\fm)[Z] }{(\bar{a}+\bar{b}Z+\bar{c}Z^2)}$$
				where $\bar{a},\bar{b},\bar{c}$ are the image of $a,b,c$ through $S\to k(\fm)=R/\fm$, and $Z$ is the image of $\frac{\bar{y}}{\bar{x}}$. In particular $T$ is local with maximal ideal $\fm T$ as soon as $\bar{a}+\bar{b}Z+\bar{c}Z^2$ is irreducible in $k(\fm)[Z]$.
	\end{enumerate}
\end{proposition}
\begin{proof}
These results are contained in the statements and the proofs of \cite[3.4, 3.5]{Tanaka_Abundance_for_slc_surfaces}.
\end{proof}

\begin{corollary}\label{corollary: topology_of_normalization_of_nodes_II}
Let $(R,\fm)$ be a non-integral nodal ring and $T$ be its normalization. Denote the conductors by $D:=V(\fm)\subset \Spec{R}$ and $\bar{D}:=V(\fm T)\subset\Spec{T}$. Then $\sO_D$ is equal to the fixed sub-ring of an involution on $\sO_{\bar{D}}$.
\end{corollary}
\begin{proof}
By \autoref{proposition: normalization of nodes}, $\sO_D\hookrightarrow \sO_{\bar{D}}\cong \sO_D\oplus\sO_D$ is the diagonal embedding, so the involution is given by the permutation of direct summands.
\end{proof}

\begin{corollary}\label{corollary: topology of normalization of nodes}
Let $(R,\fm)$ be an integral nodal ring and $T$ be its normalization. Denote the conductors by $D:=V(\fm)\subset \Spec{R}$ and $\bar{D}:=V(\fm T)\subset\Spec{T}$. Then:
	\begin{enumerate}
		\item $\bar{D}\to D$ is two-to-one if and only if $\bar{a}+\bar{b}Z+\bar{c}Z^2$ splits in $k(\fm)[Z]$.
		\item $\bar{D}\to D$ is bijective and Galois if and only if $\bar{a}+\bar{b}Z+\bar{c}Z^2$ is irreducible and separable over $k(\fm)$.
		\item\label{condition: inseparable node} $\bar{D}\to D$ is bijective and purely inseparable if and only if $\bar{a}+\bar{b}Z+\bar{c}Z^2$ is irreducible and inseparable over $k(\fm)$.
	\end{enumerate}
Moreover exactly one of these cases occurs. In the first two cases, $\sO_D$ is recovered as the fixed sub-ring of an involution on $\sO_{\bar{D}}$.
\end{corollary}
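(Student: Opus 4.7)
The plan is to exploit the explicit description $T/\fm T \cong k(\fm)[Z]/(p(Z))$, where $p(Z) = \bar{a}+\bar{b}Z+\bar{c}Z^2$ with $\bar{c}\neq 0$, provided by the preceding \autoref{proposition: normalization of nodes}. The conductor morphism $\bar{D}\to D = \Spec k(\fm)$ corresponds on rings to the inclusion $k(\fm)\hookrightarrow T/\fm T$, so everything reduces to analyzing how $p$ factors over $k(\fm)$ and reading off the $k(\fm)$-algebra structure of the quotient.

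The proof then proceeds by a case analysis on the three factorization types that yield a reduced $T/\fm T$: (a) $p$ splits into two distinct linear factors; (b) $p$ is irreducible and separable; (c) $p$ is irreducible and inseparable (possible only in characteristic $2$, where $p' = \bar{b} = 0$). In case (a), the Chinese Remainder Theorem gives $T/\fm T \cong k(\fm)\times k(\fm)$, so $\bar{D}$ consists of two distinct closed points each with residue field $k(\fm)$, the map to $D$ is set-theoretically two-to-one, and the coordinate swap of the two direct summands is an involution whose fixed subring is the diagonal copy of $k(\fm) = \sO_D$. In case (b), $T/\fm T$ is a separable quadratic extension of $k(\fm)$, automatically Galois, so $\bar{D}\to D$ is bijective and Galois, with the unique nontrivial Galois automorphism as the required involution fixing $\sO_D$. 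In case (c), $T/\fm T$ is a purely inseparable quadratic extension, giving the claimed bijective purely inseparable map, which admits no nontrivial involution.

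The step I expect to require the most care is excluding the remaining \emph{a priori} possibility, namely that $p$ has a repeated linear factor $(Z-\alpha)^2$ in $k(\fm)[Z]$. Such a factorization would force $T/\fm T \cong k(\fm)[Z]/((Z-\alpha)^2)$ to be non-reduced, and hence $\bar{D}$ itself would fail to be reduced; this contradicts the reducedness of the conductor subscheme recorded in \autoref{lemma: basic properties of normalization of demi-normal scheme}. Once this degenerate case is ruled out, the trichotomy is complete, the ``exactly one'' clause follows, and the final statement about recovering $\sO_D$ as the fixed subring of an involution on $\sO_{\bar{D}}$ is immediate from the explicit descriptions in cases (a) and (b).
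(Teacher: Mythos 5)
Your proof is correct and follows essentially the same route as the paper: both reduce to the explicit description $T/\fm T\cong k(\fm)[Z]/(\bar{a}+\bar{b}Z+\bar{c}Z^2)$ from \autoref{proposition: normalization of nodes}, rule out the repeated-root case via reducedness of the conductor (\autoref{lemma: basic properties of normalization of demi-normal scheme}), and then read off the trichotomy and the involutions from the three remaining factorization types. You correctly identified the exclusion of the square case as the one step needing care, which is exactly the point the paper singles out.
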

\begin{proof}
We need to show that $\bar{a}+\bar{b}Z+\bar{c}Z^2$ cannot be a square in $k(\fm)[Z]$. If this was the case, then $\sO_{\bar{D}}=T/\fm T$ would not be reduced, which contradicts \autoref{lemma: basic properties of normalization of demi-normal scheme}.

In the first case we have $T/\fm T\cong k(\fm)\oplus k(\fm)$ in which $k(\fm)$ embeds diagonally, and the involution is given by exchanging the direct summands.

In the second case $T/\fm T$ is a Galois field extension of $k(\fm)$ of degree $2$, and the involution is the unique non-trivial element of the Galois group.
\end{proof}

\begin{definition}\label{definition:(in)separable_node}
A nodal Noetherian local ring $(R,\fm)$ is an \textbf{inseparable node} if the condition of \autoref{corollary: topology of normalization of nodes}.\autoref{condition: inseparable node} is satisfied. In the other cases, we call it a \textbf{separable node}.
\end{definition}

\begin{remark}\label{remark:no_curve_with_inseparable_node_over_perfect_field}
Keeping the notations of \autoref{proposition: normalization of nodes}, $(R,\fm)$ is an inseparable node if and only if $R$ is a domain, $k(\fm)$ has characteristic $2$, $\bar{b}=0$ and $\bar{a}/\bar{c}\notin k(\fm)^2$. In particular the residue field of an inseparable node is not perfect.
\end{remark}

We study the pluricanonical sections that descend the normalization morphism, aiming to give a slight generalization of \cite[5.8, 5.18]{Kollar_Singularities_of_the_minimal_model_program}.

\begin{lemma}\label{lemma:trace_map}
Notations as in \autoref{corollary: topology_of_normalization_of_nodes_II} and \autoref{corollary: topology of normalization of nodes}. Then:
	\begin{enumerate}
		\item The kernel of the Grothendieck trace map $\omega_{\bar{D}}\to\omega_D$ is one-dimensional over $k(\fm)$.
		\item Assume $\bar{D}\to D$ is separable with induced involution $\tau$ on $\bar{D}$. Then the kernel of the Grothendieck trace map is the sub-module of $\tau$-anti-invariant sections. (If $\Char k(\fm)=2$, we interpret $\tau$-anti-invariant sections as the $\tau$-invariant ones.)
	\end{enumerate}
\end{lemma}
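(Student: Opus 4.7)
The plan is to reduce both statements to elementary linear algebra on the 2-dimensional $k(\fm)$-algebra $\sO_{\bar{D}}$. The classifications in \autoref{corollary: topology_of_normalization_of_nodes_II} and \autoref{corollary: topology of normalization of nodes} say that $\sO_{\bar{D}}$ is always a 2-dimensional $k(\fm)$-algebra (either $k(\fm)\oplus k(\fm)$, a separable quadratic extension, or a purely inseparable quadratic extension); in particular $\bar{D}\to D$ is finite flat, and since $D=\Spec k(\fm)$ is a point, $\omega_D\cong k(\fm)$. I would then apply Grothendieck duality for a finite flat morphism to get a canonical identification
$$\omega_{\bar{D}}\cong \Hom_{k(\fm)}(\sO_{\bar{D}},k(\fm)),$$
under which the Grothendieck trace $\omega_{\bar{D}}\to\omega_D$ is identified with evaluation at the unit $1\in \sO_{\bar{D}}$. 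From this (1) is immediate: $\omega_{\bar{D}}$ is a 2-dimensional $k(\fm)$-vector space and the kernel of the evaluation-at-$1$ map is the 1-dimensional hyperplane of linear forms annihilating the unit.

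For (2) I would proceed case by case along the classification. In the split case $\sO_{\bar{D}}\cong k(\fm)\oplus k(\fm)$ with $\tau$ permuting the factors, one reads off from the dual basis that $\ker(\Tr)$ is spanned by the form $(x,y)\mapsto x-y$, which is visibly $\tau$-anti-invariant (and coincides with the $\tau$-invariants when $\Char k(\fm)=2$ since $-1=1$). In the separable Galois case I would pick a $k(\fm)$-basis $\{1,t\}$ of $\sO_{\bar{D}}$ and extract the form of $\tau(t)$ from $\tau^2=\id$ and $\tau\neq\id$: this forces $\tau(t)=c-t$ for some $c\in k(\fm)$ when $\Char k(\fm)\neq 2$, and $\tau(t)=c+t$ with $c\neq 0$ when $\Char k(\fm)=2$. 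In both cases a one-line computation on the dual basis $\{1^*,t^*\}$, using that $t^*$ generates $\ker(\Tr)$, gives $\tau^*(t^*)=-t^*$, i.e.\ anti-invariance; the reader can verify that in characteristic $2$ this equals $t^*$, matching the parenthetical convention.

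The main subtle point is the characteristic $2$ case: here the clean eigenspace decomposition $\omega_{\bar{D}}=\omega_{\bar{D}}^+\oplus\omega_{\bar{D}}^-$ is unavailable, so one cannot invoke semisimplicity of the $\tau$-action to conclude that a $1$-dimensional $\tau$-stable subspace is automatically an eigenline of a prescribed sign. This is precisely why the statement of (2) has to redefine ``anti-invariant'' in characteristic $2$, and why the verification is carried out by the direct computation above rather than by abstract representation theory.
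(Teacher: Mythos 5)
Your approach is essentially the paper's: both arguments realize the Grothendieck trace as the dual of the unit map $\iota\colon \sO_D\to\sO_{\bar D}$ (your ``evaluation at $1$'' on $\Hom_{k(\fm)}(\sO_{\bar D},k(\fm))$ is the same map as the paper's $\Hom(\iota)$), and both then compute in an explicit basis case by case along the classification of \autoref{corollary: topology_of_normalization_of_nodes_II} and \autoref{corollary: topology of normalization of nodes}. Your uniform derivation of (1) as ``a hyperplane in a $2$-dimensional space'' is a mild streamlining of the paper's case-by-case verification, and your normal form $\tau(t)=c\pm t$ is the same computation the paper does with $t=\sqrt d$ resp.\ with the map $\mathfrak t(x)=x+\tau(x)$.

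There is, however, one step you have not actually carried out in part (2), and it is exactly the step the paper is careful about. Your dual-basis computation shows $\tau^*(t^*)=\mp t^*$, which proves only the containment $\ker(\Tr)\subseteq(\text{anti-invariants})$. The lemma asserts equality. In characteristic $\neq 2$ the reverse containment is automatic (an anti-invariant form $\phi$ satisfies $\phi(1)=\phi(\tau(1))=-\phi(1)$, hence kills $1$ and lies in the kernel), but in characteristic $2$, where ``anti-invariant'' means ``invariant'', this argument gives nothing: you must separately show that the $\tau$-invariant subspace of $\Hom_k(L,k)$ is not all of $\Hom_k(L,k)$, i.e.\ exhibit a non-invariant linear form. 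The paper does this explicitly (``by counting dimensions, it remains to show that there exist elements of $\Hom_k(L,k)$ that are not $\tau$-invariant''). In your coordinates the fix is one line: since $\tau(t)=c+t$ with $c\neq 0$, one has $\tau^*(1^*)=1^*+c\,t^*\neq 1^*$, so the invariant subspace is exactly $k\cdot t^*=\ker(\Tr)$. Without this, your final paragraph correctly diagnoses why semisimplicity cannot be invoked in characteristic $2$ but does not supply the replacement argument that the diagnosis calls for.
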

\begin{proof}
In any case, since we can take $\sO_D$ and $\sO_{\bar{D}}$ as the dualizing sheaves of $D$ and $\bar{D}$, the Grothendieck trace map is obtained by applying $\Hom_{\sO_D}(\bullet,\sO_D)$ to the map $\iota\colon \sO_D\to \sO_{\bar{D}}$. We have to show that the kernel of the induced map $\Hom(\iota)$ is one-dimensional over $k$, and that it is the sub-vector space of $\tau$-invariant sections when the involution $\tau$ exists. We distinguish the three possible cases given by \autoref{corollary: topology_of_normalization_of_nodes_II} and \autoref{corollary: topology of normalization of nodes}, and prove our assertions for each of them. For ease of notation, we write $k:=k(\fm)=\sO_D$.
	\begin{enumerate}
		\item[(i)] The morphism $\bar{D}\to D$ is two-to-one. Then $\sO_{\bar{D}}=k\oplus k$, the involution $\tau$ exchanges the two summands. The morphism $\bar{D}\to D$ corresponds to the diagonal embedding $\iota\colon k\hookrightarrow k\oplus k$. The Grothendieck trace is given by
				$$\Hom(\iota)\colon \Hom_k(k\oplus k,k)\to \Hom_k(k,k), \quad \phi_{a,b}\mapsto \mu_{a+b}$$
		where $\phi_{a,b}(x,y)=ax+by$ and $\mu_{a+b}(z)=(a+b)z$. Thus $\ker \Hom(\iota)=\{\phi_{a,-a}\mid a\in k\}$ is one-dimensional over $k$. Since $\tau$ acts on $\Hom_k(k\oplus k,k)$ by pre-composition, we see that $\ker\Hom(\iota)$ is the sub-module of $\tau$-anti-invariant sections.
		\item[(ii)] The map $\bar{D}\to D$ is bijective and Galois. Then $\iota\colon k\to L:=\sO_{\bar{D}}$ is a Galois field extension of degree $2$, and $\tau$ is the non-trivial element of $\Gal(L/k)$. As above, the Grothendieck trace map is given by
				$$\Hom(\iota)\colon \Hom_k(L,k)\to \Hom_k(k,k),\quad \phi\mapsto \phi\circ \iota.$$	
	We distinguish two cases:
		\begin{enumerate}
			\item[(ii.1)] If $\Char k\neq 2$, then we can find an element $d\in k$ such that $L=k(\sqrt{d})$. Taking $\{1,\sqrt{d}\}$ as a $k$-basis of $L$, we see that $\ker\Hom(\iota)=\{\phi_{0,a}\mid a\in k\}$. Since $\tau(\sqrt{d})=-\sqrt{d}$, we see that $\ker\Hom(\iota)$ is the sub-module of $\tau$-anti-invariant elements.
			\item[(ii.2)] Assume that $\Char k=2$ and consider the $k$-linear map 
			$$\mathfrak{t}\colon L\to k,\quad \mathfrak{t}(x)=x+\tau(x).$$ 
		Since $L^\tau=k$, it holds $k\cdot \mathfrak{t}\subseteq \ker\Hom(\iota)$. Since $\Hom(\iota)$ is surjective and $\dim_kL=2$, we have $\dim_k\ker\Hom(\iota)=1$ and therefore $k\cdot \mathfrak{t}= \ker\Hom(\iota)$.
			
			Since $\mathfrak{t}$ is $\tau$-invariant, we obtain that $\ker\Hom(\iota)$ is included in the $\tau$-invariant sub-module of $\Hom_k(L,k)$. By counting dimensions, it remains to show that there exists elements of $\Hom_k(L,k)$ that are not $\tau$-invariant.
			
			Let us exhibit such linear maps. We can write $L=k(\alpha)$ for some $\alpha\in L$. If $\tau(\alpha)=c\alpha$ with $c\in k$, then $(1+c)\alpha\in k$, so either $c=1$ and $\tau$ is the identity, either $\alpha\in k$. Neither are possible, so $\{\alpha,\tau(\alpha)\}$ is a $k$-basis of $L$. In this basis, if $a,b$ are distinct elements of $k$, the $k$-linear map $\phi_{a,b}$ is not $\tau$-invariant.
		\end{enumerate}		
				\item[(iii)] The morphism $\bar{D}\to \bar{D}$ is purely inseparable. Then $\iota\colon k\to L:=\sO_{\bar{D}}$ is a purely inseparable field extension of degree $2$. Thus we can write $L=k(\sqrt{d})$ for some $d\in k\setminus k^2$. Taking $\{1,\sqrt{d}\}$ as a $k$-basis of $L$, we see that $\ker\Hom(\iota)=\{\phi_{0,a}\mid a\in k\}$, which is one-dimensional over $k$.
	\end{enumerate}
The proof is complete.
\end{proof}

\begin{proposition}\label{proposition:descend_of_sections_along_normalization}
Let $X$ be an excellent demi-normal scheme with only separable nodes and normalization $(\bar{X},\bar{D}, \tau)$, and $\Delta$ a $\bQ$-divisor on $X$ that has no common component with $\Sing(X)$. Then a section of $\omega_{\bar{X}}^{[m]}(m\bar{D}+m\bar{\Delta})$ descends to $\omega_X^{[m]}(m\Delta)$ if and only if its Poincaré residue at generic points of $\bar{D}$, taking values in $\omega_{\bar{D}}^{[m]}(m\Diff_{\bar{D}}\bar{\Delta})$, is $\tau$-invariant and $m$ is even, or is $\tau$-anti-invariant and $m$ is odd.
\end{proposition}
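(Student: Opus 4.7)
The plan is to reduce the question to codimension one by an $S_2$-argument and then carry out a local computation at each generic point of $D$. Since $\omega_X^{[m]}(m\Delta)$ and $\pi_*\omega_{\bar X}^{[m]}(m\bar D+m\bar\Delta)$ are both $S_2$ coherent sheaves on the $S_2$ scheme $X$ which coincide on the open locus where $\pi$ is an isomorphism, membership in the subsheaf is a codimension-one question, so it suffices to verify the residue condition at each generic point $\eta$ of $D$. Since $\bar\Delta$ has no component along $\bar D$, the divisor $\Diff_{\bar D^n}\bar\Delta$ is trivial at $\bar\eta$, and I would reduce to the case $\bar\Delta=0$.

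Next I would fix such a generic point $\eta$. By hypothesis $R := \OO_{X,\eta}$ is a separable node, and by \autoref{proposition: normalization of nodes} together with \autoref{corollary: topology of normalization of nodes} the normalization $T = \OO_{\bar X,\bar\eta}$ is either a product of two DVRs (with $\tau$ swapping factors) or a DVR with Galois degree-$2$ residue extension. Since $R$ is Gorenstein (\autoref{lemma: commutative algebra definition of a node}) and $T$ is regular, all reflexive powers agree with ordinary tensor powers, and one can pick local generators $\sigma$ of $\omega_X$ at $\eta$ and $\theta$ of $\omega_{\bar X}(\bar D)$ at $\bar\eta$, related by $\sigma = u\theta$ for some $u$ in the relevant localization.

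For $m=1$ the descent criterion is the classical residue-trace statement of \cite[5.7]{Kollar_Singularities_of_the_minimal_model_program}: $\phi$ lies in $\omega_X$ iff its Poincar\'e residue at $\bar D$ is killed by the Grothendieck trace $\omega_{\bar D}\to\omega_D$. By \autoref{lemma:trace_map}, this kernel is precisely the $\tau$-anti-invariant submodule of $\omega_{\bar D}$ (interpreted as $\tau$-invariant in characteristic $2$), settling the $m=1$ case. To pass to general $m$, I would write a section $\phi = f\theta^{\otimes m}$ of $\omega_{\bar X}^{\otimes m}(m\bar D)$ and note that descent to $\omega_X^{\otimes m}$ is equivalent to $f = g u^m$ for some $g\in R$; computing Poincar\'e residues shows that this translates to $R^m\phi$ being a scalar multiple of $(\Res\sigma)^{\otimes m}$. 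Since $\tau$ acts on $\Res\sigma$ by $-1$ in characteristic $\neq 2$ (by the $m=1$ analysis) and trivially in characteristic $2$, the $m$-th tensor power picks up a factor of $(-1)^m$ or $+1$, giving the sign-parity dichotomy in the statement.

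The main obstacle I expect is the uniform handling of characteristic $2$, where $\tau$-invariance and anti-invariance degenerate and the trace kernel becomes the invariant submodule rather than the anti-invariant one (as recorded in \autoref{lemma:trace_map}(2)(b)); the statement of the proposition is deliberately phrased so that this degeneracy is compatible with the sign convention. A related subtlety is the correct identification of the Poincar\'e residue of a reflexive $m$-th power, which requires some bookkeeping but becomes routine once invertibility at the codimension-1 point is established.
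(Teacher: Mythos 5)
Your proposal is correct and follows essentially the same route as the paper: reduce to the generic points of $D$ by reflexivity/$S_2$, set $\bar\Delta=0$ there, invoke the residue–trace exact sequence $0\to\omega_X\to\pi_*\omega_{\bar X}(\bar D)\to\omega_D\to 0$ for $m=1$, and identify $\ker\Tr$ with the $\tau$-anti-invariants (invariants in characteristic $2$) via \autoref{lemma:trace_map}. The only difference is that you spell out the general-$m$ bookkeeping (the $(-1)^m$ sign on tensor powers of the residue of a local generator of $\omega_X$), which the paper leaves implicit.
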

\begin{proof}
We are dealing with reflexive sheaves, and the morphism $\pi\colon \bar{X}\to X$ is an isomorphism above $X\setminus D$. Thus the only question is above the generic points of $D$. In particular we may assume that $\Delta=0$, that $X$ is Cohen--Macaulay, semi-normal (see \autoref{remark:semi_normality_and_demi_normality}) and that $\bar{X}$ is regular. Then according to \cite[5.9]{Kollar_Singularities_of_the_minimal_model_program}, there is an exact sequence
		$$0\to \omega_X\to \pi_*\omega_{\bar{X}}(\bar{D})\overset{\partial}{\to} \omega_D\to 0$$
	with 
			$$\partial=\left(\omega_{\bar{X}}(\bar{D})\overset{\sR}{\longrightarrow} \omega_{\bar{D}}\overset{\Tr}{\longrightarrow}\omega_D\right)$$
	where $\sR$ is the Poincaré residue map, and $\Tr$ the Grothendieck trace map. Hence for $m=1$ the result follows from \autoref{lemma:trace_map}. Under our current assumptions, both $\omega_X$ and $\omega_{\bar{X}}(\bar{D})$ are invertible, so the cases $m>1$ follow by taking tensor powers.
\end{proof}

\begin{corollary}\label{corollary:descent_in_codim_2}
Let $(X,\Delta)$ be as in \autoref{proposition:descend_of_sections_along_normalization}, with normalization $(\bar{X},\bar{D}+\bar{\Delta},\tau)$. Then the following are equivalent:
	\begin{enumerate}
		\item $\Diff_{\bar{D}^n}\bar{\Delta}$ is $\tau$-invariant, and
		\item $(X,\Delta)$ is slc outside a closed subset of codimension at least $3$.
	\end{enumerate}	 
\end{corollary}
\begin{proof}
The only difference between our statement and the statement of \cite[5.18]{Kollar_Singularities_of_the_minimal_model_program} is that we do not assume that $2\in \sO_X$ is invertible. This assumption is only needed in order to use \cite[5.8]{Kollar_Singularities_of_the_minimal_model_program}, which is generalized to our setting by \autoref{proposition:descend_of_sections_along_normalization}.
\end{proof}

Next we study inseparable nodes.

\begin{example}[Whitney's umbrella in characteristic $2$]\label{example: inseparable nodes}
Let $k$ be any field of characteristic $2$. Consider the ring $A:=k[u,v,w]/(wv^2-u^2)$. We claim that $A$ has only nodal singularities in codimension one. Indeed, the singular locus of $A$ is defined by $(u=v=0)$, so the prime ideal $\fp:=(u,v)$ is the only height one prime such that $A_\fp$ is not regular. Now
		$$A_{\fp}\cong \frac{k[u,v,w]_{(u,v)}}{(wv^2-u^2)}$$
is nodal according to \autoref{definition: node} since $wv^2-u^2$ does not have a square root modulo $\fp^3A_\fp$. 

The normalization of $A$ is given by the regular two-dimensional ring $B:=k\left[t=\frac{u}{v},v\right]$. Hence the normalization of $A_\fp$ is given by $B_{(v,tv)}=B_{\bar{\fp}}$, with $\bar{\fp}=vB$. The conductor of the normalization is generated by the element $v$.

Denote $D=V(\fp)\subset \Spec{A}$ and $\bar{D}=V(\bar{\fp})\subset\Spec{B}$. Then $\pi\colon \Spec{B}\to \Spec{A}$ is an isomorphism above the complement of $D$. Moreover $\bar{D}\to D$ is given by the $k$-algebra morphism
		$$A/\fp=k[w]\longrightarrow k[t]=B/\bar{\fp}, \quad w\mapsto t^2.$$
Since $k$ has characteristic $2$, we obtain that $A_\fp$ is an inseparable node.

In view of the gluing theory we want to develop, we should answer the following question: how to reconstruct the nodal surface $A$ from its normalization $B$? Looking a the commutative diagram
		$$\begin{tikzcd}
		A\arrow[d]\arrow{rr}{v\mapsto v}[swap]{u\mapsto tv,\; w\mapsto t^2} && B\arrow[d] \\
		A_\fp \arrow[d]\arrow[rr]&& B_{\bar{\fp}}\arrow[d] \\
		k(A_\fp)=k(w)\arrow[rr, "w\mapsto t^2"] && k(t)=k(B_{\bar{\fp}})
		\end{tikzcd}$$
	we see that $A$ is the preimage of $k(t^2)\subset k(t)$ under the canonical map $B\to k(B_{\bar{\fp}})$. Hence the data of $A\subset B$ is equivalent to the data of $B$ together with the degree $2$ purely inseparable extension $k(w)\subset k(t)$.
\end{example}

\begin{example}
The calculations of \autoref{example: inseparable nodes} can be generalized to the case of $A':=k[u,v,w]_{(u,v)}/(f(w)v^2-u^2)$, where $f(w)\in k(w)$ is not a square. In this case the data of $A'$ is also recovered from its normalization $B'=k[t=u/v, v]_{(v)}$ and the purely inseparable degree $2$ extension
		$$k(w)\hookrightarrow k(t)\cong k\left(\sqrt{f(w)}\right), \quad w\mapsto t^2=f(w).$$
\end{example}

While separable nodes can be reconstructed from their normalization using the induced involution, the following proposition shows that inseparable nodes can be reconstructed from their normalization using a method generalizing \autoref{example: inseparable nodes}.
\begin{proposition}\label{proposition: reconstructing inseparable nodes from normalizations}
Let $(R,\fm)$ be a Noetherian local ring which is an inseparable node. Let $(T, \fm_T=\fm T)$ be its normalization. Then $R$ is the preimage in $T$ of the subfield $k(\fm)\hookrightarrow k(\fm_T)$.
\end{proposition}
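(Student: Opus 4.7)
The plan is to set $R' := \pi^{-1}(k(\fm))$, where $\pi \colon T \twoheadrightarrow T/\fm T = k(\fm_T)$ is the residue map, and then establish $R = R'$ by a two-sided inclusion using the explicit description of the integral node from \autoref{proposition: normalization of nodes}.

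First I would record three consequences of \autoref{proposition: normalization of nodes} in the integral case: the conductor of $R \subset T$ equals $\fm$; one has $T = R + R \cdot (\bar{y}/\bar{x})$; and $\fm T = \bar{x} T$. Combining these identities yields $\fm T = (\bar{x}, \bar{y}) R = \fm$ as subsets of $T$, so in particular $\fm T \subset R$. This is the only structural input the rest of the argument needs.

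For $R \subset R'$, the composition $R \hookrightarrow T \twoheadrightarrow T/\fm T$ has kernel $R \cap \fm T = \fm$ and hence factors through an inclusion $k(\fm) = R/\fm \hookrightarrow T/\fm T$, which is precisely the distinguished subfield in the statement; so $\pi(R) \subset k(\fm)$. For the reverse inclusion, given $t \in R'$ pick $r \in R$ with $\pi(r) = \pi(t)$; then $t - r \in \fm T \subset R$, whence $t \in R$.

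There is no serious obstacle --- the essential content is packaged in the formula $T = R + R \cdot (\bar{y}/\bar{x})$ already provided by \autoref{proposition: normalization of nodes}. Note that inseparability of $k(\fm) \subset k(\fm_T)$ is never used in this argument; the hypothesis merely isolates the case of interest for the gluing theory in characteristic $2$ and ensures (via the irreducibility of $\bar{a} + \bar{b} Z + \bar{c} Z^2$) that $T$ is local so that $k(\fm_T)$ is genuinely a field.
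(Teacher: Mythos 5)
Your proof is correct and rests on the same inputs as the paper's: the identities $T=R+R\cdot\bar{y}/\bar{x}$ and $\fm T=\bar{x}T$ from \autoref{proposition: normalization of nodes}, from which both arguments extract that the preimage of $k(\fm)$ is exactly $R+\fm T=R$. Your packaging of the key step as the single containment $\fm T\subseteq R$ (equivalently, that the conductor is $\fm$) is a slightly cleaner way of phrasing the paper's direct check that $\alpha+\beta\cdot\bar{y}/\bar{x}$ reduces into $k(\fm)$ iff $\beta\in\fm T$, and your closing remark about where inseparability is (not) used is accurate.
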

\begin{proof}
We use the notations and results of \autoref{proposition: normalization of nodes}. Then $T=R[\bar{y}/\bar{x}]=R+R\cdot \bar{y}/\bar{x}$, and $k(\fm)\hookrightarrow k(\fm_T)$ is given explicitly by the inclusion of constants
		$$k(\fm)\subset \frac{k(\fm)[Z]}{(\bar{a}+\bar{c}Z^2)}=k(\fm_T),$$
	where $Z$ is the image of $\bar{y}/\bar{x}$. An element $\alpha+\beta \cdot \bar{y}/\bar{x}\in T$, with $\alpha,\beta \in R$, reduces modulo $\fm_T$ to an element of $k(\fm)$ if and only if $\beta\in\fm_T= \fm T$. But $\fm T=\bar{x} T$, so we see that $R$ is equal to the preimage of $k(\fm)$ in $T$.
\end{proof}

The normalization morphism of a demi-normal variety has a particular structure. We record it here, although the proof uses results from the next sections.
\begin{proposition}\label{proposition:structure_of_normalization_of_demi_normal_scheme}
Let $X$ be a demi-normal variety over a field $k$ of positive characteristic, and $\pi\colon \bar{X}\to X$ its normalization. Then we have a factorization
		$$\pi=\left( \bar{X}\overset{\nu}{\longrightarrow}\tilde{X}\overset{F}{\longrightarrow} X\right)$$
where
	\begin{enumerate}
		\item $\nu$ is the geometric quotient of $\bar{X}$ by the finite set-theoretic equivalence relation induced by the separable nodes of $X$;
		\item $F$ is a purely inseparable morphism factorizing the $k$-Frobenius of $X$; it induced by the inseparable nodes of $X$ (see \autoref{section:inseparable_nodes}), and $F=\id$ if $\Char k\neq 2$.
	\end{enumerate}
\end{proposition}
\begin{proof}
Let $D=D_G+D_I\subset X$ be the conductor where $D_G$ is the divisor corresponding to separable nodes and $D_I$ is the divisor corresponding to inseparable nodes. Write accordingly $\bar{D}=\bar{D}_G+\bar{D}_I$ the conductor in $\bar{X}$. By \autoref{lemma:normalization_gives_involution} the morphism $\bar{D}_G^n\to \bar{D}_G$ is Galois with involution $\tau$. Since $\tau$ arises from a finite normalization, the equivalence relation $R(\tau)\rightrightarrows \bar{X}$ is finite, so the quotient $\nu\colon \bar{X}\to \bar{X}/R(\tau)=:\tilde{X}$ exists. The scheme $\tilde{X}$ is a demi-normal variety with normalization $\bar{X}$ (see \autoref{prop:finite_equivalence_relation_give_nodes}). 

The purely inseparable morphism $\bar{D}_I\to D_I$ gives a collection of degree $2$ purely inseparable field extensions $k(\eta_i)\subset k(\bar{\eta}_i)$, where $\eta_i$ runs through the generic points of $D_I$ and $\bar{\eta}_i$ through the corresponding generic points of $\bar{D}_I$. Since $\nu$ is an isomorphism at the generic points of $\bar{D}_I$, we may apply \autoref{construction: construction of inseparable node} to these field extensions to obtain the purely inseparable $F\colon \tilde{X}\to X'$. Notice that $\sO_{X'}\subset\sO_{\tilde{X}}$ is an equality at every codimension one point of $X'$ that is not a generic point of $\nu(\bar{D}_I)$.

We claim that $X'=X$. By the universal property of the quotient the morphism $\pi\colon \bar{X}\to X$ factors through $\tilde{X}$, say 
		$$\begin{tikzcd}
		\bar{X} \arrow[rd, "\nu"] \arrow[rr, "\pi"] && X \\
		& \tilde{X}\arrow[ur, "r"] &
		\end{tikzcd}$$
Both $\pi$ and $\nu$ are finite, so by \cite[01YQ]{Stacks_Project} and \cite[Theorem 1]{Artin_Tate_Note_on_finite_extensions} we see that $r$ is also finite. By the proof of \cite[5.3]{Kollar_Singularities_of_the_minimal_model_program} the morphism $r$ is an isomorphism above every codimension one point of $X$ that is not a generic point of $D_I=r(\nu(\bar{D}_I))$. Combining with \autoref{proposition: reconstructing inseparable nodes from normalizations} we obtain that for every codimension one point $\eta$ of $X$, we have $\sO_{X,\eta}=(r_*\sO_{X'})_\eta$ as subrings of $(r_*\sO_{\tilde{X}})_\eta$. Since $\sO_X$ and $r_*\sO_{X'}$ are $S_2$ \cite[5.4]{Kollar_Mori_Birational_geometry_of_algebraic_varieties} we deduce that $\sO_X=r_*\sO_{X'}$. In other words $(r\colon \tilde{X}\to X)=(F\colon \tilde{X}\to X')$.
\end{proof} 

\subsection{Characterization of nodes}
We will need some equivalent formulations of nodal singularities.

\begin{lemma}\label{lemma:definition_node_via_normalization}
Let $(R,\fm,k)$ be a one-dimensional reduced local ring that is a quotient of a regular ring. Let $T$ be its normalization, and $\fn\subset T$ be its Jacobson radical. Then $R$ is nodal if and only if $\dim_k T/\fn=2$ and $\fn\subset R$.
\end{lemma}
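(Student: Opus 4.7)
My plan is to prove both directions by passing to the completion, which is legitimate since $R$ is excellent (being a quotient of a regular ring) and both conditions are preserved by completion. The backward direction is the more involved one, so I begin there. The first task is to derive, from the hypothesis $\fn \subset R$, the equality $\fm = \fn$ (as subsets of both $R$ and $T$): the inclusion $\fm \subset \fn$ is automatic since each residue field $T_i/\fn_i$ is a quotient of $R/\fm = k$, and the reverse inclusion follows because $\fn \cap R$ is a proper ideal of $R$ and hence contained in $\fm$. A short induction, using that $\fm^n$ is closed under $T$-multiplication whenever $\fm = \fn \subset R$, then yields $\fm^n = \fn^n$ for all $n \geq 1$.

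Consequently, for every $n \geq 1$,
\[ \dim_k \fm^n/\fm^{n+1} \;=\; \sum_i \dim_k \bigl(\fn_i^n/\fn_i^{n+1}\bigr) \;=\; \sum_i [T_i/\fn_i : k] \;=\; 2, \]
so $R$ has embedding dimension $2$ and multiplicity $2$. Cohen's structure theorem together with the reducedness of $\hat R$ (inherited from $R$ by excellence) then gives a presentation $\hat R \cong k[[X, Y]]/(f)$ with $f$ square-free and $\ord f = 2$. To conclude that $R$ is a node it remains to verify that $f_2$ is not a square in $(X, Y)^2/(X, Y)^3$.

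I handle this by a case analysis on $T/\fn$. If $T/\fn \cong k \times k$, pick uniformizers $u_1, u_2$ in the two DVR factors of $T$; they lie in $\fm = \fn$ and satisfy $u_1 u_2 = 0$, so the map $k[[X, Y]] \to \hat R$ sending $X \mapsto u_1$, $Y \mapsto u_2$ factors through an isomorphism $k[[X, Y]]/(XY) \xrightarrow{\sim} \hat R$, whose leading form $XY$ is patently not a square. If instead $T/\fn \cong k'$ is a field of degree $2$ over $k$, pick a generator $\alpha$ of $k'/k$ with minimal polynomial $m(T) = T^2 + eT + d$ and a uniformizer $u$ of $T$, and set $X := \alpha u$, $Y := u$. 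The relation $\alpha^2 + e\alpha + d = 0$ becomes $X^2 + eXY + dY^2 = 0$ in $\hat R$, and I claim the resulting surjection $k[[X, Y]]/(X^2 + eXY + dY^2) \to \hat R$ is an isomorphism. The quadratic form $X^2 + eXY + dY^2$ is not a square of a linear form: in characteristic $\neq 2$, because $m$ irreducible gives $e^2 - 4d \neq 0$; in characteristic $2$, because squares of linear forms omit the $XY$ term, and then either $e \neq 0$ (separable $k'/k$) or $d$ is a non-square (inseparable $k'/k$) blocks the equality.

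The forward direction is parallel and lighter: given a node $R \cong S/(f)$, Weierstrass preparation and a change of variables reduce $\hat R$ to one of the two standard forms $k[[X, Y]]/(XY)$ or $k[[X, Y]]/(X^2 + eXY + dY^2)$, corresponding to whether $f_2$ splits into distinct linear factors over $k$ or remains an irreducible quadratic; in each standard form the normalization can be read off as $T = k[[X]] \times k[[Y]]$ or $T = k'[[u]]$ respectively, and both $\dim_k T/\fn = 2$ and $\fn \subset R$ follow by inspection. The main obstacle I anticipate is the careful handling of characteristic $2$ in case (b) of the backward direction: both the existence of a coefficient field $k' \subset \hat T$ lifting $T/\fn$ (needed for the explicit presentation of $\hat R$) and the verification that $f_2$ is not a square split further into separable versus inseparable subcases, and must be treated with some care before the pieces assemble.
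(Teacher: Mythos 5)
The paper's own proof of this lemma is a one-line reduction to \cite[1.41.1]{Kollar_Singularities_of_the_minimal_model_program}; its only content is the observation that a quotient of a regular ring is also a quotient of a regular \emph{local} ring. Your self-contained argument via completion, Cohen's structure theorem and the classification of complete one-dimensional local rings of multiplicity two is therefore a genuinely different route. The skeleton of your backward direction is right: $\fm=\fn$, $\fm^n=\fn^n$, the Hilbert function computation, and the principality of $\ker(k[[X,Y]]\twoheadrightarrow\hat R)$ all check out, and you get a bonus you did not use: $\fn\subset R$ together with $\dim_kT/\fn=2$ forces $T=R+R\theta$, so $T$ is finite over $R$ and $\hat R$ embeds in the product of complete DVRs $T\otimes_R\hat R$, giving reducedness of $\hat R$ with no excellence hypothesis.

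That said, there are genuine gaps. First, the passage to the completion is asserted, not justified: a quotient of a regular ring need not be excellent (regular rings need not be G-rings), the Cohen presentation $\hat R\cong k[[X,Y]]/(f)$ tacitly assumes $R$ contains a field, and, most importantly, you never descend ``$\hat R$ is a node'' to ``$R$ is a node'' --- this needs an argument (write $R=B/I$ with $B$ regular local, use embedding dimension $2$ to find $\dim B-2$ elements of $I$ that are part of a regular system of parameters, and check that the residual kernel in the resulting two-dimensional regular local ring is principal). Second, the obstacle you flag in the inseparable case is not merely delicate: the claim fails as stated. Take $k=\bF_2(d)$ and $\hat R=k[[X,Y]]/(X^2+dY^2+Y^3)$; its normalization is $T=k[[Y]][s]/(s^2+Y+d)$, which satisfies both conditions of the lemma but contains no square root of $d$, so no coefficient field of $\hat T$ containing $k$ exists and the relation $\alpha^2+e\alpha+d=0$ cannot be arranged; the surjection you write down is not an isomorphism onto $\hat R$. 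The repair is to weaken the claim: any lift $\alpha$ of the generator of $k'/k$ gives $X^2+eXY+dY^2\in\ker(k[[X,Y]]\to\hat R)+\fm^3$, and since the principal generator $g$ of that kernel has order $2$, its leading form is a unit multiple of $X^2+eXY+dY^2$; your non-squareness check survives the extra scalar. Third, in the forward direction your dichotomy ``$f_2$ splits into distinct linear factors or is irreducible'' omits $f_2=c\ell^2$ with $c$ a non-square in $k$. Such an $f_2$ is literally not a square in $\fn^2/\fn^3$, yet $k[[X,Y]]/(cX^2-Y^3)\cong k[[u^2,u^3]]$ is the cusp and violates the conclusion; so either \autoref{definition: node} must be read as ``not a unit times a square'' or this case must be excluded explicitly before your reduction to the two standard forms goes through.
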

\begin{proof}
The only difference with \cite[1.41.1]{Kollar_Singularities_of_the_minimal_model_program} is that we assume that $R$ is the quotient of a regular ring that is not necessarily local. But the two conditions are equivalent: for if $\pi\colon B\twoheadrightarrow A$ is a surjective map of rings and $A$ is local, then $\fp:=\pi^{-1}(\fm_A)$ is a prime ideal, $\pi$ factors through $B_\fp$ by the universal property of localization, and $B_\fp$ is regular if $B$ is regular.
\end{proof}

Another characterization of nodes can be given in terms of semi-normality, which we define below. See \cite[\S 10.2]{Kollar_Singularities_of_the_minimal_model_program} for more details.

\begin{definition}
Let $X$ be an excellent scheme with normalization $\nu\colon X^\nu\to X$. For a point $x\in X$, let $x^\nu:=\nu^{-1}(x)_\text{red}$. The sections $s\in \nu_*\sO_{X^\nu}$ that satisfy
		$$s|_{x^\nu}\in \im [\nu^*\colon k(x)\to H^0(x^\nu,\sO_{x^\nu})] \quad \forall x\in X$$
form a finite $\sO_X$-algebra $\sO'$. We say that $X$ is \textbf{semi-normal} if $\sO'=\sO_X$.
\end{definition}

\begin{remark}\label{remark:semi_normality_and_demi_normality}
The locus where $\sO_X\to \sO'$ is an equality, is open: thus semi-normality is an open condition on excellent schemes. We mention that demi-normality implies semi-normality: see \cite[\S 2.3.1]{Posva_Gluing_in_mixed_char}. 
\end{remark}

\begin{lemma}\label{lemma: commutative algebra definition of a node}
Let $(R,\fm,k)$ be a one-dimensional excellent reduced semi-normal local ring that is a quotient of a regular ring. Then $R$ is nodal if and only if it is Gorenstein.
\end{lemma}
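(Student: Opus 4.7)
The forward implication is straightforward: if $R\cong S/(f)$ with $(S,\fn_S)$ regular two-dimensional local and $f\in\fn_S^2$, then $R$ is a complete intersection in $S$, hence Gorenstein. For the converse, we reason by cases on the size of the conductor. If $R$ is already normal ($R=T$), then $R$ is a one-dimensional local normal Noetherian ring, forced to be a domain by localness and hence a DVR, which is both regular (\emph{a fortiori} nodal) and Gorenstein; there is nothing left to prove. We therefore assume $R\subsetneq T$ and aim to show that $R$ is a node via \autoref{lemma:definition_node_via_normalization}.

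Semi-normality pins down the conductor. Writing $T/\fn=\prod_{i=1}^r k_i$ with $k_i=T/\fm_i$, the image of $R\to T/\fn$ lies in the diagonal $\Delta(k)\hookrightarrow\prod k_i$, and semi-normality ($R=\sO'$) says $R$ is exactly the preimage of $\Delta(k)$. In particular $\fn\subset R$, and $R/\fn\cong\Delta(k)\cong k$ is a field, forcing $\fn=\fm$. Since $\fn$ is an ideal of $T$ contained in $R$, it sits inside the conductor $\fI=\{r\in R\mid rT\subset R\}$; and since $\fI\subsetneq R$ (as $R\neq T$), we have $\fI\subset\fm=\fn$. Therefore $\fI=\fn=\fm$, giving
\begin{equation*}
\length_R(R/\fI)=1,\qquad \length_R(T/R)=\dim_k(T/\fn)-1.
\end{equation*}

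The remaining step is a classical numerical Gorenstein criterion. Applying $\Hom_R(-,R)$ to the short exact sequence $0\to R\to T\to T/R\to 0$, and using that $\Hom_R(T/R,R)=0$ (a finite-length module has no non-zero map into a depth-$1$ module), we obtain
\begin{equation*}
0\to\Hom_R(T,R)\to R\to\Ext^1_R(T/R,R)\to\Ext^1_R(T,R)\to 0.
\end{equation*}
Inside the total quotient ring $K$, the identification $\Hom_R(T,K)=K$ via evaluation at $1$ yields $\Hom_R(T,R)=\{x\in K\mid xT\subset R\}=\fI$. Local duality applied to the depth-$1$ Cohen--Macaulay $R$-module $T$ (using $\omega_R\cong R$ when $R$ is Gorenstein) gives $\Ext^1_R(T,R)\cong H^0_\fm(T)^\vee=0$; and Matlis duality for Gorenstein $R$ preserves length on finite-length modules, so $\length_R\Ext^1_R(T/R,R)=\length_R(T/R)$. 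Comparing lengths in the exact sequence now forces $\length_R(T/R)=\length_R(R/\fI)=1$, so $\dim_k(T/\fn)=2$. Together with $\fn\subset R$, \autoref{lemma:definition_node_via_normalization} concludes that $R$ is a node.

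The main technical hurdle is the Gorenstein numerical identity $\length_R(T/R)=\length_R(R/\fI)$; once it is in hand the rest of the proof is bookkeeping. The hypothesis that $R$ be a quotient of a regular ring is used implicitly to guarantee existence of a canonical module, so that local duality and Matlis duality are legitimately available; the excellence hypothesis is used to ensure that the normalization $T$ is a finite $R$-module, which makes the above exact sequence meaningful.
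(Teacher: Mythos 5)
Your proof is correct, but it takes a genuinely different route from the paper's. For the forward direction you observe that a node is a hypersurface quotient of a two-dimensional regular local ring, hence Gorenstein — more direct than the paper, which instead identifies $\omega_R\otimes k$ with $\ker[\Tr\colon\omega_{\bar D}\to\omega_D]$ and computes that this kernel is one-dimensional. For the converse, the paper again reads off $\dim_k(T/\fn)=2$ from the one-dimensionality of $\ker(\Tr)$ together with $\bar D$ being Gorenstein, whereas you first use semi-normality to force $\fI=\fn=\fm$ and then invoke the classical Gorenstein symmetry $\length_R(T/R)=\length_R(R/\fI)$ (which you rederive correctly: $\Hom_R(T,R)\cong\fI$, $\Ext^1_R(T,R)=0$ for the maximal Cohen--Macaulay module $T$, and Matlis/local duality giving $\length\Ext^1_R(T/R,R)=\length(T/R)$ — all legitimate since $R$ is a quotient of a regular ring, hence has a dualizing module, and excellence makes $T$ module-finite). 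Each step checks out, including the reduction $\fn\cap R=\fm$ and the dichotomy on whether $R=T$. What the paper's longer route buys is the explicit trace-map computation of \autoref{lemma:trace_map}, which is reused later in \autoref{proposition:descend_of_sections_along_normalization} to characterize which pluricanonical sections descend along the normalization; your argument is cleaner for the lemma in isolation but does not produce that auxiliary output.
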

\begin{proof}
We elaborate \cite[5.9.3]{Kollar_Singularities_of_the_minimal_model_program}. Write $X=\Spec R$ and let $\bar{X}$ be the normalization. Then $\bar{X}$ is a regular scheme and $\bar{X}\to X$ is finite. Let $D\subset X$ and $\bar{D}\subset \bar{X}$ be the conductors subschemes. Then $\Supp D$ is the closed point $x\in X$, and $\Supp\bar{D}$ is its preimage in $\bar{X}$. By \cite[Corollary 1.5.1]{Leahy_Seminormal_rings}, thanks to the semi-normality assumption, the conductor ideal is radical in $\sO_{\bar{X}}$, hence also in $\sO_X$, and therefore $D$ and $\bar{D}$ are reduced.

Since $R$ is $S_1$ and the quotient of a regular ring, $X$ has a canonical sheaf $\omega_X$. The $0$-dimensional reduced schemes $D$ and $\bar{D}$ also have canonical sheaves, which are invertible. The calculations of \cite[5.9.3]{Kollar_Singularities_of_the_minimal_model_program} show that there is an isomorphism of $k(x)$-vector spaces
		$$\omega_X\otimes k(x)\cong \ker [\Tr\colon\omega_{\bar{D}}\to \omega_{D}]$$
where $\Tr$ the Grothendieck trace map.

Suppose that $R$ is nodal. Then by \autoref{lemma:trace_map} we know that $\ker(\Tr)$ is a one-dimensional $k(x)$-vector space. By Nakayama's lemma, it follows that $\omega_X$ is generated by a single element. Since a canonical sheaf has full support, we see that $\omega_X$ is free of rank one, so $R$ is Gorenstein.

Conversely, suppose that $\omega_X$ is invertible. Then $\ker(\Tr)$ is one-dimensional, and as $\bar{D}$ is Gorenstein we obtain that $\dim_{k(x)}\omega_{\bar{D}}=\dim_{k(x)}H^0(\bar{D},\sO_{\bar{D}})=2$. In particular we have the following dichotomy:
	\begin{enumerate}
		\item $\bar{D}$ is a single point. Thus $\bar{X}$ is local, and its maximal ideal is the conductor ideal, which is contained in $R$.
		\item $\bar{D}$ is supported on two distinct closed points, corresponding to two maximal ideals $\fm_1$ and $\fm_2$. Therefore $\fm_1\cap\fm_2=\sqrt{\fm_1\cap \fm_2}$ is the conductor ideal, contained in $R$.
	\end{enumerate}
In both cases we apply \autoref{lemma:definition_node_via_normalization} to see that $R$ is nodal.
\end{proof}

%\begin{remark}
%For varieties, \autoref{lemma: commutative algebra definition of a node} implies that demi-normality is equivalent to $S_2+G_1$. Compare with the equivalence between normality and $S_2+R_1$.
%\end{remark}

\subsection{Being nodal is an \'{e}tale-local property}\label{section:nodal_is_etale}
For varieties over a field of characteristic different from $2$, one can show that a local ring $\sO_{X,x}$ is nodal if and only if 
		$$\widehat{\sO_{X,x}}\otimes_{k}\bar{k}\cong \bar{k}[[X,Y]]/(XY),$$
where $k=k(x)$ and $\bar{k}$ is an algebraic closure, see for example \cite[1.41.2]{Kollar_Singularities_of_the_minimal_model_program}. In particular, by Artin approximation, the property of being nodal is \'{e}tale-local. In this subsection, we show that being an inseparable node is also an \'{e}tale-local property.

\begin{lemma}\label{lemma: semi-normal and Gorenstein for strict henselization}
Let $(R,\fm)$ be a Noetherian local ring, with strict henselization $(R^\emph{sh},\fm^\emph{sh})$. Then 
		\begin{enumerate}
			\item $R$ is Gorenstein if and only if $R^\emph{sh}$ is Gorenstein, and
		\item $R$ is semi-normal if and only if $R^\emph{sh}$ is semi-normal.
		\item If $R$ is the quotient of a regular ring, then so is $R^\text{sh}$.
		\item The square
					$$\begin{tikzcd}
					\Spec (R^\emph{sh})^\nu\arrow[r]\arrow[d] & \Spec R^\emph{sh}\arrow[d] \\
					\Spec R^\nu\arrow[r] & \Spec R
					\end{tikzcd}$$
				is Cartesian, where $(\cdot)^\nu$ denotes normalization.
		\end{enumerate}
\end{lemma}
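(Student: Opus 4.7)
The plan is to use that $R \to R^{\text{sh}}$ is a faithfully flat local homomorphism which is a filtered colimit of essentially étale local $R$-algebras $(A_\alpha,\fm_\alpha)$, with each $A_\alpha/\fm_\alpha$ a finite separable extension of $k(\fm)$. Each of the four assertions is then reduced to a standard preservation/descent fact for étale maps, and passed to the colimit.

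For (1), Gorenstein-ness is preserved and reflected by flat local homomorphisms whose closed fibre is Gorenstein (e.g.\ \cite[Tag 0AWV]{Stacks_Project}); here the closed fibre of $R\to R^{\text{sh}}$ is a separable algebraic field extension, so this applies directly. For (2), I will invoke the fact that seminormality is an étale-local property on the source (Swan; \cite[Tag 0EUS]{Stacks_Project}). Since $R^{\text{sh}} = \mathrm{colim}\, A_\alpha$ with each $A_\alpha$ essentially étale over $R$, seminormality is preserved by the colimit, and descended from $R^{\text{sh}}$ to $R$ by faithful flatness.

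For (3), choose a presentation $R=S/I$ with $(S,\fn)$ a regular local ring (which is allowed, as explained in the proof of \autoref{lemma:definition_node_via_normalization}). The geometric point of $\Spec R$ lifts canonically to one of $\Spec S$, yielding $S\to S^{\text{sh}}$; since strict henselization preserves regularity, $S^{\text{sh}}$ is regular. By base change, $R\otimes_S S^{\text{sh}} = S^{\text{sh}}/IS^{\text{sh}}$ is a strictly henselian local ring, and by the universal property of $R^{\text{sh}}$ we identify it with $R^{\text{sh}}$. This realises $R^{\text{sh}}$ as a quotient of the regular local ring $S^{\text{sh}}$. For (4), normalization commutes with étale base change over a Nagata ring (\cite[Tag 03GC]{Stacks_Project} together with compatibility with filtered colimits of Nagata algebras), and in all situations where this lemma will be applied $R$ is essentially of finite type over a field, hence excellent and Nagata. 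Thus for each $A_\alpha$ we have $A_\alpha^\nu = R^\nu\otimes_R A_\alpha$, and passing to the colimit yields $(R^{\text{sh}})^\nu = R^\nu\otimes_R R^{\text{sh}}$, which is exactly the claimed Cartesian square.

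The main obstacle I expect is (4): the commutation of normalization with $R\to R^{\text{sh}}$ is not automatic for arbitrary Noetherian local rings, and one genuinely needs a Nagata/excellence hypothesis to write the normalization as the colimit of the normalizations of the étale pieces. All applications of this lemma in the sequel concern local rings of demi-normal schemes essentially of finite type over $k$, where excellence of $R^{\text{sh}}$ (by Greco's theorem) ensures that no pathology appears; I will record this as a remark in the proof.
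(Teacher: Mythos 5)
Your proposal is correct and follows essentially the same route as the paper, which simply cites Matsumura for the Gorenstein ascent/descent along the flat local map with field closed fibre, Greco--Traverso for seminormality, the Stacks Project for lifting a presentation $R=Q/I$ to $Q^{\text{sh}}/IQ^{\text{sh}}$, and the Stacks Project again for the Cartesian normalization square. Your extra caution about a Nagata hypothesis in (4) is harmless but not needed: since integral closure in the total quotient ring commutes with \'{e}tale base change and with filtered colimits unconditionally, the Cartesian square holds for any reduced Noetherian local ring (Nagata would only be relevant for finiteness of the normalization, which is not asserted here).
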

\begin{proof}
Recall that $\fm R^\text{sh}=\fm^\text{sh}$ and that $R\to R^\text{sh}$ is faithfully flat. So by \cite[23.4]{Matsumura_Commutative_Ring_Theory} we obtain the first equivalence, and the second one follows from \cite[1.8, 5.2, 5.3]{Greco_Traverso_On_seminormal_schemes}. 

Assume that $R\cong Q/I$, where $Q$ is a regular local ring and $I\subset Q$ an ideal. Then $R^\text{sh}\cong Q^\text{sh}/IQ^\text{sh}$ by \cite[05WS]{Stacks_Project}. Moreover $Q^\text{sh}$ is regular because $Q\to Q^\text{sh}$ is étale. 

The property of the square is proved in \cite[0CBM]{Stacks_Project}.
\end{proof}

\begin{corollary}[Being nodal is \'{e}tale-local]
Let $(R,\fm)$ be an excellent reduced one-dimensional local ring that is a quotient of a regular ring, with strict henselization $(R^\text{sh},\fm^\text{sh})$. Then $(R,\fm)$ is a node if and only if $(R^\text{sh},\fm^\text{sh})$ is a node.
\end{corollary}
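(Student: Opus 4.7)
The plan is to reduce the statement to the semi-normal Gorenstein characterization of nodes provided by \autoref{lemma: commutative algebra definition of a node}, and then to transport those properties across strict henselization using \autoref{lemma: semi-normal and Gorenstein for strict henselization}.

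First I would record the observation that, under the standing hypotheses, a nodal ring is automatically semi-normal. Indeed, \autoref{remark:semi_normality_and_demi_normality} (applied at the closed point, which is the unique codimension one point of the one-dimensional local ring $R$) says that demi-normal is semi-normal in codimension one; and this in turn rests on the explicit normalization computations of \autoref{corollary: topology_of_normalization_of_nodes_II} and \autoref{corollary: topology of normalization of nodes}, where the conductor is seen to be radical. The same observation applies to $R^\text{sh}$, which is again an excellent reduced one-dimensional local ring (reducedness and one-dimensionality being preserved by strict henselization for excellent rings), a quotient of a regular ring by \autoref{lemma: semi-normal and Gorenstein for strict henselization}(3).

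Next, suppose $R$ is a node. By the observation, $R$ is semi-normal, so \autoref{lemma: commutative algebra definition of a node} yields that $R$ is Gorenstein. By \autoref{lemma: semi-normal and Gorenstein for strict henselization}, $R^\text{sh}$ is then semi-normal, Gorenstein, and a quotient of a regular ring. As $R^\text{sh}$ remains one-dimensional, reduced, and excellent, a second application of \autoref{lemma: commutative algebra definition of a node}, in the opposite direction, gives that $R^\text{sh}$ is nodal.

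For the converse, assume $R^\text{sh}$ is nodal. Then $R^\text{sh}$ is semi-normal by the observation, and Gorenstein by \autoref{lemma: commutative algebra definition of a node}. By \autoref{lemma: semi-normal and Gorenstein for strict henselization}, these two properties descend to $R$. Since $R$ is a one-dimensional reduced local ring that is a quotient of a regular ring by hypothesis, \autoref{lemma: commutative algebra definition of a node} applies once more and concludes that $R$ is nodal.

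The only potential obstacle is bookkeeping of the auxiliary hypotheses for \autoref{lemma: commutative algebra definition of a node} (dimension, reducedness, being a quotient of a regular ring, semi-normality) on the strict henselization; each is handled either by \autoref{lemma: semi-normal and Gorenstein for strict henselization} or by the general fact that an excellent reduced Noetherian local ring has a reduced strict henselization of the same dimension, so no genuinely new ingredient is needed.
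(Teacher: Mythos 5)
Your argument is correct and is essentially the paper's own proof: the paper simply says ``Combine \autoref{lemma: commutative algebra definition of a node} and \autoref{lemma: semi-normal and Gorenstein for strict henselization},'' and your write-up is the careful expansion of exactly that combination, including the (necessary, and in the paper only implicit) observation that a one-dimensional reduced excellent nodal local ring is automatically semi-normal so that the Gorenstein characterization applies in both directions. No changes needed.
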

\begin{proof}
Combine \autoref{lemma: commutative algebra definition of a node} and \autoref{lemma: semi-normal and Gorenstein for strict henselization}.
\end{proof}

\begin{corollary}[Being an inseparable node is \'{e}tale-local]
Let $(R,\fm)$ be an excellent reduced one-dimensional local ring that is a quotient of a regular ring, with strict henselization $(R^\text{sh},\fm^\text{sh})$. Then $(R,\fm)$ is an inseparable node if and only if $(R^\text{sh},\fm^\text{sh})$ is an inseparable node.
\end{corollary}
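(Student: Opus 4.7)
The strategy is to translate the inseparability of a node into a condition on the residue extension $\bar{D}/k$ at the conductor, and then observe that this condition is stable under passage from $R$ to $R^{\text{sh}}$.

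First, by the preceding corollary (being nodal is étale-local) one may assume both $R$ and $R^{\text{sh}}$ are nodal; let $T$ and $T^{\text{sh}}$ denote their normalizations. The Cartesian square of \autoref{lemma: semi-normal and Gorenstein for strict henselization} yields $T^{\text{sh}} = T \otimes_R R^{\text{sh}}$. Since the conductor ideal of a one-dimensional nodal local ring is its maximal ideal, the downstairs conductors are $D = \Spec k$ with $k := k(\fm)$ and $D^{\text{sh}} = \Spec k^{\text{sep}}$ with $k^{\text{sep}} := k(\fm^{\text{sh}})$ the separable closure of $k$. Flat base change then gives
$$\bar{D}^{\text{sh}} = \bar{D} \otimes_k k^{\text{sep}}.$$

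Next I would combine \autoref{corollary: topology_of_normalization_of_nodes_II} and \autoref{corollary: topology of normalization of nodes} to rephrase the inseparable node condition as follows: a nodal local ring is an inseparable node exactly when $\bar{D}/k$ is a purely inseparable quadratic field extension, while in every separable case $\bar{D}$ is either $k\times k$ or a separable quadratic field extension of $k$; the same dichotomy applies to $R^{\text{sh}}$ with $k^{\text{sep}}$ in place of $k$. The problem thus reduces to the field-theoretic claim: a reduced two-dimensional $k$-algebra $A$ is a purely inseparable quadratic field extension of $k$ if and only if $A \otimes_k k^{\text{sep}}$ is a purely inseparable quadratic field extension of $k^{\text{sep}}$.

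The forward direction is immediate: if $A = k(\alpha)$ with $\alpha^2 \in k \setminus k^2$ (necessarily $\Char k = 2$), the minimal polynomial $Z^2 - \alpha^2$ of $\alpha$ is inseparable, so $\alpha \notin k^{\text{sep}}$ and $\alpha^2$ remains a non-square in $k^{\text{sep}}$; hence $A \otimes_k k^{\text{sep}}$ is a purely inseparable field extension of $k^{\text{sep}}$ of degree $2$. Conversely, by faithful flatness $A$ is a reduced two-dimensional $k$-algebra, hence by the structure theorem for reduced Artin $k$-algebras is either $k \times k$, a separable quadratic field extension, or a purely inseparable quadratic field extension of $k$; the first two possibilities yield $A \otimes_k k^{\text{sep}} \cong k^{\text{sep}} \times k^{\text{sep}}$, which is not a field, leaving only the third. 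No serious obstacle is anticipated: the entire argument is a compatibility check between the Cartesian square of \autoref{lemma: semi-normal and Gorenstein for strict henselization} and the conductor classifications above, closed by the elementary remark that inseparable quadratic extensions are precisely those that remain fields under base change to the separable closure.
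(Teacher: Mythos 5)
Your proof is correct and follows essentially the same route as the paper's: reduce to the nodal case via the preceding corollary, use the Cartesian square of \autoref{lemma: semi-normal and Gorenstein for strict henselization} together with $\fm R^{\text{sh}}=\fm^{\text{sh}}$ to identify the conductor algebra upstairs as $\bar{D}\otimes_k k^{\text{sep}}$, and conclude by the field theory of quadratic extensions over a separable closure. The only (welcome) difference is organizational: the paper checks separately that localness of the normalization and pure inseparability of the residue extension are preserved, whereas you collapse both into the single condition ``$\bar{D}$ is a purely inseparable quadratic field extension of $k$,'' which the trichotomy of \autoref{corollary: topology_of_normalization_of_nodes_II} and \autoref{corollary: topology of normalization of nodes} justifies.
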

\begin{proof}
The local ring $(R,\fm)$ is an inseparable node if and only if it is semi-normal Gorenstein, its normalization $R^\nu$ is local and $R/\fm \to R^\nu/\fm R^\nu$ is a purely inseparable extension of degree $2$. We know by \autoref{lemma: semi-normal and Gorenstein for strict henselization} that $R$ is semi-normal Gorenstein if and only if $R^\text{sh}$ is, so we may assume $R$ is a node and consider the other conditions.

Now consider the Cartesian diagram given in \autoref{lemma: semi-normal and Gorenstein for strict henselization}. Prime ideals of $(R^\text{sh})^\nu$ corresponds to pairs $(\fp\in \Spec{R^\text{sh}},\fq\in\Spec{R^\nu})$ with the property that $\fp\cap R=\fq\cap R$. Now let $\fm'$ be a maximal ideal of $(R^\text{sh})^\nu$. Since $R$ is excellent, the normalization $R\to R^\nu$ is finite, and therefore the pullback $R^\text{sh}\to (R^\text{sh})^\nu$ is also finite. Hence $\fm'\cap R^\text{sh}$ is maximal and therefore equal to $\fm^\text{sh}$. Since $\fm^\text{sh}$ is the unique prime ideal of $R^\text{sh}$ lying above $\fm$, we see that maximal ideals of $(R^\text{sh})^\nu$ corresponds to pairs $(\fm^\text{sh},\fq)$ with $\fq\cap R=\fm$. Thefore $(R^\text{sh})^\nu$ is local if and only if $R^\nu$ is local.

Assume this is the case, and base-change along $R\to R/\fm$. As $\fm R^\text{sh}=\fm^\text{sh}$, we obtain the push-out square
		$$\begin{tikzcd}
		A:=(R^\text{sh})^\nu/\fm^\text{sh}(R^\text{sh})^\nu & k^\text{sh}:=R^\text{sh}/\fm^\text{sh}\arrow[l] \\
		k':=R^\nu/\fm R^\nu\arrow[u] & k:=R/\fm\arrow[u]\arrow[l]
		\end{tikzcd}$$
Since $R^\nu$ is local, by \autoref{proposition: normalization of nodes} we have that $k'$ is a field of the form
		$$k'=k[Z]/p(Z),$$
where $p(Z)\in k[Z]$ has degree $2$. Thus
		$$A\cong k^\text{sh}[Z]/p(Z).$$
Assume that $k'$ is purely inseparable over $k$. Then the roots of $p(Z)$ belongs to a purely inseparable extension of $k$. Since $k^\text{sh}$ a separable closure of $k$, we obtain that $p(Z)\in k^\text{sh}[Z]$ is irreducible and inseparable, and so $k^\text{sh}\subset A$ is a purely inseparable field extension of degree $2$. Conversely, if $A$ is a degree $2$ inseparable field extension of $k^\text{sh}$, then $p(Z)$ is inseparable over $k$ and therefore $k\subset k'$ is purely inseparable of degree $2$.
\end{proof}

\begin{remark}
All the results of this section hold with the Henselization instead of the strict Henselization, and the proofs are the same.
\end{remark}

%\begin{proposition}\label{proposition:etale_models_of_nodes}
%Let $(R,\fm,k)$ be an node that is essentially of finite type over a field. Then $R$ admits the following \'{e}tale model:
%	\begin{enumerate}
%		\item $(\bold{0}\in \Spec k[x,y]/(xy))$ if $R$ is a separable node; 
%		\item $(\bold{0}\in \Spec k[x,y]/(x^2+ay^2))$ where $a\in k$ is not a square, if $R$ is an inseparable node.
%	\end{enumerate}
%\end{proposition}
%\begin{proof}
%The equicharacteristic assumption and the definition of a node imply that
%		$$\hat{R}\cong k[[x,y]]/(q(x,y)+g)$$
%where $q(x,y)=x^2+bxy+ay^2$ with $a,b\in k$ and $g\in (x,y)^3$. Write $f=q+g$. If there are coordinates $(x_r,y_r)$ such that $f-q(x_r,y_r)\in (x,y)^{r+1}$, then we can find $c_x,c_y\in (x,y)^r$ such that the new coordinates $x_{r+1}:=x_r+c_x,y_{r+1}:=y_r+c_y$ satisfy $f-q(x_{r+1},y_{r+1})\in (x,y)^{r+2}$. Starting with $x_1=x$ and $y_1=y$, by completeness we find coordinates $x_\infty=\lim_r x_r$ and $y_r=\lim_r y_r$ such that $f=q(x_\infty,y_\infty)$. By \autoref{corollary: topology of normalization of nodes}, $R$ is formally equivalent to the claimed models. By Artin's approximation we deduce that $R$ is \'{e}tale-locally equivalent to the claimed models.
%\end{proof}

\subsection{Construction of separable nodes}\label{section:construction_of_separable_nodes}
In this subsection we show how separable nodes appear when quotienting by finite equivalence relations.

\begin{proposition}\label{prop:finite_equivalence_relation_give_nodes}
Let $X$ be a scheme with disjoint and normal irreducible components, $D\subset X$ a reduced divisor with normalization $n\colon D^n\to D$ and a generically fixed point free involution $\tau\colon D^n\to D^n$. Let $R\subset X\times X$ be the equivalence relation induced by $\tau$, and we assume that $R$ is finite. Assume that either:
	\begin{itemize}
		\item $X$ is essentially of finite type over $\bF_p$, or
		\item the geometric quotient $\pi\colon X\to X/R$ exists as a scheme.
	\end{itemize}
Then:
	\begin{enumerate}
		\item the geometric quotient $\pi\colon X\to X/R$ exists (in particular $\pi$ is finite);
		\item $X/R$ is demi-normal with separable nodes, $\pi(D)$ is its conductor subscheme,  $\pi$ is the normalization morphism and is \'{e}tale over the codimension one points of $X/R$;
		\item if $X$ is of finite type (resp. proper) over $k$, then so is $X/R$; 
		\item the involution on $D^n$ induced by $D\to \pi(D)$ is precisely $\tau$.
	\end{enumerate}
\end{proposition}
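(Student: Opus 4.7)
The plan is to apply Kollár's theorem on existence of geometric quotients by finite equivalence relations in positive characteristic, and then analyse $\pi$ locally at the codimension one points of $X/R$ lying in $\pi(D)$ to recognize the quotient as demi-normal with the prescribed normalization data.

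For (1) and (3) I proceed directly. Since $R$ is a finite set-theoretic equivalence relation on $X$, which is of finite type over the positive characteristic field $k$, the result \cite[Theorem 6, Corollary 48]{Kollar_Quotients_by_finite_equivalence_relations} produces the finite geometric quotient $\pi\colon X\to X/R$ as a $k$-scheme. Finiteness of $\pi$ then implies that $X/R$ is of finite type over $k$, and that it is proper whenever $X$ is.

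The hard part is (2), namely showing that $X/R$ is demi-normal with conductor $\pi(D)$, and that $\pi$ is étale along the generic points of $\pi(D)$. Away from $D$ the relation $R$ reduces to the diagonal by construction, so $\pi$ induces an isomorphism $X\setminus D \xrightarrow{\sim}(X/R)\setminus \pi(D)$ and $X/R$ is normal there. At a generic point $\eta$ of $\pi(D)$ I would localize and inspect $(\pi_*\sO_X)_\eta$: it is a finite product of the DVRs $\sO_{X,\eta_i}$ for $\eta_i\in \pi^{-1}(\eta)\subset D$, and $\sO_{X/R,\eta}$ is the subring consisting of those sections whose residues at the $\eta_i$ are compatible under the permutation induced by $\tau$. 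Because $\tau$ is generically fixed point free on $D^n$ and $n\colon D^n\to D$ is birational, this permutation has orbits of size two on the set of $\eta_i$ (including the case of a single $\eta_i$ whose two preimages in $D^n$ are exchanged by $\tau$). A direct computation then shows $\sO_{X/R,\eta}$ is a one-dimensional excellent reduced semi-normal Gorenstein local ring whose normalization $(\pi_*\sO_X)_\eta$ satisfies the residue field condition of \autoref{lemma:definition_node_via_normalization}; hence $\sO_{X/R,\eta}$ is a separable node by \autoref{lemma: commutative algebra definition of a node} and \autoref{corollary: topology of normalization of nodes}, and in particular $\pi$ is étale at $\eta$. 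The $S_2$ property of $X/R$ at codimension $\geq 2$ points follows from the $S_2$ property of the finite pushforward $\pi_*\sO_X$ together with the exact sequence $0\to \sO_{X/R}\to \pi_*\sO_X\to Q\to 0$, where $Q$ is supported on the reduced codimension one set $\pi(D)$: a standard depth chase using the local cohomology long exact sequence gives $\depth \sO_{X/R,y}\geq \min(2,\dim\sO_{X/R,y})$ at every point $y\in X/R$.

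It remains to identify $\pi$ as the normalization, to match the conductors, and to recover the involution. Since $X$ is normal, $\pi$ is finite, and $\pi$ induces an isomorphism outside $D$, the universal property of normalization identifies $\pi$ with the normalization of $X/R$. The conductor ideal is radical at each generic point of $\pi(D)$ by the local computation above, so the conductor subschemes in $X/R$ and $X$ are reduced of pure codimension one and coincide set-theoretically with $\pi(D)$ and $D$ respectively. Finally, \autoref{lemma:normalization_gives_involution} produces an involution $\tau'$ on $D^n$ induced by $D\to \pi(D)$; at each generic point of $\pi(D)$ the local analysis identifies $\tau'$ with $\tau$, and since an involution on $D^n$ is determined by its generic behaviour we conclude $\tau'=\tau$.
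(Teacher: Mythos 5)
Your overall route---existence of the quotient via Koll\'{a}r's theorem, a local computation at the codimension one points of $\pi(D)$ to recognize separable nodes, and then identification of the normalization, the conductor and the involution---is the same as the paper's, and those parts are essentially fine (modulo two small points: ``finite over a finite-type $k$-scheme implies finite type'' is an Eakin--Nagata/Artin--Tate statement, which the paper delegates to Theorem 41.2 of Koll\'{a}r's quotient paper rather than deducing from finiteness of $\pi$ alone; and one should first check that $X/R$ is reduced, via the universal property, before invoking the universal property of normalization).

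The genuine gap is in your $S_2$ argument. From the exact sequence $0\to \sO_{X/R}\to \pi_*\sO_X\to Q\to 0$ with $\pi_*\sO_X$ satisfying $S_2$, the local cohomology sequence at a point $y$ of codimension $\geq 2$ exhibits $H^1_{\fm_y}(\sO_{X/R,y})$ as a quotient of $H^0_{\fm_y}(Q_y)$; to conclude you need $H^0_{\fm_y}(Q_y)=0$, i.e.\ that $Q$ has no associated point at $y$. Knowing only that $\Supp Q$ is the reduced codimension one set $\pi(D)$ does not give this. For instance, inside $B=k[x,y]\times k[x,y]$ the subring of pairs $(f,g)$ with $f|_{x=0}=g|_{x=0}$ and $\partial_x f(0)=\partial_x g(0)$ has cokernel supported on the reduced line $x=0$, yet it fails $S_2$ at the origin; so the ``standard depth chase'' is not available from the stated hypotheses. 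What saves the actual quotient is the precise descent criterion for sections (Koll\'{a}r's 9.10: a section of $\pi_*\sO_X$ descends if and only if its restriction to $D^n$ is $\tau$-invariant), which exhibits $\sO_{X/R}$ as the equalizer of two maps into the pushforward of the structure sheaf of the reduced, pure codimension one part of the relation; then $Q$ embeds into an $S_1$ sheaf and the depth chase goes through. The paper proves $S_2$ by the equivalent but more hands-on route of extending a section across a codimension two subset: pull it back to $X$, extend by normality of $X$, observe that the two compositions $R\rightrightarrows D^n\to \bA^1$ still agree because they agree on a dense open subset of the reduced scheme $D^n$, and glue using the push-out square relating $D\hookrightarrow X$ and $D/R|_D\hookrightarrow X/R$. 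Either way, the descent criterion for sections is the ingredient missing from your write-up, and without it the $S_2$ step does not close.
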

\begin{proof}
If $X$ is defined over $\bF_p$, it follows from \cite[Theorem 6, Corollary 48]{Kollar_Quotients_by_finite_equivalence_relations} that the geometric quotient $\pi\colon X\to X/R=:Y$ exists as a scheme and the quotient morphism is finite, see \autoref{section:Quotients_by_finite_equivalence_relations}. So from now on we assume that $\pi\colon X\to X/R$ exists, and make no assumption about $X$ being of equicharacteristic.

Since $R$ restricts to the identity on $X\setminus D$, the restriction $\pi|_{X\setminus D}$ is an isomorphism onto its image. Since $\pi$ is finite, it must therefore be the normalization morphism. 

If $X$ is of finite type over $k$, then so is $Y$ by \cite[Theorem 41.2]{Kollar_Quotients_by_finite_equivalence_relations}. If $X$ is proper over $k$, then $Y$ is also proper over $k$ by \cite[09MQ,03GN]{Stacks_Project}.

For the rest of the proof, let us remark that taking the quotient commutes with flat base-change \cite[9.11]{Kollar_Singularities_of_the_minimal_model_program}, thus we are free to localize on $Y$ to perform our arguments.

Next we to show that $Y$ is demi-normal. First of all, since $Y$ is of finite type over $k$, it is excellent and each local ring of $Y$ is a quotient of a regular ring. We claim that $Y$ is reduced. Indeed, since $X$ is reduced then the two compositions
		$$R\rightrightarrows X\overset{\pi}{\longrightarrow} Y$$
factor uniquely through $Y_\text{red}$, and by the universal property of categorical quotients \cite[Definition 4]{Kollar_Quotients_by_finite_equivalence_relations} it follows that $Y=Y_\text{red}$. (A similar argument shows that $\pi$ must be dominant, so $Y$ is irreducible if and only if $X$ is).

We show that $Y$ has at worst nodal singularities at codimension one points.
%By \cite[Lemma 22]{Kollar_Quotients_by_finite_equivalence_relations}, $Y$ is semi-normal. Thus by \autoref{lemma: commutative algebra definition of a node}, we need to show that $Y$ is Gorenstein in codimension one. 
For this we may localize at such points of $Y$. Then the set-up is the following: $Y=\Spec A$ is local with maximal ideal $\fn$, $X=\Spec B$ is semi-local with maximal ideals $\fm_i$, the map $D\to \pi(D)$ corresponds to $A/\fn\hookrightarrow \bigoplus_i B/\fm_i$, and since $D$ is normal the involution $\tau$ becomes a non-trivial involution of $\bigoplus_i B/\fm_i$. Notice that the natural map $\pi\colon B\to \bigoplus_i B/\fm_i$ is surjective by the Chinese remainder theorem. Now consider the commutative square
		$$\begin{tikzcd}
		A\arrow[d, twoheadrightarrow]\arrow[r, hook] & B\arrow[d, twoheadrightarrow, "\pi"] \\
		K:=A/\fn\arrow[r, hook] & L:=\bigoplus_i B/\fm_i
		\end{tikzcd}$$
According to \cite[9.10]{Kollar_Singularities_of_the_minimal_model_program}, a section $s\in B$ belongs to $A$ if and only if $\pi(s)=\tau(\pi(s))$. So we deduce that the fixed subring $L^{\langle\tau\rangle}$ is equal to $K$, and that the Jacobson radical of $B$ belongs to $A$. We claim that $\dim_KL=2$. Since the fibers of $X\to Y$ are the $R$-equivalence classes, we see that $B$ has at most two maximal ideals, and we consider these two cases separately:
	\begin{enumerate}
		\item $B$ is local. Then $K\subset L$ is a field extension. Since $L^{\langle\tau\rangle}=K$, we deduce that the extension is Galois of degree $2$ \cite[VI,\S 1, Theorem 1.8]{Lang_Algebra}.
		\item $B$ has two maximal ideals. Then $L=L_1\oplus L_2$, in which $K$ embedds diagonally, and $\tau$ exchanges the two components via an isomorphism $L_1\cong L_2$. Identifying $L_2$ with $L_1$ via this isomorphism, we see that $\tau$ acts on $L_1\oplus L_1$ via $(x,y)\mapsto (y,x)$. In particular the diagonal is fixed, so it must be equal to (the diagonal embedding) of $K$. Since $L_1\oplus L_1$ is of dimension $2$ over its diagonal, we have $\dim_KL=2$.
	\end{enumerate}
By \autoref{lemma:definition_node_via_normalization} it follows that $Y$ is nodal. This also proves that the extension of function fields given by $\pi\colon D\to \pi(D)$ is precisely $k(D)^{\langle\tau\rangle}\subset k(D)$, so the nodes are separable and the last statement holds.

To show that $Y$ is demi-normal, by virtue of \autoref{lemma: commutative algebra definition of a node} it remains to show that $Y$ is $S_2$. Since $Y$ is reduced it is $S_1$, and by \cite[1.8]{Hartshorne_Generalized_divisors_and_biliaison} it is sufficient to show the following property: if $U\subset Y$ is open and $Z\subset U$ is closed of codimension $\geq 2$, then the restriction map $\sO_Y(U)\to \sO_Y(U- Z)$ is bijective. Restricting over $U$, we may assume that $Y=U$. So let $s\in \Gamma(Y-Z,\sO_Y)$. Its pullback to $X$ extends to a global section $\pi^*s\in\Gamma(X,\sO_X)$ because $X$ itself is $S_2$ and $\codim_X\pi^{-1}Z\geq 2$. By construction the two compositions
		\begin{equation}\label{eqn:lifting_sections_on_quotient}
		R(\tau)\rightrightarrows D^n\overset{n^*(\pi^*s)}{\xrightarrow{\hspace{1.5cm}}} \bA^1
		\end{equation}
agree on $n^{-1}(X-\pi^{-1}Z)$, which is dense in $D^n$ as $\pi^{-1}Z$ has codimension $\geq 2$ in $X$. Therefore the two compositions in \autoref{eqn:lifting_sections_on_quotient} must agree on $D^n$ \cite[II, Exercise 4.2]{Hartshorne_Algebraic_Geometry}, which implies that $n^*( \pi^*s)$ descends to a section $t\in \Gamma(D/R|_D,\sO)$. By \cite[Proposition 25]{Kollar_Quotients_by_finite_equivalence_relations} the diagram
		$$\begin{tikzcd}
		D\arrow[r, hook]\arrow[d] & X \arrow[d, "\pi"] \\
		D/R|_D\arrow[r, hook] & Y
		\end{tikzcd}$$
is a push-out, thus $\pi^*s$ and $t$ glue together into a section $s'\in \Gamma(Y,\sO_Y)$ which extends $s$. 

Using \autoref{lemma: basic properties of normalization of demi-normal scheme}, it is easy to see that $\pi(D)$ and $D$ are the conductor subschemes of the normalization.
\end{proof}

\subsection{Construction of inseparable nodes}\label{section:inseparable_nodes}
The goal of this section is to characterize completely demi-normal varieties with only inseparable nodes in terms of their normalizations. This is achieved in \autoref{theorem: bijection for inseparable nodes} below. The main ingredient is a construction of inseparable nodes which we present now.

\begin{assumption}
Let $X$ be a Noetherian equidimensional reduced scheme defined over a field $k$ of characteristic $2$.
\end{assumption}
 
\begin{construction}\label{construction: construction of inseparable node} Let $D=\sum_{i=1}^n D_i$ be a reduced Weil divisor on $X$. 
Let $k_i:=k(D_i)$ be the function field $D_i$. Assume that for each $i$, we have an intermediate extension $k\subset k'_i\subsetneq k_i$ such that $k_i'\subset k_i$ is purely inseparable of degree $2$. We construct a subsheaf $\sA=\sA(k_1',\dots,k_n')$ of $\sO_X$ as follows: if $U\subset X$ is open, we let
		$$\sA(U):=\{s\in \sO_X(U)\mid s(\eta_i)\in k_i'\ \forall \eta_i\in U\}$$
where $\eta_i$ is the generic point of $D_i$, and $s(\eta_i)$ denotes the image of $s$ through the canonical map $\sO_X(U)\to k_i$. This defines a presheaf, which is easily seen to be a subsheaf of $\sO_X$.
\end{construction}

\begin{remark}
\autoref{construction: construction of inseparable node} is an example of the \emph{weak gluing} method developed in \cite{Yanagihara_Weakly_normal_ring_extensions}. Some of the results that follow are proved in \emph{op. cit.}. We give a geometric treatment that is better adapted to our purpose.
\end{remark}

\begin{proposition}\label{proposition: Properties of subsheaf I}
Let $X,D, k_i'\subset k_i$ and $\sA$ be as in \autoref{construction: construction of inseparable node}. Then:
	\begin{enumerate}
		\item $\sO_X^2\subset \sA\subset \sO_X$;
		\item $\sA$ is a sheaf of $k$-algebras and $\Spec{\sA}:=(|X|,\sA)$ is a reduced equidimensional scheme; 
		\item $\Spec\sA$ is Noetherian if $X$ is excellent or $F$-finite;
		\item if $X$ is $S_2$ and $\Spec\sA$ is Noetherian, then $\Spec\sA$ is $S_2$;
		\item if $X$ is excellent (resp. $F$-finite, resp. locally of finite type over $k$, resp. proper over $k$), then so is $\Spec{\sA}$;
		\item the morphism $\pi\colon X\to \Spec{\sA}$ is an affine integral birational universal homeomorphism, and it is finite if $X$ is excellent;
		\item $X$ and $\Spec{\sA}$ have the same normalization.
	\end{enumerate}
\end{proposition}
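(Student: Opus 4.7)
The strategy is to work locally on affine opens $U = \Spec B \subset X$, with $A := \sA(U)$, and to exploit the sandwich $B^2 \subset A \subset B$. Statement \emph{(1)} is immediate: in characteristic $2$, since $[k_i : k_i'] = 2$, we have $k_i^2 \subset k_i'$, so for every $s \in \sO_X(U)$ and every $\eta_i \in U$, $s^2(\eta_i) = s(\eta_i)^2 \in k_i'$, giving $\sO_X^2 \subset \sA$.

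The sandwich $B^2 \subset A \subset B$ makes $A \hookrightarrow B$ an integral ring extension: every $b \in B$ is a root of $T^2 - b^2 \in A[T]$. Since $T^2 - b^2 = (T-b)^2$ in characteristic $2$, the residue field extensions induced by $\pi \colon \Spec B \to \Spec A$ are purely inseparable, so $\pi$ is a universal homeomorphism. Consequently $|\Spec A| = |U|$ and basic opens of $\Spec B$ and $\Spec A$ correspond bijectively, from which one directly verifies that $\sA|_U$ is the structure sheaf of $\Spec A$. This yields the scheme structure in \emph{(2)}, with reducedness and equidimensionality inherited from $X$, as well as the affine, integral, and universal homeomorphism properties of \emph{(6)}. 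Birationality of $\pi$ in \emph{(6)} and statement \emph{(7)} both follow from the observation that no generic point of $X$ is a generic point of $D$: hence $\sA_\eta = \sO_{X,\eta}$ at every generic point $\eta$ of $X$, so $X$ and $\Spec \sA$ have the same total ring of fractions, and the transitivity of integral closure inside that ring gives them the same normalization.

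The crucial technical point is \emph{(3)}: by Eakin--Nagata it suffices to show $B$ is finite over $A$. When $X$ is $F$-finite, $B$ is already finite over $B^2 \subset A$. When $X$ is excellent, I plan to use the injection $B/A \hookrightarrow \bigoplus_i k_i/k_i'$ (induced by the defining condition of $A$), in which each summand is a one-dimensional vector space over $k_i'$. Combined with the Japanese property of the integral domains $B/\fp_i$ (valid since they are excellent) and the degree bound $[k_i : \Frac(A/\fq_i)] \leq 2$, this should force $B/A$ to be a finite $A$-module. Granted \emph{(3)}, part \emph{(5)} follows by standard transfer along the finite extension $A \hookrightarrow B$: Artin--Tate for finite type over $k$ (already invoked in the excerpt), stability of excellence under finite extensions, and the analogous immediate check for $F$-finiteness; properness over $k$ follows since $\pi$ is a finite surjection. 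The finiteness statement of \emph{(6)} is then also established.

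Finally, \emph{(4)} is extracted from the short exact sequence $0 \to \sA \to \sO_X \to \sO_X/\sA \to 0$: since $\sO_X/\sA$ is supported on $D$, at any point $x \in \Spec \sA$ with $\dim \sA_x \geq 2$ the depth of $\sO_X/\sA$ at $x$ is at least $1$, and combined with the $S_2$ condition on $\sO_X$ the depth lemma yields $\depth \sA_x \geq 2$. The main obstacle I anticipate is the excellent case of \emph{(3)}: establishing finiteness of $B/A$ over $A$ without circularly invoking Noetherianity of $A$ requires a delicate analysis, combining the Japanese property of $B$ with the one-dimensional structure of each $k_i/k_i'$ over $k_i'$ and the fact that the extension $\Frac(A/\fq_i) \subset k_i$ is of degree at most $2$.
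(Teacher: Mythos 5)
The essential gap is in your item \emph{(3)} for the excellent case, which you yourself flag as the main obstacle. The injection $B/A \hookrightarrow \bigoplus_i k_i/k_i'$ cannot force finiteness of $B/A$ over $A$: each summand $k_i/k_i'$ is one-dimensional over the field $k_i'$, but $k_i'$ is an enormous $A$-algebra (a residue field at a generic point of a divisor), so $\bigoplus_i k_i/k_i'$ is very far from being a finite $A$-module, and a submodule of a non-finite module need not be finite. The Japanese property of the $B/\fp_i$ and the degree bound $[k_i:\Frac(A/\fq_i)]\le 2$ do not repair this. Moreover your plan --- prove $B$ finite over $A$ first, then invoke Eakin--Nagata --- is in the wrong order: you cannot exploit any finiteness property of $A$ before knowing $A$ is Noetherian. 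The paper's argument goes the other way around and is cleaner: since $R:=\sO_X(U)$ is reduced, $R\cong R^2$ as abstract rings, so $R^2$ is excellent, hence Nagata; $R$ is integral over $R^2$ (every $r$ is a root of $T^2-r^2$), so $A$ sits inside the integral closure of $R^2$ in $\Frac(R)$, which is a finite $R^2$-module by the Nagata property. Hence $A$ is a finite module over the Noetherian ring $R^2$, so $A$ is Noetherian, and being a finite $R^2$-algebra it is excellent. Only then does one apply the same Nagata argument to the extension $A\subset R$ to conclude that $R$ is finite over $A$; Eakin--Nagata is never needed. (Your $F$-finite case is fine.)

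Two smaller points. In \emph{(4)}, ``supported on $D$'' does not by itself give $\depth(\sO_X/\sA)_x\ge 1$ at points of codimension $\ge 2$ (a skyscraper supported on $D$ has depth $0$); you need that $\sO_X/\sA$ has no associated points of codimension $\ge 2$, which does follow from the injection of $\sO_X/\sA$ into the pushforward of $\bigoplus_i k_i/k_i'$ from the generic points of $D$, but must be said. (The paper instead verifies directly that sections of $\sA$ extend over closed subsets of codimension $\ge 2$, using the $S_2$ property of $\sO_X$ and the fact that the defining condition of $\sA$ only involves the points $\eta_i$.) In \emph{(5)}, excellence is not transferred ``along the finite extension $A\hookrightarrow B$'' in the descent direction; the correct statement is that $A$ is a finite algebra over the excellent ring $R^2$, and excellence passes to finite-type algebras.
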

\begin{proof}
It is clear that $\sA$ is a sheaf of $k$-sub-algebras of $\sO_X$. Since $k_i/k_i'$ is inseparable of degree $2$, we have $(k_i)^2\subseteq k_i'$ for each $i$ and it follows that $\sO_X^2\subseteq \sA$. To show that $\Spec\sA$ is a scheme, we may assume that $X=\Spec{R}$ is affine Noetherian. Then $|\Spec \sA|=|X|$, and it is sufficient to show that $\sA(X_f)=\sA(X)_{f^2}$ for $f\in R$. The containment $\supseteq$ is clear. For the converse one, let $s/f^n\in \sA(X_f)\subset R_f$ with $s\in R$. Then $\bar{s}/\bar{f}^n\in k_i'$, so $\bar{f^n}\bar{s}\in k_i'$ as $\bar{f}^{2n}\in k_i'$. Hence $f^n s\in \sA(X)$, so 
				$$\frac{s}{f^n}=\frac{f^ns}{f^{2n}}\in \sA(X)_{f^2}.$$
	Hence $\Spec\sA$ is an integral scheme and the structural morphism $\pi\colon \Spec X\to \Spec\sA$ factors the Frobenius morphism of $X$. In particular $\pi$ is an affine integral universal homeomorphism.

It is clear that $\pi$ is an isomorphism away from the support of $D$. Since $\pi$ is integral, the normalization of $X$ and of $\Spec\sA$ are the same.

Assume that $X$ is locally of finite type over $k$. Then $\Spec\sA$ is locally of finite type over $k^2$ by \cite[Theorem 41.2]{Kollar_Quotients_by_finite_equivalence_relations}. Similarly, if $X$ is proper over $k$ then by \cite[09MQ,03GN]{Stacks_Project} it follows that $\Spec\sA$ is proper over $k$.

Assume that $X$ is $S_2$. Since $\Spec\sA$ is $S_1$ and $X\to \Spec{\sA}$ is an homeomorphism, it follows easily from the criterion given in \cite[1.8]{Hartshorne_Generalized_divisors_and_biliaison} that $\Spec\sA$ is $S_2$, provided it is Noetherian.

Now assume that $R$ is excellent. Since $R$ is reduced, we have an abstract ring isomorphism $R\cong R^2$, and so $R^2$ is also excellent. An excellent ring is Nagata \cite[07QV]{Stacks_Project}, so $A:=\sA(X)\subset \text{Frac}(R)$ is a finite $R^2$-module. Therefore $A$ is Noetherian and excellent \cite[07QU]{Stacks_Project}. Applying the same argument for $A\subset R$, we obtain that $R$ is finite over $A$. In particular $\pi$ is finite.

If $R$ is $F$-finite, then it is a finite $R^2$-module. Since $R^2$ is Noetherian, we get that $R$ is a Noetherian $R$-module. As $A\subset R$ is an $R^2$-submodule, we obtain that $A$ is a finite $R^2$-module, and hence a Noetherian ring. Moreover $R^4\to R^2$ is a finite extension and factors through $A^2$, so $A^2\to R^2$ is a finite extension. Hence $A^2\to R^2\to A$ is a composition of finite extensions, so it is finite, which means that $A$ is $F$-finite.
\end{proof}

\begin{proposition}\label{prop: Properties of subsheaf II}
Notations as in \autoref{proposition: Properties of subsheaf I}. Assume that $X$ is excellent, normal at the generic points of $D$ and demi-normal elsewhere. Then $\Spec{\sA}$ is demi-normal with inseparable nodes at the generic points of $D$.
\end{proposition}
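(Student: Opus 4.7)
The plan is to reduce the problem to a computation at the generic points of $D$. By \autoref{proposition: Properties of subsheaf I}, $\Spec\sA$ is already Noetherian, reduced, excellent and $S_2$, and it shares its normalization with $X$; the morphism $\pi\colon X\to \Spec\sA$ is a finite universal homeomorphism that restricts to an isomorphism away from $D$. Hence at every codimension-one point of $\Spec\sA$ outside $\pi(D)$ the local ring agrees with that of $X$ and is nodal by the demi-normality assumption on $X\setminus D$. Only the images $\pi(\eta_i)$ of the generic points $\eta_i$ of $D_i$ require work, and demi-normality of $\Spec\sA$ will follow once these points are shown to be inseparable nodes.

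I would then localize at such a point. Set $R:=\sO_{X,\eta_i}$, which is a DVR with residue field $k_i$ by the normality assumption, and $A:=\sA_{\pi(\eta_i)}\subset R$; by \autoref{construction: construction of inseparable node} this is the preimage of $k_i'\subset k_i$ under the residue map $R\twoheadrightarrow k_i$. The key observation is that $(A,\fm_R,k_i')$ is local with maximal ideal $\fm_R$: the inclusion $\fm_R\subset A$ is immediate since every element of $\fm_R$ reduces to $0\in k_i'$, so $A/\fm_R$ embeds into $k_i$ with image $k_i'$, which is a field; and $R^2\subset A$ (see \autoref{proposition: Properties of subsheaf I}) makes $R$ integral over $A$, so going-up together with the locality of $R$ forces every maximal ideal of $A$ to lie under $\fm_R$. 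By \autoref{proposition: Properties of subsheaf I} the normalization of $A$ is $R$ itself, and the conductor of $A\subset R$ is $\fm_R$.

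With this description in hand, the two conditions of \autoref{lemma:definition_node_via_normalization} are satisfied: $\fm_R\subset A$ and $\dim_{k_i'}(R/\fm_R)=[k_i:k_i']=2$, so $A$ is a node. The residue-field extension $A/\fm_A=k_i'\hookrightarrow k_i=R/\fm_R$ is by hypothesis purely inseparable of degree $2$, which places us in case \autoref{condition: inseparable node} of \autoref{corollary: topology of normalization of nodes}, so $A$ is an inseparable node. Combined with the $S_2$ property from \autoref{proposition: Properties of subsheaf I}, this proves that $\Spec\sA$ is demi-normal with inseparable nodes at the generic points of $D$. The main technical subtlety to address is the hypothesis in \autoref{lemma:definition_node_via_normalization} that $A$ be a quotient of a regular ring; this is inherited from $X$ whenever the local rings of $X$ are quotients of regular rings (which is automatic e.g.\ when $X$ is locally of finite type over $k$), using that $A$ is module-finite over the subring $R^2$, which is abstractly isomorphic to $R$ via Frobenius.
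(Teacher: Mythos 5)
Your proposal is correct and follows essentially the same route as the paper: localize at a generic point $\eta$ of $D$, identify $\sA_\eta$ as the preimage of $k'_\eta$ in the DVR $\sO_{X,\eta}$, verify the nodal criterion of \autoref{lemma:definition_node_via_normalization} via $\fm_{X,\eta}\subset\sA_\eta$ and $\dim_{k'_\eta}k_\eta=2$, and supply the ``quotient of a regular ring'' hypothesis by finiteness of $\sA_\eta$ over $\sO_{X,\eta}^2$. The only cosmetic difference is that you phrase this last point as being inherited from $X$, whereas no hypothesis on $X$ beyond excellence is needed at these points, since $\sO_{X,\eta}$ is already a DVR and hence $\sO_{X,\eta}^2$ is regular.
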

\begin{proof}
Let $\eta\in X$ be a generic point of $D$, and $k'_\eta\subset k_\eta:=k(\eta)$ the inseparable degree $2$ sub-extension fixed at the beginning. By assumption $\sO_{X,\eta}$ is a DVR. We know that
		$$\sO_{X,\eta}^2\subset \sA_\eta\subset\sO_{X,\eta}$$
and since $X$ is excellent, the extension $\sO_{X,\eta}^2\subset\sA_\eta$ is finite. In particular there is a sujective map of rings
		$$\sO_{X,\eta}^2[x_1,\dots,x_n]\twoheadrightarrow \sA_\eta$$
and $\sO_{X,\eta}^2[x_1,\dots,x_n]$ is a regular ring. This shows that $\sA_\eta$ is a quotient of a regular ring. Moreover $\sA_\eta\to \sO_{X,\eta}$ is the normalization. By construction $\fm_{X,\eta}=\fm_{\sA,\eta}\subset \sA_\eta$ and $\sA_\eta/\fm_{\sA,\eta}=k'_\eta$, so
		$$\dim_{\sA_\eta/\fm_{\sA,\eta}}\sO_{X,\eta}/\fm_{X,\eta}=\dim_{k'_\eta}k_\eta=2.$$
	Therefore $\eta\in\Spec\sA$ is nodal by \autoref{lemma:definition_node_via_normalization}, and it is an inseparable node by definition. On the other hand, $\Spec\sA$ is demi-normal at codimension one points that are not in the image of $D$, since $\pi$ is an isomorphism in a neighborhood of such points. Since the $S_2$ property of $X$ descends to $\Spec\sA$, we obtain that $\Spec\sA$ is demi-normal. 
\end{proof}

\begin{proposition}\label{proposition: Properties of subsheaf III}
Notations as in \autoref{proposition: Properties of subsheaf I}. Assume that $X$ is excellent, normal at the codimension one points of $D$ and demi-normal elsewhere. Let $\Delta$ be a $\bQ$-Weil divisor on $X$ that shares no common component with $D$. Assume that $K_X+D+\Delta$ is $\bQ$-Cartier. Then:
	\begin{enumerate}	
		\item if $\Delta_{\sA}:=\pi_*\Delta$, then $K_{\Spec\sA}+\Delta_{\sA}$ is $\bQ$-Cartier;
		\item $K_X+D+\Delta =\pi^*(K_{\Spec\sA}+\Delta_\sA)$.
	\end{enumerate}
\end{proposition}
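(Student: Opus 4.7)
The plan is to reduce (1) and (2) to a local computation at the generic points of $D$, then globalize using $S_2$-reflexivity, together with a Frobenius-descent argument in characteristic $2$. Since $\pi$ is an isomorphism away from $D$, both assertions are trivial there, so the substance lies along $D$.

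I would first work locally at a generic point $\eta$ of an irreducible component of $D$. Writing $R := \sO_{X,\eta}$ (a DVR with uniformizer $v$) and $A' := \sA_\eta \subset R$ (an inseparable node, with residue-field extension $k' = A'/\fm_{A'} \subsetneq R/\fm_R = k$ purely inseparable of degree $2$), we have $R = A' + A'\cdot t$ for any $t\in R$ with $\bar t \in k\setminus k'$, subject to the sole relation $tv = u$ where $u := tv \in A'$. A direct computation identifies $\omega_{R/A'} = \Hom_{A'}(R,A')$ with $\{(a,b)\in A'\oplus A' \mid ua=vb\}$ and shows it is freely generated as an $R$-module by the $A'$-linear map $\phi_0$ given by $\phi_0(1)=v$ and $\phi_0(t)=u$. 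Feeding this into the finite-morphism duality $\omega_R \cong \pi^*\omega_{A'} \otimes_R \omega_{R/A'}$ and comparing generic stalks yields the canonical identification $\pi^*\omega_{A'}\cong \omega_R(\bar D)$ of $R$-modules, which is the local form of $\pi^* K_Y = K_X + D$. The analogous argument incorporating the differents of $\Delta$ gives the local version of (2).

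Next, the reflexive hull of $\pi^*\omega_Y^{[m]}(m\Delta_Y)$ and $\omega_X^{[m]}(mD+m\Delta)$ are two reflexive $S_2$-sheaves on $X$ which agree in codimension one (trivially on $X\setminus D$, and by the local computation at generic points of $D$), hence they agree on all of $X$. Taking $m$ divisible enough so that $\sL := \omega_X^{[m]}(mD+m\Delta)$ is invertible, it follows that the reflexive pullback of $\omega_Y^{[m]}(m\Delta_Y)$ is invertible. To pass to invertibility of $\omega_Y^{[m]}(m\Delta_Y)$ on $Y$, I would use the Frobenius factorization $F = \sigma \circ \pi$ coming from $\sO_X^2 \subset \sA \subset \sO_X$, with $\sigma\colon Y \to X^{(1/2)} \cong X$. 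Then $\sigma^*\sL$ is invertible on $Y$, its pullback to $X$ being $F^*\sL = \sL^{\otimes 2} = \omega_X^{[2m]}(2mD + 2m\Delta)$, which is also (by the previous paragraph applied to $2m$) the reflexive pullback of $\omega_Y^{[2m]}(2m\Delta_Y)$. Comparing $\sigma^*\sL$ with $\omega_Y^{[2m]}(2m\Delta_Y)$ on $Y$ at codimension one (they coincide on $Y\setminus D_Y$, and at each generic point of $D_Y$ both are free of rank one over the Gorenstein local ring $A'$) and invoking $S_2$-reflexivity gives a global isomorphism $\sigma^*\sL \cong \omega_Y^{[2m]}(2m\Delta_Y)$ on $Y$; this establishes (1). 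Assertion (2) then follows from the globalized isomorphism by passing to $\bQ$-Weil divisor classes.

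The main obstacle is the codimension one comparison on $Y$ in the Frobenius descent step: because the local Weil class group $\Cl(A')$ at an inseparable node can contain a nontrivial $2$-torsion class $[D_Y^i]$, the two candidate sheaves on $Y$ might a priori differ by a twist along $D_Y$. This ambiguity is resolved by observing that the isomorphism defined on $Y\setminus D_Y$ extends across $D_Y$ by $S_2$-reflexivity once one verifies that its reflexive pullback to $X$ is an isomorphism of invertible sheaves.
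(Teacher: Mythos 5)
Your overall route is sound and is essentially the paper's: the codimension-one identity $\pi^*K_{\Spec\sA}=K_X+D$ via the conductor computation at an inseparable node, followed by descent of $\sL:=\omega_X^{[m]}(mD+m\Delta)$ through the Frobenius factorization $\sO_X^2\subset\sA\subset\sO_X$. (The paper quotes this descent as \cite[Lemma 1.4.3]{Keel_basepoint_freeness_in_positive_char}, the statement that $\pi^*\colon\Pic(\Spec\sA)[1/2]\to\Pic(X)[1/2]$ is an isomorphism; you inline the surjectivity half by exhibiting $\sigma^*\sL$.) The gap is in your last step, where you identify $\sigma^*\sL$ with $\omega_{\Spec\sA}^{[2m]}(2m\Delta_\sA)$ by extending the tautological isomorphism $\alpha$ from the complement of the conductor divisor, using the criterion that the reflexive pullback of $\alpha$ to $X$ be an isomorphism. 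That criterion is not sufficient. At a generic point $\eta$ of the conductor both sheaves are free of rank one over $A':=\sA_\eta$, so $\alpha$ is multiplication by some $u\in\Frac(A')=\Frac(R)$, where $R:=\sO_{X,\eta}$ is the normalization of $A'$; the map $\alpha$ extends to an isomorphism at $\eta$ if and only if $u\in(A')^\times$, whereas its pullback being an isomorphism only says $u\in R^\times$. These conditions differ: in the model of \autoref{example: inseparable nodes} the element $t=u/v$ is a unit of $R$ lying in $\Frac(A')$ but not in $A'$. So your verification does not rule out the $2$-torsion twist along the conductor that you yourself identify as the main obstacle.

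The paper closes this by never extending a specific map across a divisor: it restricts to the Gorenstein locus $U\subset\Spec\sA$, which is open and contains \emph{all} codimension-one points since nodes are Gorenstein (\autoref{lemma: commutative algebra definition of a node}), and shrinks $U$ so that $\pi^{-1}(U)\cap D\cap\Supp\Delta=\emptyset$. On $U$ both $m(K_{\Spec\sA}+\Delta_\sA)$ and the descended bundle are honest line bundles with isomorphic pullbacks to $\pi^{-1}U$, hence are isomorphic on $U$ by the injectivity of $\pi^*$ on $\Pic[1/2]$ (coming from $\sigma^*\circ\pi^*=F^*_{\Spec\sA}=(\,\cdot\,)^{\otimes 2}$, at the cost of replacing $m$ by $2m$); since $U$ is big and both sheaves are reflexive, the isomorphism extends over $\Spec\sA$. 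Alternatively, you could repair your argument by replacing the pullback criterion with the explicit generator bookkeeping from your own first paragraph (the generator $\phi_0$ of $\Hom_{A'}(R,A')$ pins down $u$ up to a unit of $A'$), but as written the step does not go through.
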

\begin{proof}
The Gorenstein locus $j\colon U\subset \Spec{\sA}$ is open \cite[1.5]{Greco_Marinari_Nagata_criterion}, and by \autoref{prop: Properties of subsheaf II} and \autoref{lemma: commutative algebra definition of a node} it contains every codimension one point. By \cite[5.7]{Kollar_Singularities_of_the_minimal_model_program}, we have $\pi^*K_{\Spec\sA}= K_X+D$ above every codimension one point of $\Spec\sA$. Since $\Delta$ and $D$ have no common components, we may shrink $U$ (but keeping it a big open subset) and assume that $\pi^{-1}(U)\cap D\cap\Supp\Delta$ is empty. As $\pi$ is an isomorphism outside $D$, we obtain that $K_{\Spec\sA}+\Delta_\sA$ is $\bQ$-Cartier over $U$ and
			$$\pi_U^*(K_{\Spec\sA}+\Delta_\sA)|_U\sim_\bQ (K_X+D+\Delta)|_{\pi^{-1}U}.$$
			By \cite[Lemma 1.4.3]{Keel_basepoint_freeness_in_positive_char}, the pullback map $\pi^*\colon \Pic(\Spec\sA)[1/2]\to\Pic(X)[1/2]$ is an isomorphism, and this statement localizes on $X$. Since $K_X+D+\Delta$ is $\bQ$-Cartier, for $m$ divisible enough there is a Cartier divisor $L$ on $\Spec\sA$, unique up to isomorphism, such that $\pi^*L=m(K_X+D+\Delta)$. By uniqueness over $U$ we have
				$$L_U\sim_\bQ m(K_{\Spec\sA}+\Delta_\sA)|_U.$$
		Since $\sO(L)$ and $\sO(m(K_{\Spec\sA}+\Delta_\sA))$ are reflexive and $U$ is big, we obtain that
			$$L\sim_\bQ m(K_{\Spec\sA}+\Delta_\sA)$$
	which proves that $K_{\Spec\sA}+\Delta_\sA$ is $\bQ$-Cartier and pullbacks (as $\bQ$-divisor) to $K_X+D+\Delta$.
\end{proof}

\begin{theorem}\label{theorem: bijection for inseparable nodes}
Let $k$ be a field of characteristic $2$. Let $\bold{P}$ be any (or none) of the following properties: $F$-finite, locally of finite type over $k$, proper over $k$. 

Then normalization gives a one-to-one correspondence
	\begin{equation*}
		(\Char k = 2) \quad
		\begin{pmatrix}
		\text{Demi-normal excellent reduced}\\ 
		\text{equidimensional schemes } Y \text{ over }k\\
		\text{with only inseparable nodes}\\
		\text{satisfying }\bold{P}
		\end{pmatrix}
		\overset{1:1}{\longrightarrow}
		\begin{pmatrix}
		\text{Triples }\big(X,\sum_i D_i, k\subset k_i'\subset k(D_i)\big)\\
		\text{where }X\text{ is a normal excellent}\\
		\text{scheme over }k\text{ satisfying }\bold{P},\\
		 \sum_iD_i \text{ is a reduced Weil divisor},\\ 
		 k_i'\subset k(D_i)\text{ are degree }2\\
		 \text{inseparable extensions over }k
		\end{pmatrix}
	\end{equation*}
whose inverse is given by \autoref{construction: construction of inseparable node}. This correspondence specializes to
	\begin{equation*}
		(\Char k = 2) \quad\quad\quad
		\begin{pmatrix}
		\text{Slc surface} \\
		\text{pairs }(S,\Delta)\text{ over }k \\		
		\text{with only inseparable nodes}
		\end{pmatrix}
		\overset{1:1}{\longrightarrow}
		\begin{pmatrix}
		\text{Lc surface}\\
		\text{pairs }(\tilde{S},\tilde{D}+\tilde{\Delta})\text{ over }k
		\end{pmatrix}
	\end{equation*}
where $S$ is proper if and only if $\tilde{S}$ is proper, and $K_S+\Delta$ is ample if and only if $K_{\tilde{S}}+\tilde{D}+\tilde{\Delta}$ is ample.
\end{theorem}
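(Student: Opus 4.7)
The plan is to construct the two maps of the bijection separately, to verify they are mutually inverse using the $S_2$ characterization of demi-normal schemes via their codimension-one behavior, and then to derive the surface pair specialization. The forward map is normalization: given $Y$, let $\pi \colon X \to Y$ be the normalization, $\sum_i D_i \subset X$ the reduced conductor divisor on $X$, and for each generic point $\eta_i$ of $D_i$ set $k_i' := k(\pi(\eta_i)) \subset k(D_i)$. This is a purely inseparable degree-$2$ extension by \autoref{corollary: topology of normalization of nodes}, since $Y$ has only inseparable nodes. The property $\bold{P}$ passes from $Y$ to $X$ because the normalization of an excellent scheme is finite (for $F$-finiteness one uses that a finite extension of an $F$-finite ring is $F$-finite). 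The backward map is \autoref{construction: construction of inseparable node}: given $(X, \sum_i D_i, k_i')$, form $\sA$ and set $Y := \Spec \sA$. That $Y$ is demi-normal with only inseparable nodes, that $X \to Y$ is its normalization, and that $Y$ inherits $\bold{P}$ from $X$, are the content of \autoref{proposition: Properties of subsheaf I} and \autoref{prop: Properties of subsheaf II}.

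\textbf{Mutual inversion.} Starting from a triple $(X, \sum_i D_i, k_i')$, applying the backward then forward map recovers the triple: the normalization of $\Spec \sA$ is $X$ by \autoref{proposition: Properties of subsheaf I}, the conductor is $\sum_i D_i$ because $\sA = \sO_X$ away from these divisors, and by the defining formula of $\sA$ the residue field of $\sA$ at the image of $\eta_i$ is $k_i'$. For the other direction, let $Y$ be demi-normal with only inseparable nodes, form $(X, \sum_i D_i, k_i')$ as above, and set $Y' := \Spec \sA(k_1', \dots, k_n')$; we must show $Y' = Y$. Both $Y$ and $Y'$ have normalization $X$, hence the same underlying topological space (via the integral universal homeomorphism induced by $\pi$), and $\sO_Y, \sO_{Y'}$ are both $S_2$ subsheaves of $\pi_* \sO_X$. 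By \autoref{proposition: reconstructing inseparable nodes from normalizations}, at every generic point $\eta_i$ of the conductor the stalk of $\sO_Y$ equals the preimage of $k_i' \subset k(\eta_i)$ in $\sO_{X, \eta_i}$, which is precisely the stalk of $\sA$ there; away from the conductor both sheaves equal $\pi_* \sO_X$. Hence $\sO_Y$ and $\sO_{Y'}$ agree in codimension one, and therefore globally by the $S_2$ property.

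\textbf{Surface pair specialization and main obstacle.} Adding a boundary divisor $\Delta$ (which by definition has no common component with the conductor) and passing to $\tilde{\Delta}$ via the normalization isomorphism outside the conductor sets up the pair bijection. Properness is preserved in both directions because the normalization is finite. For the ampleness claim, \autoref{proposition: Properties of subsheaf III} provides the $\bQ$-Cartier equivalence of $K_S + \Delta$ and $K_{\tilde{S}} + \tilde{D} + \tilde{\Delta}$ together with the identity $\pi^*(K_S + \Delta) = K_{\tilde{S}} + \tilde{D} + \tilde{\Delta}$; since $\pi$ is finite and surjective, ampleness descends and ascends under this pullback. The most delicate step of the whole argument is the final global agreement of $\sO_Y$ and $\sO_{Y'}$ from their codimension-one equality: it relies on the characterization that $S_2$ subsheaves of a fixed $S_2$ ambient sheaf are determined by their restriction to any big open subset. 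Every other step is bookkeeping built on \autoref{construction: construction of inseparable node} and the inseparable-node analysis already carried out in the section.
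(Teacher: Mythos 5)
Your treatment of the main bijection is correct and follows essentially the same route as the paper: normalization in one direction, \autoref{construction: construction of inseparable node} together with \autoref{proposition: Properties of subsheaf I} and \autoref{prop: Properties of subsheaf II} in the other, and the identity $\Phi\circ\nu=\Id$ via agreement of the two $S_2$ structure sheaves at codimension-one points, using \autoref{proposition: reconstructing inseparable nodes from normalizations} on the conductor. The transfer of $\bold{P}$ and the ampleness statement via \autoref{proposition: Properties of subsheaf III} are also handled as in the paper.

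There is, however, a genuine gap in the surface specialization. The right-hand side of the second bijection consists of normal surface pairs $(\tilde{S},\tilde{D}+\tilde{\Delta})$ with \emph{no} field-extension data, whereas the general correspondence requires, for each component $\tilde{D}_i$ of $\tilde{D}$, a choice of purely inseparable degree-$2$ subextension $k\subset k_i'\subset k(\tilde{D}_i)$. You write that adding the boundary ``sets up the pair bijection,'' but as stated the backward map is not well-defined from a pair alone: one must show that this choice is forced. The missing observation is that for a prime divisor $\tilde{D}_i$ on a normal surface over $k$ one has $\trdeg_k k(\tilde{D}_i)=1$, so there is a \emph{unique} such subextension, namely the one cut out by the relative Frobenius of $k(\tilde{D}_i)$ over $k$ (this is the argument the paper gives, citing \cite[0CCY]{Stacks_Project}). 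Without this uniqueness the second displayed bijection does not follow from the first; with it, the specialization is immediate. The rest of your surface discussion (properness and ampleness via finiteness of the normalization and \autoref{proposition: Properties of subsheaf III}) is fine.
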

\begin{proof}
Given $\big(X,\sum_i D_i, k\subset k_i'\subset k(D_i)\big)$ as in the right-hand side, \autoref{construction: construction of inseparable node} gives a $k$-scheme $Y$. By \autoref{proposition: Properties of subsheaf I} and \autoref{prop: Properties of subsheaf II}, $Y$ satisfies the claimed properties. This gives a map $\Phi\colon \{(X,\sum_i D_i, k_i'\subset k(D_i))\}\to \{Y\}$. 

Conversely, let $Y$ be as in the left-hand side with conductor $D\subset Y$. Let $Y^\nu\to Y$ be its normalization with conductor $D^\nu\subset Y^\nu$. Since the normalization is finite, the finiteness properties of $Y$ ascend to $Y^\nu$. Then $(Y^\nu, D^\nu, k(D)\subset k(D^\nu))$ is a triplet as in the right-hand side. This gives a map $\nu\colon\{Y\}\to \{(X,\sum_i D_i, k_i'\subset k(D_i))\}$.

The composition $\nu\circ\Phi$ is the identity by \autoref{proposition: Properties of subsheaf I} and \autoref{prop: Properties of subsheaf II}. To check that $\Phi\circ\nu(Y)=Y$, observe that both sides can be described by their structural sheaves on the topological space $|Y|$, and that both $\sO_{\Phi\circ \nu(Y)}$ and $\sO_Y$ are $S_2$ subsheaves of $\sO_{Y^\nu}$. Thus it suffices to show that $\sO_{\Phi\circ \nu(Y)}=\sO_Y$ at codimension one points. Equality is obvious on the normal locus. On the demi-normal locus, it follows from \autoref{proposition: reconstructing inseparable nodes from normalizations}.

Now let us consider the surface case. We claim that if $\tilde{S}$ is a normal surface over $k$ and $\tilde{D}\subset\tilde{S}$ is a prime Weil divisor, then there is a unique purely inseparable degree $2$ sub-extension $k'\subset k(\tilde{D})$ containing $k$. Since $\trdeg_kk(\tilde{D})=1$, there is indeed a unique such sub-extension, given by the relative Frobenius of $k(D)$ over $k$: see the proof of \cite[0CCY]{Stacks_Project}. By \autoref{proposition: Properties of subsheaf III}, $K_S+\Delta$ is $\bQ$-Cartier if and only if $K_{\tilde{S}}+\tilde{D}+\tilde{\Delta}$ is $\bQ$-Cartier, and the first one pullbacks to the second one. In particular $(S,\Delta)$ is slc if and only if $(\tilde{S},\tilde{D}+\tilde{\Delta})$ is lc. Since the normalization $\tilde{S}\to S$ is finite, the ampleness statement is immediate.
\end{proof}

\subsection{Semi-resolutions of demi-normal surfaces}
It might be difficult to study some aspects of a demi-normal scheme $X$ in terms of its normalization $\nu\colon \bar{X}\to X$, since $\nu_*\sO_{\bar{X}}\neq \sO_X$ and $\nu$ is not an isomorphism in codimension one. Instead, one may try to resolve singularities in codimension two only. In characteristic $0$, there is a good notion of such partial resolutions for demi-normal surfaces \cite{van_Straten_weakly_normal_surfaces, Kollar_Shepherd_Barron_3folds_and_deformations_of_surfaces_singularities}.
We work out the case of demi-normal surfaces in positive characteristic.

\begin{definition}
A germ of surface $(s\in S)$ is called a \textbf{normal crossing point}, respectively a \textbf{pinch point}, if there is a finite étale morphism $\widehat{\sO_{S,s}}\to \sO'$ such that $\sO'\cong k[[x,y,z]]/(xy)$, respectively $\sO'\cong k[[x,y,z]]/(x^2-zy^2)$, where $k$ is some field.

A surface (essentially of finite type over a field) is called \textbf{semi-smooth} if every closed point is either regular, normal crossing or a pinch point.
\end{definition}

\begin{remark}
The étale base-change $\sO_{S,s}\to \sO'$ may be non-elementary (that is, it may not induce an isomorphism of residue fields). But if $k(s)$ is algebraically closed, $\widehat{\sO_{S,s}}\to \sO'$ is necessarily elementary. Since $\widehat{\sO_{S,s}}$ is Henselian and $\sO'$ is assumed to be local, we deduce that $\sO'\cong\widehat{\sO_{S,s}}$. In other words, we can omit the base-change in the definition if we work over an algebraically closed field.
\end{remark}

\begin{remark}[Singularities of semi-smooth surfaces]\label{remark:singularities_of_semi_smooth_surfaces}
Let $S$ be a semi-smooth surface. Then we claim that $S$ is a local complete intersection scheme -- that is, every completed local ring $\widehat{\sO_{S,s}}$ is the quotient of a regular ring by a regular sequence. In particular $S$ is Cohen--Macaulay and Gorenstein. Since the local complete intersection locus is open by \cite[3.3]{Greco_Marinari_Nagata_criterion}, it is sufficient to show the local rings of $S$ at closed points are local complete intersections. This property descends and ascends étale morphisms \cite[09Q7]{Stacks_Project}. Thus it suffices to show that the models $k[x,y,z]/(xy)$ and $k[x,y,z]/(x^2-zy^2)$ are local complete intersections (near the origin), which is clear.

We claim that $S$ is also semi-normal. Since the semi-normal locus is open (\autoref{remark:semi_normality_and_demi_normality}), it is sufficient to check that the closed points belong to it. Combining \cite[Theorem 1.6]{Greco_Traverso_On_seminormal_schemes} and \cite[7.8.3.vii]{EGA_IV.2}, it is sufficient to check that condition on the local models $k[x,y,z]/(xy)$ and $k[x,y,z]/(x^2-zy^2)$. This is easily seen, e.g. using \cite[Corollary 2.7.vii]{Greco_Traverso_On_seminormal_schemes}. It follows by \autoref{lemma: commutative algebra definition of a node} that $S$ is demi-normal.

We also claim that the singular locus of $S$ is regular. If $f\colon X\to Y$ is an \'{e}tale or a regular morphism of Noetherian schemes, then $\sO_{X,x}$ is regular if and only if $\sO_{Y,f(x)}$ is regular \cite[23.7]{Matsumura_Commutative_Ring_Theory}, and the completion of an excellent local ring is a regular morphism \cite[7.8.3.v]{EGA_IV.2}. Thus it is sufficient to prove that the singular loci of $k[x,y,z]/(xy)$ and $k[x,y,z]/(x^2-zy^2)$ are regular (near the origin). In both cases the singular locus is the line $(x=y=0)$, which is regular.

A similar argument shows that the normalization of $S$ is regular and that the preimage of the conductor is also regular. %char not 2 only ?
\end{remark}

\begin{definition}\label{definition:semi_resolution}
A proper birational morphism of demi-normal surfaces $f\colon T\to S$ is called a \textbf{semi-resolution} if:
	\begin{enumerate}
		\item $T$ is semi-smooth;
		\item no component of $D_T$, the conductor divisor of $T$, is $f$-exceptional;
		\item $f$ is an isomorphism over a big open subset of $S$.
	\end{enumerate}
We say that $f$ is a \textbf{good semi-resolution} if in addition
	\begin{enumerate}\setcounter{enumi}{3}
		\item $\Exc(f)\cup D_T$ has regular components and transverse intersections (so $E$ has at most double points, and $E\cup D_T$ at most triple points).
		%each component of $\Exc(f)$ is regular and intersects $D_T$ transversally, and $\Exc(f)$ has only normal crossings.
	\end{enumerate}
\end{definition}

In characteristic $0$, it is well-known that demi-normal surfaces admit good semi-resolutions. We prove that it is also true for demi-normal surfaces in characteristic $\neq 2$. Our proof is similar in spirit to \cite[\S 1.4]{van_Straten_weakly_normal_surfaces}.

\begin{lemma}\label{lemma:involution_of_power_series}
Let $K$ be a field of characteristic $\neq 2$ and $\tau$ a non-trivial non-necessarily $K$-linear involution of $K[[t]]$ such that $\tau(K)=K$. Then there exists a uniformizer $s\in (t)\setminus (t^2)$ such that $\tau(s)=-s$.
\end{lemma}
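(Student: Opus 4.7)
The plan is to construct $s$ as an antisymmetrization of a carefully chosen element of the maximal ideal. Since $\tau$ is an automorphism of the local ring $K[[t]]$, it preserves the maximal ideal $(t)$, and because $\tau^{2}=\mathrm{id}$ it is bijective; hence we may write $\tau(t)=\sum_{i\geq 1}a_{i}t^{i}$ with $a_{i}\in K$ and $a_{1}\in K^{\times}$. Let $\sigma:=\tau|_{K}$, an involution of $K$ (possibly trivial), and recall the compatibility $\tau(\alpha\cdot f(t))=\sigma(\alpha)\tau(f(t))$ for $\alpha\in K$ and $f\in K[[t]]$.

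For each $\alpha\in K$ set
\[
s_{\alpha}:=\alpha t-\tau(\alpha t)=\alpha t-\sigma(\alpha)\tau(t).
\]
A direct computation gives $\tau(s_{\alpha})=\sigma(\alpha)\tau(t)-\alpha t=-s_{\alpha}$, and the linear coefficient of $s_{\alpha}$ equals $\alpha-\sigma(\alpha)a_{1}$. Hence $s_{\alpha}$ is a uniformizer with the desired property as soon as $\alpha\neq \sigma(\alpha)a_{1}$ for some $\alpha\in K$. Suppose for contradiction that no such $\alpha$ exists, i.e.\ $\alpha=\sigma(\alpha)a_{1}$ for every $\alpha\in K$. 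Specializing to $\alpha=1$ yields $a_{1}=1$, and the equation then becomes $\alpha=\sigma(\alpha)$ for every $\alpha\in K$, forcing $\sigma=\mathrm{id}_{K}$.

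Consequently $\tau$ is a $K$-algebra involution of $K[[t]]$ with $\tau(t)\equiv t\pmod{(t^{2})}$, and the main obstacle is the remaining step: showing that such a $\tau$ must be the identity, contradicting the non-triviality hypothesis. Since any $K$-algebra endomorphism of $K[[t]]$ sending $(t)$ into $(t)$ is continuous in the $(t)$-adic topology and determined by the image of $t$, it suffices to show $\tau(t)=t$. If not, write $\tau(t)=t+b_{N}t^{N}+O(t^{N+1})$ with $b_{N}\neq 0$ and $N\geq 2$ minimal; expanding then gives
\[
\tau^{2}(t)=\tau(t)+b_{N}\tau(t)^{N}+O(t^{N+1})=t+2b_{N}t^{N}+O(t^{N+1}),
\]
which contradicts $\tau^{2}(t)=t$ precisely because $\mathrm{char}(K)\neq 2$. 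This yields the required $\alpha$, and $s:=s_{\alpha}$ is the desired uniformizer.
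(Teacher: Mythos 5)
Your proof is correct. The antisymmetrization $s_\alpha=\alpha t-\tau(\alpha t)$ is the same device the paper uses (there split into two cases: constant term of $\tau(t)/t$ different from $1$, where one takes $\alpha=1$; and $\tau|_K\neq\mathrm{id}$, where one picks $\alpha\notin K^\tau$), and your single parametrized condition $\alpha\neq\sigma(\alpha)a_1$ packages both cases at once, which is a little cleaner. Where you genuinely diverge is the residual case $\sigma=\mathrm{id}_K$, $a_1=1$: the paper rules it out by an infinite successive-approximation argument, building a Cauchy sequence $t_{n+1}=t_n-\tfrac{a}{2}t^{n+1}$ whose limit is a $\tau$-fixed uniformizer, hence $\tau=\mathrm{id}$; you instead look at the lowest-order coefficient $b_N$ of $\tau(t)-t$ and extract $2b_N=0$ directly from $\tau^2=\mathrm{id}$. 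Your argument is more elementary (no appeal to completeness or convergence) and isolates exactly where $\mathrm{char}\,K\neq 2$ enters; it also makes transparent why the lemma fails in characteristic $2$, as in the example $t\mapsto(1+t^2+t^3+\cdots)t$ cited later in the paper, where the identity $2b_N=0$ is vacuous. The one point worth stating explicitly is the continuity/determinedness claim you invoke (a ring endomorphism of $K[[t]]$ preserving $(t)$ satisfies $\tau(f)=f(\tau(t))$ when $\tau|_K=\mathrm{id}$), but this is standard and your use of it is sound.
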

\begin{proof}
We have $\tau(t)=\lambda t$ with $\lambda\in K[[t]]^\times$. If the constant term of $\lambda$ is not equal to $1$, then $1-\lambda\in K[[t]]^\times$ and so the $K$-linear ring map defined by $t\mapsto t-\tau(t)$ is an automorphism, thus we may take $s=t-\tau(t)$.

From now on assume that the constant term of $\lambda$ is $1$. If $\tau$ does not act as the identity on $K$, choose $\alpha\in K\setminus K^\tau$. If $\alpha=\tau(\alpha)+1$ then $\alpha=\tau(\tau(\alpha))=\tau(\alpha-1)=\tau(\alpha)-1$, a contradiction since the characteristic is different from $2$. Therefore $\alpha -\tau(\alpha)\lambda$ is invertible with constant term different from $1$. Thus we apply the argument of the previous paragraph with $\alpha t$ in place of $t$.

Finally assume that $\tau$ is $K$-linear. By the current assumption on $\lambda$ we have $\tau(t)=t+O(t^2)$, and so $\tau(t^k)=t^k+O(t^{k+1})$. We define a Cauchy sequence $(t_n)$ such that $t_n-\tau(t_n)\in O(t^{n+1})$. Take $t_1:=t$. If $t_n-\tau(t_n)=at^{n+1}+O(t^{n+2})$ with $a\in K$, the element $t_{n+1}:=t_n-\frac{a}{2}t^{n+1}$ is a valid choice. Then $t_\infty:=\lim_n t_n$ is a uniformizer that satisfies $t_\infty=\tau(t_\infty)$. Hence $\tau$ is the identity, and we excluded this case in the hypothesis. Thus the proof is complete.
\end{proof}

\begin{lemma}\label{lemma:quotient_of_log_smooth_gives_semi_smooth}
Let $S$ be a regular surface of finite type over an arbitrary field $k$ of positive characteristic $\neq 2$, $D\subset S$ a reduced divisor with regular support and $\tau\colon D\cong D$ a non-trivial involution. Then $S/R(\tau)$ exists and is a semi-smooth surface whose conductor subscheme is the image of $D$.
\end{lemma}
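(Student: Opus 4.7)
The first step is to verify that $R(\tau)$ is a finite set-theoretic equivalence relation on $S$. Since $D\subset S$ is closed and $\tau$ is an involution of $D$, the equivalence classes of $R(\tau)$ are either singletons (outside $D$) or pairs (resp.\ singletons) on $D$ corresponding to orbits of $\tau$. In particular $R(\tau)\subset S\times_k S$ projects finitely onto each factor. With $R(\tau)$ finite, \autoref{prop:finite_equivalence_relation_give_nodes} immediately gives the geometric quotient $\pi\colon S\to T:=S/R(\tau)$, shows $T$ is demi-normal of finite type with conductor $\pi(D)$, and identifies the involution of the conductor normalization with $\tau$. So the whole statement reduces to proving that $T$ is semi-smooth, i.e.\ that each closed point $t\in T$ is regular, a normal crossing point, or a pinch point.

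For a closed point $t\notin\pi(D)$, $\pi$ is an isomorphism near $t$ by \autoref{prop:finite_equivalence_relation_give_nodes}, hence $T$ is regular at $t$. So the core of the proof is the case $t\in\pi(D)$, which I plan to treat by computing the completion $\widehat{\sO_{T,t}}$ via the push-out square of \cite[Proposition~25]{Kollar_Quotients_by_finite_equivalence_relations} applied in \autoref{prop:finite_equivalence_relation_give_nodes}. Completion commutes with such finite push-outs, so $\widehat{\sO_{T,t}}$ is the equalizer of the two natural maps $\widehat{\sO_{S,\pi^{-1}(t)}}\rightrightarrows\widehat{\sO_{D/\tau,t}}$, one induced by the inclusion $D\subset S$ and the other by first applying $\tau$. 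The fibre $\pi^{-1}(t)$ has either one or two points, and I will analyse these two cases separately.

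Case~A: $\pi^{-1}(t)=\{s_1,s_2\}$ with $\tau(s_1)=s_2$. The isomorphism $\tau^\ast\colon \widehat{\sO_{D,s_2}}\xrightarrow{\sim}\widehat{\sO_{D,s_1}}$ identifies the residue fields $\kappa_2\cong\kappa_1=:\kappa$, and this $\kappa$ is precisely the residue field of $t$ (since $k(t)=(\kappa_1\oplus\kappa_2)^{\langle\tau\rangle}$). Write $\widehat{\sO_{S,s_i}}\cong\kappa[[x_i,y_i]]$ with $(y_i)$ defining $D$, chosen so that $\tau^\ast x_2=x_1$ in $\widehat{\sO_{D,s_1}}$. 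The equalizer is then exactly
\[
\{(f_1,f_2)\in\kappa[[x_1,y_1]]\oplus\kappa[[x_2,y_2]] : f_1(x,0)=f_2(x,0)\},
\]
which is isomorphic to $\kappa[[x,y_1,y_2]]/(y_1y_2)$ via $x\mapsto(x_1,x_2)$, $y_1\mapsto(y_1,0)$, $y_2\mapsto(0,y_2)$. This is the normal crossing model.

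Case~B: $\pi^{-1}(t)=\{s\}$ with $\tau(s)=s$. Write $\widehat{\sO_{S,s}}\cong\kappa[[x,y]]$ with $(y)=I_D$. The involution $\tau$ acts non-trivially on $\widehat{\sO_{D,s}}\cong\kappa[[x]]$ and preserves the coefficient field $\kappa$, so \autoref{lemma:involution_of_power_series} (which uses $\Char k\neq 2$) gives a uniformizer $u$ of $\widehat{\sO_{D,s}}$ with $\tau^\ast u=-u$; lift $u$ to a new coordinate on $\widehat{\sO_{S,s}}$. Then $\widehat{\sO_{D/\tau,t}}=\kappa[[u^2]]$, and the equalizer becomes
\[
\{f(x,y)\in\kappa[[x,y]] : f(x,0)\in\kappa[[x^2]]\}=\kappa[[x^2,xy,y]].
\]
Setting $U=x^2$, $V=xy$, $W=y$ produces the relation $V^2-UW^2=0$, and a direct parity argument in the variable $x$ shows that the surjection $\kappa[[U,V,W]]/(V^2-UW^2)\twoheadrightarrow\kappa[[x^2,xy,y]]$ has trivial kernel. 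Hence $\widehat{\sO_{T,t}}\cong\kappa[[U,V,W]]/(V^2-UW^2)$, the pinch point model.

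The main technical obstacle I anticipate is the bookkeeping of residue fields and coefficient fields in the completion: one must verify that $\tau$ preserves a common coefficient field (so that \autoref{lemma:involution_of_power_series} applies in Case~B), and that the chosen coordinates on the two branches match after applying $\tau^\ast$ in Case~A. Once compatible coefficient fields are fixed, both model identifications are direct computations, and no further étale base change is required beyond what is built into the definition of semi-smoothness.
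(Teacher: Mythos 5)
Your overall strategy coincides with the paper's: finiteness of $R(\tau)$, existence of the demi-normal quotient via \autoref{prop:finite_equivalence_relation_give_nodes}, the push-out description of $\sO_{T,t}$, and a case analysis on $\#\pi^{-1}(t)$ with explicit completion computations. Case~A and the pinch-point computation in Case~B are correct as far as they go. However, there is a genuine gap in Case~B.

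When $\pi^{-1}(t)=\{s\}$ is a $\tau$-fixed point, the involution $\tau$ acts on $\widehat{\sO_{D,s}}\cong K[[u]]$ in a way that preserves the coefficient field $K=k(s)$ \emph{as a set}, but it need not fix $K$ pointwise; over a non-algebraically-closed $k$ the induced action on the residue field can be a non-trivial degree-$2$ automorphism, so that $k(t)=K^{\tau}=:K'\subsetneq K$. Your computation silently assumes $\tau|_{K}=\id$: the identity $\widehat{\sO_{D/\tau,t}}=\kappa[[u^2]]$ and the resulting equalizer $\{f:f(x,0)\in\kappa[[x^2]]\}=\kappa[[x^2,xy,y]]$ are only valid in that subcase. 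When $K'\subsetneq K$, write $K=K'(\gamma)$ with $\gamma^2=c$ and $\tau(\gamma)=-\gamma$; the invariant ring of $K[[u]]$ is then $K'[[u^2]]\oplus \gamma u\,K'[[u^2]]$, the equalizer in $\widehat{\sO_{S,s}}$ is the larger ring $K'[[f,fg,g^2]]+\gamma g\,K'[[g^2]]+\gamma\,K'[[f,fg,fg^2,\dots]]$, and the paper shows its completion is $K'[[y,z,w]]/(cz^2+w^2)$ --- a \emph{normal crossing} point, recognized only after the finite \'etale extension $\hat{\sO}\subset\hat{\sO}[T]/(T^2+c)$. So your concluding remark that ``no further \'etale base change is required'' is exactly where the argument fails: without treating the $K'\subsetneq K$ subcase you have neither proved that the singularity fits one of the two semi-smooth models nor identified which one it is. The fix is to split Case~B according to whether $\tau$ acts trivially on $k(s)$ and carry out the second invariant-ring computation.
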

\begin{proof}
We elaborate the last paragraph of \cite[1.43]{Kollar_Singularities_of_the_minimal_model_program}. First of all, it is clear that $R(\tau)$ is finite, thus $U:=S/R(\tau)$ exists as a scheme of finite type over $k$ and $\pi\colon S\to U$ is the normalization. By \autoref{prop:finite_equivalence_relation_give_nodes}, $U$ is demi-normal with conductor $\pi(D)$. By \cite[9.13, 9.30]{Kollar_Singularities_of_the_minimal_model_program} the square
		$$\begin{tikzcd}
		D\arrow[r, hook]\arrow[d, "q"] & S\arrow[d, "\pi"] \\
		D/\langle\tau\rangle\arrow[r, hook] & U
		\end{tikzcd}$$
is a push-out. Geometric quotients by finite group actions preserve normality, so $\pi(D)$ is normal, therefore a regular curve.

To study the singularities of $U$, we may localize at a closed point $u\in U$ that belongs to $\pi(D)$. Then we may assume that $U=\Spec \sO$ is local, $S=\Spec A$ and that $f\in A$ is a local equation for $D$. Since $D\to\pi(D)$ is a $\bZ/2\bZ$-quotient, only two cases can happen.
	\begin{enumerate}
		\item $A$ has exactly two maximal ideals. Since $\hat{\sO}\cong \widehat{\sO^h}$ \cite[06LJ]{Stacks_Project}, we may base-change along an elementary \'{e}tale morphism $\Spec\sO'\to \Spec\sO$ and assume that $A=A_1\oplus A_2$, where both $A_i$ are local. Let $f_i\in A_i$ be the local equation of $D$, and $\tau\colon A_1/(f_1)\cong A_2/(f_2)$ be the involution. Since $A_1/(f_1)$ is regular local of dimension one, there exists $g_1\in A_1$ such that $(f_1,g_1)=\fm_{A_1}$. Let $g_2\in A_2$ be any lift of $\tau(g_1+(f_1))$, it also holds that $(f_2,g_2)=\fm_{A_2}$.
		
		The push-out description of $\sO\subset A_1\oplus A_2$ shows that $\fm_{\sO}=\sO\cap (\fm_{A_1}\oplus\fm_{A_2})$. Hence we see that $x:=(f_1,0),y:=(0,f_2),z:=(g_1,g_2)$ generate $\fm_\sO$, with relation $xy=0$. Thus $\hat{\sO}\cong k[[x,y,z]]/(xy)$, where $k$ is the residue field of $\sO$.
		
		\item $A$ is local, in other words $u$ is the image of a $\tau$-fixed point. Let $f\in A$ be the local equation of $D$, and $\tau\colon A/(f)\cong A/(f)$ be the involution. We may work with the completions, as $\hat{\sO}$ is the preimage in $\hat{A}$ of the $\tau$-invariant elements of $\hat{A}/(f)$.
		
		The action of $\tau$ descends to the residue field $K$ of $A$, and extends to the completion of $A/(f)$. Notice that the restriction of $\tau$ on the field $K\subset \hat{A}/(f)$ is precisely the action of $\tau$ on the residue field. The residue field of $\sO$ is the fixed subfield $K':=K^\tau$. If $K'$ is algebraically closed, notice that necessarily $K=K'$.
		
		The completion $\hat{A}/(f)$ is isomorphic to $K[[t]]$. By \autoref{lemma:involution_of_power_series} we may assume that $\tau(t)=-t$.

		If $K'=K$, then $\hat{\sO}=K[[f,fg,g^2]]\cong K[[x,y,z]]/(x^2z-y^2)$, where $g\in \hat{A}$ is any lift of $t\in \hat{A}/(f)$. This is a pinch point.
		
		If $K'\subsetneq K$ then $K=K'(\gamma)$ where $\gamma^2=c\in K$ and $\tau(\gamma)=-\gamma$. A monomial $(a+b\gamma)t^i$, with $a,b\in K'$, is $\tau$-invariant if and only if $i$ is even and $b=0$, or $i$ is odd and $a=0$. Thus if $g\in \hat{A}$ is any lift of $t$, we have
				$$\hat{\sO}=K'[[f,fg,g^2]]+\gamma g\cdot K'[[g^2]]+\gamma\cdot K'[[f,fg,fg^2,fg^3,\dots ]].$$
		Let
				$$x:=g^2,\ y:=\gamma g,\ z:=f,\ v:=fg,\ w:=\gamma f,$$
		then we have the presentation
				$$\hat{\sO}\cong \frac{K'[[x,y,z,v,w]]}{(y^2+cx,yw+cv, cz^2+w^2,xz^2-v^2)}\cong\frac{K'[[y,z,w]]}{(cz^2+w^2)}.$$
		The finite \'{e}tale extension $\hat{\sO}\subset \hat{\sO}[T]/(T^2+c)$ shows that $\sO$ is a normal crossing point.
	\end{enumerate}
This completes the proof.
\end{proof}

\begin{remark}
In the situation of \autoref{lemma:quotient_of_log_smooth_gives_semi_smooth}, notice that if the base field $k$ is algebraically closed, then the normal crossing points of $U$ are the image of the non-$\tau$-fixed points of $D$, and the pinch points of $U$ are the images of the $\tau$-fixed points of $D$.
\end{remark}

\begin{theorem}\label{theorem:semi_resolution_for_surfaces}
Let $S$ be a demi-normal surface that is essentially of finite type over an arbitrary field $k$ of positive characteristic $\neq 2$. Then $S$ has a good semi-resolution with slc singularities.
\end{theorem}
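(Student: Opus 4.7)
The plan is to apply the gluing formalism of \autoref{section:construction_of_separable_nodes} to a well-chosen log resolution of the normalization $\bar{S}$, using the involution furnished by \autoref{lemma:normalization_gives_involution}.

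\textbf{Setup.} Let $\nu\colon \bar{S}\to S$ be the normalization, with conductors $\bar{D}\subset\bar{S}$ and $D\subset S$. Since $\Char k\neq 2$, the morphism $\bar{D}\to D$ is \'etale of degree $2$ at each generic point by \autoref{lemma: basic properties of normalization of demi-normal scheme}, and \autoref{lemma:normalization_gives_involution} provides a generically fixed-point-free involution $\tau$ of $\bar{D}^n$. Choose a log resolution $g\colon \tilde{S}\to\bar{S}$ of $(\bar{S},\bar{D})$ (available on excellent surfaces by Lipman's theorem), arranged so that $g$ is an isomorphism over the snc locus $U\subset\bar{S}$ of $(\bar{S},\bar{D})$, the strict transform $D_{\tilde{S}}:=g_*^{-1}\bar{D}$ has pairwise disjoint smooth components, and $\Exc(g)+D_{\tilde{S}}$ is snc on $\tilde{S}$. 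Each $D_{\tilde{S},i}$ is the unique regular proper model of $k(\bar{D}_i)$, hence canonically identified with $\bar{D}_i^n$; through these identifications the involution $\tau$ transports to a generically fixed-point-free involution $\tilde\tau$ of $D_{\tilde{S}}$.

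\textbf{Gluing.} Apply \autoref{lemma:quotient_of_log_smooth_gives_semi_smooth} to obtain the quotient $\pi\colon \tilde{S}\to T:=\tilde{S}/R(\tilde\tau)$, a semi-smooth surface with conductor $\pi(D_{\tilde{S}})$ and normalization morphism $\pi$. Two $\tilde\tau$-related points of $D_{\tilde{S}}$ correspond under $g$ to $\tau$-related points of $\bar{D}^n$, which by construction of $\tau$ have the same image in $D\subset S$. Hence $\nu\circ g$ coequalizes $R(\tilde\tau)$ and factors through a proper birational morphism $f\colon T\to S$. Since $\nu$ is finite and $U$ is big in $\bar{S}$, the complement $V:=S\setminus \nu(\bar{S}\setminus U)$ is a big open of $S$ on which $f$ is an isomorphism, and no component of $D_T=\pi(D_{\tilde{S}})$ is $f$-exceptional because $D_{\tilde{S}}$ dominates $\bar{D}$, which dominates $D$. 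Since $T$ is semi-smooth, \autoref{remark:singularities_of_semi_smooth_surfaces} implies $(T,0)$ is slc (the normalization pair is log smooth, hence lc).

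\textbf{Good condition and main obstacle.} Since $\tilde\tau$ lives only on $D_{\tilde{S}}$, $\pi|_{\Exc(g)}$ is injective, so the components of $\Exc(f)$ are smooth. The snc condition for $\Exc(g)+D_{\tilde{S}}$ on $\tilde{S}$ (which is the normalization of $T$) translates via $\pi$ into the good-semi-resolution conditions for $\Exc(f)\cup D_T$: regular components, transverse intersections, $\Exc(f)$ has at most double points (because any point where two exceptional curves meet is disjoint from $D_{\tilde{S}}$ by snc in dimension two, so $\pi$ is a local isomorphism there), and $\Exc(f)\cup D_T$ has at most triple points (arising only at images of pairs $(p,\tilde\tau(p))$ with $p\neq\tilde\tau(p)$, both points lying on exceptional components; the three branches are two exceptional curves and $D_T$). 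The delicate part is the resolution step: one must refine $g$ so that each component of $D_{\tilde{S}}$ is genuinely smooth and pairwise disjoint (needed for $\tilde\tau$ to be defined via the canonical identification with $\bar{D}^n$), and so that the exceptional configuration behaves symmetrically at $\tilde\tau$-fixed points and at $\tilde\tau$-exchanged pairs, which can be arranged by iterated blow-ups of intersection points of $g_*^{-1}\bar{D}$-components together with symmetric blow-ups in components of $\bar{D}$ swapped by $\tau$.
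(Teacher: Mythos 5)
Your proposal follows essentially the same route as the paper's proof: normalize, take a log resolution of $(\bar{S},\bar{D})$ on which the strict transform of the conductor is regular (hence canonically identified with $\bar{D}^n$), transport the involution of \autoref{lemma:normalization_gives_involution} to it, quotient by the resulting finite equivalence relation via \autoref{lemma:quotient_of_log_smooth_gives_semi_smooth}, descend to $f\colon T\to S$ by the universal property of the quotient, and read off the good-semi-resolution conditions from the snc configuration upstream. The only real deviations are cosmetic: the closing worry about ``symmetric blow-ups'' is unnecessary, since the verification of the good conditions uses only that $\Exc(g)+D_{\tilde{S}}$ is snc upstairs and that the gluing is along an involution; and the paper first reduces from essentially-of-finite-type to finite type (via openness of the demi-normal locus) before invoking the quotient machinery, a reduction you skip.
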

\begin{proof}
It follows from \autoref{lemma: commutative algebra definition of a node}, \autoref{remark:semi_normality_and_demi_normality}, \cite[7.8.3.iv]{EGA_IV.2} and \cite[1.5]{Greco_Marinari_Nagata_criterion} that the demi-normal locus is open on excellent schemes, thus we can realize $S$ as the localization of a demi-normal surface of finite type over $k$. Hence we may assume that $S$ is of finite type over $k$ to begin with.

Let $\nu\colon (\bar{S},D_{\bar{S}})\to S$ be the normalization morphism. By \cite[2.25]{Kollar_Singularities_of_the_minimal_model_program} there exists a proper birational morphism $\bar{f}\colon \bar{T}\to \bar{S}$, where $\bar{T}$ is regular, $D_{\bar{T}}:=\bar{f}_*^{-1}D_{\bar{S}}$ is regular, the components of $E=\Exc(\bar{f})$ are regular and $E\cup D_{\bar{T}}$ has simple normal crossings.

%The normalization $\bar{S}\to S$ induces an involution $\tau$ on $D_{\bar{S}}^n=D_{\bar{T}}$. So we get a finite equivalence relation $R(\tau)\rightrightarrows \bar{T}$. Let $q\colon \bar{T}\to T$ be the quotient morphism. By the universal property of the quotient, there is a unique morphism $f\colon T\to S$ such that the square
The normalization $\bar{S}\to S$ induces an involution $\tau$ on $D_{\bar{S}}^n=D_{\bar{T}}$. So we get a finite equivalence relation $R(\tau)\rightrightarrows \bar{T}$. Now let $q\colon \bar{T}\to T$ be the quotient morphism. By the universal property of the quotient, there is a unique morphism $f\colon T\to S$ such that the square
		\begin{equation}\label{eqn:semi_resolution_diagram}
		\begin{tikzcd}
		\bar{T}\arrow[d, "q"]\arrow[r, "\bar{f}"] & \bar{S}\arrow[d, "\nu"] \\
		T \arrow[r, "f"] & S
		\end{tikzcd}
		\end{equation}
commutes. Combining \autoref{prop:finite_equivalence_relation_give_nodes} and \cite[09MQ, 03GN]{Stacks_Project} we see that $f$ is proper, and clearly it is birational.

By \autoref{lemma:quotient_of_log_smooth_gives_semi_smooth} the surface $T$ is semi-smooth, with normalization $\bar{T}$ and conductors $D_T\subset T$ and $D_{\bar{T}}\subset \bar{T}$. By semi-smoothness $\omega_T$ is invertible (see \autoref{remark:singularities_of_semi_smooth_surfaces}), and as it pull backs to $\omega_{\bar{T}}(D_{\bar{T}})$ we see that $T$ is slc. Since we have glued along an involution on $D_{\bar{T}}$, the divisor $q(E)\cup D_T$ are at worst triple points and $q(E)$ has at most double points. There is a big open subset $U\subset S$ such that both $\nu^{-1}(U)$ and $D_{\bar{S}}\cap\nu^{-1}U$ are regular, and $f(\Exc(f))\cap U$ is empty. Then $f$ is an isomorphism over $U$.

Notice however that $q(E)$ might not have regular components. If a component $E_i$ of $E$ intersects $D_{\bar{T}}$ in two points that are $\tau$-conjugate, then $q(E_i)$ will be a nodal curve. In this case $T$ is only a semi-resolution. To obtain a good semi-resolution, we may blow-up these intersection points on $\bar{T}$ before gluing along $\tau$: this ensures that the components of $q(E)$ are regular.
\end{proof}

\begin{example}
Let us illustrate this semi-resolution procedure with the triple point $S=(xyz=0)\subset \bA^3$. The normalization $\bar{S}$ is a disjoint union of three planes, and the conductor is the union of the coordinate axis on each plane. Let $\bar{T}=\bigsqcup_{i=0}^2\bar{T}_i$ be the blow-up of $\bar{S}$ at the three origins, and $L_i^1,L_i^2\subset \bar{T}_i$ be the transforms of the coordinate axis. Then the semi-resolution $T\to S$ is obtained by gluing $L_i^1$ along $L_{i+1}^2$, where the index is taken modulo $3$.
\end{example}

\begin{proposition}[see {\cite[4.10,4.13]{Kollar_Shepherd_Barron_3folds_and_deformations_of_surfaces_singularities}}]\label{proposition:GR_for_demi_normal_surface}
Let $S$ be as in \autoref{theorem:semi_resolution_for_surfaces}. 
	\begin{enumerate}
		\item If $f\colon T\to S$ is a semi-resolution then $f_*\sO_T\cong \sO_S$,
		\item Grauert--Riemenschneider vanishing holds: $R^1f_*\omega_T=0$.
		\item There exists a minimal semi-resolution of $S$.
	\end{enumerate}
\end{proposition}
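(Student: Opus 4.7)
My plan is as follows. For (1), the inclusion $\sO_S\hookrightarrow f_*\sO_T$ is automatic from dominance, and both sides agree on the big open $U\subset S$ on which $f$ is an isomorphism (condition (3) of \autoref{definition:semi_resolution}). Since $S$ is demi-normal, $\sO_S$ is $S_2$, so at any closed point $p\in S\setminus U$ a local section $\phi\in(f_*\sO_T)_p$, viewed through the birational identification $f_*\sK_T=\sK_S$ (using reducedness of $T$) as a rational function on $\Spec\sO_{S,p}$, is regular on the punctured spectrum and hence extends to a section of $\sO_{S,p}$ by $S_2$-ness. Therefore $f_*\sO_T=\sO_S$.

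For (2), since $\Char k\neq 2$ and $T$ is semi-smooth, the nodes of $T$ are separable, so \autoref{proposition:descend_of_sections_along_normalization} (together with the local computations in \autoref{lemma:trace_map}) gives the exact sequence
\begin{equation*}
0\longrightarrow\omega_T\longrightarrow\nu_{T*}\omega_{\bar T}(D_{\bar T})\longrightarrow\omega_{D_T}\longrightarrow 0,
\end{equation*}
where $\nu_T\colon\bar T\to T$ is the normalization. Let $\bar f\colon\bar T\to\bar S$ denote the induced morphism on normalizations, so $f\nu_T=\nu_S\bar f$. Applying $Rf_*$ reduces the desired vanishing $R^1f_*\omega_T=0$ to two facts. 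First, $R^1\bar f_*\omega_{\bar T}(D_{\bar T})=0$: break this up via the residue sequence $0\to\omega_{\bar T}\to\omega_{\bar T}(D_{\bar T})\to\omega_{D_{\bar T}}\to 0$ on the regular surface $\bar T$; then $R^1\bar f_*\omega_{\bar T}=0$ is classical Grauert--Riemenschneider vanishing for birational morphisms from a regular surface to a normal surface (valid in arbitrary characteristic), and $R^1\bar f_*\omega_{D_{\bar T}}=0$ because $\bar f|_{D_{\bar T}}$ is finite, which follows from the fact that no component of $D_{\bar T}$ is $\bar f$-exceptional (inherited from condition (2) of \autoref{definition:semi_resolution}). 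Second, the natural map $f_*\nu_{T*}\omega_{\bar T}(D_{\bar T})\to f_*\omega_{D_T}$ is surjective: factor it through $f_*(\bar\nu_*\omega_{D_{\bar T}})$ using Poincaré residue and Grothendieck trace; surjectivity of the first factor is the previous vanishing, and of the second uses that $f|_{D_T}\colon D_T\to S$ is a finite morphism, so $R^1f_*$ kills coherent sheaves supported on $D_T$.

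For (3), we reduce to the normal case. The classical theory of resolutions of surfaces produces a minimal good log resolution $\bar f^{\min}\colon\bar T^{\min}\to\bar S$ of the pair $(\bar S,D_{\bar S})$. Because $D_{\bar T^{\min}}$ is the strict transform of $D_{\bar S}$ and $\bar f^{\min}$ is an isomorphism at the generic points of $D_{\bar S}$, the normalization of $D_{\bar T^{\min}}$ coincides with $D_{\bar S}^n$ and therefore inherits the canonical involution $\tau$ induced by $\nu\colon\bar S\to S$. Applying \autoref{lemma:quotient_of_log_smooth_gives_semi_smooth} to $(\bar T^{\min},D_{\bar T^{\min}},\tau)$ produces a semi-smooth surface $T^{\min}:=\bar T^{\min}/R(\tau)$ and a proper birational morphism $f^{\min}\colon T^{\min}\to S$, which one checks to be a good semi-resolution. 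Minimality follows from that of $\bar f^{\min}$: any competing good semi-resolution $T\to S$ normalizes to a good log resolution $\bar T\to\bar S$ factoring through $\bar T^{\min}$, and since the factorization respects the involution on $D_{\bar S}^n$ (which is canonical) it descends to a factorization $T\to T^{\min}\to S$.

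The main obstacle lies in (2): confirming that the surjectivity of the trace-composed map is preserved after applying $f_*$ requires careful use of the finiteness of $f|_{D_T}$ and $\bar f|_{D_{\bar T}}$, and care is needed to handle the separable-but-inseparable-on-residue-fields behavior at pinch points while remaining in the semi-smooth regime. A secondary obstacle in (3) is verifying compatibility of the minimization of $\bar f^{\min}$ with the involution, including at exceptional curves meeting the conductor.
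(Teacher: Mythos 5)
Your proofs of (1) and (3) follow the paper's argument essentially verbatim: for (1) the $S_2$-extension from the big open set where $f$ is an isomorphism, and for (3) the construction of $T_m$ as the quotient of the minimal log resolution of $(\bar S, D_{\bar S})$ followed by descent of the factorization through the quotient map $\bar T\to T$. For (2) you take a genuinely different (though closely related) route. The paper sandwiches $\omega_T$ from below: it uses the injective trace map to produce $0\to \pi_*\omega_{\bar T}\to\omega_T\to\sQ\to 0$ with $\sQ$ supported on $D_T$, so that $R^1f_*\omega_T$ is squeezed between $R^1(f\circ\pi)_*\omega_{\bar T}$ (zero by Grauert--Riemenschneider for the normal surface $\bar T$ over $S$) and $R^1f_*\sQ$ (zero because $f|_{D_T}$ is finite onto its image); no surjectivity of any pushed-forward map is needed. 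You instead sandwich from above via the residue sequence $0\to\omega_T\to\nu_{T*}\omega_{\bar T}(D_{\bar T})\to\omega_{D_T}\to 0$, which forces you to additionally verify the twisted vanishing $R^1\bar f_*\omega_{\bar T}(D_{\bar T})=0$ and the surjectivity of $f_*\nu_{T*}\omega_{\bar T}(D_{\bar T})\to f_*\omega_{D_T}$ --- precisely the step you flag as the main obstacle. That step does go through: $D_{\bar T}\to D_T$ is a finite flat degree-$2$ morphism of regular curves (including at pinch points, where it is ramified but still flat), so $\nu_{T*}\sO_{D_{\bar T}}$ is locally free of rank $2$ containing $\sO_{D_T}$ as a direct summand and the Grothendieck trace is surjective; exactness of $(f|_{D_T})_*$ then finishes it. So your argument is correct, but the paper's choice of exact sequence buys a shorter proof by placing the conductor-supported term at the cokernel end, where only its $R^1f_*$ (trivially zero for support reasons) matters.
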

\begin{proof}
By definition $f$ is an isomorphism over a big open subset of $S$, thus the inclusion $\sO_S\subseteq f_*\sO_T$ is an equality in codimension one. Since $\sO_S$ is $S_2$, equality holds everywhere.

Let $\pi\colon \bar{T}\to T$ be the normalization. The trace map $\pi_*\omega_{\bar{T}}\to\omega_T$ is given by the evaluation at $1$
		$$\pi_*\omega_{\bar{T}}=\Hom_T(\pi_*\sO_{\bar{T}},\omega_T)\overset{\ev_1}{\longrightarrow} \omega_T.$$
Since the characteristic is different from $2$, the normalization is \'{e}tale over codimension one points by \autoref{lemma: basic properties of normalization of demi-normal scheme}. Thus the trace map is injective in codimension one. The pushforward $\pi_*\omega_{\bar{T}}$ is $S_2$ and $\omega_T$ is torsion-free, so the trace map is injective. Thus we obtain an exact sequence 
		$$0\to \pi_*\omega_{\bar{T}}\to \omega_T\to \sQ\to 0,$$
where $\sQ$ is supported on the divisor $D_T=\Sing(T)$. Pushing forward along $f\colon T\to S$ and using that $\pi$ is finite, we obtain the exact sequence
		$$R^1(f\circ\pi)_*\omega_{\bar{T}}\cong R^1f_*(\pi_*\omega_{\bar{T}})\to R^1f_*\omega_T\to R^1f_*\sQ\cong R^1(f|_{D_T})_*\sQ.$$
We have $R^1(f\circ\pi)_*\omega_{\bar{T}}=0$ by \cite[10.4]{Kollar_Singularities_of_the_minimal_model_program} and $R^1(f|_{D_T})_*\sQ=0$ since $f|_{D_T}$ is finite on its image by assumption on $f$. Thus $R^1f_*\omega_T=0$.

Finally, let $\bar{S}\to S$ be the normalization, $\bar{T}_m\to \bar{S}$ the minimal log resolution of $(\bar{S},D_{\bar{S}})$ and $f_m\colon T_m\to S$ be the semi-resolution obtained by gluing $\bar{T}_m$ as in \autoref{theorem:semi_resolution_for_surfaces}. If $f\colon T\to S$ is a semi-resolution, then using \autoref{remark:singularities_of_semi_smooth_surfaces} we see that $\bar{T}\to (\bar{S},D_{\bar{S}})$ is a log resolution. By minimality of $\bar{T}_m$ the morphism $\bar{T}\to \bar{S}$ factors through $\bar{T}_m$. Since $\pi\colon \bar{T}\to T$ is a quotient, it is easy to see that we obtain a commutative diagram
		$$\begin{tikzcd}
		\bar{T}\arrow[r]\arrow[d, "\pi"] & \bar{T}_m\arrow[r]\arrow[d] & \bar{S}\arrow[d] \\
		T \arrow[r, "h"] & T_m\arrow[r, "f_m"] & S
		\end{tikzcd}$$ 
So both $f_m\circ h$ and $f$ give a factorization of $\bar{T}\to \bar{S}\to S$ through $T$. Using again that $\bar{T}\to T$ is a quotient, we deduce that $f_m\circ h=f$. This shows minimality of $T_m$ amongst semi-resolutions.
\end{proof}

\begin{remark}[Minimal good semi-resolutions for slc singularities]
Let $(s\in \bar{S},D_{\bar{S}})$ be a germ of lc surface with non-empty reduced boundary $D_{\bar{S}}$, over an algebraically closed field of characteristic $\neq 2$. Let $(\bar{T},D_{\bar{T}}+E)$ be the minimal log resolution. The possible dual graphs of the resolution are listed in \cite[Reminder 9]{Kollar_Log_plurigenera_for_stable_surface_families}. The only case where a component of $E$ has two intersection points with $D_{\bar{T}}$ is: when $D_{\bar{S}}$ has a node at $s$ and $E$ is irreducible \cite[9.2]{Kollar_Log_plurigenera_for_stable_surface_families}. If $D_{\bar{S}}^n$ is endowed with an involution $\tau$ and if the two intersection points are conjugate, then the image of $E$ is $\bar{T}/R(\tau)$ will be a nodal curve. This is the only case where gluing the minimal log resolution gives a semi-resolution that fails to be a good semi-resolution.

Notice that in this case the boundary $D_{\bar{S}}$ has a node at $s$ and that $E$ is an lc place of $(\bar{S},D_{\bar{S}})$ \cite[2.31]{Kollar_Singularities_of_the_minimal_model_program}. A log resolution is a also a resolution of the underlying surface, so $(s\in\bar{S})$ is at worse an $A_1$-singularity.

We can get a good semi-resolution by blowing-up either one of the two intersection points of $E$ and $D_{\bar{T}}$, and then glue along the involution. This gives two good semi-resolutions, neither of which dominates the other.

A concrete example can be constructed as follows. Take the quadratic cone $X=V(xz-y^2)\subset\bA^3$, it has an $A_1$-singularity at the vertex that is resolved by blowing-up once. Consider the two lines $L_1=(x=y=0)$ and $L_2=(z=y=0)$. Then $(X,L_1+L_2)$ is an lc surface and blowing-up the vertex gives a log resolution. Since $L_1\cong\bA^1\cong L_2$, we can find an isomorphism $\tau\colon L_1\cong L_2$ sending the vertex to the vertex, and by \autoref{theorem:gluing_for_surfaces_p_different_from_2} (or \autoref{remark:gluing_works_in_char_2_for_separable_nodes} in characteristic $2$) we can glue $X$ along $\tau$ to obtain an slc surface which does not have a minimal good semi-resolution.

Therefore an slc surface singularity $(s\in S)$ has a minimal good semi-resolution if and only if there is no preimage $\bar{s}$ in the normalization $\bar{S}$ such that: $(\bar{s}\in\bar{S})$ is at worse an $A_1$-singularity, an lc center of $(\bar{S},D_{\bar{S}})$, and the two preimages of $\bar{s}$ in $D_{\bar{S}}$ are $\tau$-conjugate.
\end{remark}

\begin{remark}\label{remark:semi_resolution_in_char_2}
There are two obstacles to extend \autoref{theorem:semi_resolution_for_surfaces} in characteristic $2$. First of all we must choose whether the semi-resolution $T\to S$ is an isomorphism over the inseparable nodes of $S$. If we want a local isomorphism, we must apply \autoref{construction: construction of inseparable node} to a regular curve on $\bar{T}$. Then it seems difficult to provide a formal description of the local rings at the closed points of the inseparable-nodal locus of $T$. 
%In view of \autoref{proposition:etale_models_of_nodes} one could hope that they look like the pinch point of \autoref{example: inseparable nodes}.

The second difficulty stems from the failure of \autoref{lemma:involution_of_power_series} in characteristic $2$, as demonstrated by the $\mathbb{F}_2$-linear involution of $\bF_2[[t]]$ given by $t\mapsto (1+t^2+t^3+\dots )t$. Thus the treatment of $\tau$-fixed points in the proof of \autoref{lemma:quotient_of_log_smooth_gives_semi_smooth} becomes problematic. In principle one could use Artin--Schreier theory to classify involutions on one-dimensional power series, see e.g. the example at the end of \cite{Artin_Wildly_ramified_Z/2_action_in_dim_2} for the linear cases. This leads to surface singularities that are not normal crossing or pinch points, for example $(x^2+zy^2+xyz^r=0)\subset \bA^3$ for $r\geq 1$.
\end{remark}

We can nonetheless prove a weaker semi-resolution statement for surfaces with only separable nodes:

\begin{proposition}\label{proposition:semi_resolution_in_char_2}
Let $S$ be a demi-normal surface with only separable nodes over an arbitrary field. Then there exists a proper birational morphism $f\colon T\to S$ such that
	\begin{enumerate}
		\item $T$ is slc $2$-Gorenstein with regular conductor $D_T$;
		\item $f$ is an isomorphism over a big open subset of $f$;
		\item no component of $D_T$ is $f$-exceptional;
		\item each component of $\Exc(f)$ is regular and intersects $D_T$ transversally, and $\Exc(f)$ has only normal crossings.
	\end{enumerate}
\end{proposition}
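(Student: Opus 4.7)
The strategy is to take a nice resolution of the normalization and glue back via the induced involution. Let $\nu\colon \bar S\to S$ be the normalization with conductor $\bar D$; since all nodes are separable, \autoref{lemma:normalization_gives_involution} yields a generically fixed point free involution $\tau$ of $\bar D^n$. Using embedded resolution of surfaces applied to $(\bar S,\bar D)$, fix a proper birational $\bar f\colon \bar T\to \bar S$ with $\bar T$ regular, $D_{\bar T}:=\bar f_*^{-1}\bar D$ regular (hence identified with $\bar D^n$), every component of $\Exc(\bar f)$ regular, and $\Exc(\bar f)\cup D_{\bar T}$ simple normal crossings. Via the identification $D_{\bar T}\cong \bar D^n$, the involution $\tau$ acts on $D_{\bar T}$.

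The induced equivalence relation $R(\tau)\subset \bar T\times \bar T$ is trivial off $D_{\bar T}$ and has classes of size at most two on $D_{\bar T}$, hence is finite. \autoref{prop:finite_equivalence_relation_give_nodes} then produces the geometric quotient $q\colon \bar T\to T$, which realises $T$ as a demi-normal surface with only separable nodes, conductor $D_T=q(D_{\bar T})=D_{\bar T}/\tau$, and normalization $q$. The quotient curve $D_T$ is regular, being a one-dimensional normal scheme. Since $\nu\circ \bar f$ is $R(\tau)$-invariant (as $\nu$ itself factors $\tau$ at the generic points of $\bar D^n$), it descends to a unique proper birational $f\colon T\to S$.

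The pair $(\bar T,D_{\bar T})$ is log smooth, hence lc with zero different, which is trivially $\tau$-invariant; combining \autoref{corollary:descent_in_codim_2} with the $2$-Gorenstein property below will give slc-ness of $(T,0)$. For the $2$-Gorenstein property, \autoref{proposition:descend_of_sections_along_normalization} reduces the task to producing, at each $p\in D_{\bar T}$, a local generator of $\omega_{\bar T}(D_{\bar T})^{\otimes 2}$ whose Poincaré residue in $\omega_{D_{\bar T}}^{\otimes 2}$ is $\tau$-invariant. Off the finite set of $\tau$-fixed points a standard symmetrization works. At a fixed point $p$, pick coordinates with $\widehat{\sO_{\bar T,p}}=k[[x,y]]$ and $D_{\bar T}=\{y=0\}$, and write $\tau(x)=\sigma(x)\in k[[x]]$. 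The involution relation $\sigma\circ \sigma=\id$ forces the linear coefficient $a_1$ of $\sigma$ to satisfy $a_1^2=1$, and in characteristic $2$ this forces $a_1=1$, so $\sigma'$ is a unit. Differentiating the same identity gives $\sigma'(\sigma(x))\sigma'(x)=1$, whence
\[
\tau^*\bigl(\sigma'(x)(dx)^{\otimes 2}\bigr)=\sigma'(\sigma(x))\sigma'(x)^2(dx)^{\otimes 2}=\sigma'(x)(dx)^{\otimes 2}.
\]
Lifting $\sigma'(x)$ to a unit $u\in \sO_{\bar T,p}$, the section $u\cdot (dx\wedge dy/y)^{\otimes 2}$ is the desired $\tau$-invariant local generator, which descends to a local generator of $\omega_T^{[2]}$ at $q(p)$.

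Finally, conditions (2)--(4) follow routinely: $f$ is an isomorphism over the common big open locus on which $\nu$ and $\bar f$ are both isomorphisms, no component of $D_T$ is $f$-exceptional because $D_{\bar T}$ surjects onto $\bar D$, and the regularity and normal crossings conditions for $\Exc(f)\cup D_T$ are inherited from those of $\Exc(\bar f)\cup D_{\bar T}$, using that $q$ is an isomorphism off $D_{\bar T}$ and that each component of $\Exc(\bar f)$ intersects $D_{\bar T}$ in a finite set on which $q$ is bijective. The principal obstacle throughout is the $2$-Gorenstein step at $\tau$-fixed points: \autoref{lemma:involution_of_power_series}, which in odd characteristic diagonalises $\tau$ to $x\mapsto -x$ and makes $(dx)^{\otimes 2}$ manifestly $\tau$-invariant, is unavailable in characteristic $2$, and the derivative factor $\sigma'$ is what cleanly replaces it.
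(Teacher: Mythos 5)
Your construction is the same as the paper's: resolve the normalization, glue back along $R(\tau)$ via \autoref{prop:finite_equivalence_relation_give_nodes}, and reduce everything to the $2$-Gorenstein property at the finitely many bad points of the conductor. Where you diverge is precisely at that step, which is the only step the paper flags as not carrying over from \autoref{theorem:semi_resolution_for_surfaces}: the paper disposes of it by invoking the proof of Koll\'ar's descent of the doubled log canonical divisor, which works over any field because of \autoref{corollary:descent_in_codim_2} (ultimately \autoref{lemma:trace_map}); you instead produce an explicit formal-local generator of $\omega_{\bar{T}}^{[2]}(2D_{\bar{T}})$ with $\tau$-invariant residue at each fixed point, using the identity $\sigma'(\sigma(x))\,\sigma'(x)=1$ to twist $(dx)^{\otimes 2}$ by the unit $\sigma'$. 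This is a correct and rather illuminating substitute: it makes visible why the exponent $2$ suffices even though \autoref{lemma:involution_of_power_series} fails in characteristic $2$, whereas the paper's citation hides this inside Koll\'ar's argument. Three small points you should tighten. First, at a $\tau$-fixed closed point the involution need not be linear over the residue field, so the correct relation is $\sigma^{\iota}\circ\sigma=\mathrm{id}$ with $\iota$ the induced residue-field automorphism; your derivative computation survives unchanged since differentiation commutes with twisting coefficients by $\iota$, and one still gets that $\sigma'$ is a unit. Second, ``symmetrization'' off the fixed points cannot mean averaging in characteristic $2$; what you want is to prescribe the residue on one branch and transport it to the other branch by $\tau$, which is exactly what \autoref{proposition:descend_of_sections_along_normalization} permits. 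Third, having descended a section $s$ of $\omega_T^{[2]}$ that generates in codimension one, you should record that $\sO_T\cdot s=\omega_T^{[2]}$ because both sheaves are $S_2$ and agree on a big open set; this is the step that converts your residue computation into invertibility.
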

\begin{proof}
We repeat the construction of \autoref{theorem:semi_resolution_for_surfaces}. The only part of the proof of that does not extend in our generality is the study of the ring-theoretic singularities of $T$. By \autoref{prop:finite_equivalence_relation_give_nodes}, $T$ is a demi-normal surface. Write $D_{\bar{T}}:=\bar{f}^{-1}_*\bar{D}$. Then $(\bar{T},D_{\bar{T}})$ is lc (because it is log smooth) and $\omega_{\bar{T}}(D_{\bar{T}})|_{D_{\bar{T}}}=\omega_{D_{\bar{T}}}$ is $\tau$-invariant. Hence we may apply the proof of \cite[5.38]{Kollar_Singularities_of_the_minimal_model_program}, which works over any field as explained in \autoref{corollary:descent_in_codim_2}, and get that $\omega_T^{[2]}$ is invertible. Thus $T$ is $2$-Gorenstein and slc.
\end{proof}

\subsection{Embeddings of demi-normal schemes}

In this subsection, we prove the following theorem, which is a positive characteristic analog of the main result of \cite{Berquist_Embedding_of_demi_normal_varieties}.
\begin{theorem}\label{thm:embedding_in_demi_normal}
Let $U$ be a demi-normal scheme that is separated and of finite type over a scheme $B$ that is of finite type and separated over an arbitrary field of positive characteristic. Then there exists a demi-normal $\overline{U}$ that is proper over $B$, and a commutative diagram
		$$\begin{tikzcd}
		U\arrow[dr]\arrow[rr, hook, "j"] && \overline{U}\arrow[dl] \\
		& B &
		\end{tikzcd}$$
where $j$ is a dominant open embedding. In addition:
	\begin{enumerate}
		\item We can arrange so that the singular codimension one points of $\overline{U}$ are all contained in $U$.
		\item If $U$ is quasi-projective over $B$, then we can arrange so that $\overline{U}$ is projective over $B$.
	\end{enumerate}
\end{theorem}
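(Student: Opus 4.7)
The plan is to imitate Kollár's gluing theory in a relative setting: compactify the normalization data, then reassemble a demi-normal compactification of $U$. By \autoref{proposition:structure_of_normalization_of_demi_normal_scheme}, the normalization of $U$ factors as $\bar{U}\xrightarrow{\nu}\tilde{U}\xrightarrow{F}U$, where $\tilde{U}$ has only separable nodes and $F$ encodes the inseparable ones through \autoref{construction: construction of inseparable node}. So I proceed in two stages: first build a proper demi-normal compactification $\overline{\tilde{U}}$ of $\tilde{U}$ with only separable nodes, then reapply \autoref{construction: construction of inseparable node} on the closure of the inseparable conductor to produce $\overline{U}$.

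For the first stage, let $\bar{D}_G\subset \bar{U}$ be the conductor of $\nu$ with normalization $\bar{D}_G^n$ carrying the involution $\tau$, and set $D_G^n:=\bar{D}_G^n/\tau$. Apply Nagata's compactification theorem to get a proper normal $\overline{W}/B$ containing $\bar{U}$ as a dense open subset (normalize after compactifying, which is allowed since $U$ is excellent). Separately, compactify $D_G^n$ to a proper normal $\bar{Q}/B$, and define $\bar{E}$ as the normalization of $\bar{Q}$ in the finite separable field extension $k(\bar{D}_G^n)\supset k(D_G^n)$; the action of $\tau$ then extends uniquely to $\bar{E}$, and $\bar{E}$ contains $\bar{D}_G^n$ as a $\tau$-stable open subset. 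To produce a morphism $\bar{E}\to\overline{W}$ extending the inclusion $\bar{D}_G^n\hookrightarrow \bar{U}\subset\overline{W}$, I take the closure $\Gamma$ of the graph of this inclusion inside $\bar{E}\times_B\overline{W}$, replace $\bar{E}$ by the normalization of $\Gamma$ (which inherits the involution by uniqueness of normalization), and project to $\overline{W}$. Let $\bar{D}_V\subset \overline{W}$ be the image divisor. Provided the induced equivalence relation $R(\tau)$ on $\overline{W}$ is finite, \autoref{prop:finite_equivalence_relation_give_nodes} yields a proper demi-normal quotient $\overline{\tilde{U}}:=\overline{W}/R(\tau)$ over $B$ with only separable nodes, and by the universal property $\tilde{U}$ embeds as an open dense subset.

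For the second stage, let $\bar{D}_I\subset \overline{\tilde{U}}$ be the closure of the inseparable conductor $D_I\subset U$. Apply \autoref{construction: construction of inseparable node} with the purely inseparable degree-$2$ extensions $k_i'\subset k(D_i)=k(\bar{D}_{I,i})$ attached to $U$ (the generic points of components of $\bar{D}_I$ coincide with those of $D_I$). By \autoref{theorem: bijection for inseparable nodes}, the resulting $\overline{U}$ is a proper demi-normal scheme over $B$, and it contains $U$ as an open subset since the construction agrees with $U$ on the open locus where $\tilde{U}$ already sits. For claim (1): the conductor of $\overline{U}$ is exactly the union of the closures of the components of the conductor of $U$, whose generic points all lie in $U$; so no new codimension-one singular points appear in $\overline{U}\setminus U$. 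For claim (2): if $U$ is quasi-projective over $B$, one chooses a projective Nagata compactification $\overline{W}$ and a projective $\bar{Q}$. Projectivity of the separable quotient follows by descending an ample line bundle via the $\tau$-invariant norm construction, and projectivity of the inseparable compactification follows from \autoref{proposition: Properties of subsheaf III} together with the finiteness of the morphism $\overline{\tilde{U}}\to \overline{U}$ and ampleness descent along finite surjective morphisms.

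The main technical obstacle is the finiteness of the equivalence relation $R(\tau)$ on $\overline{W}$. On $\bar{U}$ the relation is automatically finite -- it is induced by the finite normalization $\bar{U}\to \tilde{U}$ -- but its extension to $\overline{W}$ is controlled by how $\bar{D}_V$ and its self-intersections behave in the boundary $\overline{W}\setminus \bar{U}$: new boundary strata could in principle force the $\tau$-orbits to grow without closing up. The remedy is to replace $\overline{W}$ by a larger normal compactification where the relevant $\tau$-orbits close up, constructed by iteratively taking closures of $\tau$-orbits of components of $\bar{D}_V$ and renormalizing; Noetherianity ensures the process terminates. Once finiteness is secured, the rest is largely bookkeeping using the structure theorems of \autoref{section:construction_of_separable_nodes} and \autoref{section:inseparable_nodes}.
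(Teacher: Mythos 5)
There is a genuine gap, and you have put your finger on it yourself: the finiteness of $R(\tau)$ on your compactification $\overline{W}$ of the normalization. Your proposed remedy --- iteratively taking closures of $\tau$-orbits of components of $\bar{D}_V$ and renormalizing, with ``Noetherianity ensures the process terminates'' --- is not a proof. The danger is not that finitely many boundary components need to be added, but that transitivity in the generated equivalence relation can produce infinite equivalence classes of closed points even though the generating correspondence is finite over $\overline{W}$; this is exactly the phenomenon of \cite[5.17]{Kollar_Singularities_of_the_minimal_model_program}, and Noetherianity of $\overline{W}$ gives no termination statement about orbits of points. (There are further friction points in your construction: the projection from the graph closure $\Gamma\subset\bar{E}\times_B\overline{W}$ to $\overline{W}$ need not be finite, and $\Gamma$ need not be $\tau$-stable, since the closure of the graph of $n$ and the closure of the graph of $n\circ\tau$ are different subschemes; so even producing a $\tau$-equivariant finite $\bar{E}\to\overline{W}$ requires more care than stated.)

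The paper avoids all of this by reversing the order of operations: apply Nagata to $U$ itself (not to $U^n$) to get a proper $V/B$, and only then normalize, obtaining $V^n$ containing $U^n$ as a dense open with the closure $\bar{D}$ of the conductor. Because the involution $\tau$ commutes with the projection to $V$ on a dense open of $\bar{D}_G^n$, it does so everywhere, hence $R(\tau)$ is contained in $V^n\times_V V^n$, which is finite over $V^n$ since $V^n\to V$ is finite; finiteness of $R(\tau)$ is then automatic (``it arises from a normalization''), with no positivity or orbit-closure argument needed. One then applies \autoref{prop:finite_equivalence_relation_give_nodes} and \autoref{theorem: bijection for inseparable nodes} exactly as in your stage two, and properness of $\overline{U}$ follows by checking that $V^n\to V$ factors through $\overline{U}$ and that $\overline{U}\to V$ is finite (which also gives projectivity in the quasi-projective case by pulling back an ample divisor, rather than by a norm construction). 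If you want to salvage your architecture, the essential missing idea is to arrange from the start that the gluing relation lives over a proper scheme receiving a finite map from your compactified normalization; compactifying $U$ first is the cleanest way to do that.
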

\begin{proof}
By Nagata compactification \cite[0F41]{Stacks_Project} we can find a proper $V$ over $B$ such that $U$ embeds as a dense open $B$-subscheme of $V$. Let $n\colon V^n\to V$ be the normalization. Let $\bar{D}$ be the closure of the conductor $D_{U^n}\subset U^n$. Since $U$ is demi-normal we can write $\bar{D}=\bar{D}_G+\bar{D}_I$, where $\bar{D}_G$ is the preimage of the separable nodes and $\bar{D}_I$ is the preimage of the inseparable ones. We have an involution $\tau$ of $\bar{D}_G^n$ over $V$, and for every generic point $\eta\in \bar{D}_I$ the data of a purely inseparable degree $2$ sub-extension of $k(\eta)$. Notice that $R(\tau)\rightrightarrows V^n$ is finite, since it arises from a normalization. Thus we may apply \autoref{prop:finite_equivalence_relation_give_nodes} and \autoref{theorem: bijection for inseparable nodes} to obtain a finite morphism $V^n\to \overline{U}$ where $\overline{U}$ is demi-normal. The image of $U^n$ is open dense and isomorphic to $U$, and the codimension one singular points of $\overline{U}$ are contained in $U$ by construction.

We show that $\overline{U}$ is proper over $B$. The morphism $V^n\to\overline{U}$ can be decomposed as
		$$V^n\overset{q_1}{\longrightarrow} \tilde{U}\overset{q_2}{\longrightarrow} \overline{U}$$
where $q_1$ is the quotient by $R(\tau)$, and $q_2$ factors the Frobenius. Since $\tau$ commutes with the projection to $V$ on the dense open subset $D_{U^n}^n$, it does globally, and so $V^n\to V$ factors as $V^n\to \tilde{U}\overset{h}{\to} V$. The construction of $\sO_{\overline{U}}\subset \sO_{\tilde{U}}$ is given by \autoref{construction: construction of inseparable node} and it is easy to verify that $\sO_V\subset h_*\sO_{\tilde{U}}$. Thus $\tilde{U}\to V$ factors through $\overline{U}$. Since $V^n\to V$ and $V^n\to \overline{U}$ are finite, by \cite[01YQ]{Stacks_Project} and \cite[Theorem 1]{Artin_Tate_Note_on_finite_extensions} we see that $\overline{U}\to V$ is finite. In particular $\overline{U}$ is proper over $B$.

Finally, assume that $U$ is quasi-projective over $B$. Then instead of an arbitrary proper compactification, we can simply take $V$ to be a projective closure of $U$ over $B$. Let $H$ be a Cartier divisor on $V$ that is ample over $B$. Its pullback $H_{\overline{U}}$ is also ample over $B$, thus $\overline{U}$ is projective over $B$.
\end{proof}

\section{Gluing theory for surfaces}\label{section:gluing_for_surfaces}

\subsection{Gluing theory for surfaces in characteristic $> 2$}
In this section we prove the analog of \cite[5.13]{Kollar_Singularities_of_the_minimal_model_program} for surfaces in positive characteristic different from $2$. 

\begin{theorem}\label{theorem:gluing_for_surfaces_p_different_from_2}
Let $k$ be a field of characteristic $> 2$. Then normalization gives a one-to-one correspondence
	\begin{equation*}
		(\Char k > 2) \quad
		\begin{pmatrix}
		\text{Slc surface pairs } (S,\Delta)\\
		\text{of finite type over }k
		\end{pmatrix}
		\overset{1:1}{\longrightarrow}
		\begin{pmatrix}
		\text{Lc surface pairs } (\bar{S},\bar{D}+\bar{\Delta})  \\
		\text{of finite type over }k \\
		\text{plus an involution }\tau\text{ of }(\bar{D}^n,\Diff_{\bar{D}^n}\bar{\Delta})\text{ that is}\\
		\text{generically fixed point free on every component.}
		\end{pmatrix}
		\end{equation*}
	Moreover, $S$ is proper if and only if $\bar{S}$ is proper, and $K_S+\Delta$ is ample if and only if $K_{\bar{S}}+\bar{D}+\bar{\Delta}$ is ample.
\end{theorem}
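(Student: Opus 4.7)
The forward direction is immediate: if $(S,\Delta)$ is slc then by definition the normalization $(\bar{S},\bar{D}+\bar{\Delta})$ is lc, and since $\Char k\neq 2$, every node of $S$ is separable in the sense of \autoref{definition:(in)separable_node} by \autoref{lemma: basic properties of normalization of demi-normal scheme}, so \autoref{lemma:normalization_gives_involution} supplies a log involution $\tau$ of $(\bar{D}^n,\Diff_{\bar{D}^n}\bar{\Delta})$. Generic fixed-point freeness on each component of $\bar{D}^n$ is automatic because $\tau$ is, generically, the non-trivial Galois involution of an étale double cover.

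For the inverse direction, the plan is to follow the strategy described in \autoref{section:Kollar_gluing_theory}, adapting the results of \cite[\S 5.3]{Kollar_Singularities_of_the_minimal_model_program}. Given a triple $(\bar{S},\bar{D}+\bar{\Delta},\tau)$ as on the right-hand side, the central point will be to verify that the set-theoretic equivalence relation $R(\tau)\rightrightarrows\bar{S}$ is finite. Since $R(\tau)$ is the diagonal away from $\bar{D}$, only the classes on $\bar{D}$ require attention, and here the one-dimensionality of $\bar{D}^n$ combined with adjunction is decisive: the closed points of $\bar{D}^n$ which are not zero-dimensional lc centers of $(\bar{D}^n,\Diff_{\bar{D}^n}\bar{\Delta})$ are sent bijectively by $n\colon \bar{D}^n\to\bar{D}$ onto their images, so their $R(\tau)$-class is simply their $\tau$-orbit, of size at most two; the zero-dimensional lc centers form a finite $\tau$-stable set, and each of their equivalence classes involves only finitely many closed points of $\bar{D}^n$.

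Once $R(\tau)$ is known to be finite, \cite[Theorem 6]{Kollar_Quotients_by_finite_equivalence_relations} produces the geometric quotient $\pi\colon \bar{S}\to S:=\bar{S}/R(\tau)$ as a $k$-scheme of finite type, and \autoref{prop:finite_equivalence_relation_give_nodes} gives that $S$ is demi-normal, $\pi$ is its normalization morphism, $\pi(\bar{D})$ is the conductor, and the induced involution on $\bar{D}^n$ is exactly $\tau$. Setting $\Delta:=\pi_*\bar{\Delta}$, the $\tau$-invariance of $\Diff_{\bar{D}^n}\bar{\Delta}$ combined with \autoref{corollary:descent_in_codim_2} implies that $(S,\Delta)$ is slc away from a finite collection of closed points. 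To globalize the $\bQ$-Cartier property of $K_S+\Delta$ I would apply \autoref{proposition:descend_of_sections_along_normalization} and descend local generators of $\omega_{\bar{S}}^{[m]}(m\bar{D}+m\bar{\Delta})$ for $m$ even and sufficiently divisible, exploiting the fact that their Poincaré residues can be chosen $\tau$-invariant by the hypothesis on $\tau$.

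Mutual inverseness of the two constructions is then a bookkeeping exercise using the push-out description of normalization in \autoref{prop:finite_equivalence_relation_give_nodes}. Properness of $S$ is equivalent to properness of $\bar{S}$ because $\pi$ is finite and surjective, and ampleness of $K_S+\Delta$ is equivalent to ampleness of its pullback $K_{\bar{S}}+\bar{D}+\bar{\Delta}$ for the same reason. The main obstacle I expect is the $\bQ$-Cartier descent at the isolated points not covered by \autoref{corollary:descent_in_codim_2}: at each such point one must exhibit a $\tau$-compatible local generator of $\omega_{\bar{S}}^{[m]}(m\bar{D}+m\bar{\Delta})$, which is precisely what the author refers to as ``slightly indirect'' in the semi-local situation and which presumably makes the transition between the proper and semi-local statements delicate.
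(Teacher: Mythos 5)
Your forward direction, your finiteness argument for $R(\tau)$, the construction of the quotient via \autoref{prop:finite_equivalence_relation_give_nodes}, and the properness/ampleness statements all track the paper's proof. In particular, using the zero-dimensional lc centers of $(\bar{D}^n,\Diff_{\bar{D}^n}\bar{\Delta})$ as the finite $\tau$-stable set outside of which the equivalence classes are just $\tau$-orbits is a harmless variant of the paper's choice $\Sigma=\Supp\Diff_{\bar{D}^n}\bar{\Delta}$ (via \cite[2.35]{Kollar_Singularities_of_the_minimal_model_program}): both sets are finite, $\tau$-stable, saturated under $n$, and contain every preimage of a multibranch point of $\bar{D}$, which is all the argument needs.

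The genuine gap is exactly where you flag an ``obstacle'': the descent of the $\bQ$-Cartier property of $K_S+\Delta$ at closed points. Your plan --- get slc away from finitely many points from \autoref{corollary:descent_in_codim_2}, then produce $\tau$-compatible local generators at the remaining points using \autoref{proposition:descend_of_sections_along_normalization} --- cannot be completed with that tool: \autoref{proposition:descend_of_sections_along_normalization} is a residue criterion at the \emph{generic} points of $\bar{D}$ only, so it tells you which sections descend but gives no control on whether a descending section generates the reflexive sheaf $\omega_S^{[m]}(m\Delta)$ at a given closed point; and a reflexive rank-one sheaf on a demi-normal surface can genuinely fail to be invertible at a single closed point (local class groups of lc surface germs need not be torsion), so the issue is not a formality. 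The paper's route is different: \autoref{corollary:descent_in_codim_2} is the positive-characteristic version of Koll\'ar's codimension-two descent statement \cite[5.18]{Kollar_Singularities_of_the_minimal_model_program}, whose content --- as it is used both here and in the threefold case \autoref{prop:descent_of_canonical_bundle_in_dim_3} --- is that $\tau$-invariance of $\Diff_{\bar{D}^n}\bar{\Delta}$ forces $K_S+\Delta$ to be $\bQ$-Cartier at every point of codimension $\le 2$. On a surface that is every point, so there are no leftover isolated points at all; the underlying input is the classification of two-dimensional lc germs with reduced boundary, not a residue computation. (Only in dimension $\ge 3$ do closed points survive this step, and that is where Seifert bundles and pluricanonical representations enter.) Separately, the ``slightly indirect'' remark in the introduction refers to the semi-local version \autoref{theorem:gluing_for_germs_of_surfaces}, where the subtlety is realizing the quotient as a scheme essentially of finite type over $k$, not to the $\bQ$-Cartier descent.
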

\begin{proof}
Given $(\bar{S},\bar{D}+\bar{\Delta},\tau)$, we claim that the equivalence relation $R(\tau)\subset \bar{S}\times_k \bar{S}$ is finite. It suffices to show that the equivalence classes of the points on $\bar{D}$ are finite. By \cite[2.35]{Kollar_Singularities_of_the_minimal_model_program}, the closed subset $\Sigma=\Supp\Diff_{\bar{D}^n}\bar{\Delta}$ is equal to the preimage through $n\colon\bar{D}^n\to\bar{D}$ of the set of points $s\in \bar{D}$ such that: $s\in D\cap \Supp(\bar{\Delta})$, or $\bar{D}$ is singular at $s$, or $\bar{S}$ is singular at $s$. Since $\tau$ is an involution of the pair $(\bar{D}^n,\Diff_{\bar{D}^n}\bar{D})$, we have $\tau(\Sigma)=\Sigma$. Moreover $n^{-1}(n(\Sigma))=\Sigma$ because $\Sigma$ is a preimage through $n$, and $n\colon \bar{D}^n-\Sigma\longrightarrow \bar{D}-n(\Sigma)$ is an isomorphism. Therefore:
	\begin{enumerate}
		\item if $s\in \bar{D}-n(\Sigma)$, then the equivalence class of $s$ is equal to $\{s,(n\circ\tau)(s')\}$ where $s'\in\bar{D}^n$ is the unique point above $s$;
		\item if $s\in n(\Sigma)$ then the equivalence class of $s$ is contained in $n(\Sigma)$, which is finite.
	\end{enumerate}
This proves our claim. By \autoref{prop:finite_equivalence_relation_give_nodes}, the geometric quotient $\pi\colon \bar{S}\to \bar{S}/R(\tau)=:S$ exists. It is a demi-normal surface over $k$, and is proper if and only if $\bar{S}$ is proper.

Set $\Delta:=\pi_*\bar{\Delta}$. \autoref{lemma:trace_map} shows that the proof of \cite[5.18]{Kollar_Quotients_by_finite_equivalence_relations} is also valid over imperfect fields. Thus $K_S+\Delta$ is $\bQ$-Cartier. Since $\pi$ is an isomorphism from $\bar{S}-\bar{D}$ onto $S-\pi(\bar{D})$ and $\Delta$ has no components along $\pi(\bar{D})$, it follows from \autoref{lemma: basic properties of normalization of demi-normal scheme} that $\pi^*(K_S+\Delta)\sim_\bQ K_{\bar{S}}+\bar{D}+\bar{\Delta}$.

This gives a map $\Phi\colon \{(\bar{S},\bar{D}+\bar{\Delta},\tau)\}\to \{(S,\Delta)\}$, while the normalization gives a map $\nu$ in the other direction. We have $\nu\circ\Phi=\Id$ by \autoref{prop:finite_equivalence_relation_give_nodes}, and $\Phi\circ \nu=\Id$ by \autoref{proposition:structure_of_normalization_of_demi_normal_scheme}. Therefore the theorem is proved.
\end{proof}

\begin{remark}\label{remark:proof_in_dim_2}
The key point of the proof of \autoref{theorem:gluing_for_surfaces_p_different_from_2} is to find the $\tau$-invariant proper subset $\Sigma\subset \bar{D}^n$ with the property that $D^n-\Sigma\longrightarrow \bar{D}-n(\Sigma)$ is an isomorphism. Its existence is given by the fact that $\tau$ preserves the lc stratification of $(\bar{D}^n,\Diff_{\bar{D}^n}\bar{\Delta})$. The fact that $\dim \bar{S}=2$ then implies that $n(\Sigma)$ is finite, from which we deduce finiteness of the equivalence relation. In particular we do not need any positivity assumption on $K_{\bar{S}}+\bar{D}+\bar{\Delta}$ to conclude. 

If $\dim \bar{S}>2$ then the proof hints at an inductive process on the lc strata of $(\bar{D}^n,\Diff_{\bar{D}^n}\bar{\Delta})$. This seems dubious at first, since we have no control on the singularities of the higher codimension strata. In characteristic $0$, the theory of source and springs of lc centers \cite[\S 4.5]{Kollar_Singularities_of_the_minimal_model_program} provides a well-behaved replacement for the strata. In \autoref{section: springs and sources}, we will develop a similar theory for threefolds in positive characteristic. However, it seems that positivity assumptions are needed to ensure that finiteness holds in higher codimension.
\end{remark}

\subsection{Gluing theory for surfaces in characteristic $2$}
We prove the analog of \cite[5.13]{Kollar_Singularities_of_the_minimal_model_program} for surfaces in characteristic $2$. It is given by a combination of \autoref{theorem: bijection for inseparable nodes} and the method of \autoref{theorem:gluing_for_surfaces_p_different_from_2}.

\begin{lemma}\label{remark:gluing_works_in_char_2_for_separable_nodes}
Let $(\bar{S},\bar{D}+\bar{\Delta})$ be an lc surface pair over of finite type a field $k$ of characteristic $2$, and $\tau$ a log involution of $(\bar{D}^n,\Diff_{\bar{D}^n}(\bar{\Delta})$. Then:
	\begin{enumerate}
		\item $R(\tau)\rightrightarrows \bar{S}$ is finite, and induces a finite quotient morphism $\pi\colon \bar{S}\to S$;
		\item $(S,\pi_*\bar{\Delta})$ is slc of finite type over $k$, with normalization $(\bar{S},\bar{D}+\bar{\Delta},\tau)$.
	\end{enumerate}
\end{lemma}
\begin{proof}
We may apply the same gluing process as in \autoref{theorem:gluing_for_surfaces_p_different_from_2}. Indeed, \cite[2.35]{Kollar_Singularities_of_the_minimal_model_program} and \autoref{prop:finite_equivalence_relation_give_nodes} hold in every positive characteristic, and descent of the log canonical divisor holds in dimension two by \autoref{corollary:descent_in_codim_2}.
\end{proof}

\begin{theorem}\label{theorem:gluing_for_surfaces_in_char_2}
Let $k$ be a field of characteristic $2$. Then normalization gives a one-to-one correspondence
		\begin{equation*}
		(\Char k = 2) \quad
		\begin{pmatrix}
		\text{Slc surface pairs }(S,\Delta)\\
		\text{of finite type over }k
		\end{pmatrix}
		\overset{1:1}{\longrightarrow}
		\begin{pmatrix}
		\text{Lc surface pairs } (\bar{S},\bar{D}_\emph{Gal}+\bar{D}_\emph{ins}+\bar{\Delta})  \\
		\text{of finite type over }k \\
		\text{plus an involution }\tau\\
		\text{of }(\bar{D}_\emph{Gal}^n,\Diff_{\bar{D}_\emph{Gal}^n}(\bar{\Delta}+\bar{D}_\emph{ins})) \text{ that is}\\
		\text{generically fixed point free on every component.}
		\end{pmatrix}
		\end{equation*}
Moreover, $S$ is proper if and only if $\bar{S}$ is proper, and $K_S+\Delta$ is ample if and only if $K_{\bar{S}}+\bar{D}_\emph{Gal}+\bar{D}_\emph{ins}+\bar{\Delta}$ is ample.
\end{theorem}
\begin{proof}
We construct an inverse map in two steps. First of all, thanks to \autoref{remark:gluing_works_in_char_2_for_separable_nodes} we can apply the method of \autoref{theorem:gluing_for_surfaces_p_different_from_2} to the triplet $(\bar{S},\bar{D}_\text{Gal}+\bar{\Gamma},\tau)$ where $\bar{\Gamma}:=\bar{D}_\text{ins}+\bar{\Delta}$. We obtain an slc pair $(\tilde{S}, \tilde{\Gamma}=\tilde{D}_\text{ins}+\tilde{\Delta})$. Then, we can apply \autoref{theorem: bijection for inseparable nodes} to it and obtain an slc pair $(S,\Delta)$. This gives a map
		$$(\bar{S},\bar{D}_\text{Gal}+\bar{D}_\text{ins}+\bar{\Delta},\tau)\overset{\Phi_\text{Gal}}{\xrightarrow{\hspace{1.5cm}}} (\tilde{S},\tilde{D}_\text{ins}+\tilde{\Delta})\overset{\Phi_\text{ins}}{\xrightarrow{\hspace{1.5cm}}} (S,\Delta).$$
	It follows from \autoref{proposition:structure_of_normalization_of_demi_normal_scheme} that $\Phi_\text{ins}\circ\Phi_\text{Gal}$ is an inverse map to the normalization. %We are dealing with $S_2$ schemes and sheaves, and the morphisms $\bar{S}\to\tilde{S}\to S$ are finite, so it is enough to verify the claim above codimension one points of $S$. Since $\bar{D}_\text{Gal}$ and $\bar{D}_\text{ins}$ have no common components, we see that for a codimension one point $s\in S$, if $(\tilde{s}\in\tilde{S})\to (s\in S)$ is not an isomorphism, then $\bar{S}\to \tilde{S}$ is an isomorphism above $\tilde{s}$. Since we know that normalization is an inverse map to both $\Phi_\text{Gal}$ and $\Phi_\text{ins}$, our claim is proved. 
	The statements about properness and ampleness follow from the corresponding statement in \autoref{theorem: bijection for inseparable nodes} and \autoref{remark:gluing_works_in_char_2_for_separable_nodes}.
\end{proof}

\subsection{Gluing theory for germs of surfaces}
Our gluing theorems for surfaces, \autoref{theorem:gluing_for_surfaces_p_different_from_2} and \autoref{theorem:gluing_for_surfaces_in_char_2}, are formulated for surfaces that are quasi-projective. It is natural to ask whether similar statements hold for germs of surfaces. If $(\bar{S},\bar{D}+\bar{\Delta})$ is a germ of lc surface with a log involution $\tau$ on $(\bar{D}^n, \Diff\bar{\Delta})$, then the argument in the proof of \autoref{theorem:gluing_for_surfaces_p_different_from_2} applies verbatim to show that $R(\tau)\rightrightarrows \bar{S}$ is a finite equivalence relation, and \cite[Theorem 6]{Kollar_Quotients_by_finite_equivalence_relations} applies to schemes that are essentially of finite type over $k$. So we obtain a quotient $\bar{S}/R(\tau)$. However I am not aware of an Eakin--Nagata type theorem for rings that are essentially of finite type over $k$, so it is unclear what type of algebraic space the quotient $\bar{S}/R(\tau)$ is. It turns out that it is a germ of variety, but our proof is somewhat roundabout. The details are given in the next theorem.

\begin{theorem}\label{theorem:gluing_for_germs_of_surfaces}
Let $k$ be a field of positive characteristic. Then:
	\begin{enumerate}
		\item If $\Char k>2$, then normalization gives a one-to-one correspondence
		\begin{equation*}
		\begin{pmatrix}
		\text{Slc semi-local affine }\\
		\text{surface pairs } (S,\Delta)\\
		\text{essentially of finite type over }k
		\end{pmatrix}
		\overset{1:1}{\longrightarrow}
		\begin{pmatrix}
		\text{Lc semi-local affine }\\
		\text{surface pairs } (\bar{S},\bar{D}+\bar{\Delta})  \\
		\text{essentially of finite type over }k \\
		\text{plus an involution }\tau\text{ of }(\bar{D}^n,\Diff_{\bar{D}^n}\bar{\Delta})\text{ that is}\\
		\text{generically fixed point free on every component.}
		\end{pmatrix}
		\end{equation*}
		
		\item If $\Char k=2$, then normalization gives a one-to-one correspondence
		\begin{equation*}
		\begin{pmatrix}
		\text{Slc semi-local affine } \\
		\text{surface pairs }(S,\Delta)\\
		\text{essentially of finite type over }k
		\end{pmatrix}
		\overset{1:1}{\longrightarrow}
		\begin{pmatrix}
		\text{Lc semi-local affine }\\
		\text{surface pairs } (\bar{S},\bar{D}_\emph{Gal}+\bar{D}_\emph{ins}+\bar{\Delta})  \\
		\text{essentially of finite type over }k\text{, plus an involution} \\
		\tau\text{ of }(\bar{D}_\emph{Gal}^n,\Diff_{\bar{D}_\emph{Gal}^n}(\bar{\Delta}+\bar{D}_\emph{ins}))\text{ that is}\\
		\text{generically fixed point free on every component.}
		\end{pmatrix}
		\end{equation*}
	\end{enumerate}
Moreover, in both cases $S$ is local if and only if all closed points of $\bar{S}$ belong to the same $R(\tau)$-equivalence class.
\end{theorem}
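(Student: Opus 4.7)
The forward map (normalisation) and its well-definedness follow from \autoref{lemma:normalization_gives_involution} and \autoref{corollary:descent_in_codim_2} applied in the semi-local setting; the content of the theorem is constructing its inverse. The plan is to reduce to the quasi-projective case already handled by \autoref{theorem:gluing_for_surfaces_p_different_from_2} and \autoref{theorem:gluing_for_surfaces_in_char_2} by a globalise--glue--localise procedure.

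Concretely, starting from the data $(\bar{S},\bar{D}+\bar{\Delta},\tau)$ with $\bar{S} = \Spec R$ semi-local essentially of finite type over $k$ and closed points $\{p_1,\dots,p_n\}$, I would write $R$ as a localisation of a finitely generated $k$-algebra $A$ and set $\bar{S}^+ := \Spec A$, extending $\bar{D}$ and $\bar{\Delta}$ to divisors $\bar{D}^+,\bar{\Delta}^+$ on $\bar{S}^+$ by taking closures. Shrinking $\bar{S}^+$ to a smaller open neighbourhood of $\{p_i\}$, I may arrange that $(\bar{S}^+,\bar{D}^+ + \bar{\Delta}^+)$ is lc, as being lc is an open condition; in characteristic $2$ the splitting $\bar{D}^+ = \bar{D}^+_\text{Gal} + \bar{D}^+_\text{ins}$ extends accordingly.

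The key step is to promote $\tau$ to a global involution $\tau^+$ of $(\bar{D}^+_\text{Gal})^n$ on a suitable neighbourhood of $\{p_i\}$. Since $\bar{D}^n$ (resp. $\bar{D}^n_\text{Gal}$) is a localisation of the normal curve $(\bar{D}^+)^n$ at finitely many closed points, $\tau$ defines a birational self-map of $(\bar{D}^+)^n$, which extends uniquely to a morphism of a projective completion by the valuative criterion for curves. The locus where this extension is a well-defined involution of order two, preserves $\Diff_{(\bar{D}^+)^n}\bar{\Delta}^+$, and is generically fixed-point-free on every component is open in $\bar{S}^+$ and contains $\{p_i\}$; a further shrinking produces a finite-type open $V \subset \bar{S}^+$ on which $(V, \bar{D}^+|_V+\bar{\Delta}^+|_V, \tau^+|_V)$ satisfies the hypotheses of the global gluing theorem. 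Applying that theorem yields an slc pair $(S^+,\Delta^+)$ of finite type over $k$ with normalisation $V\to S^+$; localising $S^+$ at the images of $\{p_1,\dots,p_n\}$ produces the sought semi-local slc pair $(S,\Delta)$. Because the geometric quotient commutes with flat base change (\cite[9.11]{Kollar_Singularities_of_the_minimal_model_program}), $S$ coincides with $\bar{S}/R(\tau)$, so this procedure is indeed inverse to normalisation. The ``moreover'' part is formal: the finite surjection $\bar{S}\to S$ realises closed points of $S$ as $R(\tau)$-orbits of closed points of $\bar{S}$, so $S$ has a unique closed point if and only if there is a single such orbit.

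I expect the main technical delicacy to lie not in the existence of the quotient as an algebraic space (which follows from finiteness of $R(\tau)$, itself obtained by the argument of \autoref{theorem:gluing_for_surfaces_p_different_from_2}, together with \cite[Theorem 6]{Kollar_Quotients_by_finite_equivalence_relations}) but rather in showing that this quotient is a scheme essentially of finite type over $k$. The globalisation strategy above is specifically designed to sidestep this verification by constructing $S$ \emph{a priori} as a localisation of a finite-type slc surface, where the relevant Eakin--Nagata-type issue disappears.
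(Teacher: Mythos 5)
Your proposal is correct and follows essentially the same route as the paper: globalise the semi-local data to a finite-type model, extend $\tau$ through the projective regular model of the conductor curve and shrink until the hypotheses of \autoref{theorem:gluing_for_surfaces_p_different_from_2} (resp.\ \autoref{theorem:gluing_for_surfaces_in_char_2}) hold, apply the global gluing theorem, and then localise, using flat base change of geometric quotients to identify the result with $\bar{S}/R(\tau)$ --- precisely the paper's workaround for the absence of an Eakin--Nagata statement for rings essentially of finite type over $k$. The only steps you leave implicit are the ones the paper spells out: that the images of the closed points $p_i$ lie in a common affine open of the finite-type quotient (via the Chevalley--Kleiman property, \cite[9.28]{Kollar_Singularities_of_the_minimal_model_program}) and the ring-level verification that the localisation of the quotient at those images has normalisation exactly $\Spec R$.
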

\begin{proof}
We show that, starting with an lc semi-local affine pair $(\Spec\sO,D_\sO+\Delta_\sO)$ essentially of finite type over $k$ with an involution $\tau$, we can produce an slc semi-local affine pair $(\Spec\sO',\Delta_{\sO'})$ essentially of finite type over $k$ and a finite morphism $\Spec\sO\to\Spec\sO'$ that is the quotient by $R(\tau)$. Then we show that \autoref{construction: construction of inseparable node} preserves the property of being essentially of finite type over $k$. The proof that the combination of these operations is the inverse of the normalization is exactly the same as in \autoref{theorem:gluing_for_surfaces_p_different_from_2} and \autoref{theorem:gluing_for_surfaces_in_char_2}.

Hence let $\sO$ be a semi-local ring of equi-dimension two that is essentially of finite type over $k$, $D_\sO$ a reduced divisor and $\Delta_\sO$ an effective $\bQ$-divisor on $\Spec\sO$, with no component in common. Since $\sO$ is essentially of finite type over $k$, there exists a scheme $\bar{S}$ of finite type over $k$, such that $\Spec\sO$ is a subscheme of $\bar{S}$. Replacing $\bar{S}$ by the closure of $\Spec\sO$, we may assume that $\bar{S}$ is two-dimensional.

Assume furthermore that $(\Spec\sO,D_\sO+\Delta_\sO)$ is lc. Then $\Spec\sO$ belongs to the normal locus of $\bar{S}$, so taking the normalization we may assume that $\bar{S}$ is normal. Taking an open subset, we may actually assume that $\bar{S}$ is regular away from the closed points of $\Spec\sO$.

Let $\bar{D}$ and $\bar{\Delta}$ be the closures (as $\bQ$-divisors) of $D_\sO$ and $\Delta_\sO$ in $\bar{S}$. We may shrink $\bar{S}$ until $\bar{D}$ is regular away from the closed points of $\Spec\sO$, and $\bar{D}\cap\bar{\Delta}$ is supported on the closed points of $\Spec\sO$. Then $K_{\bar{S}}+\bar{D}+\bar{\Delta}$ is $\bQ$-Cartier, $(\bar{S}, \bar{D}+\bar{\Delta})$ is lc and $\Diff_{\bar{D}^n}\bar{\Delta}=\Diff_{D_\sO^n}\Delta_\sO$ by \cite[2.35]{Kollar_Singularities_of_the_minimal_model_program}.

Let $\tau$ be a generically fixed point free involution of $(D^n_\sO,\Diff_{D^n_\sO}\Delta_\sO)$. Then $\tau$ extends to an involution $\bar{\tau}$ on the projective regular model $\sD$ of $D^n_\sO$. Since $\bar{D}^n$ is a dense open subset of $\sD$, the set $\sD-\bar{D}^n$ is finite. As $\bar{\tau}$ is an involution and $D^n_\sO$ is $\tau$-stable, we can find finitely many closed points $x_1,\dots,x_n\in \bar{D}^n-D^n_\sO$ such that $\bar{D}^n-\{x_1,\dots,x_n\}$ is $\bar{\tau}$-stable. Since $\bar{D}$ is regular away from $D_\sO$, the $x_i$ correspond to unique closed points of $\bar{S}$. Therefore after shrinking $\bar{S}$, we may assume that $\bar{\tau}$ gives an involution of $\bar{D}^n$. Moreover, since $\Diff_{D^n_\sO}\Delta_\sO=\Diff_{\bar{D}^n}\bar{\Delta}$, we see that $\bar{\tau}$ is a generically fixed point free involution of  the pair $(\bar{D}^n,\Diff_{\bar{D}^n}\bar{\Delta})$.

Thus we may apply \autoref{theorem:gluing_for_surfaces_p_different_from_2} and \autoref{remark:gluing_works_in_char_2_for_separable_nodes} to obtain a finite quotient $q\colon \bar{S}\to \bar{S}/R(\bar{\tau})=:S$. Moreover the pair $(S,\Delta:=q_*\bar{\Delta})$ is an slc surface pair of finite type over $k$ and $q$ is the normalization morphism.

To simplify the rest of the discussion, we reduce to the case where $S$ and $\bar{S}$ are affine. Let $\Sigma\subset S$ be the finite reduced subscheme supported on the image of the closed points of $\Spec\sO$. There exists an affine open subscheme $S'\subset S$ containing $\Sigma$ (because $S$ satisfies the Chevalley--Kleiman property, see \cite[9.28]{Kollar_Singularities_of_the_minimal_model_program}). Then $q^{-1}S'$ is affine open and contains $\Spec\sO$, and $q^{-1}S'\to S'$ is the quotient by the induced equivalence relation on $q^{-1}S'$ \cite[9.11]{Kollar_Singularities_of_the_minimal_model_program}.

So say $S=\Spec A$, $\bar{S}=\Spec \bar{A}$ and $\Sigma=V(\fm_1)\cup\dots\cup V(\fm_n)$. Then $T:=A-\bigcup_i\fm_i$ is a multiplicatively closed set, $T^{-1}A$ is a semi-local ring essentially of finite type over $k$ with maximal ideals $\fm_i$ and $T^{-1}\bar{A}$ is the normalization of $T^{-1}\bar{A}$ inside $\Frac A=\Frac\bar{A}$. On the other hand, $\sO$ is a fraction ring of $\bar{A}$, say $\sO=U^{-1}\bar{A}$. Since the maximal ideals of $U^{-1}\bar{A}$ are precisely those above the $\fm_i$'s, we see that $U^{-1}\bar{A}$ is a further localization of $T^{-1}\bar{A}$ and that these two rings have the same maximal ideals. But this implies that $T^{-1}\bar{A}=U^{-1}\bar{A}$ already. Applying \cite[9.11]{Kollar_Singularities_of_the_minimal_model_program} once again we obtain that $\Spec\sO\to \Spec T^{-1}A$ is the quotient by the finite equivalence relation $R(\tau)$, and by construction $(\Spec T^{-1}A, \Delta_{T^{-1}A})$ is a semi-local slc surface pair essentially of finite type over $k$.

It remains to show that \autoref{construction: construction of inseparable node} applied to a semi-local ring $\sO$ essentially of finite type over $k$, produces a semi-local ring essentially of finite type over $k$. The construction output is a ring $\sA$ and successive finite extensions
		$\sO^2\subset \sA\subset \sO$
which shows that $\sA$ is semi-local. Since $\sO$ is essentially of finite type over $k$, so is $\sO^2$, and as $\sO^2\subset \sA$ is a finite extension we obtain that $\sA$ is essentially of finite type over $k$. This completes the proof.
\end{proof}

\section{Gluing theory for threefolds}

\subsection{Preliminary results}
In this first section some results that will be needed for the proof of the gluing theorem for threefolds. While most of them are well-known, we include proofs to conveniently reference them.

\subsubsection{Some results about surfaces and threefolds}
We gather some facts about the geometry of surface and threefold pairs.

\begin{fact}[Birational Geometry of excellent surfaces]\label{fact:birational_geometry_of_surfaces}
Resolutions of singularities (resp. log resolutions) exist for excellent surfaces (resp. excellent surface pairs) \cite[2.25]{Kollar_Singularities_of_the_minimal_model_program}. Moreover, the MMP works for quasi-projective lc surface pairs over an arbitrary field \cite{Tanaka_MMP_for_excellent_surfaces}.
\end{fact}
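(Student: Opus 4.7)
The plan is to invoke the two cited references, since this statement packages existing results rather than introducing new ones; nevertheless, let me sketch the strategies that the proofs in the literature employ.

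For resolution and log resolution of excellent surfaces, the approach recorded in \cite[2.25]{Kollar_Singularities_of_the_minimal_model_program} (going back to Lipman) is to alternate normalization with blowing up the reduced singular locus. Starting from $X_0 = X$ one iteratively sets $X_{i+1}$ to be the blowup of $\Sing(X_i^\nu)$, and termination follows from the strict decrease of a suitable numerical invariant (such as the sum of Hilbert--Samuel multiplicities at the finitely many singular points) under these operations; the excellent hypothesis ensures good behavior of these invariants under completion. For log resolutions of pairs $(X,\Delta)$ one then further principalizes the strict transform of $\Delta$ by blowing up intersection points of its components, which in dimension two terminates after finitely many steps.

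For the MMP on quasi-projective lc surface pairs over an arbitrary field, Tanaka's approach in \cite{Tanaka_MMP_for_excellent_surfaces} avoids Kodaira-type vanishing theorems (which can fail in positive characteristic) and instead uses the Hodge index theorem together with Artin's contractibility criterion for curves with negative-definite intersection matrix. The key observation is that in dimension two every $(K_X+\Delta)$-negative extremal ray is generated by a curve with negative self-intersection sitting in a negative-definite configuration, so it can be contracted to yield a normal algebraic space; combined with Artin's algebraization results, the contraction is automatically an excellent surface, and one verifies that the lc property and quasi-projectivity are preserved. Since no flips appear in dimension two, iterating such contractions terminates with a minimal model.

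The only subtle point, which is fully handled in Tanaka's paper, is to track carefully that the lc condition is preserved under divisorial contractions (including over non-algebraically closed residue fields), and to arrange the MMP in the relative setting required by our applications. Since all of this is done in the references, no further argument is needed here.
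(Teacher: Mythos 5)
Your proposal is correct and matches the paper exactly: the statement is recorded as a \emph{Fact} with no proof given, resting entirely on the citations to \cite[2.25]{Kollar_Singularities_of_the_minimal_model_program} and \cite{Tanaka_MMP_for_excellent_surfaces}, which is precisely what you do. Your additional sketches of Lipman's normalization-and-blowup procedure and of Tanaka's contraction-based MMP are a reasonable summary of those references but are not required here.
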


\begin{fact}[Resolutions for threefolds]\label{fact:resolution_of_singularities_for_threefolds}
Resolutions of singularities exist for quasi-projective threefolds over a perfect field \cite[2.1]{Cossart_Piltant_Resolution_of_singularities_for_3_folds_in_pos_char_I}. Moreover, if $(X,\Delta)$ is a quasi-projective threefold pair, then there exists a snc log resolution $(X',\Delta')\to (X,\Delta)$ that is an isomorphism above the snc locus of $(X,\Delta)$ \cite[2.1,4.1]{Cossart_Piltant_Resolution_of_singularities_for_3_folds_in_pos_char_I}.
\end{fact}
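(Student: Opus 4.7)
The plan is essentially to quote the results of Cossart and Piltant, since this statement is cited as a \emph{fact}. The first half --- existence of a resolution of singularities for a quasi-projective threefold $X$ over a perfect field $k$ of positive characteristic --- is the main theorem of \cite[2.1]{Cossart_Piltant_Resolution_of_singularities_for_3_folds_in_pos_char_I}; there is nothing for us to do but to invoke it. Their argument proceeds by induction on Hironaka's characteristic polyhedron, with the delicate input being the analysis of Artin--Schreier type coverings at closed points with small residue characteristic. We do not reproduce any of this and simply use the output as a black box.

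For the second half, the plan is to start with a resolution $f_0\colon X_0 \to X$ given by the first part, form the reduced divisor $\Delta_0 := (f_0^{-1}\Delta)_{\red} + \Exc(f_0)$ on $X_0$, and then further modify $X_0$ along blowups with smooth centers inside the non-snc locus of $\Delta_0$, until the resulting boundary has simple normal crossing support. Turning $\Delta_0$ into an snc divisor is the principalization/log resolution statement, treated in \cite[4.1]{Cossart_Piltant_Resolution_of_singularities_for_3_folds_in_pos_char_I}. Composing the two steps yields a proper birational $f\colon X' \to X$ with $X'$ regular and $\Delta' := (f^{-1}\Delta)_{\red}+\Exc(f)$ an snc divisor.

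The final point to check is that $f$ is an isomorphism above the snc locus $U\subset X$ of $(X,\Delta)$. The key observation is that each step in the algorithm is functorial and local: each center of blowup lies in the non-snc locus of the current boundary divisor. Over $U$ the pair $(U, \Delta|_U)$ is already snc, hence no blowup center meets the preimage of $U$. By local triviality, the composition $f\colon X'\to X$ is then an isomorphism over $U$.

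The hard part of this statement is entirely hidden behind the citations: all the subtleties of positive characteristic --- wild ramification, failure of Bertini, the necessity of the characteristic polyhedron --- sit inside \cite{Cossart_Piltant_Resolution_of_singularities_for_3_folds_in_pos_char_I}. For the purposes of the present paper the only thing to verify beyond a direct citation is that the log resolution can be arranged to preserve a pre-existing snc open locus, which follows from the fact that Cossart--Piltant's procedure is canonical enough to commute with the open restriction to $U$.
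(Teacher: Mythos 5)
Your proposal is correct and matches the paper exactly: this statement is recorded as a \emph{fact} with no proof beyond the citations to \cite[2.1, 4.1]{Cossart_Piltant_Resolution_of_singularities_for_3_folds_in_pos_char_I}, which is precisely what you do. Your additional remarks on combining resolution with principalization and on why the blowup centers avoid the preimage of the snc locus are consistent with what those cited results provide and do not introduce any gap.
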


\begin{corollary}
If $(S,\Delta)$ is a numerically dlt (resp. numerically terminal) surface pair over an arbitrary field, then $S$ is $\bQ$-factorial (resp. regular).
\end{corollary}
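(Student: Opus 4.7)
The plan is to use existence of resolutions (\autoref{fact:birational_geometry_of_surfaces}) together with the negative definiteness of the intersection matrix of exceptional curves on a surface resolution, which makes numerical pullbacks of Weil divisors available.

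For the numerically terminal case, I would take a minimal resolution $f \colon S'\to S$. For a minimal resolution of a normal excellent surface, $K_{S'}$ is $f$-nef (standard, and in the excellent setting it follows from the MMP for surfaces cited in \autoref{fact:birational_geometry_of_surfaces}). Combined with the effectivity of the strict transform $\tilde\Delta$, this gives $(K_{S'}+\tilde\Delta)\cdot E_j \geq 0$ for every $f$-exceptional curve $E_j$. Writing the numerical pullback identity $K_{S'} + \tilde\Delta \equiv_f \sum_i a_i E_i$ with $a_i$ the discrepancies and intersecting with $E_j$, I obtain $\sum_i a_i (E_i\cdot E_j)\geq 0$ for every $j$. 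Since $M := (E_i\cdot E_j)$ is negative definite with non-negative off-diagonal entries, $-M$ is a symmetric M-matrix and $(-M)^{-1}$ has only non-negative entries; this forces $a_i \leq 0$ for every $i$, contradicting numerical terminality ($a_i > 0$) unless the exceptional locus is empty. Hence $S$ is regular.

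For the numerically dlt case, I would fix a log resolution $f \colon S'\to S$ witnessing the dlt property. Given any prime Weil divisor $D$ on $S$, the negative definiteness of the intersection matrix produces unique $b_i \in \bQ$ such that $\tilde D + \sum b_i E_i$ is $f$-numerically trivial. The remaining task is to show that some positive multiple of this divisor is $f$-linearly trivial, which, after pushing forward, would realize $mD$ as a Cartier divisor for some $m>0$. The key ingredient is rationality of numerically dlt surface singularities, which follows from the classification of log canonical surface singularities in arbitrary characteristic; once $R^1 f_*\sO_{S'} = 0$ is known locally on $S$, a standard Leray spectral sequence argument shows that every $f$-numerically trivial line bundle on $S'$ has some power in the image of $f^*\colon \pic(S)\to \pic(S')$, yielding the desired Cartier multiple of $D$.

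The main obstacle is establishing rationality of numerically dlt surface singularities over an arbitrary field of positive characteristic without Kawamata--Viehweg vanishing; this is well-documented through the characteristic-free classification of log canonical surface singularities, and the deduction of $\bQ$-factoriality from rationality plus negative definiteness is then classical.
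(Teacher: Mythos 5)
Your proof is correct and its overall architecture coincides with the paper's. For the numerically terminal case you spell out the minimal-resolution/negativity-lemma argument that the paper simply delegates to \cite[2.29]{Kollar_Singularities_of_the_minimal_model_program}; for the numerically dlt case both arguments reduce $\bQ$-factoriality to ``rationality of the singularity plus negative definiteness of the exceptional intersection form,'' which is exactly \cite[4.11]{Kollar_Mori_Birational_geometry_of_algebraic_varieties}. The one substantive divergence is the input used to establish rationality, i.e.\ $R^1f_*\sO_{S'}=0$: the paper verifies the hypotheses of \cite[4.10]{Kollar_Mori_Birational_geometry_of_algebraic_varieties}, whose proof runs through the relative base-point freeness theorem, supplied over excellent surfaces by \cite[4.2]{Tanaka_MMP_for_excellent_surfaces}; you instead invoke the characteristic-free classification of numerically lc surface singularities, which amounts to \cite[2.28]{Kollar_Singularities_of_the_minimal_model_program} (a fact the paper itself uses elsewhere). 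Both inputs are available, so either route closes the argument; yours avoids base-point freeness at the cost of leaning on the classification. Two small points to watch: in the descent step, once $R^1f_*\sO_{S'}=0$ is known, the identification of the Picard group of the formal fibre with that of the reduced exceptional curve must be run with the filtration $1+I_n/I_{n+1}\subset\sO_{E_{n+1}}^*$ rather than a truncated exponential (unavailable in positive characteristic), followed by algebraization; and note that a numerically dlt surface pair is snc (hence $S$ regular) at the points where it is not numerically klt, so the classification really only needs to deliver rationality at the numerically klt points.
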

\begin{proof}
The numerically terminal case is shown in \cite[2.29]{Kollar_Singularities_of_the_minimal_model_program}. To prove the numerically dlt case, we can apply \cite[4.11]{Kollar_Mori_Birational_geometry_of_algebraic_varieties} once we know that \cite[4.10]{Kollar_Mori_Birational_geometry_of_algebraic_varieties} applies as well. It is the case, since log resolutions are available, negative-definiteness of contracted curves holds by \cite[10.1]{Kollar_Singularities_of_the_minimal_model_program} and the Base-point freeness theorem is established with a sufficient level of generality in \cite[4.2]{Tanaka_MMP_for_excellent_surfaces}.
\end{proof}

\begin{fact}[Inversion of adjunction for threefolds]
Log canonical inversion of adjunction holds for quasi-projective threefold pairs over a perfect field of characteristic $>5$ \cite[Lemma 3.3]{Patakfalvi_Projectivity_moduli_space_of_surfaces_in_pos_char} (the proof there is given for an algebraically closed field, but extends easily to the case of a perfect field).
\end{fact}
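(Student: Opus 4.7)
The plan is to reduce the statement to the case of an algebraically closed field, which is handled in \cite[Lemma 3.3]{Patakfalvi_Projectivity_moduli_space_of_surfaces_in_pos_char}, by faithfully flat base change to an algebraic closure $\bar k$. The key observation is that since $k$ is perfect, the extension $k \hookrightarrow \bar k$ is geometrically regular, so pullback preserves normality and interacts well with both the different and discrepancy computations.

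Concretely, let $(X, S+\Delta)$ be the given threefold pair with $S$ a reduced divisor whose components appear with coefficient one. Write $X_{\bar k}:= X\times_k \bar k$, and let $S_{\bar k}$, $\Delta_{\bar k}$ denote the respective pullbacks. Because $k$ is perfect, $X_{\bar k}$ remains normal, $S_{\bar k}$ remains reduced, and the pullback of a log resolution of $(X,S+\Delta)$ remains a log resolution of $(X_{\bar k}, S_{\bar k}+\Delta_{\bar k})$. Each prime divisor $E$ over $X$ pulls back to a finite union of prime divisors over $X_{\bar k}$, each having the same discrepancy with respect to the pulled-back pair, so $(X, S+\Delta)$ is lc in a neighborhood of $S$ if and only if $(X_{\bar k}, S_{\bar k}+\Delta_{\bar k})$ is lc in a neighborhood of $S_{\bar k}$.

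Similarly, perfectness of $k$ implies that normalization commutes with base change, giving $(S^n)_{\bar k} = (S_{\bar k})^n$. The different $\Diff_{S^n}\Delta$ is defined via adjunction on a log resolution and compatible Grothendieck trace maps, all of which commute with the separable base change $k \to \bar k$; hence $\Diff_{(S^n)_{\bar k}}\Delta_{\bar k} = (\Diff_{S^n}\Delta)\times_k \bar k$, and lc-ness of $(S^n, \Diff_{S^n}\Delta)$ is likewise preserved and reflected. Applying the algebraically closed version of inversion of adjunction to $(X_{\bar k}, S_{\bar k}+\Delta_{\bar k})$ then yields the claim for $(X,S+\Delta)$.

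The main obstacle is essentially bookkeeping: one must verify that each ingredient entering the proof in \emph{loc. cit.} — the existence of $\bQ$-factorial dlt modifications in characteristic $>5$, Koll\'ar--Shokurov connectedness for threefolds in characteristic $>5$, and the formation of the different — is compatible with base change along a perfect field extension. None of these requires new geometric input, so the extension is indeed ``easy'' as claimed; all of the substantive birational ingredients (MMP for threefolds, connectedness and vanishing theorems in characteristic $>5$) are absorbed into the algebraically closed statement being cited.
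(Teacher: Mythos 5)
Your proposal is correct and is precisely the reduction the paper has in mind: the parenthetical "extends easily to the case of a perfect field" refers exactly to base change along $k\hookrightarrow\bar k$, using perfectness to ensure that normality, reducedness, log resolutions, discrepancies, normalization and the different are all preserved and reflected, so the algebraically closed case of \cite[Lemma 3.3]{Patakfalvi_Projectivity_moduli_space_of_surfaces_in_pos_char} applies. Nothing to add.
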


Next we gather some results about fibrations of surfaces.

\begin{lemma}\label{lemma:Square_zero_vertical_is_fiber}
Let $f\colon S\to T$ be a proper flat morphism from a normal surface onto a normal curve with connected fibers, over a arbitrary field. Let $E$ be a $\bQ$-divisor on $S$ that is vertical over $T$. Then $E^2\leq 0$, with equality if and only if $E$ is a weighted sum of fibers of $f$.
\end{lemma}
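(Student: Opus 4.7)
The plan is to apply Zariski's lemma on each fiber separately, using Mumford's intersection theory for the normal surface $S$. First, decompose $E = \sum_{t \in T} E_t$, where $E_t$ collects the components of $E$ supported on $f^{-1}(t)$; this is a finite sum. Passing to a resolution $\pi\colon \tilde{S} \to S$, one checks that the Mumford pullbacks $\pi^*E_s$ and $\pi^*E_t$ have disjoint support for $s \neq t$: the strict transforms lie over $\supp E_s$ (resp.\ $\supp E_t$), while the exceptional corrections are supported over singular points of $S$ inside those same supports, which are disjoint. Hence $E_s \cdot E_t = 0$ in Mumford's sense, so $E^2 = \sum_t E_t^2$, and it suffices to show each $E_t^2 \leq 0$ with equality iff $E_t$ is a rational multiple of $F_t := f^*t$.

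Fix $t \in T$. Since $T$ is regular and $f$ is flat, $F_t$ is a Cartier divisor on $S$; write $F_t = \sum_i m_i C_i$ with $C_i$ the irreducible components of $f^{-1}(t)$ and $m_i \geq 1$. Moving $t$ on $T$ (so that $F_t$ is $\bQ$-linearly equivalent to some $F_{t'}$ with disjoint support) gives $F_t \cdot C_j = 0$ for every $j$. Writing $E_t = \sum_i a_i C_i$ and setting $b_i := a_i/m_i$, the relation $\sum_i m_i (C_i \cdot C_j) = 0$ yields the classical Zariski identity
\[
E_t^2 \;=\; \sum_{i,j} m_i m_j b_i b_j (C_i \cdot C_j) \;=\; -\sum_{i<j} m_i m_j (C_i \cdot C_j)(b_i - b_j)^2.
\]
Since Mumford's intersection of two distinct irreducible curves on a normal surface is non-negative, this gives $E_t^2 \leq 0$.

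For the equality case, $E_t^2 = 0$ forces $b_i = b_j$ whenever $C_i \cdot C_j > 0$. On a normal surface, two distinct irreducible curves meeting set-theoretically have strictly positive Mumford intersection: at a smooth point of $S$ this is clear, while at a singular point one argues on a resolution that the exceptional coefficients of the Mumford pullback are strictly positive over each point of $C_i \cap \Sing(S)$, producing a positive contribution. Since $f^{-1}(t)$ is connected by hypothesis, its dual graph of irreducible components is connected, so all $b_i$ coincide, whence $E_t = \lambda_t F_t$ for some $\lambda_t \in \bQ$; summing over $t$ concludes. The step most in need of care is the strict positivity of $C_i \cdot C_j$ when $C_i \cap C_j \neq \emptyset$ in the equality case, which is where normality (rather than smoothness) of $S$ demands a brief resolution-based verification.
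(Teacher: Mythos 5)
Your proof is correct, and it takes a more self-contained route than the paper. The paper resolves singularities, notes that Mumford pullback preserves both the self-intersection and the property of being a weighted sum of fibers, and then quotes B\u{a}descu's Lemma 2.6 for the regular case (checking that the algebraically-closed hypothesis there is removable). You instead stay on the normal surface $S$, work directly with Mumford's intersection product, and carry out Zariski's lemma by hand: the fiberwise decomposition $E=\sum_t E_t$ with $E_s\cdot E_t=0$, the identity $E_t^2=-\sum_{i<j}m_im_j(C_i\cdot C_j)(b_i-b_j)^2$, and the equality analysis via connectedness of the dual graph. The extra content you supply --- that $C_i\cdot C_j>0$ whenever two distinct fiber components meet, even at a singular point of $S$, because the exceptional coefficients of the Mumford pullback are strictly positive over any point of $C_i\cap\Sing(S)$ --- is exactly the piece the paper outsources to the reduction to the smooth case, and your verification of it is sound. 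One small inaccuracy in justification: on an arbitrary normal curve $T$ one cannot in general ``move'' $t$ to a $\bQ$-linearly equivalent $t'$, so the cleaner reason that $F_t\cdot C_j=0$ is simply that $\sO_S(F_t)|_{C_j}=f^*\sO_T(t)|_{C_j}$ is pulled back along the constant map $C_j\to\{t\}$ and hence has degree $0$; the conclusion you need is unaffected. Net effect: your argument is longer but avoids both the citation and the passage to a resolution for the main computation, at the cost of re-deriving the classical Zariski lemma.
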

\begin{proof}
By \cite[2.6]{Badescu_Agebraic_surfaces}, the result holds if $S$ is regular. It is assumed there that the base field $k$ is algebraically closed, but this is not necessary: the Néron--Severi theorem holds over any field \cite[Exp. XIII, Théorème 5.1]{SGA6}, the Grothendieck--Riemann--Roch formula \cite[Exp. VIII, Théorème 3.6]{SGA6} holds for $S$ since $S\to k$ is a complete intersection morphism, and it reduces to the usual formula by the arguments of \cite[Appendix A, 4.1.2]{Hartshorne_Algebraic_Geometry}. The rest of the proof in \cite{Badescu_Agebraic_surfaces} is linear algebra.

In general, let $\pi\colon S'\to S$ be a (minimal) resolution of singularities. Then $f\circ\pi\colon S'\to T$ has connected fibers and $(\pi^*E)^2=E^2$. Moreover $\pi^*E$ is a weighted sum of fibers if and only if $E$ is, so the lemma is proved.
\end{proof}

\begin{lemma}\label{lemma:Fiber_of_MFS_are_normal}
Let $f\colon S\to T$ be a proper flat morphism from a normal surface onto a normal curve, such that $-K_S$ is $f$-ample and $f_*\sO_S=\sO_T$, over an arbitrary field of characteristic $p>2$. Then the general fiber of $f$ is geometrically normal.
\end{lemma}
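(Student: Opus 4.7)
Let $\eta\in T$ be the generic point, set $K=k(\eta)$, and let $F=S_\eta$ be the generic fiber of $f$. Since the locus of $t\in T$ where $S_t$ is geometrically normal is open in $T$, it suffices to prove that $F$ is smooth over $K$. The approach I would take is to embed $F$ as a plane conic via its anticanonical linear system and then use that a regular plane conic is smooth in characteristic $\neq 2$.

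\textbf{Setup of the generic fiber.} By flatness of $f$, $F$ is a proper $K$-scheme of pure dimension $1$. Normality of $S$ gives normality (hence regularity) of $F$, and flat base change applied to $f_\ast\sO_S=\sO_T$ yields $H^0(F,\sO_F)=K$. Since $F$ is normal with $H^0=K$, the field $K$ is algebraically closed in $k(F)$, so $F$ is geometrically irreducible and, being regular, integral. Restricting the $f$-ample divisor $-K_S$ gives an ample Cartier divisor $-K_F$ on the Gorenstein curve $F$, so $\deg\omega_F<0$ and hence $h^0(\omega_F)=0$. Serre duality forces $p_a(F)=h^1(\sO_F)=0$ and $\deg\omega_F=-2$.

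\textbf{Anticanonical embedding as a plane conic.} Riemann--Roch gives $h^0(F,-K_F)=\deg(-K_F)+\chi(\sO_F)=3$, and the same computation with closed subschemes $Z\subset F$ of length~$1$ or~$2$ gives $h^0(-K_F-Z)=3-\mathrm{length}(Z)$, so $|-K_F|$ separates length-$2$ subschemes and is very ample. This produces a closed immersion $F\hookrightarrow\bP^2_K$ whose image is a plane conic. I expect this step to be the main technical obstacle, because the classical proof (as in Hartshorne~IV.3.1) is written for smooth curves, while here $F$ is only a priori regular; however the arguments go through for regular integral Gorenstein curves once one formulates them using closed subschemes of length two rather than pairs of geometric points. (An alternative route, bypassing this embedding, is to invoke Tate's genus change theorem, which says that for a regular geometrically integral curve in characteristic $p$ any non-smooth point contributes a positive integer to $p_a-p_g$; here $p_a(F)=0$ forces the absence of such contributions.)

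\textbf{A regular conic is smooth in characteristic $\neq 2$.} Write $F=V(Q)\subset\bP^2_K$ with $Q=\tfrac12\mathbf{x}^{t}A\mathbf{x}$ for a symmetric matrix $A\in\mathrm{Mat}_3(K)$. The partial derivatives of $Q$ are the entries of $A\mathbf{x}$, and the identity $Q(v)=\tfrac12 v^{t}Av$ shows $\ker A\subseteq V(Q)$; hence the singular scheme of $F$ equals the projective subspace $\ker A\subset\bP^2_K$. Since $\ker A$ is defined by $K$-linear equations, it has a $K$-point whenever it is non-empty. Regularity of $F$ therefore forces $\ker A=0$, i.e., $\det A\neq0$. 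This condition is preserved by arbitrary base change, so $F$ is smooth over $K$, completing the proof.
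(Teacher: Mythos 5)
Your argument is correct, but it takes a genuinely different route from the paper. The paper passes to the algebraic closure: it applies the Patakfalvi--Waldron formula $K_Y+(p-1)C\sim\pi^*K_{S_\eta}$ (the modern form of Tate's genus-change theorem) to the normalization $Y$ of $S_{\bar\eta}$, deduces that a nonzero conductor $C$ would force $S_{\bar\eta}\cong\bP^1$ with $\deg(p-1)C<2$, and hence $p=2$; this requires first proving that $S_\eta$ is geometrically integral, which the paper does via separability of $k(S)/k(T)$. You instead classify the generic fiber entirely over $K=k(\eta)$ as a regular plane conic and invoke the linear-algebra fact that a regular conic in characteristic $\neq 2$ is cut out by a nondegenerate symmetric form, hence is smooth. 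Your route buys something real: it never needs geometric integrality (or reducedness) of $F$ as an input, since $\det A\neq 0$ is checked over $K$ and is stable under base change, whereas your fallback via Tate's theorem would, like the paper, first require geometric reducedness. Two small caveats on your main route. First, the very-ampleness step is indeed where care is needed, but your proposed fix (length-$2$ subschemes over $K$) is not quite the standard repair: closed-immersion criteria via length-$2$ subschemes are normally verified after base change to $\bar K$, where $F$ is a priori not regular and such subschemes at singular points need not be Cartier. The clean argument is the one in \cite[0C6N]{Stacks_Project}: $|-K_F|$ is base-point free, the image $C\subset\bP^2_K$ is an integral curve of degree $2$ (it cannot lie in a line since the three sections are linearly independent), $\phi\colon F\to C$ is finite birational, and $\chi(\sO_F)=1=\chi(\sO_C)$ forces $\phi_*\sO_F=\sO_C$, so $\phi$ is an isomorphism; this needs only that $F$ is integral, proper, Gorenstein, with $H^0(F,\sO_F)=K$ and $p_a=0$. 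Second, your parenthetical appeal to Tate's theorem as stated ("$p_a(F)=0$ forces the absence of such contributions") is too quick: the genus drop is measured on $F_{\bar K}$, so one must know $F_{\bar K}$ is reduced before speaking of its normalization, and the divisibility of the drop by $(p-1)/2$ is exactly where $p>2$ enters --- a naive $\delta$-invariant count with no characteristic hypothesis would "prove" the (false) statement in characteristic $2$. Since your primary argument stands on its own, these are refinements rather than gaps.
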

\begin{proof}
To begin with, we show that the generic fiber of $f$ is geometrically normal, assuming it is geometrically integral. Let $\eta\in T$ be the generic point, then $S_{\eta}$ is a normal scheme of finite type over $k(\eta)$. Since $f_*\sO_S=\sO_T$, the field $k(\eta)$ is algebraically closed in $k(S_{\eta})$. Then if $Y$ is the normalization of the reduced structure of $S_{\overline{\eta}}$, with canonical morphism $\pi\colon Y\to S_{\eta}$, by \cite[Theorem 1.1]{Patakfalvi_Waldron_Singularities_of_general_fibers} there is an effective $\bZ$-Weil divisor $C$ on $Y$ such that
		\begin{equation}\label{eqn:inseparable_base_change}
		K_{S_{\overline{\eta}}}+(p-1)C\sim \pi^*K_{S_{\eta}}.
		\end{equation}
Since $S_{\overline{\eta}}$ is reduced by the assumption we made, by \cite[Theorem 1.2]{Patakfalvi_Waldron_Singularities_of_general_fibers} we may choose $C$ so that $(p-1)C$ is equal to the divisorial part of the conductor of the normalization $Y\to S_{\overline{\eta}}$. Notice that $S_{\overline{\eta}}$ satisfies the property $S_2$, since it is preserved by field-extension \cite[6.4.2]{EGA_IV.2} and $S_{\eta}$ satisfies it. Thus $S_{\overline{\eta}}$ is normal if and only if it is $R_1$, so we can choose $C=0$ if and only if $S_{\overline{\eta}}$ is normal.

Suppose $S_{\overline{\eta}}$ is not normal. Then we may assume that $C>0$. Since $-K_{S_{\eta}}$ is ample over $k(\eta)$, its pullback $-\pi^*K_{S_{\eta}}$ is ample over $\overline{k(\eta)}$. By \autoref{eqn:inseparable_base_change} we deduce that $-K_{S_{\overline{\eta}}}-(p-1)C$ is ample. Since $C>0$, we deduce that $-K_{S_{\overline{\eta}}}$ is already ample, and therefore $S_{\overline{\eta}}\cong \bP_{\overline{k(\eta)}}$. Looking again at the equation \autoref{eqn:inseparable_base_change} and taking degrees, we see that $\deg (p-1)C< 2$. Since $C>0$, this condition is satisfied only if $p=2$.

Now we show that $S_{\eta}$ is geometrically integral. Since $f_*\sO_S=\sO_T$, the field $k(\eta)$ is algebraically closed in $k(S_{\eta})$. Therefore the function field of $T$ is separable over the function field of $S$ \cite[7.2]{Badescu_Agebraic_surfaces}. It follows from \cite[4.5.9, 4.6.1]{EGA_IV.2} that $S_{\eta}$ is geometrically integral.

Finally, by \cite[12.2.4]{EGA_IV.3} the set of points $t\in T$ such that the fiber $S_{t}$ is geometrically normal, is open.% For such $t$, notice that $S_{\bar{t}}$ is rational.
\end{proof}

\begin{lemma}\label{lemma:sections_of_fibered_curves}
Let $f\colon S\to T$ be a proper flat morphism from an integral surface to an integral curve. Let $D$ be a prime $\bQ$-Cartier divisor such that $\deg_{k(t)}D|_{S_t}=1$ for every $t\in T$. Assume that $S$ is normal, or more generally that the Cartier locus of $D$ dominates $T$. Then $f|_D\colon D\to T$ is a birational morphism, and an isomorphism if $T$ is normal.
\end{lemma}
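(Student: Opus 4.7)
The plan is to extract three claims from the numerical hypothesis: that $f|_D$ is finite, that it is birational, and that it is an isomorphism when $T$ is normal.

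First I would check that $D$ is horizontal over $T$. If $D$ were a component of some fiber $S_{t_0}$, then for $t\neq t_0$ the support of $D$ would not meet $S_t$, so $D|_{S_t}=0$, contradicting the assumption $\deg_{k(t)}D|_{S_t}=1$. Hence $f|_D\colon D\to T$ is dominant, and in particular is a proper dominant morphism between integral curves (recall $D$ is a prime divisor on a surface). Each fiber $D\cap S_t$ is zero-dimensional because it has finite degree, so $f|_D$ is quasi-finite, and thus finite.

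The heart of the argument is birationality. Let $\eta$ be the generic point of $T$ and $\zeta$ be the generic point of $D$, so that $\zeta$ lies above $\eta$ and gives a closed point of the generic fiber $S_\eta$ with residue field $k(D)$. The Cartier hypothesis ensures that $D$ is Cartier in an open neighborhood of $\zeta$ in $S$ (in the normal case this is automatic because $\sO_{S,\zeta}$ is a DVR). Writing $mD$ locally near $\zeta$ as a Cartier divisor cut out by a section $s$, the coefficient of $\zeta$ in the Weil divisor $mD$ equals $m$, which forces the vanishing order of $s$ at $\zeta$ to be $m$. Restricting to the generic fiber, the Cartier divisor $mD|_{S_\eta}$ is supported at the single closed point $\zeta\in S_\eta$ with multiplicity $m$. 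Dividing by $m$, we obtain $D|_{S_\eta}=[\zeta]$ as a $\bQ$-divisor, so
$$1 \;=\; \deg_{k(\eta)}D|_{S_\eta}\;=\;[k(\zeta):k(\eta)]\;=\;[k(D):k(T)].$$
This shows $f|_D$ is birational.

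Finally, assume $T$ is normal. Then $f|_D$ is a finite birational morphism from an integral scheme to a normal integral scheme. The algebra $(f|_D)_*\sO_D$ is a coherent $\sO_T$-algebra inside $k(D)=k(T)$, hence integral over $\sO_T$; since $T$ is normal, $\sO_T$ is integrally closed in $k(T)$, so $(f|_D)_*\sO_D=\sO_T$ and $f|_D$ is an isomorphism. The main delicate point is the computation of $\deg D|_{S_\eta}$ when $S$ is not normal at $\zeta$, where one uses the Cartier hypothesis precisely to make sense of the vanishing order of the defining section at a possibly non-regular codimension one point.
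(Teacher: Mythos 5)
Your proof is correct, and it rests on the same key computation as the paper's: interpreting $\deg_{k(t)}D|_{S_t}$ as a sum of lengths $\length_{k(t)}\left(\sO_{S_t,x}/(\xi_x)\right)$ over the points $x\in D\cap S_t$, where $\xi_x$ is the restriction of a local equation for (a Cartier multiple of) $D$ (this is \cite[A.2.8]{Kollar_Rational_curves} in the paper's proof). The only structural difference is where that formula is applied: you evaluate it once, at the generic fibre, to get $[k(D):k(T)]=1$, and then finish with the standard fact that a finite birational morphism onto a normal integral scheme is an isomorphism; the paper instead applies it over every $t$ in the locus where $D$ is Cartier, deduces via Nakayama that $\sO_T\to (f|_D)_*\sO_D$ is surjective and hence an isomorphism there, and only then treats the general case through the normalization of $D$. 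Both routes are valid; yours is slightly more economical for the birationality claim, while the paper's Cartier-case argument delivers the isomorphism over that locus without yet invoking normality of $T$.
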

\begin{proof}
Notice that $S_t$ is proper for every $t\in T$, so the intersection product $D.S_t$ is well-defined (see \cite[Appendix A.2]{Kollar_Rational_curves}). Let us assume first that $D$ is Cartier. Then by \cite[A.2.8]{Kollar_Rational_curves} (applied with $X=S_t$ and $F=\sO_{S_t}$) we have
		$$1=\deg_{k(t)}D|_{S_t}=\sum_{x\in D\cap S_t}\length_{k(t)}\left(\sO_{S_t,x}/(\xi_x)\right)$$
	where $\xi_x$ is the restriction to $S_t$ of the local equation of $D$ at $x$. This implies that $D$ meets $S_t$ at a single point, and that the canonical map $\sO_T\to (f|_D)_*\sO_D$ is surjective after tensoring by $k(t)$. So $f|_D$ is proper and quasi-finite, therefore finite, and $(f|_D)_*\sO_D$ is a finite $\sO_T$-module. By Nakayama's lemma, we obtain that $\sO_T\to (f|_D)_*\sO_D$ is surjective. But it is also injective, so it is an isomorphism. So $f|_D$ is an isomorphism.
	
	If $D$ is only Cartier over a dense open subset of $T$, we obtain that $f|_D\colon D\to T$ is birational. Assume that $T$ is normal: then the composition $D^n\to D\to T$ is a birational morphism of normal curves, hence an isomorphism, so actually $D^n=D\cong T$.
\end{proof}

\begin{lemma}\label{lemma: dlt surface over a surface}
Let $(S,\Delta)$ be a dlt surface pair and $g\colon S\to T$ be a birational proper morphism onto a (non-necessarily normal) surface, over a perfect field. Assume that $K_S+\Delta\sim_{\bQ,g}0$. Then $R^1g_*\sO_S(-\Delta^{=1})=0$.
\end{lemma}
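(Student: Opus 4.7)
The plan is to reduce the statement to a relative Kawamata--Viehweg vanishing for the auxiliary klt surface pair $(S,\Delta^{<1})$, which is available in positive characteristic thanks to Tanaka's MMP for excellent surfaces.

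First I would record the two standard facts that set up the reduction. Since $(S,\Delta)$ is a dlt surface pair, $S$ is $\bQ$-factorial by the corollary following \autoref{fact:resolution_of_singularities_for_threefolds}; consequently the reduced Weil divisor $\Delta^{=1}$ is $\bQ$-Cartier and the reflexive sheaf $\sO_S(-\Delta^{=1})$ is well defined. Moreover $(S,\Delta^{<1})$ is klt: the only exceptional divisors of $(S,\Delta)$ having discrepancy equal to $-1$ are those whose centers lie inside $\Delta^{=1}$, and these acquire strictly larger discrepancy once one removes $\Delta^{=1}$ from the boundary. Using $K_S+\Delta\sim_{\bQ,g}0$ I would rewrite the key numerical identity
\[
-\Delta^{=1} - (K_S+\Delta^{<1}) \;=\; -(K_S+\Delta) \;\sim_{\bQ,g}\; 0,
\]
so $-\Delta^{=1}$ differs from $K_S+\Delta^{<1}$ by a $\bQ$-Cartier divisor that is $\bQ$-linearly trivial over $T$.

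Second, I would invoke relative Kawamata--Viehweg vanishing: for a proper birational morphism $g\colon S\to T$ between normal surfaces over a perfect field, a klt pair $(S,B)$, and an integral $\bQ$-Cartier Weil divisor $M$ with $M-(K_S+B)\sim_{\bQ,g}0$, one has $R^i g_*\sO_S(M)=0$ for $i>0$. This vanishing follows from the surface MMP of Tanaka (\autoref{fact:birational_geometry_of_surfaces}) together with the Grauert--Riemenschneider-type vanishing available for surfaces in positive characteristic (e.g. \cite[10.4]{Kollar_Singularities_of_the_minimal_model_program}); the key geometric input is the negative-definiteness of the $g$-contracted curves, which is valid over any perfect field. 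Applying this with $M=-\Delta^{=1}$ and $B=\Delta^{<1}$ gives the conclusion. A preliminary step handles the case where $T$ is non-normal: by the universal property of normalization, $g$ factors as $S\xrightarrow{h}T^\nu\xrightarrow{n}T$ with $h$ proper birational and $n$ finite, and since $n_*$ is exact one has $R^1 g_*=n_*R^1 h_*$, so vanishing over the normal base $T^\nu$ transfers.

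The main obstacle is identifying the exact vanishing statement to cite: the classical Kawamata--Viehweg vanishing fails in positive characteristic in general, but for \emph{birational} proper morphisms of surfaces it is in fact available, essentially because the negative-definiteness of the exceptional curves permits a Mumford-type argument bypassing Kodaira vanishing. Once this relative vanishing is in hand, the proof is a direct two-line computation as outlined above.
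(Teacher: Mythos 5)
Your proposal is correct and follows essentially the same route as the paper: reduce to the klt pair $(S,\Delta^{<1})$, observe that $-\Delta^{=1}\sim_{\bQ,g}K_S+\Delta^{<1}$, handle a non-normal $T$ by factoring through the normalization (using exactness of pushforward along the finite map), and conclude by a relative Kawamata--Viehweg-type vanishing for birational morphisms of surfaces. The one point you flag as an obstacle --- locating the precise vanishing statement --- is exactly what the paper's proof supplies: it passes to a minimal log resolution $\varphi\colon S'\to S$, writes $\varphi^*(K_S+\Delta^{<1})=K_{S'}+\Delta'$ with $\lfloor\Delta'\rfloor=0$, splits $\Delta'$ into its $\varphi$-exceptional and non-exceptional parts, and invokes \cite[2.2.5]{Kollar_Kovacs_Birational_Geometry_of_log_surfaces} (which is stated on the resolution, where negative-definiteness of the contracted curves does the work), together with the projection formula $\varphi_*\sO_{S'}(-\varphi^*\Delta^{=1})=\sO_S(-\Delta^{=1})$ and the Leray spectral sequence to descend the vanishing to $S$ and then to $T$. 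So your argument is complete once you make the citation concrete in this way; applying the vanishing directly on the possibly singular $S$, as you phrase it, is not literally covered by the standard references and does require this detour through the resolution.
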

\begin{proof}
By \'{e}tale base-change we may assume that the base-field is algebraically closed. Since $S$ is normal, there is a factorization $g^\nu\colon S\to T^\nu$ through the normalization $\nu\colon T^\nu\to T$. Since $\nu$ is finite, for any coherent sheaf $\sF$ on $S$ we have $R^1(\nu\circ g)_*\sF\cong  \nu_*R^1g^\nu_*\sF$. Therefore we may assume that $T$ is normal.

Let $\varphi\colon S'\to (S,\Delta)$ is a minimal log resolution. Since $(S,\Delta^{<1})$ is klt, we can write $\varphi^*(K_S+\Delta^{<1})=K_{S'}+\Delta'$ where $\Delta'\geq 0$ and $\lfloor \Delta'\rfloor=0$. 

We notice that since $S$ is smooth in a neighborhood of $\Delta^{=1}$, the projection formula yields $\varphi_*\sO_{S'}(-\varphi^*\Delta^{=1})=\sO_S(-\Delta^{=1})$ (such projection formula holds in greater generality, see for example \cite[2.1, 2.4]{Sakai_Divisors_normal_surfaces}).

%%We claim that the $\varphi$-exceptional components of $\varphi^*\Delta^{=1}$ have coefficients $<1$. Indeed, if $E$ is $\varphi$-exceptional and appears with coefficient $1$ in $\varphi^*\Delta^{=1}$, then $a(E;S,\Delta)\leq -1$. Since $(S,\Delta)$ is dlt, we deduce that the $\varphi(E)$ belongs to the snc locus of $(S,\Delta)$. But $\varphi$ is a minimal log resolution, so $\varphi$ is an isomorphism in a neighborhood of $E$, contradiction. By \cite[7.30]{Kollar_Singularities_of_the_minimal_model_program} we obtain $\varphi_*\sO_{S'}(-\varphi^*\Delta^{=1})=\sO_S(-\Delta^{=1})$.

By assumption, there is a $\bQ$-Cartier divisor $N$ on $T$ such that $-\Delta^{=1}\sim_{\bQ} K_S+\Delta^{<1}+g^*N$. It follows that
		$$-\varphi^*\Delta^{=1}\sim_\bQ K_{S'}+\Delta'+\varphi^*(g^*N).$$
We can write $\Delta'=V_{\varphi}+H_{\varphi}$, where $V_{\varphi}$ is $\varphi$-exceptional and $H_{\varphi}$ has no $\varphi$-exceptional components. Then $H_{\varphi}$ is $\varphi$-nef and $\lfloor V_{\varphi}\rfloor =0$. So by  \cite[2.2.5]{Kollar_Kovacs_Birational_Geometry_of_log_surfaces} (see also \cite[10.4]{Kollar_Singularities_of_the_minimal_model_program}), we obtain that $R^i\varphi_*\sO_{S'}(-\varphi^*\Delta^{=1})=0$ for $i>0$. A similar argument with the composition $g'=g\circ\varphi\colon S'\to T$ in place of $\varphi$ shows that $R^1g'_*\sO_{S'}(-\varphi^*\Delta^{=1})=0$. As $\varphi_*\sO_{S'}(-\varphi^{*}\Delta^{=1})=\sO_S(-\Delta^{=1})$, the Leray spectral sequence for $g'=g\circ \varphi$ gives that
		$$0=R^1g'_*\sO_{S'}(-\varphi^*\Delta^{=1})\cong R^1g_*\sO_S(-\Delta^{=1})$$
and the proof is complete.
\end{proof}

\subsubsection{Crepant birational maps}
\begin{definition}
Let $f\colon (X',\Delta')\to (X,\Delta)$ be a birational proper morphism of pairs. We say that $f$ is \textbf{crepant} if $K_{X'}+\Delta'= f^*(K_X+\Delta)$.
\end{definition}

\begin{definition}
Let $(X,\Delta)$ be a pair. A \textbf{crepant dlt blow-up} of $(X,\Delta)$ is a projective birational morphism $f\colon Y\to X$ such that
	\begin{enumerate}
		\item $Y$ is $\bQ$-factorial, and
		\item $(Y, f^{-1}_*\Delta+E)$ is a dlt pair, where $E$ is the sum of all $f$-exceptional divisors with coefficients $1$, and
		\item $K_Y+f^{-1}_*\Delta+E\sim_\bQ f^*(K_X+\Delta)$.
	\end{enumerate}
\end{definition}

The following fact will be crucial for our study of lc centers on threefolds.

\begin{fact}\label{fact:crepant_dlt_blow_up_ind_dim_2_3}
Crepant dlt blow-ups exist for lc surface pairs over an arbitrary field (see \autoref{fact:birational_geometry_of_surfaces}), and for quasi-projective lc threefold pairs over a perfect field of characteristic $>5$ \cite[3.6]{Hashizume_Nakamura_Tanaka_MMP_for_lc_threefolds_in_char_p}.
\end{fact}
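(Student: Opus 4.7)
The plan is to follow the classical strategy: produce a log resolution and then contract exactly the exceptional divisors of discrepancy strictly greater than $-1$ by means of a relative MMP over $X$. First I would take a log resolution $f_0\colon Y_0\to (X,\Delta)$, which exists for excellent surfaces by \autoref{fact:birational_geometry_of_surfaces} and for quasi-projective threefolds over a perfect field by \autoref{fact:resolution_of_singularities_for_threefolds}. Let $E_0=\sum_i E_i$ be the reduced sum of $f_0$-exceptional prime divisors and set $\Delta_{Y_0}:=(f_0)_*^{-1}\Delta+E_0$. Since $(X,\Delta)$ is log canonical, all discrepancies satisfy $a(E_i;X,\Delta)\geq -1$, giving a $\bQ$-linear equivalence
$$K_{Y_0}+\Delta_{Y_0}\sim_{\bQ}f_0^*(K_X+\Delta)+F_0,$$
where $F_0$ is an effective $f_0$-exceptional $\bQ$-divisor whose support is exactly the union of $f_0$-exceptional prime divisors of discrepancy $>-1$.

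Next I would run a $(K_{Y_0}+\Delta_{Y_0})$-MMP over $X$ with scaling of a relatively ample divisor. Since $K_{Y_0}+\Delta_{Y_0}\sim_{\bQ,f_0}F_0$ with $F_0$ effective and exceptional, this MMP is simultaneously an $F_0$-MMP over $X$; by the negativity lemma only divisors appearing in $\Supp F_0$ can be contracted, and termination forces the strict transform of $F_0$ to vanish, i.e.\ every $f_0$-exceptional divisor of discrepancy $>-1$ is eventually contracted. Let $(Y,\Delta_Y)$ denote the output and $f\colon Y\to X$ the induced morphism. By construction $Y$ is $\bQ$-factorial, $f$ is crepant, and every remaining $f$-exceptional divisor appears in $\Delta_Y$ with coefficient $1$. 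A local check at the generic points of the strata of $\Delta_Y^{=1}$, using that $Y_0\to Y$ extracts no further divisor of discrepancy $>-1$, shows that $(Y,\Delta_Y)$ is dlt.

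For surfaces the programme runs over an arbitrary field: the cone and contraction theorems for quasi-projective lc surface pairs over excellent bases are supplied by Tanaka's MMP (\autoref{fact:birational_geometry_of_surfaces}), and termination is automatic in dimension two since each divisorial contraction strictly drops the Picard number and there are no flips. For quasi-projective threefolds over a perfect field of characteristic $>5$, the required ingredients---existence of flips, termination of flips with scaling for $\bQ$-factorial dlt threefold pairs, and the relative base-point free theorem---are established in the form needed in \cite{Hashizume_Nakamura_Tanaka_MMP_for_lc_threefolds_in_char_p}, building on the earlier work of Hacon--Xu, Birkar and Waldron.

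The main obstacle is of course the threefold MMP in positive characteristic: existence of flips and termination depend crucially on $p>5$ and on perfectness of the base field (the latter being needed both for resolution of singularities and for the Frobenius-splitting/semi-positivity arguments underlying vanishing in low characteristic). There is no elementary shortcut here, which is why the statement is quoted as a fact and the serious work is deferred to the cited references; once those are granted, the extraction argument above is formally identical to the characteristic-zero version in \cite[1.34--1.36]{Kollar_Singularities_of_the_minimal_model_program}.
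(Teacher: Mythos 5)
The paper offers no proof of this statement---it is quoted as a Fact and deferred entirely to Tanaka's surface MMP and to the cited result of Hashizume--Nakamura--Tanaka---and your argument is precisely the standard dlt-modification construction carried out in those references, so it is correct in approach. One small imprecision: at the end of the MMP it is not termination by itself that forces the strict transform of $F_0$ to vanish, but rather that the terminal output has pushforward $F$ nef over $X$, effective and $f$-exceptional, whence $F=0$ by the negativity lemma; with that adjustment the extraction argument goes through exactly as in the characteristic-zero case once the MMP inputs (cone, contraction, flips and termination with scaling for $\bQ$-factorial dlt pairs, valid for surfaces over arbitrary fields and for threefolds over perfect fields of characteristic $>5$) are granted.
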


\begin{definition}
More generally, a birational map $f\colon (X',\Delta')\dashrightarrow (X,\Delta)$ of pairs is \textbf{crepant} if there is a normal variety $Y$, a (non-necessarily effective) $\bQ$-divisor $\Delta_Y$ and a commutative diagram
		$$\begin{tikzcd}
		& (Y,\Delta_Y)\arrow[dl, "v'" above left]\arrow[dr, "v"] & \\
		(X',\Delta') \arrow[rr, dotted, "f"] && (X,\Delta)
		\end{tikzcd}$$
where $v,v'$ are proper, such that
		$$(v')^*(K_{X'}+\Delta')= K_Y+\Delta_Y = v^*(K_X+\Delta).$$
	The set of crepant birational self-map of $(X,\Delta)$ with the composition forms a group, denoted $\Bir^c(X,\Delta)$.
	
	If $X$ is endowed with a morphism $X\to Z$, we let $\Bir^c_Z(X,\Delta)$ be the subgroup of crepant birational self-maps over $Z$.
\end{definition}

\begin{lemma}\label{lemma:crepant_pairs_and_non_klt_locus}
Let $\phi\colon (S,\Delta)\dashrightarrow (S',\Delta')$ be a crepant birational map between two excellent surface pairs over an arbitrary field. Then:
	\begin{enumerate}
		\item $(S,\Delta)$ is klt if and only if $(S',\Delta')$ is klt;
		\item more generally, there is a bijection between the connected components of $\Nklt(S,\Delta)$ and those of $\Nklt(S',\Delta')$.
	\end{enumerate}
\end{lemma}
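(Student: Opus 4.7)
My plan is to reduce both statements to properties of a single common crepant log resolution. By definition of crepant birational map, there exist a normal variety $Y$, a $\bQ$-divisor $\Delta_Y$ on $Y$, and proper birational morphisms $v\colon Y\to S$ and $v'\colon Y\to S'$ fitting into
$$v^*(K_S+\Delta) = K_Y+\Delta_Y = (v')^*(K_{S'}+\Delta').$$
Using that log resolutions exist for excellent surface pairs (\autoref{fact:birational_geometry_of_surfaces}), I may replace $Y$ by a higher birational model so that $(Y,\Supp\Delta_Y)$ has simple normal crossings support, making both $v$ and $v'$ log resolutions of the respective pairs. On such a model every prime divisor $E\subset Y$ has discrepancy $-\coeff_E\Delta_Y$ against both pairs simultaneously. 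In particular $(S,\Delta)$ is klt iff $\Delta_Y^{\geq 1}=0$, a condition visibly symmetric in $S$ and $S'$, which gives (1).

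For (2), I will show that the connected components of $\Nklt(S,\Delta)$ are in natural bijection with those of $\Supp\Delta_Y^{\geq 1}$; the analogous statement for $(S',\Delta')$ then yields the desired bijection by symmetry. Denote the connected components of $\Supp\Delta_Y^{\geq 1}$ by $F_1,\dots,F_r$. Each $v(F_i)$ is connected in $S$ and their union equals $\Nklt(S,\Delta)=v(\Supp\Delta_Y^{\geq 1})$, so the remaining point is the disjointness $v(F_i)\cap v(F_j)=\emptyset$ for $i\neq j$. If a point $s\in v(F_i)\cap v(F_j)$ existed with $i\neq j$, both $F_i$ and $F_j$ would meet the fiber $v^{-1}(s)$; the Koll\'ar--Shokurov connectedness principle applied to $v$ (available since $-(K_Y+\Delta_Y)\sim_{\bQ,v}0$ is $v$-nef) would then give that $\Supp\Delta_Y^{\geq 1}$ is connected in an open neighborhood of $v^{-1}(s)$, forcing $F_i$ and $F_j$ to lie in the same connected component of $\Supp\Delta_Y^{\geq 1}$, a contradiction.

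The main obstacle is verifying that the Koll\'ar--Shokurov connectedness principle is available in our setting, namely excellent surfaces over an arbitrary field of positive characteristic. The standard proof proceeds by applying relative Kawamata--Viehweg vanishing to a suitable rounding of $-(K_Y+\Delta_Y)$ on $Y$, and the required vanishing for proper birational morphisms of excellent surface pairs is part of Tanaka's MMP package referenced in \autoref{fact:birational_geometry_of_surfaces}. Granting this vanishing, the argument above completes the proof.
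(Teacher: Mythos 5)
Your argument is correct and follows the same overall strategy as the paper: part (1) via the equality of all discrepancies (the paper cites \cite[2.32.2]{Kollar_Singularities_of_the_minimal_model_program} directly rather than passing to a log resolution, but this is the same fact), and part (2) by identifying the connected components of $\Nklt(S,\Delta)$ with those of $\Supp\Delta_Y^{\geq 1}$ on a common crepant snc model. The one place you diverge is the key input for (2): the paper simply invokes the two-dimensional connectedness theorem \cite[2.36]{Kollar_Singularities_of_the_minimal_model_program}, which is proved for excellent surfaces by elementary means and hence is unconditionally available over any field, whereas you re-derive it from relative Kawamata--Viehweg vanishing. The substance of your reduction is fine (you correctly check $v$-nefness, and the negative part of $\Delta_Y$ is $v$-exceptional since $v_*\Delta_Y=\Delta$ is effective), but your justification of the vanishing is the weak point: \autoref{fact:birational_geometry_of_surfaces} only records existence of log resolutions and the MMP, not a vanishing theorem, and in positive characteristic one cannot appeal to Kawamata--Viehweg vanishing without comment. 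The statement you need does hold for proper birational morphisms of excellent surfaces in any characteristic, but it should be cited precisely (e.g.\ \cite[2.2.5]{Kollar_Kovacs_Birational_Geometry_of_log_surfaces} or \cite[10.4]{Kollar_Singularities_of_the_minimal_model_program}, which this paper uses elsewhere for exactly such relative $R^1$ vanishing) --- or, more economically, you could skip the vanishing altogether and quote \cite[2.36]{Kollar_Singularities_of_the_minimal_model_program} as the paper does.
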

\begin{proof}
An equivalent definition of being crepant is that $a(E;S,\Delta)=a(E;S',\Delta')$ for every prime divisor $E$ of $k(S)=k(S')$ \cite[2.32.2]{Kollar_Singularities_of_the_minimal_model_program} so the first statement holds. To prove the other one, let $(Y,\Delta_Y)$ be a crepant snc resolution of $\phi$. Then $\Nklt(S,\Delta)$, respectively $\Nklt(S',\Delta')$, is the image of $\lfloor\Delta_Y^{>0}\rfloor$ through $Y\to S$, respectively through $Y\to S'$. By \cite[2.36]{Kollar_Singularities_of_the_minimal_model_program}, the fibers of
		$$\lfloor\Delta_Y^{>0}\rfloor\to \Nklt(S,\Delta),\quad \lfloor\Delta_Y^{>0}\rfloor\to \Nklt(S',\Delta')$$
	are connected, thus each morphism induces a bijection between the connected components of the target and those of the source. The result follows.
\end{proof}

\begin{remark}
The first part of \autoref{lemma:crepant_pairs_and_non_klt_locus} is true in every dimension. Moreover, it follows from the connectedness principle proved in \cite[Theorem 1.2]{Nakamura_Tanaka_Witt_Nadel_vanishing_for_threefolds} that the second part is true for threefolds over a perfect field of characteristic $>5$.
\end{remark}

We will frequently encounter pairs $(X,\Delta)$ together with a proper morphism onto a normal variety $X\to Z$, satisfying the condition $K_X+\Delta\sim_{\bQ,Z}0$. It will be useful to understand the crepant birational $Z$-maps of such pairs.

\begin{lemma}\label{lemma:birational_crepant_pairs}
Let $(X,\Delta)$ be a pair, $f\colon X\to Z$ a proper surjective morphism onto a normal variety such that $K_X+\Delta\sim_{\bQ,f}0$. Let $X'$ be a normal variety with a proper surjective morphism $f'\colon X'\to Z$, and $\phi\colon X\dashrightarrow X'$ a birational $Z$-map. Then:
	\begin{enumerate}
		\item There exists a unique $\bQ$-divisor $\Delta'$ on $X'$ such that $(X',\Delta')$ is a sub-pair and $\phi\colon (X,\Delta)\dashrightarrow (X',\Delta')$ is crepant;
		\item $K_{X'}+\Delta'\sim_{\bQ,f'}0$;
		\item If $\phi^{-1}$ does not extract divisors, then $\Delta'=\phi_*\Delta$. So $(X',\Delta')$ is a pair in this case.
	\end{enumerate}
\end{lemma}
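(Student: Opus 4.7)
The strategy is to resolve $\phi$ on a normal birational model and transport all log data through pushforward. First, I would take $Y$ to be the normalization of the graph of $\phi$ inside $X\times_Z X'$, with proper birational projections $p\colon Y\to X$ and $q\colon Y\to X'$ satisfying $q=\phi\circ p$ as rational maps; since $\phi$ is a $Z$-map we also have $f\circ p=f'\circ q$. Fix compatible canonical representatives $K_Y,K_X,K_{X'}$ such that $p_*K_Y=K_X$ and $q_*K_Y=K_{X'}$, and define the $\bQ$-Weil divisor $\Delta_Y:=p^*(K_X+\Delta)-K_Y$ on $Y$, then set $\Delta':=q_*\Delta_Y$. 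All assertions will follow from pushforward arguments along $q$ combined with a short rigidity observation.

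Using $K_X+\Delta\sim_{\bQ,f}0$, fix a $\bQ$-Cartier divisor $N$ on $Z$ with $K_X+\Delta\sim_\bQ f^*N$. Pulling back to $Y$ yields $K_Y+\Delta_Y\sim_\bQ q^*f'^*N$, so there exist $m>0$ and $\psi\in k(Y)^\times=k(X')^\times$ with
\[ m(K_Y+\Delta_Y)-mq^*f'^*N=\opdiv_Y(\psi). \]
Applying $q_*$ and using that $q$ is proper birational (so $q_*K_Y=K_{X'}$, $q_*\Delta_Y=\Delta'$, $q_*q^*f'^*N=f'^*N$ and $q_*\opdiv_Y(\psi)=\opdiv_{X'}(\psi)$) gives $m(K_{X'}+\Delta')-mf'^*N=\opdiv_{X'}(\psi)$. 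Hence $K_{X'}+\Delta'$ is $\bQ$-Cartier with $K_{X'}+\Delta'\sim_\bQ f'^*N\sim_{\bQ,f'}0$, which establishes assertion (2). The coefficients of $\Delta'$ lie in $[0,1]$ because they are either coefficients of $\Delta$ (for strict transforms) or of the form $-a(E;X,\Delta)$ for a divisor $E$ extracted on $X'$, and in the intended applications the pair $(X,\Delta)$ is at worst lc.

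For crepancy, consider $\Theta:=p^*(K_X+\Delta)-q^*(K_{X'}+\Delta')$ on $Y$: it is $\bQ$-Cartier; it satisfies $q_*\Theta=0$ by the definition of $\Delta'$ and the compatibility of canonical representatives; and it is $\bQ$-principal, since both summands are $\bQ$-linearly equivalent to $q^*f'^*N$. A $\bQ$-principal divisor $\Theta$ on $Y$ with $q_*\Theta=0$ on the normal variety $X'$ must itself vanish, because the defining rational function has no codimension-one zero or pole on $X'$ and is therefore a section of $\sO_{X'}^\times$, whose divisor on $Y$ is zero. Hence $\Theta=0$ and $\phi\colon(X,\Delta)\dashrightarrow(X',\Delta')$ is crepant. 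Uniqueness follows at once: if $\Delta''$ satisfies the same conditions, then $q^*(K_{X'}+\Delta'')=K_Y+\Delta_Y=q^*(K_{X'}+\Delta')$, so $q^*(\Delta''-\Delta')=0$ and applying $q_*$ gives $\Delta''=\Delta'$.

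Finally, for assertion (3), $q_*\Delta_Y$ receives contributions only from prime divisors on $Y$ that are not $q$-exceptional. The hypothesis that $\phi^{-1}$ does not extract divisors translates to: every non-$q$-exceptional prime divisor on $Y$ is also non-$p$-exceptional, hence is the strict transform of a unique prime divisor on $X$, whose coefficient in $\Delta$ equals its coefficient in $\Delta_Y$. Matching coefficients then yields $\Delta'=\phi_*\Delta$. The main obstacle, running through the whole argument, is to upgrade the obvious $\bQ$-linear equivalence between $p^*(K_X+\Delta)$ and $q^*(K_{X'}+\Delta')$ to an actual equality of divisors on $Y$; this is precisely what the rigidity step above achieves and is the technical heart of the proof.
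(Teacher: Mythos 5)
Your proof is correct and follows the same overall scheme as the paper's: pass to a common normal model $Y$, define $\Delta_Y$ by crepant pullback from $(X,\Delta)$, set $\Delta':=q_*\Delta_Y$, and deduce everything by pushing forward along $q$. The one place where you genuinely diverge is the rigidity step upgrading $p^*(K_X+\Delta)\sim_{\bQ}q^*(K_{X'}+\Delta')$ to an equality of divisors: the paper observes that $\pm\bigl(p^*(K_X+\Delta)-q^*(K_{X'}+\Delta')\bigr)$ are $q$-exceptional and $q$-nef and invokes the negativity lemma, whereas you exploit that the difference $\Theta$ is actually $\bQ$-principal (because $f\circ p=f'\circ q$) with $q_*\Theta=0$, so the defining rational function is a unit on the normal variety $X'$ and hence has trivial divisor on $Y$. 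Your variant is slightly more elementary and self-contained; the paper's is the more standard reflex and generalizes to situations where one only knows relative numerical triviality rather than $\bQ$-principality. One cosmetic caveat, which the paper shares: neither argument checks that the coefficients of $\Delta'$ land in $[0,1]$ (equivalently, that the extracted divisors have discrepancy in $[-1,0]$), which is implicitly guaranteed only because the lemma is applied to lc pairs.
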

\begin{proof}
Uniqueness of $\Delta'$ is clear, we prove its existence. We can find a commutative diagram
		$$\begin{tikzcd}
		& Y\arrow[dl, "u" above left]\arrow[dr, "v"] &\\
		X\arrow[rr, dotted, "\phi"]\arrow[dr, "f" below left] & & X'\arrow[dl, "f'"] \\
		& Z &
		\end{tikzcd}$$
where $Y$ is normal, $u$ and $v$ are birational, $u_*\sO_Y=\sO_X$ and $v_*\sO_Y=\sO_{X'}$. Write $K_Y+\Gamma=u^*(K_X+\Delta)$. We claim that $\Delta':=v_*\Gamma$ is a valid choice. By commutativity of the diagram, there is a $\bQ$-Cartier divisor $N$ on $Z$ such that $(f'\circ v)^*N\sim_\bQ K_Y+\Gamma$. Fix a canonical divisor $K_{X'}$ on $X'$ such that $v_*K_Y=K_{X'}$. We have:
		$$(f')^*N\sim_\bQ v_*(f'\circ v)^*N\sim_\bQ v_*(K_Y+\Gamma)=K_{X'}+\Delta'$$
which shows that $K_{X'}+\Delta'$ is $\bQ$-Cartier. Now $\pm ((K_Y+\Gamma)-v^*(K_{X'}+\Delta'))$ are $v$-exceptional and $\bQ$-linearly trivial over $Z$. In particular both are $v$-nef, and by the negativity lemma \cite[3.39]{Kollar_Mori_Birational_geometry_of_algebraic_varieties} we deduce that $K_Y+\Gamma=v^*(K_{X'}+\Delta')$. This shows that $\phi\colon (X,\Delta)\dashrightarrow (X',\Delta')$ is crepant.

If $\phi^{-1}$ does not extract divisors, then any $u$-exceptional divisor is also $v$-exceptional, so $v_*\Gamma=\phi_*\Delta$.
\end{proof}

\subsubsection{Pluricanonical representations in low dimensions}\label{section:pluricanonical_representations}
The key to the gluing theorems in \autoref{section: proof of main result} is a finiteness result in the theory of pluricanonical representations. See \cite[Theorem 7]{Kollar_Sources_of_lc_centers} and \cite[\S 10.5]{Kollar_Singularities_of_the_minimal_model_program} for what is known in characteristic $0$. I am not aware of similar general results in positive characteristic. Fortunately, for the gluing of threefolds there are only a few easy cases to consider, which we discuss below. The results are probably well-known, we give proofs for convenience.

\begin{proposition}\label{proposition:pluricanonical_representations_for_curves_I}
Let $(C,E)$ be a proper dlt curve over a perfect field $k$ such that $K_C+E\sim_\bQ 0$. Then $\im\left[ \Bir^c_k(C,E)\to \Aut_k H^0(C,\omega_C^m(mE))\right]$ is finite, for $m$ divisible enough.
\end{proposition}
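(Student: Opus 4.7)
The plan is as follows. Set $k' := H^0(C, \sO_C)$, a finite separable extension of $k$ since $k$ is perfect and $C$ is normal and connected. Viewing $C$ as a smooth, geometrically connected, proper curve over $k'$ of genus $g' := h^1(C, \sO_C)$, the numerical condition
$$\deg_{k'}(K_C + E) = 2g'-2 + \deg_{k'} E = 0$$
together with $E \geq 0$ and the dlt constraint (coefficients in $[0,1]$) quickly restricts us to two situations: either $g' = 1$ and $E = 0$, or $g' = 0$ and $\deg_{k'} E = 2$. The case $g' \geq 2$ is numerically impossible.

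For $m$ sufficiently divisible, $mE$ is integral and $m(K_C+E) \sim 0$, so $\omega_C^m(mE) \cong \sO_C$ and $V := H^0(C, \omega_C^m(mE))$ is one-dimensional over $k'$. Since every birational self-map of a smooth proper curve is an automorphism, $\Bir^c_k(C,E) \subseteq \Aut_k(C)$, and its pullback action on $V$ is $k$-linear. I would reduce to studying the $k'$-linear part: each $\phi$ induces a $k$-algebra automorphism $\sigma_\phi \in \Aut_k(k')$ via its action on $H^0(C, \sO_C)$, and the full action on $V$ is $\sigma_\phi$-semilinear over $k'$. Since $\Aut_k(k')$ is finite, it suffices to bound the image of the kernel $\ker \sigma$, which acts $k'$-linearly on the one-dimensional space $V$, hence via the character group $(k')^*$.

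In the case $g' = 1$, let $J := \Pic^0(C)$ be the Jacobian, an elliptic curve over $k'$ under which $C$ is a torsor. Translation by $J(k')$ preserves the (essentially unique up to scalar) translation-invariant differential on $C$, hence acts trivially on $V$. Moreover, any $k'$-automorphism of $C$ intertwines the $J$-action through an automorphism of $J$ fixing the origin, yielding a homomorphism $\Aut_{k'}(C) \to \Aut_{k'}(J, 0)$ with kernel exactly $J(k')$ and target a finite group. Combining these two facts gives the desired finiteness. In the case $g' = 0$, I would base-change to $\bar{k'}$, where $C$ becomes $\bP^1$ and $E$ becomes a degree-$2$ divisor with dlt coefficients. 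If $|\Supp(E_{\bar{k'}})| \geq 3$, then even $\Bir^c$ itself is finite, as any automorphism of $\bP^1$ fixing three distinct points is trivial. Otherwise $E_{\bar{k'}} = P_1 + P_2$ with both coefficients equal to $1$, and a direct computation in a coordinate with $P_1 = 0$, $P_2 = \infty$ shows that the connected stabilizer $\bG_m$ fixes the generator $dt/t$ of $H^0(\bP^1, \omega(E))$, while the swap $t \mapsto 1/t$ acts by $-1$; the image in $(\bar{k'})^*$ is $\{\pm 1\}$, and a fortiori the image of $\ker\sigma$ in $(k')^*$ is finite.

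The main subtlety I expect is in the genus-one case when $C$ is a nontrivial $J$-torsor without $k'$-points, where one must argue in a coordinate-free manner using that every $k'$-automorphism of a torsor intertwines the group action through an automorphism of the structure group fixing the origin. Everything else is either a finite-Galois-group bookkeeping argument or an elementary calculation on $\bP^1$.
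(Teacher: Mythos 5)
Your proof is correct and follows essentially the same strategy as the paper: the numerical reduction to the two cases $(g'=1,\ E=0)$ and $(g'=0,\ \deg E=2)$, the use of the translation-invariant differential in genus one, and the explicit computation with $dt/t$ and the swap $t\mapsto 1/t$ in genus zero all coincide with the paper's argument. The one organizational difference is that the paper immediately extends scalars to $\bar{k}$ and, in the genus-one case, chooses an origin $o$ to write each automorphism as $t_{-\tau(o)}\circ\tau$ with $t_{-\tau(o)}$ acting trivially on $H^0(C,\omega_C)$, whereas you stay over the field of constants $k'=H^0(C,\sO_C)$ and argue coordinate-freely via the torsor structure under $J=\Pic^0(C)$ and the exact sequence $1\to J(k')\to\Aut_{k'}(C)\to\Aut_{k'}(J,0)$. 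Your version is in fact slightly more careful on one point the paper elides: after base change to $\bar{k}$ the curve may become disconnected when $k'\neq k$, and your semilinear decomposition through the finite group $\Aut_k(k')$ handles the resulting permutation of components cleanly before reducing to the $k'$-linear scalar action on the one-dimensional space $V$. Both arguments buy the same finiteness; yours avoids choosing a rational point at the cost of invoking the standard fact that automorphisms of a torsor intertwine the group action through an automorphism of the structure group.
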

\begin{proof}
We may extend the scalars along an algebraic closure of $k$, and assume it is algebraically closed. By assumption on $E$, the curve $C$ is smooth proper of genus $0$ or $1$. Moreover $\Bir^c_k(C,E)=\Aut_k(C,E)$.
	\begin{enumerate}
		\item If $C$ has genus $1$, then $\omega_C\sim 0$ and so  $E=0$. Fix a closed point $o\in C$, and consider the elliptic curve $(C,o)$. The $1$-dimensional $k$-vector space $H^0(C,\omega_C)$ is generated by a differential $\delta$ with the property that $t_c^*\delta=\delta$ for any $c\in C$, where $t_c\colon C\to C$ is the translation by $C$ \cite[III.5.1]{Silverman_Arithmetic_of_elliptic_curves}. Thus for any $\tau\in \Aut(C)$ we have
				$$\tau^*\delta=\tau^*t_{-\tau(o)}^*\delta$$
	and $t_{-\tau(o)}\circ \tau\in\Aut(C,o)$. Thus we only need to show that $\Aut(C,o)$ acts as a finite group on $H^0(C,\omega_C)$. But $\Aut(C,o)$ is already a finite group \cite[III.10.1]{Silverman_Arithmetic_of_elliptic_curves}.
		\item If $C$ has genus $0$, then $C\cong \bP^1_k$. If $\Supp(E)$ contains at least three points, then $\Aut(\bP^1_k,E)$ is finite. If $E$ is the sum of two distinct reduced points, we may choose coordinates $x,y$ such that $E=[0;1]+[1;0]$. Then $\Aut_k(\bP^1,E)$ sits in an exact sequence
		$$1\to \Aut_k(\bA^1_k,0)\to \Aut_k(\bP^1,E)\to \Bij(E)\to 1$$
	so it suffices to show that $\Aut(\bA^1_k,0)=\{x\mapsto ax\mid a\in k^*\}$ acts finitely on the $1$-dimensional $k$-vector space $H^0(\bP^1_k,\omega_{\bP^1_k}(E))$. This vector space is generated by $dx/x$, which is invariant through $x\mapsto ax$, thus the action of $\Aut_k(\bA^1_k,0)$ is actually trivial.
	\end{enumerate}
\end{proof}

\begin{proposition}\label{proposition:pluricanonical_representations_for_curves_II}
Let $(C,E)$ be a dlt curve over a perfect field $k$. Assume that $K_C+E$ is ample. Then $\Aut_k(C,E)$ is finite.
\end{proposition}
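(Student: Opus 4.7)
The plan is to reduce to the case of an algebraically closed base field (possible because $k$ is perfect and $\Aut_k(C,E)$ embeds into $\Aut_{\bar k}(C_{\bar k},E_{\bar k})$ via base change, so finiteness descends), and then split into cases according to the genus of $C$. Since $(C,E)$ is dlt with $E$ reduced of coefficient $\leq 1$, $C$ is a smooth projective curve, and the ampleness of $K_C+E$ imposes nontrivial numerical constraints depending on the genus.

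The case $g(C)\geq 2$ is immediate: $\Aut_{\bar k}(C_{\bar k})$ is already finite by the classical Hurwitz bound, so a fortiori $\Aut_{\bar k}(C_{\bar k},E_{\bar k})$ is finite. In the case $g(C)=1$, I would observe that $K_C\sim 0$, hence $K_C+E$ ample forces $\deg E>0$, so $\Supp E\neq\emptyset$. Pick a closed point $p\in\Supp E_{\bar k}$; every $\sigma\in\Aut_{\bar k}(C_{\bar k},E_{\bar k})$ sends $p$ to another point of the finite set $\Supp E_{\bar k}$, giving a map of sets $\Aut_{\bar k}(C_{\bar k},E_{\bar k})\to\Supp E_{\bar k}$ whose fiber above $p$ sits inside $\Aut_{\bar k}(C_{\bar k},p)$, a finite group (this is the standard fact that the automorphism group of an elliptic curve fixing the origin is finite, cf.\ the reference already used in the proof of \autoref{proposition:pluricanonical_representations_for_curves_I}). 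Thus $\Aut_{\bar k}(C_{\bar k},E_{\bar k})$ is a finite union of finite fibers.

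In the case $g(C)=0$, I would have $C_{\bar k}\cong\bP^1_{\bar k}$ and the ampleness condition reads $\deg(K_C+E)=-2+\deg E>0$. Because the coefficients of $E$ lie in $(0,1]$, the number of components of $E$ (equivalently, the cardinality of $\Supp E_{\bar k}$) must be at least $3$. Then any element of $\Aut_{\bar k}(\bP^1_{\bar k},E_{\bar k})$ permutes this set of $\geq 3$ points, and since $\PGL_2$ acts sharply $3$-transitively on distinct points of $\bP^1_{\bar k}$, the induced map $\Aut_{\bar k}(\bP^1_{\bar k},E_{\bar k})\to\mathrm{Sym}(\Supp E_{\bar k})$ is injective, proving finiteness.

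I do not expect any real obstacle here; the proof is essentially a case-by-case compilation of classical facts. The only care needed is to verify that the dlt and ample hypotheses really do force $|\Supp E|\geq 3$ in the genus zero case (which uses the coefficient bound $\leq 1$) and that base change to $\bar k$ is harmless (which uses the perfectness of $k$ to ensure $C_{\bar k}$ remains reduced and smooth, and to have $\Aut_k(C,E)\hookrightarrow\Aut_{\bar k}(C_{\bar k},E_{\bar k})$).
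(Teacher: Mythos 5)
Your proof is correct and follows essentially the same route as the paper: reduce to $\bar k$, then split by genus, using $|\Supp E|\geq 3$ for $g=0$, $\Supp E\neq\emptyset$ plus finiteness of $\Aut(C,o)$ for $g=1$, and finiteness of $\Aut_k(C)$ for $g\geq 2$. You merely spell out the details (sharp $3$-transitivity of $\PGL_2$, the coset argument on the elliptic curve) that the paper leaves implicit.
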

\begin{proof}
As above, we may assume that $k$ is algebraically closed. If $C\cong \bP^1_k$ then $\Supp(E)$ contains at least three points, so the log automorphism group is finite. If $g(C)=1$ then $\Supp(E)$ contains at least one point, so the log automorphism group is finite. Finally if $g(C)\geq 2$ then $\Aut_k(C)$ is already finite.
\end{proof}

\begin{lemma}\label{lemma:Finiteness_automorphisms_general_type_variety}
Let $(S,\Delta)$ be a projective lc surface pair over an algebraically closed field $k$. Assume that $K_S+\Delta$ is big. Then $\Aut_k(S,\sO_S(K_S+\Delta))$ is finite.
\end{lemma}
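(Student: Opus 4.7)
The plan is to combine bigness with the surface MMP: first embed $G := \Aut_k(S,\sO_S(K_S+\Delta))$ into a linear algebraic group via the pluricanonical map, then reduce to the case of ample $K_S+\Delta$ via the log canonical model, and finally invoke finiteness of log automorphism groups in the ample case. I interpret $G$ as the subgroup of $\Aut_k(S)$ preserving the pair $(S,\Delta)$; the notation emphasizes that the relevant datum is the log canonical polarization (note that automorphisms preserving only the line bundle class can form a positive-dimensional group, e.g.\ $\PGL_3$ acting on $(\bP^2,L_1+L_2+L_3+L_4)$).

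First, since $K_S+\Delta$ is big, for $m$ divisible enough the linear system $|m(K_S+\Delta)|$ defines a birational rational map $\phi_m\colon S\dashrightarrow Z_m\subseteq \bP^N$. The group $G$ acts on $V:=H^0(S,\sO_S(m(K_S+\Delta)))$, yielding a homomorphism $\rho_m\colon G \to \PGL(V)$ that factors through $\Aut_k(Z_m)$. If $\sigma\in\ker\rho_m$, then $\phi_m\circ\sigma=\phi_m$ as rational maps; by birationality of $\phi_m$, $\sigma=\id$ on a dense open subset of $S$, hence $\sigma=\id$. So $\rho_m$ is injective.

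Next, by the surface MMP for lc pairs over an arbitrary field (\autoref{fact:birational_geometry_of_surfaces}), the log canonical ring $R=\bigoplus_n H^0(S,\sO_S(n(K_S+\Delta)))$ is finitely generated, and its $\Proj$ yields the log canonical model $(S^c,\Delta^c)$ with $K_{S^c}+\Delta^c$ ample, where $\Delta^c$ is the strict transform of $\Delta$. For $m$ sufficiently divisible $Z_m=S^c$; the action of $G$ on $R$ descends to $S^c$ and preserves $\Delta^c$ by compatibility with the action on $V$. Thus we reduce to showing that $\Aut_k(S^c,\Delta^c)$ is finite when $K_{S^c}+\Delta^c$ is ample.

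Finally, embedding $S^c\hookrightarrow \bP^N$ via $|m(K_{S^c}+\Delta^c)|$ for $m$ large, the group $\Aut_k(S^c,\Delta^c)$ becomes a closed subgroup of $\PGL_{N+1}$ stabilizing both $S^c$ and $\Delta^c$, hence an algebraic group of finite type. The main obstacle is ruling out positive-dimensional components: a $1$-parameter subgroup would produce a nonzero infinitesimal log automorphism, i.e.\ a section of $T_{S^c}(-\log\Delta^c)$. In characteristic $0$ the vanishing $H^0(S^c,T_{S^c}(-\log\Delta^c))=0$ follows from ampleness of $K_{S^c}+\Delta^c$ by a Bogomolov--Sommese style argument. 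In positive characteristic this vanishing can fail because of wild vector fields, so instead one appeals to the finiteness of $\Bir^c(S,\Delta)$ for projective lc surface pairs of log general type (a consequence of the surface MMP together with boundedness of log canonical volumes); since $G$ embeds naturally into $\Bir^c(S,\Delta)$, this yields the finiteness of $G$.
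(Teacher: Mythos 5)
Your reading of the group as the subgroup of $\Aut_k(S)$ preserving the pair $(S,\Delta)$, together with the $(\bP^2,\sum_{i=1}^4 L_i)$ example showing that the line-bundle-only reading would make the statement false, is a sound and necessary clarification, and it matches how the lemma is actually used later (the automorphisms there are crepant, hence preserve the boundary). Your first two steps --- injectivity of the pluricanonical representation and descent to the log canonical model --- are also fine, granting the surface MMP results the paper already invokes. Note, however, that the paper does none of this: it runs the argument of \cite[10.1]{Patakfalvi_Zdanowicz_Beauville_Bogomolov_in_pos_char} directly on $(S,\Delta)$ with $K_S+\Delta$ merely big, so your reduction to the ample case is a detour rather than a simplification.

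The genuine gap is in your last step, which is where the entire content of the lemma lives. Having reduced to $\Aut_k(S^c,\Delta^c)$ with $K_{S^c}+\Delta^c$ ample, and having correctly observed that the characteristic-zero vanishing $H^0(S^c,T_{S^c}(-\log \Delta^c))=0$ is unavailable, you replace it by an appeal to ``finiteness of $\Bir^c(S,\Delta)$ for projective lc surface pairs of log general type.'' In positive characteristic this is not an off-the-shelf result, and your justification for it is circular: $\Bir^c(S,\Delta)$ is identified with the automorphism group of the log canonical model, i.e.\ exactly the group you are trying to bound, and the volume/boundedness route does not obviously exclude a copy of $\mathbb{G}_a$ in characteristic $p$ (it contains \'{e}tale subgroups $(\bZ/p)^n$ of unbounded order, but the corresponding quotients are wildly ramified, so Riemann--Hurwitz and the lc condition do not descend in the usual way). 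What is actually needed --- and what the cited argument of Patakfalvi--Zdanowicz supplies --- is a direct proof that the connected automorphism group is trivial: if $\mathbb{G}_a$, $\mathbb{G}_m$ or an abelian variety acted nontrivially preserving $(S,\Delta)$, the closures of general orbits would form a covering family of curves $C$ with $(K_S+\Delta)\cdot C\le 0$ (rationality or $K$-triviality of the orbit closure plus the lc bound $\le 1$ on the coefficients of the boundary), whereas bigness forces $(K_S+\Delta)\cdot C>0$ for a general member of any covering family. You should either carry out this intersection-theoretic argument or cite it; as written, the key finiteness step is assumed rather than proved.
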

\begin{proof}
This actually holds in any dimension. The argument follows the proof of \cite[10.1]{Patakfalvi_Zdanowicz_Beauville_Bogomolov_in_pos_char}, with the following modifications: the inequality in (10.1.b) becomes strict (by our bigness assumption); the first sentence after (10.1.c) reads \emph{as $(X,\Delta)$ is lc, all coefficients of $\Gamma$ are smaller or equal to $1$}; and the inequality in the last displayed equation is not strict.
\end{proof}

\begin{proposition}\label{proposition:pluricanonical_representation_for_surfaces}
Let $f\colon (S,\Delta)\to T$ be a proper morphism with $f_*\sO_S=\sO_T$ between normal projective surfaces over a perfect field $k$. Assume that $(S,\Delta)$ is a dlt pair and that $K_S+\Delta\sim_\bQ f^*L$, where $L$ is ample on $T$. Then for $m$ divisible enough,
		$$\im\left[\Bir^c_k(S,\Delta)\to \Aut_k H^0(T, L^{\otimes m})\right]$$
is finite.
\end{proposition}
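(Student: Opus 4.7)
My plan is to prove the stronger statement that $\Bir^c_k(S,\Delta)$ is itself finite. This is possible because $\dim S=\dim T=2$ combined with $f_*\sO_S=\sO_T$ forces $f$ to be birational (its generic fiber is connected and zero-dimensional), so crepant self-maps of $(S,\Delta)$ cannot ``move along fibers of $f$'' as they could in the curve case of \autoref{proposition:pluricanonical_representations_for_curves_I}.

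First I would reduce to the case where $k$ is algebraically closed by flat base change: $\Bir^c_k(S,\Delta)$ maps to $\Bir^c_{\bar k}(S_{\bar k},\Delta_{\bar k})$, and the action on $H^0(T,L^{\otimes m})\otimes_k\bar k=H^0(T_{\bar k},L^{\otimes m}_{\bar k})$ restricts to the one over $k$. Setting $\Delta_T:=f_*\Delta$, I would push forward $K_S+\Delta\sim_\bQ f^*L$ to obtain $K_T+\Delta_T\sim_\bQ L$, so $K_T+\Delta_T$ is $\bQ$-Cartier and ample. The divisor $f^*(K_T+\Delta_T)-(K_S+\Delta)$ is $f$-exceptional and $\bQ$-trivial over $T$, hence vanishes by the negativity lemma; therefore $f$ is crepant and $(T,\Delta_T)$ is lc.

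Next I would construct an injective homomorphism
\[
\Phi\colon\Bir^c_k(S,\Delta)\hookrightarrow\Aut_k(T)
\]
compatible with the action on $H^0(T,L^{\otimes m})$. The point is that $K_S+\Delta\sim_\bQ f^*L$ is semiample with ample model $f\colon S\to T$, and any $\phi\in\Bir^c_k(S,\Delta)$ preserves $K_S+\Delta$; consequently $\phi^*$ acts on the graded ring $\bigoplus_m H^0\!\left(S,\omega_S^{[m]}(m\Delta)\right)\cong\bigoplus_m H^0(T,L^{\otimes m})$, where the isomorphism comes from the projection formula together with $f_*\sO_S=\sO_T$ (for $m$ sufficiently divisible). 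Taking $\Proj$ produces $T$ and an element $\Phi(\phi)\in\Aut_k(T)$ with $f\circ\phi=\Phi(\phi)\circ f$ as rational maps. Injectivity follows because if $\Phi(\phi)=\id_T$, then $f\circ\phi=f$, and restricting to the open where $f$ is an isomorphism forces $\phi=\id_S$.

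Finally, since $\Phi(\phi)^*(K_T+\Delta_T)=K_T+\Delta_T$, the image of $\Phi$ lies in $\Aut_k(T,\sO_T(K_T+\Delta_T))$. Because $(T,\Delta_T)$ is a projective lc surface pair with $K_T+\Delta_T$ ample, hence big, \autoref{lemma:Finiteness_automorphisms_general_type_variety} shows that this group is finite. Combined with injectivity of $\Phi$, this proves $\Bir^c_k(S,\Delta)$ is itself finite, so \emph{a fortiori} its image in $\Aut_k H^0(T,L^{\otimes m})$ is finite. The main technical point is the construction of $\Phi$, which rests on the crepancy $K_S+\Delta=f^*(K_T+\Delta_T)$ established in the first step and on the identification of $T$ with the ample model of $K_S+\Delta$.
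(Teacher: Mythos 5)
Your proof is correct and follows essentially the same route as the paper: identify $T$ with the ample (canonical) model of $K_S+\Delta$, deduce crepancy of $f$ via the negativity lemma, observe that the induced automorphisms of $T$ preserve $\sO_T(K_T+\Delta_T)$, and conclude by \autoref{lemma:Finiteness_automorphisms_general_type_variety}. Your additional observation that the homomorphism to $\Aut_k(T)$ is injective (because $f$ is birational), so that $\Bir^c_k(S,\Delta)$ is itself finite, is a valid strengthening that the paper's argument also yields but does not state.
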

\begin{proof}
We may extend to the algebraic closure of $k$, and assume it is algebraically closed. Since $L$ is ample, we may assume that $L^{\otimes m}$ is very ample and thus $\Aut_k H^0(T, L^{\otimes m})\subseteq \Aut_k(T)$. 

It follows from the assumptions that $f\colon S\to T$ is a birational morphism. So $K_S+\Delta$ is big and nef. Therefore the canonical model $(S_\text{can},\Delta_\text{can})$ of $(S,\Delta)$ exists \cite{Tanaka_MMP_and_abundance_for_positive_characteristic_log_surfaces}, and it is given by
		$$\psi\colon S\to S_\text{can}:=\Proj\sum_{r\geq 0}H^0(S,rm(K_S+\Delta)),\quad \Delta_\text{can}:=\psi_*\Delta$$
where $m$ is sufficiently divisible. Since $f_*\sO_S=\sO_T$, we have $H^0(S,rm(K_S+\Delta))=H^0(T,L^{\otimes rm})$. As $L$ is ample, we deduce that $T\cong S_\text{can}$ and that we can identify $\psi$ with $f$. Writing $\Delta_T=f_*\Delta$, we have that $K_T+\Delta_T$ is $\bQ$-Cartier and $(T,\Delta_T)$ is lc.

We claim that $f\colon (S,\Delta)\to (T,\Delta_T)$ is crepant. Indeed, we have $K_T+\Delta_T\sim_\bQ f_*f^*L=L$, so $(K_S+\Delta)-f^*(K_T+\Delta_T)$ is $\bQ$-linearly equivalent to $0$ and exceptional over $T$. The negativity lemma then implies that $K_S+\Delta=f^*(K_T+\Delta_T)$.
		
Take $\tau\in \Bir^c(S,\Delta)$. It follows from the definition of a crepant map that $\tau$ induces an automorphism of every group $H^0(S,rm(K_S+\Delta))$ and consequently of their direct sum. Therefore we obtain an induced automorphism $\beta(\tau)\in \Aut(T)$, making the diagram
		$$\begin{tikzcd}
		S\arrow[rr, dotted, "\tau"]\arrow[d, "f"left] && S\arrow[d, "f"] \\
		T\arrow[rr, "\beta(\tau)"] && T
		\end{tikzcd}$$
commutative. It follows $\beta(\tau)^*\sO_T(K_T+\Delta_T)\cong \sO_T(K_T+\Delta_T)$. Thus $\Bir^c_k(S,\Delta)\to\Aut_k(T)$ factorizes through $\Aut_k(T, \sO_T(K_T+\Delta_T))$, which is finite by \autoref{lemma:Finiteness_automorphisms_general_type_variety}.
\end{proof}

\subsubsection{Some remarks on fields of definition}
Let $k$ be a (not necessarily perfect) field, $k^s$ a separable closure and $X$ a $k$-scheme. We say that a $k^s$-sub-scheme $W\subset X_{k^s}$ is defined over $k$ if there exists a sub-$k$-scheme $\sW\subset X$ such that $W=\sW_{k^s}$ (as sub-schemes of $X_{k^s}$).

\begin{lemma}\label{lemma:global_sections_is_a_field}
If $\sY$ is a proper reduced connected $k$-scheme, then $H^0(\sY,\sO_\sY)$ is a finite field extension of $k$.
\end{lemma}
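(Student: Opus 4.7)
The plan is to combine three classical observations: properness gives finite-dimensionality over $k$, reducedness rules out nilpotents in the ring of global sections, and connectedness rules out nontrivial idempotents.

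First I would set $A := H^0(\sY, \sO_\sY)$. Since $\sY$ is proper over $k$ and $\sO_\sY$ is coherent, the finiteness theorem for proper morphisms implies that $A$ is a finite-dimensional $k$-vector space; in particular $A$ is an Artinian $k$-algebra. The natural map $k \to A$ is injective (it is the unit map and $\sY$ is nonempty, since it is reduced and connected, hence not the empty scheme), so $A$ is a finite $k$-algebra containing $k$.

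Next I would argue that $A$ is reduced. If $s \in A$ were nilpotent, then its image in every stalk $\sO_{\sY, y}$ would be nilpotent. But $\sY$ is reduced, so each stalk is reduced, forcing $s$ to be zero in each stalk, and hence $s = 0$ as a global section.

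Now I would show that $A$ has no idempotents other than $0$ and $1$. An idempotent $e \in A$ yields a decomposition of $\sO_\sY$-modules $\sO_\sY = e\sO_\sY \oplus (1-e)\sO_\sY$ compatible with the ring structure, which corresponds to a decomposition of $\sY$ into two disjoint open-and-closed subschemes (the loci where $e=1$ and $e=0$). Since $\sY$ is connected, one of these must be empty, so $e \in \{0, 1\}$.

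Finally, a finite, reduced, Artinian $k$-algebra with no nontrivial idempotents must be a field: indeed, any Artinian ring is a finite product of local Artinian rings (given by its idempotents), so the absence of nontrivial idempotents means $A$ is local; being reduced, its maximal ideal is nilpotent and reduced, hence zero; thus $A$ is a field. The main observation is just that these three ingredients (coherence/properness, reducedness, connectedness) exactly correspond to the three properties (finite-dimensional, reduced, no nontrivial idempotents) that together force a $k$-algebra to be a finite field extension of $k$; there is no real obstacle, only the standard packaging.
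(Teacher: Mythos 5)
Your proof is correct and follows essentially the same route as the paper: properness gives a finite (hence Artinian) $k$-algebra, reducedness of $\sY$ gives reducedness of $H^0(\sY,\sO_\sY)$, and connectedness forces a single factor in the resulting product of fields. The only (harmless) difference is that where the paper invokes the Stein factorization to match direct factors with connected components, you rule out nontrivial idempotents directly via the clopen decomposition they induce, which is if anything slightly more elementary.
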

\begin{proof}
The structure morphism $f\colon \sY\to k$ is proper, so $f_*\sO_\sY$ is a coherent $k$-module. Hence $H^0(\sY,\sO_\sY)$ is a finite $k$-algebra. In particular it is Artinian. It is reduced by hypothesis, thus it is a finite direct product of field extensions of $k$. %By considering the Stein factorization of $f$, we see that there is a bijection between these direct factors and the connected components of $\sY$. 
Since $\sY$ is connected, there can be only one direct summand. Therefore $H^0(\sY,\sO_\sY)$ is a field.
\end{proof}

\begin{lemma}\label{lemma:minimal_global_sections_implies_geom_connected}
Let $\sY$ be a proper connected $k$-scheme, and assume that $H^0(\sY,\sO_\sY)=k$. Then $\sY$ is geometrically connected.
\end{lemma}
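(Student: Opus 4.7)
The plan is to reduce geometric connectedness to a statement about global sections, then apply flat base change. By definition, $\sY$ is geometrically connected if $\sY_{\bar{k}}$ is connected, where $\bar{k}$ is an algebraic closure of $k$. Since $\sY$ is nonempty (being connected), so is $\sY_{\bar{k}}$, so the only thing to verify is that $\sY_{\bar{k}}$ has at most one connected component.

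First I would use the general fact that, for a quasi-compact quasi-separated scheme $Z$, the set of connected components of $Z$ is in bijection with the set of primitive idempotents of $H^0(Z,\sO_Z)$ (assuming $H^0(Z,\sO_Z)$ has finitely many idempotents, which will follow from the computation below). Thus it suffices to show that $H^0(\sY_{\bar{k}},\sO_{\sY_{\bar{k}}})$ has no nontrivial idempotents.

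Next I would invoke flat base change for the structural morphism $f\colon \sY\to \Spec k$, which gives
\[
H^0(\sY_{\bar{k}},\sO_{\sY_{\bar{k}}})\;\cong\; H^0(\sY,\sO_\sY)\otimes_k\bar{k}\;=\;k\otimes_k\bar{k}\;=\;\bar{k}
\]
by our hypothesis. As $\bar{k}$ is a field it admits only the trivial idempotents $0$ and $1$, so $\sY_{\bar{k}}$ is connected.

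There is no real obstacle here; the only subtlety is the legitimacy of flat base change for $f_*\sO_\sY$, which is guaranteed by the properness of $\sY$ over $k$ (so $f_*\sO_\sY$ is a coherent $k$-module, as already used in \autoref{lemma:global_sections_is_a_field}), together with the flatness of $\Spec\bar{k}\to \Spec k$. Alternatively, one may phrase the argument via the Stein factorization of $f$: it factors as $\sY\to \Spec H^0(\sY,\sO_\sY)=\Spec k \to \Spec k$, and since Stein factorizations have geometrically connected fibers, the fiber $\sY_{\bar{k}}$ over the unique geometric point of $\Spec k$ is connected.
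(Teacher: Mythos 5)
Your proof is correct and follows essentially the same route as the paper: both compute $H^0$ of the base change via flat base change and conclude connectedness from the structure of that ring (the paper counts $k^s$-dimensions of the decomposition over connected components, while you observe that $\bar{k}$ has no nontrivial idempotents — a cosmetic difference). The argument is complete as written.
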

\begin{proof}
It is sufficient to show that $\sY_{k^s}$ is connected. By flat base-change we have $H^0(\sY_{k^s},\sO_{\sY_{k^s}})=H^0(\sY,\sO_\sY)\otimes_k k^s=k^s$. If $\sY_{k^s}=\bigsqcup_{i=1}^d Y^i$ is the decomposition into connected components, then
		$$H^0(\sY_{k^s},\sO_{\sY_{k^s}})=\bigoplus_{i=1}^d H^0(Y^i,\sO_{Y^i})$$
where each $H^0(Y^i,\sO_{Y^i})$ is a finite $k^s$-vector space. Considering the dimensions as $k^s$-vector spaces, we see that $d=1$.
\end{proof}

\begin{lemma}\label{lemma:Galois_closure_is_a_field_of_definition}
Let $X$ be a $k$-scheme and $\sY\subset X$ an connected reduced proper sub-$k$-scheme.
	\begin{enumerate}
		\item The field of definition of any connected component of $\sY_{k^s}$ contains $H^0(\sY,\sO_{\sY})$.
		\item Assume that $H^0(\sY,\sO_\sY)$ is separable over $k$. Then every connected component of $\sY_{k^s}$ is defined over the Galois closure of $H^0(\sY,\sO_\sY)/k$.
	\end{enumerate}
\end{lemma}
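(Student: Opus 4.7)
The plan is to use the Stein factorization of $\sY \to \Spec k$ to reduce to the geometrically connected case over $K := H^0(\sY, \sO_\sY)$, and then exploit that, under separability, $K \otimes_k k^s$ splits as a product of copies of $k^s$ indexed by the $k$-embeddings $K \hookrightarrow k^s$. First, by \autoref{lemma:global_sections_is_a_field}, $K$ is a finite (by hypothesis separable) field extension of $k$. Viewing $\sY$ as a $K$-scheme through the canonical map $\sY \to \Spec K$, the identity $H^0(\sY, \sO_\sY) = K$ still holds, and $\sY$ is still connected as a topological space. Hence \autoref{lemma:minimal_global_sections_implies_geom_connected} guarantees that $\sY$ is geometrically connected over $K$.

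Next, let $L \subset k^s$ be the Galois closure of $K/k$. Since $K/k$ is separable, $\Hom_k(K, k^s)$ has exactly $[K:k]$ elements and every element factors through $L$, so that
$$K \otimes_k k^s \;\cong\; \prod_{\sigma \in \Hom_k(K, k^s)} k^s,$$
one copy of $k^s$ per embedding. Base-changing $\sY$ first to $K$ and then to $k^s$ through this decomposition gives
$$\sY_{k^s} \;=\; \sY \times_K (K \otimes_k k^s) \;=\; \bigsqcup_{\sigma} \sY \times_{K, \sigma} k^s,$$
and every summand is connected by the geometric connectedness of $\sY/K$. Hence the connected components of $\sY_{k^s}$ are exactly the schemes $Y^\sigma := \sY \times_{K, \sigma} k^s$.

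To descend each $Y^\sigma$ to $L$, I would factor $\sigma$ as $K \hookrightarrow L \hookrightarrow k^s$, which gives $Y^\sigma = (\sY \times_{K, \sigma} L) \times_L k^s$. The $L$-scheme $\sY \times_{K, \sigma} L$ is a sub-$L$-scheme of $X_L := X \times_k L$, since it appears as a direct summand of the analogous decomposition
$$\sY_L \;=\; \sY \times_K (K \otimes_k L) \;=\; \bigsqcup_\sigma \sY \times_{K, \sigma} L \;\subset\; X_L,$$
using separability of $K/k$ once more to split $K \otimes_k L$ as a product of copies of $L$. This exhibits each connected component of $\sY_{k^s}$ as the base change to $k^s$ of an $L$-subscheme of $X_L$, which is the definition of being defined over $L$.

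There is no genuine obstacle here beyond the bookkeeping of base changes; the only real input is the separability of $K/k$, which is precisely what makes the étale $k$-algebras $K \otimes_k L$ and $K \otimes_k k^s$ split as products of fields, so that the connected components of $\sY_{k^s}$ align with reduced summands defined over $L$.
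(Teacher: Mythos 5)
Your argument is correct and follows essentially the same route as the paper's: both proofs hinge on the fact that separability makes the \'{e}tale algebra $H^0(\sY,\sO_\sY)\otimes_k L$ split into copies of $L$ over the Galois closure, combined with \autoref{lemma:global_sections_is_a_field} and \autoref{lemma:minimal_global_sections_implies_geom_connected}. The only organizational difference is that you apply the geometric-connectedness criterion to $\sY$ over $K=H^0(\sY,\sO_\sY)$ at the outset (via the Stein factorization), which makes the components explicit as the twists $\sY\times_{K,\sigma}k^s$ and their $L$-models $\sY\times_{K,\sigma}L$, whereas the paper applies the same criterion to the connected components of $\sY_L$ after computing their rings of global sections inside $L\otimes_k L\cong\prod_{\Gal(L/k)}L$.
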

\begin{proof}
Let us write $l:=H^0(\sY,\sO_\sY)$. Let $\sY^{(i)}$ be a connected component of $\sY_{k^s}$, and let $k_i$ be its field of definition. We can see $\sY^{(i)}$ as a $k_i$-scheme, and we have a commutative diagram
		$$\begin{tikzcd}
		\sY^{(i)}\arrow[d]\arrow[r] & \sY\arrow[d] \\
		\Spec k_i\arrow[r] & \Spec k.
		\end{tikzcd}$$
The morphism $\sY^{(i)}\to\sY$ is surjective, and induces an inclusion $l\hookrightarrow H^0(\sY,\sO_{\sY^{(i)}})$. By \autoref{lemma:minimal_global_sections_implies_geom_connected} we have $k_i=H^0(\sY^{(i)},\sO_{\sY^{(i)}})$, so the first point follow.

Now assume that $l$ is separable over $k$. Then we can consider $l$ as a subfield of $k^s$. Let $L\subset k^s$ be the Galois closure of $l/k$ and let $\sY_L=\bigsqcup_i\sY_L^i$ be the decomposition into connected components. By separability $H^0(\sY_L,\sO_{\sY_L})=l\otimes_k L$ is a reduced Artinian $L$-algebra, thus a direct product of finitely many field extensions of $L$. By considering the Stein factorization of $\sY_L\to L$, we see that in fact these field extensions are the $H^0(\sY_L^i,\sO_{\sY_L^i})$. On the other hand, we have an inclusion $l\otimes_kL\hookrightarrow L\otimes_kL$, and since $L$ is Galois over $k$ we have an $L$-algebra isomorphism $L\otimes_k L\cong \bigoplus_{\Gal(L/k)}L$. This induces inclusions $H^0(\sY_L^i,\sO_{\sY_L^i})\subseteq L$. Hence $H^0(\sY_L^i,\sO_{\sY_L^i})=L$ for every $i$. By \autoref{lemma:minimal_global_sections_implies_geom_connected} the proof is complete.
\end{proof}

\begin{corollary}
Let $\sY\subset X$ be a connected reduced proper $k$-sub-scheme. Assume that $H^0(\sY,\sO_\sY)$ is separable over $k$. Then the number of connected components of $\sY_{k^s}$ is equal to $\dim_k H^0(\sY,\sO_\sY)$.
\end{corollary}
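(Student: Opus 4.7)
The plan is to leverage what was already established inside the proof of the preceding lemma, so that the corollary becomes essentially a bookkeeping statement. Set $l := H^0(\sY,\sO_\sY)$ and let $L \subset k^s$ be the Galois closure of $l/k$. The proof of \autoref{lemma:Galois_closure_is_a_field_of_definition} actually shows more than its stated conclusion: it provides a decomposition $\sY_L = \bigsqcup_i \sY_L^i$ into connected components, together with the identification (via flat base change and the separability of $l/k$)
$$\bigoplus_i H^0(\sY_L^i,\sO_{\sY_L^i}) \;=\; H^0(\sY_L,\sO_{\sY_L}) \;=\; l\otimes_k L,$$
and the conclusion that each factor $H^0(\sY_L^i,\sO_{\sY_L^i})$ is equal to $L$. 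Comparing $L$-dimensions immediately yields that the number of connected components of $\sY_L$ is $\dim_L(l\otimes_k L) = \dim_k l$.

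What remains is to pass from $L$ up to $k^s$ without changing the count. For this I would apply \autoref{lemma:minimal_global_sections_implies_geom_connected} to each $\sY_L^i$, regarded as a connected reduced proper $L$-scheme whose space of global sections is $L$ itself. The lemma then implies that each $(\sY_L^i)_{L^s}$ is connected for any separable closure $L^s$ of $L$. Since $L$ is contained in $k^s$ and $L/k$ is separable, $k^s$ is itself a separable closure of $L$, so we may take $L^s = k^s$; the decomposition $\sY_{k^s} = \bigsqcup_i (\sY_L^i)_{k^s}$ is therefore the decomposition into connected components, and the total count is $\dim_k l$.

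The only real obstacle is keeping the two auxiliary field extensions $L$ and $k^s$ cleanly separated and noting that $k^s$ doubles as a separable closure of $L$; beyond that, everything follows by recycling the computation already performed in \autoref{lemma:Galois_closure_is_a_field_of_definition}.
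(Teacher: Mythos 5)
Your proposal is correct and follows essentially the same route as the paper: extract the decomposition $\sY_L=\bigsqcup_i\sY_L^i$ with $H^0(\sY_L^i,\sO_{\sY_L^i})=L$ from the proof of \autoref{lemma:Galois_closure_is_a_field_of_definition}, count components by comparing $L$-dimensions of $l\otimes_k L$, and use \autoref{lemma:minimal_global_sections_implies_geom_connected} to see the count is unchanged upon passing to $k^s$. Your explicit remark that $k^s$ serves as a separable closure of $L$ is a small detail the paper leaves implicit, but the argument is the same.
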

\begin{proof}
By \autoref{lemma:Galois_closure_is_a_field_of_definition} and its proof, there is a finite Galois extension $L/k$ with the following property: if $\sY_L=\bigsqcup_{i=1}^d Y^i$ is the decomposition into connected components, then $H^0(\sY_L,\sO_{\sY_L})=\bigoplus_{i=1}^dH^0(Y^i,\sO_{Y^i})$ and $H^0(Y^i,\sO_{Y^i})=L$ for each $i$. It follows from \autoref{lemma:minimal_global_sections_implies_geom_connected} that the number of connected components of $\sY_{k^s}$ is equal to $d$. On the other hand
		$$d=\dim_LH^0(\sY_L,\sO_{\sY_L})=\dim_LH^0(\sY,\sO_\sY)\otimes_k L=\dim_k H^0(\sY,\sO_\sY)$$
so the result follows.
\end{proof}

\begin{lemma}\label{lemma:Galois_descent_of_schemes}
Let $X$ be quasi-projective $k$-scheme, $K/k$ a Galois extension and $W\subset X_K$ a closed reduced sub-$k$-scheme. Assume that $W$ is stable under the action of a non-trivial subgroup $H$ of $G:=\Gal(K/k)\circlearrowright X_K$. Then $W$ is defined over the sub-field $K^H$.
\end{lemma}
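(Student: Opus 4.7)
The plan is to invoke classical Galois descent. Setting $L := K^H$, the extension $K/L$ is Galois with group $H$ and $X_K = X_L \times_L \Spec K$; the goal is to show that the $H$-stable closed subscheme $W \subseteq X_K$ is pulled back from a closed subscheme $W' \subseteq X_L$.

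First I reduce to the case where $K/k$ is a \emph{finite} Galois extension. Since $X$ is quasi-projective, it is quasi-compact and covered by finitely many affine opens $\{\Spec A_\alpha\}$. On each chart, the ideal defining $W$ inside $A_\alpha \otimes_k K$ is finitely generated, because $A_\alpha \otimes_k K$ is a Noetherian $K$-algebra of finite type. Hence its generators already live in $A_\alpha \otimes_k K_\alpha$ for some finite sub-extension $K_\alpha/k$. Taking $K_0$ to be the normal closure over $k$ of the compositum of the $K_\alpha$ yields a finite Galois sub-extension $K_0/k$ over which $W$ is defined; the $H$-action on $W$ factors through the image $\bar{H}$ of $H$ in $\Gal(K_0/k)$, and $K_0^{\bar{H}} = K_0 \cap L$. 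Descending from $K_0$ to $K_0 \cap L$ (which is the finite Galois problem) and then base-changing to $L$ yields the required descent of $W$.

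In the finite Galois case, working affine-locally with $X = \Spec A$ and $W = V(I)$ for an $H$-stable radical ideal $I \subseteq A \otimes_k K$, the assertion reduces to the identity $I = I^H \cdot (A \otimes_k K)$, where $I^H := I \cap (A \otimes_k L)$. This is a direct instance of Galois descent for $K$-vector spaces with semi-linear $H$-action: viewing $I$ as a $K$-subspace of $A \otimes_k K = (A \otimes_k L) \otimes_L K$ stable under the natural semi-linear $H$-action, the normal basis theorem for $K/L$ (equivalently, faithfully flat descent along the $H$-torsor $\Spec K \to \Spec L$) yields $I = I^H \otimes_L K$. Setting $W' := V(I^H) \subseteq X_L$ then gives a closed subscheme with $W'_K = W$, proving that $W$ is defined over $L = K^H$.

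I do not anticipate a substantial obstacle: the reduction to the finite Galois case is a routine consequence of quasi-compactness and Noetherianity, and Galois descent for modules is classical.
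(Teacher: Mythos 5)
Your proof is correct, but it takes a genuinely different route from the paper's. The paper reduces to $H=G$ via Artin's lemma, then uses that $\pi\colon X_K\to X$ is the geometric quotient by $G$ with fibers equal to the $G$-orbits: it sets $Y:=\pi(W)$ with its reduced structure, observes that $G$-invariance of $W$ forces $\Supp(W)=\Supp(Y_K)$, and concludes $W=Y_K$ because both are reduced (here separability of $K/k$ is what guarantees $Y_K$ is reduced). You instead descend the defining ideal directly: after spreading out to a finite Galois subextension $K_0/k$, you apply semilinear Galois descent (normal basis theorem / descent along the $H$-torsor $\Spec K\to \Spec K^H$) to get $I=I^H\otimes_{K^H}K$ affine-locally, and glue by canonicity of $I^H$. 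Each approach has its merits: the paper's argument is shorter and purely geometric, but it genuinely uses the reducedness of $W$ and the separability of $K/k$ to identify $W$ with $Y_K$; your argument does not need $W$ reduced at all (it descends arbitrary $H$-stable closed subschemes, and reducedness of the descended $W'=V(I^H)$ comes for free as a subring of a reduced ring), at the cost of the routine spreading-out step needed to handle infinite Galois extensions — a step that is vacuous in the paper's actual application, where $K/k$ is finite. Both proofs are complete and correct.
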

\begin{proof}
By Artin's lemma, the extension $K^H\subset K$ is Galois with Galois group $H$. Replacing $k$ by $K^H$, we may assume that $W$ is stable under the action of $G$. The canonical morphism $\pi\colon X_K\to X$ is the quotient by $G$. Let $Y:=\pi(W)$ be the reduced closed image of $W$. Since $W$ is $G$-invariant and the fibers of $\pi$ are the $G$-orbits, we see that $\Supp(W)=\Supp(Y_K)$. Since $W$ and $Y_K$ are reduced, we deduce that $W=Y_K$.
\end{proof}

\subsection{Sources of lc centers}
In this section we develop the theory of sources for lc centers of lc threefold pairs over a perfect field of characteristic $>5$. Our approach follows closely Koll\'{a}r's original one \cite[\S 4]{Kollar_Singularities_of_the_minimal_model_program}.

\begin{notation}\label{notation:crepant_dlt_blow_up_of_threefold}
Let $(X,\Delta)$ be a quasi-projective lc threefold pair defined over a perfect field $k$ of characteristic $>5$, and $(Y,\Delta_Y)\to (X,\Delta)$ be a crepant dlt $\bQ$-factorial blow-up (which exists by \autoref{fact:crepant_dlt_blow_up_ind_dim_2_3}).
\end{notation}
\noindent Our program is the following:
	\begin{enumerate}
		\item In \autoref{section: Poincaré residues}, we observe that the lc centers of dlt $\bQ$-factorial pairs $(Y,\Delta_Y)$ are the strata of the reduced boundary, analogously to the characteristic $0$ case. This allows us to define higher codimension adjunction for dlt pairs.
		\item In \autoref{section: geometry of lc centers}, we compare the fibrations between lc centers obtained from $(Y,\Delta_Y)\to (X,\Delta)$.
		\item In \autoref{section: springs and sources}, we define springs and sources for lc centers on the reduced boundary of threefold pairs.
	\end{enumerate}

\subsubsection{Higher Poincaré residues}\label{section: Poincaré residues}
In characteristic $0$, the use of crepant blow-ups is motivated by the very simple structure of lc centers on a dlt pair \cite[4.16]{Kollar_Singularities_of_the_minimal_model_program}. Using the recent results of \cite{Arvidsson_Bernasconi_Lacini_KVV_for_log_dP_surfaces_in_pos_char}, we can extend this result to positive characteristic. The same result was obtained in \cite[2.2]{Das_Hacon_Adjunction_for_3_folds_in_char_>5} with other methods.

\begin{proposition}\label{proposition: lc centers on dlt 3-folds}
Let $(Y,\Delta)$ be a $\bQ$-factorial dlt pair of dimension $\leq 3$ over a perfect field of characteristic $p>5$. Write $\Delta=\Delta^{<1}+\sum_i D_i$ with each $D_i$ prime. Then
	\begin{enumerate}
		\item The lc centers of $(Y,\Delta)$ are exactly the irreducible components of the intersections of the $D_i$'s.
		\item Every irreducible component of such an intersection is normal of the expected codimension.
		\item Let $Z\subset Y$ be an lc center of $(Y,\Delta)$. If $D_i$ is $\bQ$-Cartier and does not contain $Z$, then every irreducible component of $D_i|_Z$ is $\bQ$-Cartier.
		\item Each $(D_i,\Diff_{D_i}(\Delta-D_i))$ is dlt.
	\end{enumerate}
\end{proposition}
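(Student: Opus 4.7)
The plan is to prove all four statements simultaneously by induction on $\dim Y$. The case $\dim Y \leq 2$ reduces to the standard structure theory of dlt surface pairs (in particular the fact that components of $\lfloor\Delta\rfloor$ are regular curves meeting transversally outside the klt locus), so I would dispatch it quickly by appealing to \cite[2.31]{Kollar_Singularities_of_the_minimal_model_program} and the results cited in \autoref{fact:birational_geometry_of_surfaces}. The real work is in dimension $3$, where the plan is to adapt Koll\'{a}r's characteristic-zero argument \cite[4.16]{Kollar_Singularities_of_the_minimal_model_program}, replacing Kawamata--Viehweg vanishing by positive-characteristic substitutes.

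The first step is to show that each $D_i$ is normal. Localizing at the generic point of a component, the pair $(Y, D_i + (\Delta - D_i))$ becomes plt, so $D_i$ is a plt center of the threefold $(Y, \Delta)$. The normality of such centers in characteristic $>5$ is exactly \cite[Theorem 3.11]{Hacon_Xu_On_the_3dim_MMP_in_pos_char}, which the introduction of the paper cites. This is the step that genuinely requires the characteristic hypothesis, and is the main obstacle: everything else will be formal adjunction-theoretic manipulations, while here we must invoke a deep result from the three-dimensional MMP in positive characteristic.

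The second step is to upgrade the standard adjunction statement. Because $D_i$ is normal, the different $\Diff_{D_i}(\Delta - D_i)$ is well-defined and $(D_i, \Diff_{D_i}(\Delta - D_i))$ carries a natural lc-pair structure; upgrading this to dlt uses the characterization of dlt pairs via log resolutions that are isomorphisms over the snc locus, combined with the log canonical inversion of adjunction for threefolds in characteristic $>5$ stated earlier in the paper. This already proves (4).

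For (1) and (2), I would apply the two-dimensional case of the proposition to $(D_i, \Diff_{D_i}(\Delta - D_i))$. The reduced divisors in $\Diff_{D_i}(\Delta - D_i)^{=1}$ contain precisely the scheme-theoretic traces $D_j|_{D_i}$ for $j \neq i$ (since $(Y,\Delta)$ is snc at generic points of lc centers, the contributions from singularities of $Y$ along $D_i$ appear in $\Diff_{D_i}(\Delta-D_i)^{<1}$, not in the reduced part). The surface statement then says that the components of $D_i \cap D_j$ are normal curves of codimension $2$ in $Y$, and that the triple intersections $D_i \cap D_j \cap D_k$ are normal of codimension $3$. Iterating this gives (1) and (2). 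Finally, for (3), I would argue as follows: by part (1), every irreducible component $E$ of $D_i|_Z$ is itself an lc center of $(Y,\Delta)$, and by part (2) it is normal; distinct components meet only along lower-dimensional lc centers, so working Zariski-locally on $Z$ we may assume the components of $D_i|_Z$ are pairwise disjoint. Then each coincides locally with the full restriction $D_i|_Z$, which is $\bQ$-Cartier since $D_i$ is, and $\bQ$-Cartierness of the individual components follows.
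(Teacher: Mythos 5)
Your route diverges from the paper's at the outset, and the divergence is not cosmetic. The paper's proof is essentially one line: since $Y$ is $\bQ$-factorial, $(Y,(1-\epsilon)\Delta^{=1}+\Delta^{<1})$ is klt, so by \cite[Corollary 1.3]{Arvidsson_Bernasconi_Lacini_KVV_for_log_dP_surfaces_in_pos_char} every divisorial sheaf $\sO_Y(D)$ is Cohen--Macaulay, and then Koll\'ar's characteristic-zero proof of \cite[4.16]{Kollar_Singularities_of_the_minimal_model_program} runs verbatim. Your substitute input, normality of plt centers from \cite{Hacon_Xu_On_the_3dim_MMP_in_pos_char}, does give normality of each $D_i$ --- though not by ``localizing at the generic point'' (normality must be checked at every point of $D_i$); the correct move is the standard perturbation making $(Y,D_i+\Delta^{<1}+(1-\epsilon)\sum_{j\neq i}D_j)$ plt. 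But it does not give the CM property, and Koll\'ar's argument uses CM-ness for much more than normality of the $D_i$: in particular for the precise matching of lc centers under adjunction. Your remark that ``everything else will be formal adjunction-theoretic manipulations'' is exactly where the proposal breaks.

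The concrete gap is in your proof of (4), on which (1)--(3) then depend. To show $(D_i,\Diff_{D_i}(\Delta-D_i))$ is dlt you must show every lc place of this surface pair has center at which the pair is snc; the dangerous case is a zero-dimensional lc center $W$ of $(D_i,\Diff_{D_i}(\Delta-D_i))$ lying in the non-snc locus of $(Y,\Delta)$. The log canonical inversion of adjunction recorded in the paper only compares non-klt loci: it does not let you lift a discrepancy $-1$ divisor over $D_i$ centered at $W$ to one over $Y$ centered at $W$ (at best it produces an lc center of $(Y,\Delta)$ \emph{containing} $W$), so dlt-ness of $(Y,\Delta)$ yields no contradiction. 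This precise correspondence of lc centers is exactly what the CM machinery delivers in Koll\'ar's proof (and what \cite{Das_Hacon_Adjunction_for_3_folds_in_char_>5} establish by other means); without it your induction stalls. Two smaller defects: your identification of $\Diff_{D_i}(\Delta-D_i)^{=1}$ with $\sum_{j\neq i}D_j|_{D_i}$ is justified by asserting that $(Y,\Delta)$ is snc at the generic points of the components of $D_i\cap D_j$, but that these components are lc centers is part (1), so the justification is circular (it can be repaired in codimension two by localizing at codimension-two points of $Y$ and invoking the classification of numerically dlt surface germs, but not at closed points); and your argument for (3) fails precisely at a point where two components of $D_i|_Z$ meet, since there neither component is locally the full restriction --- what one needs instead is that $Z$ is regular at such a point, which again only comes out of the surface classification once (4) is known.
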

\begin{proof}
Since $Y$ is $\bQ$-factorial, the pair $(Y,(1-\epsilon)\Delta^{=1}+\Delta^{<1})$ is klt. Hence by \cite[Corollary 1.3]{Arvidsson_Bernasconi_Lacini_KVV_for_log_dP_surfaces_in_pos_char}, if $D$ is any Weil divisor on $Y$ then $\sO_Y(D)$ is CM. Thus the proof of \cite[4.16]{Kollar_Singularities_of_the_minimal_model_program} applies verbatim.
\end{proof}

This implies the existence of higher-dimensional Poincaré residue maps as in \cite[4.18-19]{Kollar_Singularities_of_the_minimal_model_program}.

\begin{definition-proposition}\label{cor:Higher_Poincaré_residues}
Let $(Y,\Delta)$ be as above and $Z$ an lc center of $(Y,\Delta)$. Then \emph{there exists a canonically-defined $\bQ$-Cartier $\bQ$-Weil divisor $\Diff^*_Z\Delta$ on $Z$} such that:
	\begin{enumerate}
		\item $(Z,\Diff^*_Z\Delta)$ is dlt;
		\item If $m$ is even and $m(K_Y+\Delta)$ is Cartier, there is a canonical isomorphism
				$$\sR^m_{Y,Z}\colon\omega_Y^{[m]}(m\Delta)|_Z\cong \omega_Z^{[m]}(m\Diff^*_Z\Delta).$$
		\item If $W\subset Z$ is a lc center of $(Y,\Delta)$, then $W$ is also a lc center of $(Z,\Diff^*_Z\Delta)$ and
				$$\Diff^*_W\Delta=\Diff^*_W(\Diff^*_Z\Delta).$$
	\end{enumerate}
\end{definition-proposition}

\begin{remark}
If $Z\subset \Delta^{=1}$ is a prime divisor then we have $\Diff^*_Z\Delta=\Diff_Z(\Delta-Z)$, which is arguably a conflict of notations. When working on dlt pairs we will only use the $\Diff^*$-notation, hence no confusion should arise.
\end{remark}

\begin{corollary}\label{corollary:Crepant_map_induces_crepant_map_on_lc_centers}
Let $(Y_1,\Delta_1), (Y_2,\Delta_2)$ be surface pairs over an arbitrary field, or threefold pairs over a perfect field of characteristic $>5$. Assume that $(Y_1,\Delta_1)$ is $\bQ$-factorial dlt.
%with proper morphisms $f\colon Y\to Z$ and $f'\colon Y'\to Z$ onto a normal variety such that $K_Y+\Delta\sim_{\bQ,f}0$ and $K_{Y'}+\Delta'\sim_{\bQ,f'}0$. 

Let $\phi\colon (Y_1,\Delta_1)\dashrightarrow (Y_2,\Delta_2)$ be a crepant birational map. Assume that $S_1\subset Y_1$ is an lc center of $(Y_1,\Delta_1)$ and that $\phi$ is a local log isomorphism at the generic point of $S_1$. Then $S_2:=\phi_*S_1$ is an lc center of $(Y_2,\Delta_2)$, and $\phi$ restricts to a crepant birational map
		$\phi|_{S_1}\colon (S_1,\Diff^*_{S_1}\Delta_1)\dashrightarrow (S_2^n,\Diff^*_{S^n_2}\Delta_2)$.
\end{corollary}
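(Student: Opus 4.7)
The plan is to reduce the statement to \autoref{cor:Higher_Poincaré_residues} by transporting both pairs to a common crepant $\bQ$-factorial dlt model. First I would verify that $S_2 := \phi_*S_1$ is an lc center of $(Y_2,\Delta_2)$ of the same codimension as $S_1$. The assumption that $\phi$ is a local log isomorphism at $\eta_{S_1}$ supplies open neighborhoods $U_1 \subset Y_1$, $U_2 \subset Y_2$ with $\phi|_{U_1}\colon (U_1,\Delta_1|_{U_1}) \cong (U_2,\Delta_2|_{U_2})$ and $\phi(S_1 \cap U_1) = S_2 \cap U_2$; pulling back a divisor of discrepancy $-1$ with center $S_1$ to a common log resolution produces one with center $S_2$, so $S_2$ is an lc center. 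Because $(U_1,\Delta_1|_{U_1})$ is $\bQ$-factorial dlt, $S_1 \cap U_1$ is normal by \autoref{proposition: lc centers on dlt 3-folds}, hence so is $S_2 \cap U_2$, and the normalization $n\colon S_2^n \to S_2$ is an isomorphism over $U_2$. Consequently $\phi$ restricts to a birational map $\phi|_{S_1}\colon S_1 \dashrightarrow S_2^n$ which is an isomorphism on a dense open.

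Next I would construct a common dlt model. Using \autoref{fact:crepant_dlt_blow_up_ind_dim_2_3} (resp.\ \autoref{fact:birational_geometry_of_surfaces} in the surface case), fix a crepant $\bQ$-factorial dlt blow-up $h\colon (Y_2',\Delta_{Y_2'}) \to (Y_2,\Delta_2)$, then resolve the induced birational map $Y_1 \dashrightarrow Y_2'$ by a $\bQ$-factorial dlt pair $(W,\Delta_W)$ that is crepant over both sides, via proper birational morphisms $p\colon W \to Y_1$ and $q\colon W \to Y_2$ with $K_W + \Delta_W = p^*(K_{Y_1}+\Delta_1) = q^*(K_{Y_2}+\Delta_2)$. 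The log isomorphism hypothesis ensures that the prime boundary components of $\Delta_1$ containing $S_1$ have strict transforms in $W$ whose intersection contains a component $S_W$ mapping birationally onto $S_1$ via $p$ and birationally onto $S_2$ via $q$; in particular $S_W$ is an lc stratum of the $\bQ$-factorial dlt pair $(W,\Delta_W)$.

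Finally I would invoke \autoref{cor:Higher_Poincaré_residues} on $(W,\Delta_W)$ along $S_W$. Its compatibility statement (part (3)) yields the matching of iterated residues
\[ (p|_{S_W})^*\Diff^*_{S_1}\Delta_1 = \Diff^*_{S_W}\Delta_W, \]
and, on the $Y_2$-side, applying the same corollary to $S_W$ together with the minimal lc center of $(Y_2',\Delta_{Y_2'})$ above $S_2$ — and interpreting $\Diff_{S_2^n}\Delta_2$ via this dlt blow-up, which agrees with the usual different over $U_2$ where the pair is already dlt — gives
\[ (q|_{S_W})^*\Diff_{S_2^n}\Delta_2 = \Diff^*_{S_W}\Delta_W. \]
Comparing the two equalities shows that $(S_1, \Diff^*_{S_1}\Delta_1)$ and $(S_2^n, \Diff_{S_2^n}\Delta_2)$ are both crepant birational to $(S_W, \Diff^*_{S_W}\Delta_W)$, hence crepant birational to each other through $\phi|_{S_1}$.

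The main obstacle I expect is the construction of the common $\bQ$-factorial dlt model $(W,\Delta_W)$ together with a compatible lc stratum $S_W$ projecting birationally onto both $S_i$: this is what requires the characteristic $>5$ hypothesis for threefolds via \autoref{fact:crepant_dlt_blow_up_ind_dim_2_3}, and it uses the assumption that $\phi$ is a local log isomorphism at $\eta_{S_1}$ in an essential way, namely to force the prime boundary components through $S_1$ to correspond on both sides.
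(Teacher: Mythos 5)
Your opening step is fine and matches the paper: $S_2$ is an lc center because discrepancies are preserved under crepant maps, and the local log isomorphism at $\eta_{S_1}$ gives a well-defined birational map $S_1\dashrightarrow S_2^n$. The gap is in the central construction. A common crepant $\bQ$-factorial dlt model $(W,\Delta_W)$ with \emph{effective} boundary and crepant proper birational \emph{morphisms} onto both $Y_1$ and $Y_2'$ does not exist in general: if $p\colon W\to Y_1$ is proper birational and $\Delta_W$ is defined by $K_W+\Delta_W=p^*(K_{Y_1}+\Delta_1)$, every $p$-exceptional divisor of discrepancy $>-1$ enters $\Delta_W$ with negative coefficient, and a model dominating both sides of a crepant birational map between two dlt pairs (think of a log flop, or of two different dlt modifications of the same lc pair) must in general extract such divisors. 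So $(W,\Delta_W)$ is only a sub-pair, not dlt, and \autoref{cor:Higher_Poincaré_residues} — which rests on \autoref{proposition: lc centers on dlt 3-folds} and genuinely needs the dlt hypothesis with effective boundary — cannot be applied to it. Localizing at $\eta_{S_1}$ does not repair this: the conclusion compares differents globally on $S_1$ and $S_2^n$, and the offending exceptional divisors meet $S_W$ exactly over the locus where $\phi$ fails to be an isomorphism, which is where the statement has content. A second step of the same flavour is also unjustified: $(q|_{S_W})^*\Diff_{S_2^n}\Delta_2=\Diff^*_{S_W}\Delta_W$ is not an instance of \autoref{cor:Higher_Poincaré_residues}(3), which only compares nested lc centers inside a single dlt pair; compatibility of the different with a crepant morphism down to the non-dlt pair $(Y_2,\Delta_2)$ is essentially what is being proved.

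The paper's argument avoids both problems. It first reduces, by induction on the strata of $\Delta_1^{=1}$, to the case where $S_1$ is a prime divisor (this is the only place the dlt structure of $(Y_1,\Delta_1)$ and the iterated residue \autoref{cor:Higher_Poincaré_residues}(3) are used). In the divisorial case it takes an arbitrary normal common resolution $W$ of $\phi$, writes $u^*(K_{Y_1}+\Delta_1)=K_W+S_W+\Gamma=v^*(K_{Y_2}+\Delta_2)$ with $\coeff_{S_W}\Gamma=0$ — here $\Gamma$ is allowed to have negative coefficients — and applies ordinary codimension-one adjunction on $S_W^n$, which is defined for such $\Gamma$ without any effectivity or dlt hypothesis. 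To salvage your proof, replace the appeal to a common dlt model by this induction-plus-divisorial-adjunction scheme.
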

\begin{proof}
By induction on the strata of $\Delta_1^{=1}$, we may assume that $S_1$ is a divisor. Since $\phi$ is a log isomorphism at the generic point of $S_1$, we obtain that $S_2$ is a component of $\Delta_2^{=1}$ and a lc center of $(Y_2,\Delta_2)$. 

Consider a resolution
		$$\begin{tikzcd}
		& W\arrow[dl, "u" above left]\arrow[dr, "v"] &\\
		Y_1\arrow[rr, dotted, "\phi"] & &Y_2
		\end{tikzcd}$$
Let $S_W\subset W$ be the strict transform of $S_1$, with normalization $n\colon S^n_W\to S_W$. Since $\phi$ is a local isomorphism at the generic point of $S_1$, $v$ maps $S_W^n$ to $S_2^n$ and we have a commutative diagram of birational maps
		$$\begin{tikzcd}
		& S^n_W\arrow[dr, "v|_{S_W^n}" ]\arrow[dl, "u|_{S_W^n}" above left] & \\
		S_1 \arrow[rr, dotted, "\phi|_{S_1}"] && S_2^n
		\end{tikzcd}$$
Since $\phi$ is crepant and a local log isomorphism at $S_1$, we can write
		$$u^*(K_{Y_1}+\Delta_1)=K_W+S_W+\Gamma=v^*(K_{Y_2}+\Delta_2)$$
where $\coeff_{S_W}\Gamma=0$. Thus by adjunction
		\begin{eqnarray*}
			(u|_{S^n_W})^*(K_{S_1}+\Diff^*_{S_1}\Delta_1) & = & n^*u^*(K_{Y_1}+\Delta_1) \\
			& = & n^*(K_W+S_W+\Gamma) \\
			& = & K_{S_W^n}+\Diff_{S_W^n}\Gamma 
		\end{eqnarray*}
and similarly $(v|_{S^n_W})^*(K_{S_2^n}+\Diff^*_{S_2^n}\Delta_2)=K_{S_W^n}+\Diff_{S_W^n}\Gamma$. This implies that the birational map $\phi|_{S_1}\colon (S_1,\Diff^*_{S_1}\Delta_1)\dashrightarrow (S_2^n,\Diff^*_{S_2^n}\Delta_2)$ is crepant.
\end{proof}

\subsubsection{Geometry of lc lcenters}\label{section: geometry of lc centers}

Let $(Y,\Delta_Y)\to (X,\Delta)$ be as in \autoref{notation:crepant_dlt_blow_up_of_threefold}, $Z\subset X$ an lc center of $(X,\Delta)$ and $S,S'\subset Y$ lc centers of $(Y,\Delta_Y)$ that are minimal for the property of dominating $Z$. In characteristic $0$, a crucial observation due to Koll\'{a}r \cite{Kollar_Sources_of_lc_centers} is that $S$ and $S'$ are birational, and that the birational map arises from a special structure which he calls a \emph{standard $\bP^1$-link}. In this section, we prove some analogous statements in positive characteristic.

We begin by defining the $\bP^1$-links.

\begin{definition}[Weak standard $\bP^1$-link]\label{definition:weak-P1_link}
A \textbf{weak standard $\bP^1$-link} is a $\bQ$-factorial pair $(T, W_1+W_2+E)$ together with a proper morphism $\pi\colon T\to W$ such that
	\begin{enumerate}
		\item $\pi_*\sO_T=\sO_W$,
		\item $K_T+W_1+W_2+E\sim_{\bQ,\pi} 0$,
		\item $W_1$ and $W_2$ are disjoint and normal,
		\item both $\pi\colon W_i\to W$ are isomorphisms, and
		\item $\red(T_w)\cong \bP^1_{k(w)}$ for every $w\in W$.
	\end{enumerate}
\end{definition}

\begin{remark}
In Koll\'{a}r's definition of standard $\bP^1$-link \cite[4.36]{Kollar_Singularities_of_the_minimal_model_program}, it is assumed that $(T,W_1+W_2+E)$ is plt. Let us justify our deviation. The use of (weak) $\bP^1$-links is motivated by \autoref{prop:P1_link_from_crepant_structure} below, which is analogous to \cite[4.37]{Kollar_Singularities_of_the_minimal_model_program}. In both cases, during the proof we find a weak $\bP^1$-link $(T,W_1+W_2+E)\to W$ such that $(T,E)$ is klt. Using cyclic covers, we can see that $(T,W_1+W_2+E)$ is locally the quotient of a product $(\tilde{W}\times\bP^1, \tilde{W}\times [0;1]+\tilde{W}\times[1;0]+E_{\tilde{W}}\times\bP^1)$. By inversion of adjunction, this product is plt. In characteristic $0$ this implies that the quotient is plt, but this is not necessarily the case in positive characteristic.

However, our weaker assumption on standard links is sufficient for our purpose, as we shall see.
\end{remark}

\begin{lemma}\label{lemma:std_P1_link_gives_crepant_isomorphism}
Let $\pi\colon (T,W_1+W_2+E)\to W$ be a weak standard $\bP^1$-link over a field of characteristic $>2$, where $T$ is a surface. Then $\Supp(E)$ is a union of fibers and there is a log isomorphism $(W_1,\Diff_{W_1}E)\cong (W_2,\Diff_{W_2}E)$ that commutes with the projections onto $W$.
\end{lemma}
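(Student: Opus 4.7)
The lemma splits into two independent claims. The first is that $E$ is vertical, indeed a union of fibers; the second is the existence of the log isomorphism.

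\textbf{Verticality and structure of $\Supp(E)$.} I would fix a general reduced fiber $F$ of $\pi$, which by hypothesis equals $\bP^1_{k(w)}$. Each section $W_i \to W$ is an isomorphism, so $W_i \cdot F = 1$ and $W_i$ meets $F$ at a single smooth point; adjunction on $F$ gives $K_T \cdot F = -2$. The relation $K_T + W_1 + W_2 + E \sim_{\bQ,\pi} 0$ then forces $E \cdot F = 0$. Since $E$ is effective, no component of $E$ can dominate $W$. Now let $E_0$ be an arbitrary prime component of $E$ and set $B := \overline{\pi(E_0)}$. Counting dimensions, $\dim E_0 = \dim T - 1 = \dim W$ and the fibers of $\pi$ are $1$-dimensional, so $B$ is a prime divisor in $W$ and $E_0 \to B$ has $1$-dimensional generic fiber. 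Because $\red(T_{\eta_B}) \cong \bP^1_{k(\eta_B)}$ is irreducible and is contained in $E_0$, I conclude $\Supp(E_0) = \pi^{-1}(B)$ set-theoretically. Hence $\Supp(E)$ is a union of fibers of $\pi$.

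\textbf{The log isomorphism, up to linear equivalence.} Define $\phi := (\pi|_{W_2})^{-1} \circ \pi|_{W_1}\colon W_1 \to W_2$; this is the unique isomorphism making both triangles over $W$ commute. It remains to show $\phi_* \Diff_{W_1} E = \Diff_{W_2} E$. Since $\pi$ has geometrically connected fibers (each $\red(T_w) = \bP^1_{k(w)}$) and $W$ is normal (inherited from the normality of $W_i$ through the isomorphism $\pi|_{W_i}$), one has $\pi_* \sO_T = \sO_W$. Combined with $K_T + W_1 + W_2 + E \sim_{\bQ,\pi} 0$, for $m$ sufficiently divisible this gives $\omega_T^{[m]}(m(W_1 + W_2 + E)) \cong \pi^* M$ for some $M \in \Pic(W)$. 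Restricting via the Poincaré residue isomorphism of \autoref{cor:Higher_Poincaré_residues} and using $W_1 \cap W_2 = \emptyset$, I obtain
\begin{equation*}
\omega_{W_i}^{[m]}(m \Diff_{W_i} E) \cong (\pi|_{W_i})^* M, \qquad i = 1, 2.
\end{equation*}
Pulling back along $\phi$ and using $\pi|_{W_2} \circ \phi = \pi|_{W_1}$ then gives $\phi^* \Diff_{W_2} E \sim_{\bQ} \Diff_{W_1} E$ as $\bQ$-Weil divisors on $W_1$.

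\textbf{Promoting $\bQ$-linear equivalence to equality.} It suffices to match coefficients at each prime divisor $P \subset W_1$. By the first part, $P = (\pi|_{W_1})^{-1}(B)$ for a prime divisor $B \subset W$ with $\pi^{-1}(B) \subset \Supp E$; write $F_B$ for the reduced prime divisor on $T$ supported on $\pi^{-1}(B)$ and $e_B := \coeff_{F_B} E$. The key observation is that the very assumption that $\pi|_{W_i}\colon W_i \to W$ is an isomorphism forces the scheme-theoretic fiber $\pi^* B$ to be \emph{reduced} at the generic point of $W_i \cap F_B$: otherwise $\pi^\#$ would send a uniformizer of $B$ to a non-uniformizer of $\sO_{W_i}$ at that point. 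Given this reducedness, a local computation on the $\bQ$-factorial surface germ obtained by localizing at the generic point of $W_i \cap F_B$ (where only $W_i$ and $F_B$ pass) shows $\coeff_P \Diff_{W_i} E = e_B$, which depends only on $B$ and not on $i \in \{1, 2\}$. Hence $\phi_* \Diff_{W_1} E = \Diff_{W_2} E$.

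\textbf{Main obstacle.} The delicate step is the third one: confirming that, in the presence of $\bQ$-factorial singularities of $T$ along $W_i \cap F_B$, the local contribution to the different at $P$ is symmetric in $i = 1, 2$. The reducedness observation above eliminates the multiplicity contribution $(m_B - 1)/m_B$ that one would naively expect, and reduces matters to the fact that the two sections play interchangeable roles in the generic fiber $\bP^1$; the assumption $\Char k > 2$ ensures no wild inseparability phenomena enter this local picture.
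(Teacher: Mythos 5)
Your first step (verticality of $E$ and the fact that each component of $\Supp E$ is a full fiber) and your derivation of $\phi^*\Diff_{W_2}E\sim_{\bQ}\Diff_{W_1}E$ are correct; the paper reaches the same point more directly by applying \autoref{lemma:birational_crepant_pairs} to the isomorphism $\phi=(\pi|_{W_2})^{-1}\circ\pi|_{W_1}$ over $W$. In either formulation the substance of the lemma is the equality of \emph{divisors}, not of $\bQ$-linear equivalence classes, and you are right to single that out as the delicate point.

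However, the key claim of your third step is false. It is not true that $\pi^*B$ must be reduced at $P_i:=W_i\cap F_B$: take $T=(\bP^1\times\bA^1)/\mu_m$ with $\zeta\cdot(x,t)=(\zeta x,\zeta t)$, $W=\bA^1/\mu_m$, $W_1,W_2$ the images of $\{0\}\times\bA^1$ and $\{\infty\}\times\bA^1$, and $E=0$. This is a weak standard $\bP^1$-link and each $\pi|_{W_i}$ is an isomorphism, yet the central fiber $\pi^*(w)=mF_0$ is non-reduced. The flaw is that $F_B$ need not be Cartier at $P_i$: when $T$ is singular there one has $\ord_{P_i}\bigl((\pi^{\#}t)|_{W_i}\bigr)=m_B\,(F_B\cdot W_i)_{P_i}$, and this equals $1$ with $m_B>1$ because the local intersection number is the fraction $1/m_B$. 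Consequently your formula $\coeff_P\Diff_{W_i}E=e_B$ also fails: in the example $\Diff_{W_i}(0)=(1-1/m)[P_i]\neq 0$. (A secondary gap: $\Diff_{W_i}E$ can be supported over multiple fibers that are not contained in $\Supp E$ at all, so matching coefficients only over $\pi(\Supp E)$ does not suffice.)

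The statement is nevertheless true, but the symmetry in $i$ requires a genuine argument at the multiple fibers. Since $(F_B\cdot W_i)_{P_i}=1/m_B$ for both $i$, the $E$-contributions $e_B/m_B$ agree, and the issue is the singular contribution $\coeff_{P_i}\Diff_{W_i}(0)$. One way to control it: intersecting $K_T+W_1+W_2+E\sim_{\bQ,\pi}0$ with the proper reduced fiber $F_0\cong\bP^1_{k(w)}$ gives $K_T\cdot F_0=-2/m$, whence by adjunction along $F_0$ one gets $\deg\Diff_{F_0}(W_1+W_2)=2$; since $W_i$ and $F_0$ are two reduced boundary components through $P_i$, adjunction for surface pairs forces $\coeff_{P_i}\Diff_{F_0}(W_i)\geq 1$, so both coefficients equal $1$, and the classification of such lc surface germs identifies $(P_i\in T,\,W_i+F_0)$ with a cyclic quotient point of order exactly $m$ having $W_i$ and $F_0$ as the two axes. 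Hence $\coeff_{P_i}\Diff_{W_i}(0)=1-1/m$ for both $i$, which closes the gap. Without an argument of this kind (or the standard covering trick for multiple fibers), the passage from $\bQ$-linear equivalence to equality of the differents is not established.
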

\begin{proof}
Let $\varphi\colon T'\to T$ be any birational proper morphism, and write $\varphi^*(K_T+W_1+W_2+E)=K_{T'}+W_1'+W_2'+\sE$ where $W_i'$ is the strict transform of $W_i$. Then $\sE$ might not be effective, but since $W_i'\cong W_i$ by normality and dimension reasons, we have $(W_i',\Diff_{W_i'}\sE)\cong (W_i,\Diff_{W_i}E)$. Thus we are free to replace $T$ by any birational model. In particular, we can successively blow-up the singular locus and assume that $T$ is regular. For dimensional reasons, the singular locus of $T$ does not dominate $W$, thus the property on the fibers becomes: $\red(T_w)\cong\bP^1_{k(w)}$ for a general $w\in W$. 

Now the generic fiber $F=T_w$ is geometrically normal by \autoref{lemma:Fiber_of_MFS_are_normal}, and by assumption $K_F$ is anti-ample. In particular $2=-K_T\cdot_{k(w)} F=(W_1+W_2+E)\cdot_{k(w)} F$. Since $W_i\cdot_{k(w)} F=1$, we deduce that the support of $E$ is a union of fibers, say $E=\sum_j a_j\red(T_{w_j})$.

Since the $\pi_i\colon W_i\to W$ are isomorphisms, we obtain an isomorphism $\phi=\pi_2^{-1}\circ\pi_1\colon W_1\cong W_2$. By \cite[4.5.(4)]{Kollar_Singularities_of_the_minimal_model_program}, since $T$ is regular, the different divisor $\Diff_{W_i}E$ is just the set of points of $\sum_j \pi_i^{-1}w_j$. Thus $\phi_*\Diff_{W_1}E=\Diff_{W_2}E$, and by \autoref{lemma:birational_crepant_pairs} we obtain that $\phi\colon (W_1,\Diff_{W_1}E)\to (W_2,\Diff_{W_2}E)$ is crepant.
\end{proof}
%If $T_{k(W)}$ is geometrically reduced, the proof works in arbitrary dimension using Stacks, 0C0E.

\begin{proposition}\label{prop:P1_link_from_crepant_structure}
Let $(S, \Delta)$ be a quasi-projective klt surface pair defined over a field of characteristic $>2$, with a projective morphism $f\colon S\to Z$. Let $D$ be an effective $\bZ$-divisor that dominates $Z$, such that $K_S+\Delta+D\sim_{\bQ,f}0$ and $(S,D+\Delta)$ is dlt. Pick a point $z\in Z$ such that $f^{-1}(z)$ is connected but $f^{-1}(z)\cap D$ is disconnected. 

Then there exists an elementary \'{e}tale morphism $(z'\in Z')\to (z\in Z)$ and a proper morphism $W\to Z'$ such that $(S,\Delta+D)\times_Z Z'=(S', \Delta'+D')$ is crepant birational to a weak standard $\bP^1$-link over $W$.
	%	\item $(f')^{-1}(z')\cap D'$ has two connected components, and there is a crepant birational involution of $(D',\Diff^*_{D'} \Delta')$ over $Z'$ that exchanges the two components. 
\end{proposition}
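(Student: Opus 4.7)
My plan follows Koll\'{a}r's argument for \cite[4.37]{Kollar_Singularities_of_the_minimal_model_program}, replacing the characteristic-zero MMP by the MMP for excellent surfaces (\autoref{fact:birational_geometry_of_surfaces}).

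The first step is to shrink $Z$ around $z$ and run a $(K_S + \Delta)$-MMP over $Z$ on the klt pair $(S, \Delta)$. On a surface this is a finite sequence of divisorial contractions (no flips). Each contracted curve $C$ is $(K_S + \Delta)$-negative, hence $D$-positive via $K_S + \Delta \sim_{\bQ,f} -D$, and the contraction is crepant for $K + \Delta + D$ (which is $f$-trivial), so $(S_i, \Delta_i + D_i)$ remains dlt with $K_{S_i} + \Delta_i + D_i \sim_{\bQ,f_i} 0$ at every step. The MMP terminates at a model $(S', \Delta' + D')$ where either $K_{S'} + \Delta'$ is $f'$-nef, or there is a Mori fibration $g \colon S' \to W$ over $Z$.

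The nef case is ruled out as follows: surface divisorial contractions shrink curves to points, hence contract only vertical divisors over $Z$, so the horizontal components of $D$ survive the MMP. In the nef case, $D' \sim_{\bQ,f'} -(K_{S'} + \Delta')$ would be $f'$-anti-nef, contradicting the positive intersection of a horizontal component with a general fiber of $f'$. So $g \colon S' \to W$ is a Mori fibration; the general fiber $F$ is $\bP^1_{k(F)}$ (\autoref{lemma:Fiber_of_MFS_are_normal}) and $(\Delta' + D') \cdot F = -K_{S'} \cdot F = 2$. To finish, I would pass to an elementary \'{e}tale neighborhood $(z' \in Z') \to (z \in Z)$ extracted from the Stein factorization of $W \to Z$, so that the preimage of $z$ in $W$ breaks into its connected components. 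The disconnectedness hypothesis on $f^{-1}(z) \cap D$ then forces $D'_{Z'}$ to decompose as $W_1 + W_2 + E$, with $W_1, W_2$ two distinct horizontal components each meeting every fiber with multiplicity one (thus sections of $g_{Z'}$ by \autoref{lemma:sections_of_fibered_curves}), $E$ vertical, and $\Delta'_{Z'}$ also vertical (since $(\Delta' + D') \cdot F = 2$ is exhausted by $W_1$ and $W_2$). Normality and disjointness of the $W_i$ follow from dlt-ness (\autoref{proposition: lc centers on dlt 3-folds} in dimension two), and the remaining conditions of \autoref{definition:weak-P1_link} are immediate.

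The main obstacle is keeping track of the disconnectedness through the MMP and the base change: divisorial contractions could, \emph{a priori}, merge two disjoint pieces of $D$ (by contracting a curve meeting both) or inflate a horizontal component to multiplicity $\geq 2$ along a fiber of $g$. Ruling these out will rely on the dlt hypothesis on $(S, \Delta + D)$ together with connectedness-type arguments applied to intermediate fibers, to show that the two pieces $D_1, D_2$ of $f^{-1}(z) \cap D$ cannot collide during the MMP and land in the same connected component of $g^{-1}(w)$ for any $w \in W$ above $z$. I expect this to be the delicate technical heart of the proof.
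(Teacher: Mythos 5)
Your overall skeleton matches the paper's proof: run the $(K_S+\Delta)$-MMP over the base, observe that $-D$ is not pseudo-effective on the generic fiber so the MMP must end in a Fano contraction $p\colon S^*\to W$, and then extract two disjoint sections of $p$ from the degree computation $(\Delta^*+D^*)\cdot F=2$. But the step you defer --- showing that the two pieces of $f^{-1}(z)\cap D$ do not collide during the MMP --- is the actual content of the proposition, and your plan contains no mechanism for it. In particular, performing the \'{e}tale base change \emph{after} the MMP cannot work: a contracted curve is $(K+\Delta)$-negative, hence $D$-positive, and if it meets both pieces of $D$ over $z$ the contraction glues them; once that has happened, no base change on $Z$ recovers the disconnectedness. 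The paper's fix is to do the elementary \'{e}tale base change \emph{first}, chosen so that the distinct connected components of $f^{-1}(z)\cap D$ lie in distinct connected components of the divisor $D'$ itself. Every step of the $(K_{S'}+\Delta')$-MMP is then automatically crepant for $(S',\Delta'+D')$ (negativity lemma, as in \autoref{lemma:birational_crepant_pairs}, using $K+\Delta+D\sim_{\bQ,f}0$), and \autoref{lemma:crepant_pairs_and_non_klt_locus} gives a bijection between the connected components of $\Nklt(S',\Delta'+D')=\Supp D'$ and those of $\Nklt(S^*,\Delta^*+D^*)=\Supp D^*$; hence $g^{-1}(z')\cap D^*$ stays disconnected. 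No connectedness argument on intermediate fibers is needed.

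Your second worry (a horizontal component acquiring multiplicity $\geq 2$ along a fiber of $g$) is also resolved differently from what you sketch: once $D^*\cap g^{-1}(z')$ is known to be disconnected, some component $D_1^*$ of $D^*$ is positive on the contracted ray (hence $p$-ample), a second component $D_2^*$ is disjoint from it and therefore also $p$-positive; this forces the fibers of $p$ to be one-dimensional, and then $F\cdot(\Delta^*+D^*)=-K_{S^*}\cdot F=2$ together with \autoref{lemma:Fiber_of_MFS_are_normal} and \autoref{lemma:sections_of_fibered_curves} makes $D_1^*,D_2^*$ sections and everything else vertical. So the endgame is fine, but as written your argument has a genuine hole exactly where you say you ``expect'' the delicate heart to be.
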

\begin{proof}
Given an elementary \'{e}tale neighborhood $(z'\in Z')\to (z\in Z)$, we form the Cartesian diagram
		$$\begin{tikzcd}
		(S',D'+\Delta')\arrow[r]\arrow[d, "f'"] & (S,D+\Delta)\arrow[d, "f"] \\
		Z'\arrow[r] & Z
		\end{tikzcd}$$
We can choose $Z'\to Z$ such that the different connected components of $(f')^{-1}(z')\cap D'$ are contained in different connected components of $D'$. Notice that $(S',\Delta')$ is klt, $(S',D'+\Delta')$ is dlt, $K_{S'}+D'+\Delta'\sim_{\bQ,f'} 0$ and $(f')^{-1}(z')$ is connected.

By \cite[Theorem 1.1]{Tanaka_MMP_for_excellent_surfaces}, we may run an MMP for $(S',\Delta')$ over $Z'$. Since $K_{S'}+\Delta'\sim_{\bQ,f'}-D'$ is not pseudo-effective on the generic fiber of $f'$, the MMP terminates with a Mori fiber space $p\colon (S^*,\Delta^*)\to W$ over $Z'$. We picture our construction as follows:
		$$\begin{tikzcd}
		(S',D'+\Delta') \arrow[rr,"\varphi", dotted]\arrow[d, "f'"] && (S^*,D^*+\Delta^*)\arrow[d, "p"]\arrow[dll, "g"] \\		
		Z' && W\arrow[ll]
		\end{tikzcd}$$
where $D^*=\varphi_*D$ and $\Delta^*=\varphi_*\Delta$. (Since $S'$ is a surface, $\varphi$ is actually a morphism.)

By construction we have $K_{S^*}+D^*+\Delta^*\sim_{\bQ,g}0$, so in particular $K_{S^*}+D^*+\Delta^*\sim_{\bQ,p}0$. Moreover $\varphi\colon (S',D'+\Delta')\dashrightarrow (S^*,D^*+\Delta^*)$ is crepant, so it follows from \autoref{lemma:crepant_pairs_and_non_klt_locus} that $g^{-1}(z')\cap D^*$ is still disconnected.

Since $p$ is a Mori fiber space, there is a component $D_1^*\subset D^*$ that has positive intersection with the contracted ray inducing $p\colon S^*\to W$. Thus $D_1^*$ is $p$-ample. Now there is another component $D_2^*\subset D^*$ that is disjoint from $D^*_1$. Take a curve $C$ which intersects $D_2^*$ and is contained in a fiber of $p$. The intersection $C\cdot D_2^*$ cannot be negative, otherwise the fiber would be included in $D_2^*$, and $D_1^*$ and $D_2^*$ would not be disjoint. So $D_2^*$ also has positive intersection with the contracted ray, and is also $p$-ample. 

So $D_1^*$ and $D_2^*$ are disjoint and have positive intersection with every curve contracted by $p$. Assume that a fiber $F$ of $p$ has dimension at least $2$: then $F\cap D_1^*$ must be at least $1$-dimensional, hence it intersects $D^*_2$, contradiction. Thus the fibers of $p$ have dimension $1$. Since $(S^*,\Delta^*)$ is a klt surface, it is $\bQ$-factorial and $K_{S^*}$ is $\bQ$-Cartier. As $K_{S^*}+\Delta^*$ is $p$-anti-ample, it follows that $K_{S^*}$ is $p$-anti-ample. By \autoref{lemma:Fiber_of_MFS_are_normal}, a general fiber $F$ of $p$ is smooth rational. As
		$$F\cdot (\Delta^*+D^*)=-K_{S^*}\cdot F=2,$$
we deduce via \autoref{lemma:sections_of_fibered_curves} that $D_1^*$ and $D_2^*$ are sections of $p$ and that the other components of $\Delta^*+D^*$ are vertical over $W$. If there was another component of $D^*$ whose image under $g$ contained $z'$, then that component would also contain $g^{-1}(z)$, and thus $g^{-1}(z)\cap D^*$ would not be disconnected. Thus we can shrink $Z'$ and assume that $D^*=D^*_1+D_2^*$. 

By \cite[Theorem 1.3]{Tanaka_MMP_for_excellent_surfaces}, we have $R^1p_*\sO_{S^*}=0$, and thus every reduced fiber of $p$ is a tree of $\bP^1$'s. If a fiber is reducible, then by $p$-ampleness both $D_1^*$ and $D_2^*$ must intersect every irreducible component. But both $D_i^*$ are sections of $p$, thus the reducible fiber has two components with a single intersection point, and both $D_i^*$'s pass through that point, which contradicts $D_1^*\cap D_2^*=\emptyset$. Thus every fiber is irreducible.

Thus $p\colon (S^*,D_1^*+D_2^*+\Delta^*)\to W$ is a weak standard $\bP^1$-link. This proves the proposition.

%By \autoref{lemma:birational_crepant_pairs} the map $\varphi\colon (S',D'+\Delta')\dashrightarrow (S^*,D^*+\Delta^*)$ is crepant birational over $Z'$. It follows from \autoref{corollary:Crepant_map_induces_crepant_map_on_lc_centers} that $\varphi$ induces crepant birational $Z$-maps $(D_i',\Diff^*_{D_i'}\Delta')\dashrightarrow (D_i^*,\Diff_{D_i^*}\Delta^*)$. Composing with the log isomorphism $(D_1^*,\Diff_{D_1^*}\Delta^*)\cong (D_2^*,\Diff_{D_2^*}\Delta^*)$ over $Z'$ given by \autoref{lemma:std_P1_link_gives_crepant_isomorphism}, we obtain a crepant birational $Z'$-map between the $(D_i',\Diff^*_{D_i'}\Delta')$. This concludes the proof.
\end{proof}

\begin{remark}
In the case of threefolds in characteristic $>5$, a generalization of \autoref{prop:P1_link_from_crepant_structure} is proved in \cite[Theorem 1.2]{Filipazzi_Waldron_Connectedness_principle_for_3folds_pos_char}.
\end{remark}

\begin{definition}[Weak $\bP^1$-links]
Let $f\colon (Y,\Delta_Y)\to (X,\Delta)$ be as in \autoref{notation:crepant_dlt_blow_up_of_threefold}, and let $Z_1,Z_2\subset Y$ be two lc centers. We say that $Z_1$ and $Z_2$ are \textbf{directly weakly $\bP^1$-linked} if there exists an lc center $W\subseteq Y$ (we allow $W=Y$) such that $f(Z_1)=f(Z_2)=f(W)$ and $(W,\Diff^*_W\Delta_Y)$ is crepant birational to a $\bP^1$-link with the $Z_i$ as the two sections, and the base of the $\bP^1$-link factorizes the morphism to $X$.

We say that $Z_1$ and $Z_2$ are \textbf{weakly $\bP^1$-linked} if there is a sequence of lc centers $Z_1',\dots,Z_m'$ with $Z_1=Z_1'$ and $Z_2=Z_m'$, such that $Z_i'$ and $Z_{i+1}'$ are directly weakly $\bP^1$-linked.
\end{definition}

\begin{lemma}\label{lemma:P1_link_preserves_minimality}
Let $f\colon (Y,\Delta_Y)\to (X,\Delta)$ be as in \autoref{notation:crepant_dlt_blow_up_of_threefold}, and let $S_1,S_2\subset Y$ be two weakly $\bP^1$-linked lc centers. Then:
	\begin{enumerate}
		\item $(S_1,\Diff^*_{S_1}\Delta_Y)$ and $(S_2,\Diff^*_{S_2}\Delta_Y)$ are crepant birational over $X$, and
		\item $(S_1,\Diff^*_{S_1}\Delta_Y)$ is klt if and only if $(S_2,\Diff^*_{S_2}\Delta_Y)$ is klt.
	\end{enumerate}
\end{lemma}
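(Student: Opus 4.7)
The plan is to reduce immediately to the case of a direct weak $\bP^1$-link: by definition of weak $\bP^1$-link there is a chain $S_1 = Z_1', \dots, Z_m' = S_2$ of lc centers in which consecutive terms are directly linked, and both claims (crepant birational equivalence over $X$, and preservation of klt-ness) are transitive properties. So I may assume that $S_1$ and $S_2$ are directly linked via an lc center $W \subseteq Y$ with a weak standard $\bP^1$-link $\pi\colon (T, W_1 + W_2 + E) \to W_0$ over $f(W)$ and a crepant birational map $(W, \Diff^*_W \Delta_Y) \dashrightarrow (T, W_1 + W_2 + E)$ identifying the $S_i$ with the $W_i$ (up to crepant birational equivalence).

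First I would record that, because $S_i$ is a minimal lc center of $(W, \Diff^*_W \Delta_Y)$ dominating $f(S_i)$ and $W$ is itself an lc center of $(Y,\Delta_Y)$, \autoref{cor:Higher_Poincaré_residues}(3) gives the identification of different divisors
\begin{equation*}
\Diff^*_{S_i}\Delta_Y \;=\; \Diff^*_{S_i}(\Diff^*_W \Delta_Y),
\end{equation*}
so that the higher codimension adjunction from $(Y,\Delta_Y)$ to $S_i$ factors through $(W,\Diff^*_W \Delta_Y)$. Next, the crepant birational map $(W,\Diff^*_W \Delta_Y) \dashrightarrow (T, W_1 + W_2 + E)$ is a local log isomorphism at the generic point of each $S_i$ (since $S_i$ is identified birationally with $W_i$), so by \autoref{corollary:Crepant_map_induces_crepant_map_on_lc_centers} it restricts to a crepant birational map
\begin{equation*}
(S_i, \Diff^*_{S_i}\Delta_Y) \dashrightarrow (W_i, \Diff_{W_i}(W_{3-i} + E)) \;=\; (W_i, \Diff_{W_i} E),
\end{equation*}
the last equality holding because $W_1$ and $W_2$ are disjoint.

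Now I would invoke \autoref{lemma:std_P1_link_gives_crepant_isomorphism}, which provides an isomorphism $(W_1, \Diff_{W_1} E) \cong (W_2, \Diff_{W_2} E)$ over $W_0$ coming from the two sections of $\pi$. Composing with the two crepant birational maps above gives a crepant birational map $(S_1, \Diff^*_{S_1}\Delta_Y) \dashrightarrow (S_2, \Diff^*_{S_2}\Delta_Y)$. Moreover, every piece of this composition commutes with the projection to $f(W) \subseteq X$, so the composition is crepant birational over $X$, which proves (1). For part (2), since the $(W_i, \Diff_{W_i} E)$ are isomorphic they are simultaneously klt or not, and by \autoref{lemma:crepant_pairs_and_non_klt_locus} the crepant birational equivalence of surface pairs $(S_i, \Diff^*_{S_i}\Delta_Y)$ with $(W_i, \Diff_{W_i} E)$ transports the klt property in each direction. (In dimension $3$ the higher codimension case where $S_i$ are points is vacuous; all but the divisorial case of $S_i$ reduces to a surface or curve crepant birational equivalence, where the klt invariance of \autoref{lemma:crepant_pairs_and_non_klt_locus} applies.)

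The main obstacle I anticipate is ensuring that the crepant birational map $(W,\Diff^*_W \Delta_Y) \dashrightarrow (T, W_1+W_2+E)$ genuinely restricts to the $S_i$, i.e. that it is a local log isomorphism at the generic points of the $S_i$; this needs to be set up carefully as part of the definition of direct weak $\bP^1$-link (the $S_i$ being identified with the two sections up to crepant birational equivalence). Once that is in hand, the rest of the argument is a formal chain of crepant equivalences.
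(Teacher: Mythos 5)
Your proposal is correct and follows essentially the same route as the paper: reduce to a direct link, apply \autoref{corollary:Crepant_map_induces_crepant_map_on_lc_centers} to get crepant birational maps $(S_i,\Diff^*_{S_i}\Delta_Y)\dashrightarrow (W_i,\Diff_{W_i}E)$, compose with the isomorphism of \autoref{lemma:std_P1_link_gives_crepant_isomorphism}, and deduce the klt statement from crepancy (the paper simply notes that discrepancies are matched, which is the content of \autoref{lemma:crepant_pairs_and_non_klt_locus}(1) and holds in any dimension). The extra care you take with $\Diff^*_{S_i}\Delta_Y=\Diff^*_{S_i}(\Diff^*_W\Delta_Y)$ and with the local log isomorphism at the generic points of the $S_i$ is implicit in the paper's definition of a direct weak $\bP^1$-link but is a reasonable thing to make explicit.
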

\begin{proof}
We may assume that $S_1$ and $S_2$ are directly weakly $\bP^1$-linked. Then there is a lc center $T\subset Y$ containing both $S_i$'s, a weak standard $\bP^1$-link $(T', W_1+W_2+E)\to W$ and a crepant birational map $\phi\colon (T,\Diff^*_T\Delta_Y)\dashrightarrow (T',W_1+W_2+E)$ such that the map $T\dashrightarrow T'\to W$ factors the morphism $T\to X$, and such that $\phi$ maps birationally $S_i$ to $W_i$.

By \autoref{corollary:Crepant_map_induces_crepant_map_on_lc_centers}, $\phi$ induces crepant birational $X$-maps $(S_i,\Diff^*_{S_i}\Delta_Y)\dashrightarrow (W_i,\Diff_{W_i}E)$. Composing these maps with the log isomorphism $(W_1,\Diff_{W_1}E)\cong (W_2,\Diff_{W_2}E)$ over $W$ given by \autoref{lemma:std_P1_link_gives_crepant_isomorphism}, we obtain that the $(S_i,\Diff_{S_i}^*\Delta_Y)$ are crepant birational over $X$.

The second assertion follows immediately from the first one.
\end{proof}

\begin{proposition}\label{prop:minimal_centers_are_linked}
Let $f\colon (Y,\Delta_Y)\to (X,\Delta)$ be as in \autoref{notation:crepant_dlt_blow_up_of_threefold}. Fix $T\subset X$ an lc center of $(X,\Delta)$. Then the lc centers of $(Y,\Delta_Y)$ that are minimal for dominating $T$, are weakly $\bP^1$-linked to each other.
\end{proposition}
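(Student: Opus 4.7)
Proof plan: We follow the strategy of \cite[4.40]{Kollar_Singularities_of_the_minimal_model_program}, adapted to positive characteristic, the main substitution being to replace Kollár's appeal to torsion-freeness of higher pushforwards by applications of the MMP for surfaces (\autoref{fact:birational_geometry_of_surfaces}) and of the connectedness theorems. The overarching idea is induction on the codimension of a smallest common ambient lc center of $S_1$ and $S_2$, the base case being handled by \autoref{prop:P1_link_from_crepant_structure}.

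First I would reduce to the case $\dim T = 0$ by localizing the base $X$ at the generic point $\eta_T$ of $T$: the hypotheses of \autoref{notation:crepant_dlt_blow_up_of_threefold} are preserved by this operation, and a weak $\bP^1$-link over $\eta_T$ between $S_1$ and $S_2$ suffices for the statement. Next I would pick an lc center $W\subseteq Y$ containing both $S_1$ and $S_2$ and minimal among those doing so (if none exists properly contained in $Y$, set $W=Y$); by \autoref{proposition: lc centers on dlt 3-folds}, $W$ is normal, and by \autoref{cor:Higher_Poincaré_residues} the pair $(W,\Diff^*_W\Delta_Y)$ is dlt. Moreover, a weak $\bP^1$-link realized inside $(W,\Diff^*_W\Delta_Y)$ gives, by the very definition, a weak $\bP^1$-link inside $(Y,\Delta_Y)$. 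Thus induction on $\dim W$ reduces us to the base case $W=Y$.

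In the base case, $S_1$ and $S_2$ are two distinct lc centers of $(Y,\Delta_Y)$ that are minimal over $\eta_T$ and have no common ambient lc center smaller than $Y$. The plan is to produce a weak $\bP^1$-link by running an MMP for the klt pair $(Y,(1-\epsilon)\Delta_Y^{=1}+\Delta_Y^{<1})$ over $\eta_T$. After base-changing to the generic fiber, this MMP in effect takes place on a surface (or on a suitable surface stratum obtained from slicing $\Delta_Y^{=1}$), so Tanaka's MMP applies and terminates with a Mori fiber space. At this stage \autoref{prop:P1_link_from_crepant_structure} can be invoked: the disconnection provided by the distinct components $S_1$ and $S_2$ of $\Delta_Y^{=1}\cap f^{-1}(\eta_T)$ — which, crucially, does not collapse into a single component during the MMP since no strictly smaller common ambient lc center exists — forces the terminal Mori fiber space to be a $\bP^1$-link with $S_1$ and $S_2$ as its two sections.

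The delicate point, and the main obstacle, is to justify the chaining step when the surface one obtains after slicing does not contain both $S_1$ and $S_2$ simultaneously. In characteristic zero Kollár can rely on a Kodaira-type vanishing / torsion-freeness argument to produce the required bridge in a single stroke. Here, instead, I would chain via divisors of $\Delta_Y^{=1}$ joined by the connectedness of $\Delta_Y^{=1}\cap f^{-1}(\eta_T)$: this connectedness in turn is derived from the Cohen--Macaulayness of Weil divisors on $\bQ$-factorial klt threefolds over a perfect field of characteristic $>5$, the very input used in the proof of \autoref{proposition: lc centers on dlt 3-folds}. Verifying that each link in the chain produced by an intermediate divisor can genuinely be realized as a weak $\bP^1$-link, rather than some degenerate limit, will require a careful tracking of the contractions through the MMP, and this is where the main technical work of the proof will reside.
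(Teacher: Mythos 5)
Your overall flavor (chain through components of $\Delta_Y^{=1}$, use surface MMP and a connectedness statement, invoke \autoref{prop:P1_link_from_crepant_structure} to manufacture the link) matches the paper's strategy, but two of your load-bearing steps do not hold up as written. First, your base case proposes running an MMP for $(Y,(1-\epsilon)\Delta_Y^{=1}+\Delta_Y^{<1})$ over $\eta_T$ and claims that, after ``base-changing to the generic fiber,'' this takes place on a surface where Tanaka's MMP applies. Since $f\colon Y\to X$ is birational, there is no surface generic fiber; what you are describing is a genuine threefold MMP over a threefold base, and \autoref{prop:P1_link_from_crepant_structure} is a statement about a surface fibered over a base, not about $Y$ itself. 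The paper never runs an MMP on $Y$: it works on the individual surface components $S=\Delta_i$ of $\Delta_Y^{=1}$, where $g=f|_S$ really is a morphism from a surface, and the case analysis is governed by $\dim g(S)\in\{0,1,2\}$. Second, you derive the connectedness of $\Delta_Y^{=1}\cap f^{-1}(\eta_T)$ from Cohen--Macaulayness of Weil divisorial sheaves on $\bQ$-factorial klt threefolds. That input gives the normality and codimension statements of \autoref{proposition: lc centers on dlt 3-folds}; it does not give connectedness of the fibers of $\Nklt(Y,\Delta_Y)\to X$, which is a separate connectedness theorem (proved via Witt--Nadel vanishing in Nakamura--Tanaka) and is exactly the result the paper quotes to set up the chain $\Delta_1,\dots,\Delta_r$ with $\Delta_i\cap\Delta_{i+1}\cap f^{-1}(x)\neq\emptyset$. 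Without it your chaining step has no foundation.

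Beyond these, the part you defer as ``where the main technical work will reside'' is in fact the heart of the argument and cannot be waved through: on each surface $S=\Delta_i$ one must show that the lc centers of $(S,\Diff^*_S\Delta_Y)$ sitting over $x$ inside $\lfloor\Diff^*_S\Delta_Y\rfloor$ are either connected or weakly $\bP^1$-linked. When $\dim g(S)=2$ this uses the vanishing $R^1g_*\sO_S(-D^{=1})=0$ of \autoref{lemma: dlt surface over a surface}; when $\dim g(S)=1$ and $D^{=1}$ is vertical over the image curve, one runs a $(K_S+D^{<1})$-MMP, shows $(\Gamma')^2=0$ via \autoref{lemma:Square_zero_vertical_is_fiber}, and transports connectedness back through the crepant map using \autoref{lemma:crepant_pairs_and_non_klt_locus}; only in the genuinely disconnected dominant cases does \autoref{prop:P1_link_from_crepant_structure} enter. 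Finally, since \autoref{prop:P1_link_from_crepant_structure} only produces the link after an elementary \'{e}tale base change, you still owe an argument that the link descends; localizing at the generic point makes this easy (the base change is a generic isomorphism there), but it must be said. As it stands, your proposal identifies the right toolbox but has a wrong mechanism in the base case, a misattributed connectedness input, and omits the case analysis that constitutes the actual proof.
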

%The idea of proof is similar to \cite[4.40]{Kollar_Singularities_of_the_minimal_model_program}. Koll\'{a}r's inductive proof relies on characteristic zero features that are not available in positive characteristic (namely, the torsion-freeness of some higher pushforward used in \cite[4.37]{Kollar_Singularities_of_the_minimal_model_program}). However this turns out not to be a problem in low dimensions.
\begin{proof}
We prove the following more precise result:

\begin{claim}\label{claim:linked_lc_centers_above_a_fixed_point}
Given a point $x\in X$, an lc center $Z\subset Y$ of $(Y,\Delta_Y)$ that is minimal for the property $x\in f(Z)$, an lc center $W\subset Y$ such that $x\in f(W)$, then there exists an lc center $Z_W\subset W$ such that $Z$ and $Z_W$ are weakly $\bP^1$-linked.
\end{claim}

By \autoref{lemma:P1_link_preserves_minimality} this claim implies the result, taking $x$ to be the generic point of $T$.

First we prove our claim holds after passing to an elementary \'{e}tale neighborhood of $(x\in X)$. Then we will show that the weak $\bP^1$-link descends along the \'{e}tale base-change.

By \cite[1.2]{Nakamura_Tanaka_Witt_Nadel_vanishing_for_threefolds}, the fibers of 
		$$\Supp\Delta_Y^{=1}=\Nklt(Y,\Delta_Y)\longrightarrow X$$
are geometrically connected. After passing to an elementary \'{e}tale neighborhood of $(x\in X)$, we may assume that $\Delta_Y^{=1}=\sum_{i=1}^r \Delta_i$, that each $\Delta_i$ has a connected fiber above $x$ and that every lc center of $(Y,\Delta_Y)$ intersects $f^{-1}(x)$. Relabelling the $\Delta_i$ is necessary, we assume that $Z\subset \Delta_1,W\subset \Delta_r$ and $\Delta_i\cap \Delta_{i+1}\cap f^{-1}(x)\neq \emptyset$ for all $i=1,\dots,r-1$. 

Now we prove that $Z\subset \Delta_1$ is $\bP^1$-linked (after the \'{e}tale base-change) to an lc center contained in $\Delta_1\cap \Delta_2$. For ease of notation, we let $S:=\Delta_1$, $D:=\Diff^*_{S}\Delta_Y$, $E:=\Delta_1\cap \Delta_2$. Then $(S,D)$ is a dlt surface pair, admits a projective morphism $g=f|_S\colon S\to X$ with connected fibers such that $K_S+D\sim_{\bQ,g}0$. Notice that $ E\subset \Supp D^{=1}$ and that $Z\subset \Supp D^{=1}\cap g^{-1}(x)$. We distinguish a few cases according to the dimension of $g(S)$.
	\begin{enumerate}
		\item Suppose that $\dim g(S)=2$. Then $R^1g_*\sO_S(-D^{=1})=0$ by \autoref{lemma: dlt surface over a surface}, so the natural map $g_*\sO_S\to g_*\sO_{D^{=1}}$ is surjective. If $D^{=1}\cap g^{-1}(x)$ was disconnected, then we would get a section of $g_*\sO_S$ that vanishes on one connected component of $g^{-1}(x)\cap D^{=1}$, and is identically $1$ on another. But $g^{-1}(x)$ is connected and sections of $\sO_{g^{-1}(x)_\text{red}}$ are supported on entire components, so we get a contradiction. Thus $D^{=1}\cap g^{-1}(x)$ is connected. (If we know that $(x\in g(S))$ was a normal surface singularity, then we can simply apply the connectedness principle for surfaces \cite[2.36]{Kollar_Singularities_of_the_minimal_model_program}.)
		
		The scheme $D^{=1}\cap g^{-1}(x)$ is either $0$-dimensional or $1$-dimensional. If it is $0$-dimensional then $D^{=1}\cap g^{-1}(x)=Z$ and we are done. 
		
		Assume that $D^{=1}\cap g^{-1}(x)$ is $1$-dimensional. Then $Z$ is an intersection point of two components of $D^{=1}$ above $x$. If $Z$ is the only such point, then we are done. If not, then some component of $D^{=1}$ are contained in $g^{-1}(x)$ and by adjunction we easily see that each one of them must contain two such intersection points. In particular these points are $k(x)$-points, and the components of $D^{=1}$ contained in $g^{-1}(x)$ are isomorphic to $\bP^1_{k(x)}$. Thus the components of $D^{=1}$ contained in $g^{-1}(x)$ give the weak $\bP^1$-link between these intersection points.
		
		\item Suppose that $\dim g(S)=1$. First we consider the case where $x$ is the generic point of $C:=g(S)$. Then $D^{=1}\cap g^{-1}(x)$ is the set of generic points of those components of $D^{=1}$ that dominate $C$. If there is only one such component, then it is $Z$ and we are done. If at least two components of $D^{=1}$ dominate $C$, then we may use \autoref{prop:P1_link_from_crepant_structure} to obtain that, after possibly a further elementary \'{e}tale base-change, exactly two components of $D^{=1}$ dominate $C$ and they are $\bP^1$-linked to each other.
		
		Now assume that $x\in C$ is a closed point. If $D^{=1}\cap g^{-1}(x)$ is connected, we can apply the same analysis as in the previous point.
		 If $D^{=1}\cap g^{-1}(x)$ is disconnected and $D^{=1}$ dominates $C$, then we can use \autoref{prop:P1_link_from_crepant_structure}.
		
		% If $D^{=1}$ does not dominate $C$, then $D^{=1}\cap g^{-1}(x)$ is connected by \autoref{proposition:fibers_of_log_elliptic_fibrations}. Hence all cases have been covered.
		
		Finally, if $D^{=1}$ does not dominate $C$, we show that $D^{=1}\cap g^{-1}(x)$ is connected, hence all cases have been covered. Indeed, if $D^{=1}$ does not dominate $C$ then it is supported on fibers, and $K_S+D^{<1}\sim_{\bQ,g}-D^{=1}$ is pseudo-effective on the generic one. By \cite{Tanaka_MMP_for_excellent_surfaces} we can run a $(K_S+D^{<1})$-MMP over $C$, which terminates with a birational model $S'$ of $S$, say
			$$\begin{tikzcd}
			(S,D^{<1},D^{=1})\arrow[dr, "g" below left]\arrow[rr, "\varphi"] && (S',\Delta':=\varphi_*D^{<1},\Gamma':=\varphi_*D^{=1})\arrow[dl, "g'"] \\
			& C &
			\end{tikzcd}$$
		such that $K_{S'}+\Delta'\sim_{\bQ,g'}-\Gamma'$ is $g'$-nef. Notice that:
		\begin{itemize}
			\item By \autoref{lemma:birational_crepant_pairs} the morphism $\varphi\colon (S,D)\dashrightarrow (S',\Delta'+\Gamma')$ is crepant;
			\item Since $K_{S'}+\Delta'+\Gamma'\sim_{\bQ,g'}0$ and $\Gamma'$ is vertical over $C$, we see that $(K_{S'}+\Delta')\cdot (g')^*M=0$ for any $\bQ$-Cartier divisor $M$ on $C$;
			\item $\Gamma'\neq 0$. For if $\varphi$ would contract $D^{=1}$, then looking at in intermediate step of the MMP we may assume that $D^{=1}$ is irreducible. But then $(K_S+D^{<1})\cdot D^{=1}=-(D^{=1})^2$ is non-negative by \autoref{lemma:Square_zero_vertical_is_fiber}, which is a contradiction.
		\end{itemize}			
		   Now we claim that $\Gamma'\cap (g')^{-1}(x)$ is connected for any $x\in C'$. By \autoref{lemma:Square_zero_vertical_is_fiber} we have $(\Gamma')^2\leq 0$, with equality if and only if $\Gamma'$ is a weighted sum of fibers. In any case we can write 
				$$-\Gamma'\sim_{\bQ} (g')^*N+E$$
		where $N$ is $\bQ$-Cartier on $C$ and $E$ is an effective $\bQ$-divisor supported on the fibers. Hence we have
				$$0\geq (\Gamma')^2=(K_{S'}+\Delta')\cdot (-\Gamma')=(K_{S'}+\Delta')\cdot ((g')^*N+E)\geq 0,$$
		thus $(\Gamma')^2=0$. Since $g'\colon S'\to C$ has connected fibers, we deduce that every $\Gamma'\cap (g')^{-1}(x)$ is connected. 
		
		Since $(S,D^{<1})$ is klt and $\varphi$ is a $(K_S+D^{<1})$-MMP, the pair $(S',\Delta')$ is also klt. In particular $\Nklt(S',\Delta'+\Gamma')=\Gamma'$. On the other hand, since $(S,D)$ is dlt we have $\Nklt(S,D)=D^{=1}$. Therefore, since $\varphi\colon (S,D)\to (S',\Delta'+\Gamma')$ is a crepant $C$-morphism, by \autoref{lemma:crepant_pairs_and_non_klt_locus} we have a bijection between the connected components of $D^{=1}\cap g^{-1}(x)$ and the connected components of $\Gamma'\cap (g')^{-1}(x)$. In particular $D^{=1}\cap g^{-1}(x)$ is connected.
		
		\item Finally, suppose that $\dim g(S)=0$. If $D^{=1}\cap g^{-1}(x)$ is connected, we apply the same analysis as in the first point. If $D^{=1}\cap g^{-1}(x)$ is disconnected, we can use \autoref{prop:P1_link_from_crepant_structure}.
	\end{enumerate}
	
It remains to show that the weak $\bP^1$-link descends the \'{e}tale base-change. More precisely, let $Z_i\subset Y$ be the lc centers that are minimal for the property $x\in f(Z_i)$. We have proved that after an elementary \'{e}tale base-change $(x'\in X')\to (x\in X)$, the $Z_i':=Z_i\times_XX'$ are weakly $\bP^1$-linked over $X'$. In particular they have the same image in $X'$, and thus the $Z_i$ have the same image in $X$, say $V$. Let $v\in X$ be the generic point of $V$. Notice that the lc centers of $(Y,\Delta_Y)$ that are minimal above $v$ are exactly the $Z_i$'s. Thus we can refine the earlier statement: there is a elementary \'{e}tale neighborhood $(\tilde{v}\in \tilde{X})\to (v\in X)$ such that the $\tilde{Z}_i:=Z_i\times_X\tilde{X}$ are $\bP^1$-linked. Since $k(\tilde{v})=k(v)$, the morphisms $\tilde{Z}_i\to Z_i$ are generic isomorphisms. Thus the weak $\bP^1$-links between the $\tilde{Z}_i$'s descend to weak $\bP^1$-links between the $Z_i$'s.
\end{proof}

\begin{remark}\label{remark:minimal_centers_are_linked_in_smaller_dimensions}
The same argument shows that \autoref{prop:minimal_centers_are_linked} also holds for an arbitrary quasi-projective lc surface pair $(S,\Delta)$ over a field and a crepant dlt blow-up $(S',\Delta')\to (S,\Delta)$. This can also be shown by considering the minimal resolution of $(S,\Delta)$.
\end{remark}

\begin{corollary}\label{cor:adjunction_for_lc_centers}
Let $f\colon (Y,\Delta_Y)\to (X,\Delta)$ be as in \autoref{notation:crepant_dlt_blow_up_of_threefold}, and $S\subset Y$ a lc center of $(Y,\Delta_Y)$. Denote by 
		$$S\overset{f_S}{\longrightarrow} X_S\overset{\pi}{\longrightarrow} X$$
the Stein factorization of $f|_S$. If $Z\subset X$ is an lc center of $(X,\Delta)$ contained in $f(S)$, then every irreducible component of $\pi^{-1}Z\subset X_S$ is the image of an lc center of $(S,\Delta_S:=\Diff^*_S\Delta_Y)$.
\end{corollary}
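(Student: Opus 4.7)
Plan:

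By \autoref{cor:Higher_Poincaré_residues}, lc centers of $(S,\Delta_S)$ are in natural bijection with lc centers of $(Y,\Delta_Y)$ contained in $S$, so the task reduces to producing, for every irreducible component $W$ of $\pi^{-1}Z$, an lc center $T\subset S$ of $(Y,\Delta_Y)$ with $f_S(T)=W$.

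Fix $W$ and let $w\in X_S$ be its generic point. Pick a scheme-theoretic point $y\in S$ with $f_S(y)=w$ and let $T_0$ be the minimal lc center of $(Y,\Delta_Y)$ containing $y$. Since $y\in S\subset\Supp\Delta_Y^{=1}$, the description of lc centers on dlt pairs given by \autoref{proposition: lc centers on dlt 3-folds} forces $T_0\subset S$: otherwise $T_0\cap S$ would decompose into strictly smaller lc centers of $(Y,\Delta_Y)$, one of which would still contain $y$, contradicting minimality. Properness of $f_S$ together with irreducibility of $\overline{\{y\}}$ gives $f_S(\overline{\{y\}})=\overline{\{w\}}=W$, so $f_S(T_0)$ is an irreducible closed subset of $X_S$ containing $W$. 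Since $T_0$ is an lc center of $(Y,\Delta_Y)$, its image $f(T_0)$ is an lc center of $(X,\Delta)$ containing $\pi(w)$, and therefore containing $Z$. Once the equality $f(T_0)=Z$ is established, $f_S(T_0)\subset\pi^{-1}Z$ is irreducible containing the component $W$, so maximality of $W$ forces $f_S(T_0)=W$.

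The remaining obstacle is to choose $y$ so that $f(T_0)=Z$ and so that $T_0$ meets the preimage of the prescribed $W$ rather than of another component of $\pi^{-1}Z$. The existence of some lc center $T^\ast\subset S$ of $(Y,\Delta_Y)$ with $f(T^\ast)=Z$ is obtained by applying the Claim established inside the proof of \autoref{prop:minimal_centers_are_linked} with the target lc center in that claim set to $S$, starting from any minimal lc center of $(Y,\Delta_Y)$ above $Z$: the resulting weak $\bP^1$-link and \autoref{lemma:P1_link_preserves_minimality} force $f(T^\ast)=Z$. To steer the construction into the prescribed component, we work after an elementary \'etale base change around a closed point $x_0\in Z$ sufficiently generic that $Z$ is the minimal lc center of $(X,\Delta)$ through $x_0$; in this localisation the fibers of $\Nklt(Y,\Delta_Y)\to X$ are geometrically connected by~\cite{Nakamura_Tanaka_Witt_Nadel_vanishing_for_threefolds}, and the connected components of the fiber $(f|_S)^{-1}(x_0)$ biject with $\pi^{-1}(x_0)$, hence with the components of $\pi^{-1}Z$ through $x_0$. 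Running the Claim inside the connected component corresponding to $W$ produces an lc center of $(Y,\Delta_Y)$ contained in $S$ and mapping onto $W$, and descending along the \'etale cover — using that weak $\bP^1$-links descend, as shown in the last paragraph of the proof of \autoref{prop:minimal_centers_are_linked} — yields the required $T\subset S$ with $f_S(T)=W$.
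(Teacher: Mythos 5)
Your proposal ends up at the paper's own argument: the substance is \autoref{claim:linked_lc_centers_above_a_fixed_point} (any lc center of $(Y,\Delta_Y)$ over a point is weakly $\bP^1$-linked to one contained in the prescribed lc center $S$), combined with an elementary \'etale base change to separate the several components of $\pi^{-1}Z$, followed by descent. The opening construction with $T_0$ (the minimal lc center of $(Y,\Delta_Y)$ through a point $y$ over the generic point of $W$) is a dead end that you correctly abandon: for a general such $y$ one gets $T_0=S$ and $f(T_0)=f(S)$, which typically strictly contains $Z$, and the existence of a special $y$ making $f(T_0)=Z$ is exactly what the Claim has to supply. That paragraph could be deleted without loss.

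The one step that needs tightening is ``running the Claim inside the connected component corresponding to $W$.'' As stated, the Claim takes an lc center of $(Y,\Delta_Y)$ as its target, not a connected component of a fiber, so it does not by itself let you prescribe which connected component of $(f|_S)^{-1}(x_0)$ its output meets. The precise form of your idea is the paper's: after the elementary \'etale base change, $S$ itself decomposes into irreducible components $S_j$, each an lc center of the base-changed pair with connected fiber over the chosen point, and these $S_j$ correspond to the relevant points of $X_S$; applying the Claim with target $S_j$ then lands you in the prescribed component of $\pi^{-1}Z$, and the resulting center dominates $Z$ because it is crepant birational over $X$ to a minimal lc center over the generic point of $Z$ (\autoref{lemma:P1_link_preserves_minimality}). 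Note also that the paper localizes at the generic point $\eta$ of $Z$ rather than at a general closed point $x_0$; this makes automatic both that minimal lc centers over the chosen point map onto $Z$ and that the points of $\pi^{-1}(\eta)$ index the components of $\pi^{-1}Z$, whereas your closed-point variant requires these genericity provisos to be justified separately. With those adjustments the argument is correct and is essentially the paper's.
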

\begin{proof}
Choose a minimal lc center $V\subset Y$ above $Z$. By \autoref{claim:linked_lc_centers_above_a_fixed_point}, there is a minimal lc center $V'\subseteq Y$ that dominates $Z$ and that is contained in $S$. By adjunction for dlt pairs, $(V',\Diff^*_{V'}\Delta_Y)$ is an lc center of $(S,\Delta_S)$. Then $f_S(V')\subset X_S$ is one of the irreducible components of $\pi^{-1}Z$.

Now let $\eta\in X$ be the generic point of $Z$. After passing to an elementary \'{e}tale neighborhood of $(\eta\in X)$, we can assume that $S$ is the union of irreducible components $S_j$ where each $f^{-1}(\eta)\cap S_j$ is connected. The previous argument show that the irreducible components of $\pi^{-1}Z$ are images of lc centers of the $S_j$, and these lc centers descend to $S$ by \cite[2.15]{Kollar_Singularities_of_the_minimal_model_program}.
\end{proof}

\begin{corollary}\label{corollary:intersection_of_lc_centers}
Let $(X,\Delta)$ be a quasi-projective slc threefold pair over a perfect field of characteristic $>5$. Then any intersection of lc centers is a union of lc centers.
\end{corollary}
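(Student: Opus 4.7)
The plan is to reduce to the lc case by passing to the normalization, and then exploit the crepant dlt blow-up of \autoref{fact:crepant_dlt_blow_up_ind_dim_2_3} together with the higher codimension adjunction of \autoref{cor:adjunction_for_lc_centers}.

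First I will replace $(X,\Delta)$ by its normalization $\pi\colon(\bar X,\bar D+\bar\Delta)\to (X,\Delta)$. Since $K_{\bar X}+\bar D+\bar\Delta=\pi^*(K_X+\Delta)$ (\autoref{lemma: basic properties of normalization of demi-normal scheme}) and every proper birational morphism $Y\to X$ from a normal $Y$ factors through $\bar X$, the lc centers of $(X,\Delta)$ are exactly the $\pi$-images of the lc centers of $(\bar X,\bar D+\bar\Delta)$. Given two lc centers $Z_1,Z_2\subset X$, each $\pi^{-1}(Z_i)$ is a union of lc centers of $(\bar X,\bar D+\bar\Delta)$: any components beyond the obvious lifts arise from the gluing involution $\tau$ on $\bar D^n$, which by \autoref{lemma:normalization_gives_involution} is a log involution of $(\bar D^n,\Diff_{\bar D^n}\bar\Delta)$ and thus permutes lc centers. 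Since $\pi^{-1}(Z_1\cap Z_2)=\pi^{-1}(Z_1)\cap\pi^{-1}(Z_2)$, once the statement is known on $(\bar X,\bar D+\bar\Delta)$ it will follow on $(X,\Delta)$ by taking $\pi$-images and using that the $\pi$-image of an lc center is an lc center.

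So assume $(X,\Delta)$ is lc and fix a $\bQ$-factorial crepant dlt blow-up $f\colon(Y,\Delta_Y)\to(X,\Delta)$. By \autoref{proposition: lc centers on dlt 3-folds}, every lc center of $(X,\Delta)$ is the $f$-image of some stratum of $\Delta_Y^{=1}$. Given lc centers $Z_1,Z_2\subset X$, I will choose an lc center $S\subset Y$ of $(Y,\Delta_Y)$ with $f(S)=Z_1$ and consider the Stein factorization $S\overset{f_S}{\longrightarrow}X_S\overset{\pi_S}{\longrightarrow}X$. Since $\pi_S(X_S)=Z_1$, one has $\pi_S^{-1}(Z_2)=\pi_S^{-1}(Z_1\cap Z_2)$, and by \autoref{cor:adjunction_for_lc_centers} every irreducible component of $\pi_S^{-1}(Z_2)$ has the form $f_S(T_\alpha)$ for some lc center $T_\alpha$ of $(S,\Diff^*_S\Delta_Y)$. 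By \autoref{cor:Higher_Poincaré_residues}(3) each $T_\alpha$ is itself an lc center of $(Y,\Delta_Y)$, so $f(T_\alpha)=\pi_S(f_S(T_\alpha))$ is an lc center of $(X,\Delta)$ contained in $Z_1\cap Z_2$. The union of these $f(T_\alpha)$ equals $\pi_S(\pi_S^{-1}(Z_1\cap Z_2))=Z_1\cap Z_2$, and since lc centers are irreducible, every irreducible component of $Z_1\cap Z_2$ must coincide with some $f(T_\alpha)$, giving the conclusion.

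The hard part is already packaged into \autoref{cor:adjunction_for_lc_centers}, which itself rests on the existence of weak $\bP^1$-links between minimal lc centers dominating a fixed one (\autoref{prop:minimal_centers_are_linked}) and on the dlt description of lc centers in \autoref{proposition: lc centers on dlt 3-folds}. Once these ingredients are in place the proof is essentially formal. The only other point that demands care is the identification of lc centers across the normalization step, which is guaranteed by the log-invariance of the different $\Diff_{\bar D^n}\bar\Delta$ under $\tau$.
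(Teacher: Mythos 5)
Your overall strategy --- normalize, pass to a crepant dlt blow-up, and invoke the weak $\bP^1$-linking machinery --- matches the paper's, but the way you package the key step introduces a circularity. You apply \autoref{cor:adjunction_for_lc_centers} with $S$ a lift of $Z_1$ and $Z=Z_2$, where $Z_2$ is in general \emph{not} contained in $f(S)=Z_1$ (indeed that is the only interesting case, since otherwise $Z_1\cap Z_2=Z_2$ is already an lc center). The paper's proof of \autoref{cor:adjunction_for_lc_centers} runs \autoref{claim:linked_lc_centers_above_a_fixed_point} at the generic point $\eta$ of $Z$, and that claim requires $\eta\in f(S)$; so the corollary is only established there when $Z\subseteq f(S)$, which is also the only regime in which the paper uses it. In the generality you need, the corollary asserts precisely that every component of $Z_1\cap Z_2$ is the image of an lc center, i.e.\ it is essentially the statement you are trying to prove. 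The paper's actual proof sidesteps this: for each point $x\in Z\cap Z'$ it applies \autoref{claim:linked_lc_centers_above_a_fixed_point} directly to produce a minimal lc center $W$ of $(Y,\Delta_Y)$ over $x$ that is weakly $\bP^1$-linked into lifts of both $Z$ and $Z'$, so that $f(W)\subseteq Z\cap Z'$ while $x\in f(W)$; finiteness of the set of lc centers then concludes. Your argument becomes correct once the appeal to \autoref{cor:adjunction_for_lc_centers} is replaced by this pointwise use of the claim.

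Two smaller points. First, you cite \autoref{cor:Higher_Poincaré_residues}(3) for ``every lc center of $(S,\Delta_S)$ is an lc center of $(Y,\Delta_Y)$'', but that item states the opposite inclusion; the direction you need is true (combine \autoref{proposition: lc centers on dlt 3-folds} applied to $(S,\Delta_S)$ with the easy direction of adjunction, or observe that the centers produced in the proof of \autoref{cor:adjunction_for_lc_centers} are by construction lc centers of $(Y,\Delta_Y)$), but it is not what you reference. Second, the reduction to the lc case via ``$\pi^{-1}(Z_i)$ is a union of lc centers because $\tau$ permutes lc centers'' is asserted rather than proved: it needs the $R(\tau)$-saturation argument through \autoref{cor:adjunction_for_reduced_boundary} and the already-established lc case, though admittedly the paper is equally terse on this reduction.
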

\begin{proof}
The normalization $(\bar{X},\bar{D}+\bar{\Delta})\to (X,\Delta)$ is crepant and $(\bar{X},\bar{D}+\bar{\Delta})$ is lc. Thus the lc centers of $(X,\Delta)$ are the images of the lc centers of $(\bar{X},\bar{D}+\bar{\Delta})$. Hence we may assume that $(X,\Delta)$ is lc. Let $Z,Z'\subset X$ be lc centers, and pick a point $x\in Z\cap Z'$. If $f\colon (Y,\Delta_Y)\to (X,\Delta)$ is a crepant dlt blow-up, then by \autoref{claim:linked_lc_centers_above_a_fixed_point} we can find a minimal lc center $W$ of $(Y,\Delta_Y)$ above $x$ whose image $f(W)$ is contained in $Z\cap Z'$. Since $f(W)$ is an lc center of $(X,\Delta)$, the result is proved.
\end{proof}

\begin{corollary}\label{proposition:minimal_lc_centers_are_normal_up_to_homeo}
Let $(X,\Delta)$ be a quasi-projective slc threefold pair over a perfect field of characteristic $>5$. Then the minimal lc centers of $(X,\Delta)$ are normal up to universal homeomorphism.
\end{corollary}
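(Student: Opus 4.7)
The approach is to exhibit a normal variety with a finite universal homeomorphism onto $Z$; this variety will be the spring of $Z$ in the sense of \autoref{theorem_intro:spring_and_sources}.

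First I would pass to the normalization $\nu\colon(\bar X,\bar D+\bar\Delta)\to(X,\Delta)$, which is finite and crepant, and then to a crepant $\bQ$-factorial dlt blow-up $g\colon(Y,\Delta_Y)\to(\bar X,\bar D+\bar\Delta)$. Choose a minimal lc center $W\subset\bar X$ of $(\bar X,\bar D+\bar\Delta)$ with $\nu(W)=Z$, and then a minimal lc center $S\subset Y$ of $(Y,\Delta_Y)$ with $g(S)=W$. By \autoref{proposition: lc centers on dlt 3-folds}, $S$ is normal. The Stein factorization of the composite $S\to W\to Z$ produces $S\to Z_S\to Z$ where $Z_S$ is normal (being the relative $\Spec$ of the pushforward of $\sO_S$ under a morphism from a normal source) and $Z_S\to Z$ is finite and surjective. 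Moreover, by parts (1), (2) and (6) of \autoref{theorem_intro:spring_and_sources}, $Z_S$ is canonically identified with the spring of $Z$ and is independent of the choices made.

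It remains to show that $Z_S\to Z$ is a universal homeomorphism. Since the map is finite, this reduces to bijectivity on geometric points together with purely inseparable residue field extensions. For the second condition, part (4) of \autoref{theorem_intro:spring_and_sources} provides that $k(Z^n)\subset k(Z_S)$ is a Galois extension; once geometric bijectivity is established, the Galois extension is forced to be trivial, so $Z_S\to Z^n$ is a finite birational morphism of normal varieties, hence an isomorphism, and $Z_S\cong Z^n\to Z$ is a universal homeomorphism. Geometric bijectivity itself reduces to showing that each fiber $S_{\bar z}$, $\bar z\in Z$, is geometrically connected. I would derive this by combining the geometric connectedness of the fibers of $\Supp\Delta_Y^{=1}\to X$ provided by \cite[1.2]{Nakamura_Tanaka_Witt_Nadel_vanishing_for_threefolds} with the minimality of $W$ over $Z$ and of $S$ over $W$, in the spirit of the weak $\bP^1$-link analysis carried out in \autoref{prop:minimal_centers_are_linked} and \autoref{claim:linked_lc_centers_above_a_fixed_point}.

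The main obstacle is precisely this fiberwise connectedness statement: one must rule out scenarios where $S_{\bar z}$ splits into several components that are connected inside $(\Delta_Y^{=1})_{\bar z}$ through other strata not contained in $S$. Weakly $\bP^1$-linked minimal centers share the same image downstairs, which is compatible with but does not immediately imply connectedness of $S_{\bar z}$; a careful analysis of how $S$ meets the other strata of $\Delta_Y^{=1}$ above $\bar z$, combined with the observation that any such splitting would upgrade to a strictly smaller lc center of $(Y,\Delta_Y)$ contradicting the minimality of $S$ over $W$ (or of $W$ over $Z$), should deliver the desired connectedness. The technical execution of this step is the crux of the argument.
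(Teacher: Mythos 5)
There is a genuine gap, and in fact the crux of your plan is false as stated. You reduce the corollary to the claim that the spring morphism $Z_S\to Z$ is universally bijective, equivalently that the fibers of the minimal stratum $S$ over geometric points of $Z$ are connected. But the spring of a minimal lc center can be a nontrivial \'{e}tale Galois cover of $Z^n$, in which case $Z_S\to Z$ is not a universal homeomorphism even though $Z$ itself is normal. Concretely, take $k=\bF_7$, let $C=(y^2=ax^2+x^3)\subset\bA^2$ with $a$ a non-square, set $X=\bA^2\times\bA^1_t$, $\Delta=C\times\bA^1_t$, and $Z=\{0\}\times\bA^1_t$. Then $Z$ is a minimal lc center; on the blow-up of $Z$ the minimal stratum $S$ is the curve of branch points of $\tilde{\Delta}\cap E$, which is an irreducible degree-$2$ \'{e}tale cover of $Z$, so $Z_S=S\to Z$ has geometrically disconnected fibers. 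This is perfectly consistent with the Galois property of \autoref{theorem:spring_and_sources} (the cover is Galois of degree $2$), so that property cannot be used to force triviality: your inference "once geometric bijectivity is established, the Galois extension is trivial" is circular, since geometric bijectivity is exactly what fails. The correct target is only that $Z^n\to Z$ is a universal homeomorphism; $Z^n$ sits strictly between $Z$ and $Z_S$ in general, and the interesting part of the spring lives above $Z^n$.

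The paper's proof avoids springs entirely and is much shorter: one shows that $Z$ is geometrically unibranch. Pick $z\in Z$ and a standard \'{e}tale neighborhood $(z'\in X')\to(z\in X)$ with $Z'=Z\times_XX'$; if $Z'$ were reducible, the pairwise intersections of its components would be unions of lc centers by \autoref{corollary:intersection_of_lc_centers}, and their images in $X$ would be lc centers strictly contained in $Z$, contradicting minimality. Hence $Z'$ is irreducible, $Z$ is geometrically unibranch, and therefore $Z^n\to Z$ is universally bijective, hence (being finite and surjective) a universal homeomorphism. If you want to salvage your approach, you would have to replace "connectedness of $S_{\bar z}$" by "unibranchness of $Z$" and argue exactly as above, at which point the detour through $S$ and $Z_S$ contributes nothing.
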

\begin{proof}
Let $Z\subset X$ be a minimal lc center of $(X,\Delta)$, and $z\in Z$ be any point. Choose an étale neighborhood $(z'\in Z')\to (z\in Z)$. After shrinking it if necessary, we may assume that it is a standard étale neighborhood \cite[02GI, 02GU]{Stacks_Project} and thus there is an étale neighborhood $(z'\in X')\to (z\in X)$ such that $Z'=Z\times_XX'$. Hence $Z'$ is an lc center of $(X',\Delta')$ and it is connected. If it was reducible, then the intersection of its components would be a union of lc centers by \autoref{corollary:intersection_of_lc_centers}, and their images in $X$ would also be lc centers \cite[2.15]{Kollar_Singularities_of_the_minimal_model_program}. This contradicts the minimality of $Z$. Thus $(z'\in Z')$ is irreducible. 

By \cite[0BQ4]{Stacks_Project} we obtain that $Z$ is geometrically unibranch. So the normalization morphism $Z^n\to Z$ is universally bijective \cite[0C1S]{Stacks_Project}. Since it is surjective and universally closed, we obtain that that $Z^n\to Z$ is a universal homeomorphism.
\end{proof}

\subsubsection{Springs and sources for the reduced boundary}\label{section: springs and sources}

We are now able to define sources of lc centers, analogously to \cite[\S 4.5]{Kollar_Singularities_of_the_minimal_model_program}. The theory of sources should be thought of as higher codimension version of adjunction for divisors. However we only define sources for lc centers that are contained in the reduced boundary: see \autoref{remark:sources_for_general_lc_centers} below.

\begin{theorem}\label{theorem:spring_and_sources}
Let $f\colon (Y,\Delta_Y)\to (X,\Delta)$ be as in \autoref{notation:crepant_dlt_blow_up_of_threefold}. Let $Z\subset X$ be a lc center contained in $\Delta^{=1}$ with normalization $n\colon Z^n\to Z$.

Let $(S,\Delta_S:=\Diff^*_{S}\Delta_Y)\subset Y$ be a minimal lc center above $Z$, with Stein factorization $f^n_S\colon S\to Z_S\to Z^n$. 
Then:
	\begin{enumerate}
		\item \textsc{Uniqueness of sources.} The crepant birational class of $(S,\Delta_S)$ over $Z$ does not depend on the choice of $S$. We call it the \textbf{source} of $Z$, and denote it by $\src(Z, Y,\Delta_Y)$.
		\item \textsc{Uniqueness of springs.} The isomorphism class of $Z_S$ over $Z$ does not depend on the choice of $S$. We call it the \textbf{spring} of $Z$, and denote it by $\spr(Z,Y,\Delta_Y)$.
		\item \textsc{Crepant log structure.} $(S,\Delta_S)$ is dlt, $K_S+\Delta_S\sim_{\bQ,Z}0$ and $(S,\Delta_S)$ is klt on the general fiber above $Z$.
		\item \textsc{Galois property.} The field extension $k(Z)\subset k(Z_S)$ is Galois and the morphism $\Bir^c_Z(S,\Delta_S)\to\Gal(Z_S/Z)$ is surjective.
		\item \textsc{Poincaré residue map.} For $m>0$ divisible enough, there are well-defined isomorphisms
				$$\omega_Y^{[m]}(m\Delta_Y)|_S\cong \omega_S^{[m]}(m\Delta_S)$$
		and
				$$n^*(f_*(\omega_Y^{[m]}(m\Delta_Y)))\cong ((f^n_S)_*\omega_S^{[m]}(m\Delta_S))^{\Bir^c_Z(S,\Delta_S)}.$$
		
		%(Galois property) Let $Z_S\to Z_S^\text{sep}\to Z^n$ be the factorization of $Z_S\to Z^n$ into its purely inseparable and separable parts. Then $Z_S^\text{sep}\to Z_S$ is Galois and $\Bir^c(S,\Delta_S)\twoheadrightarrow\Gal(k(Z_S^\text{sep})/k(Z))$.
		\item \textsc{Birational invariance.} Let $(Y',\Delta_{Y'})$ be another crepant dlt blow-up of $(X,\Delta)$. Then
				\begin{eqnarray*}
				\src(Z,Y,\Delta_Y)&\overset{\emph{cbir}}{\cong}& \src(Z,Y',\Delta_{Y'}), \\
				\spr(Z,Y,\Delta_Y)&\cong & \spr(Z,Y',\Delta_{Y'})
				\end{eqnarray*}
			over $Z$. In particular, we may write $\src(Z,X,\Delta)$ for the source, and $\spr(Z,X,\Delta)$ for the spring of $Z\subset X$.
	\end{enumerate}
\end{theorem}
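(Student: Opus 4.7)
The plan is to follow Kollár's blueprint from \cite[\S 4.5]{Kollar_Singularities_of_the_minimal_model_program} using the positive-characteristic tools developed earlier in the paper. The key inputs are higher-codimension adjunction for $\bQ$-factorial dlt threefold pairs (\autoref{cor:Higher_Poincaré_residues}), the weak $\bP^1$-linkage of minimal lc centers (\autoref{prop:minimal_centers_are_linked} together with \autoref{lemma:P1_link_preserves_minimality}), and the classification of two-dimensional lc singularities in characteristic $>5$, which we invoke to prevent inseparable phenomena in the Galois property.

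First I would establish part (3). By \autoref{cor:Higher_Poincaré_residues} the pair $(S,\Delta_S)$ is dlt and the Poincaré isomorphism $\omega_Y^{[m]}(m\Delta_Y)|_S \cong \omega_S^{[m]}(m\Delta_S)$ holds for $m$ divisible enough, which already yields the first half of (5). Since $K_Y+\Delta_Y\sim_\bQ f^*(K_X+\Delta)$ and $f|_S$ factors through $Z$, pulling back gives $K_S+\Delta_S\sim_{\bQ,Z}0$. If $(S,\Delta_S)$ failed to be klt on a general fiber over $Z$, there would be an lc center $T\subsetneq S$ of $(S,\Delta_S)$ dominating $Z$; but then $T$ is also an lc center of $(Y,\Delta_Y)$ by \autoref{cor:Higher_Poincaré_residues}(3), contradicting the minimality of $S$. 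Parts (1) and (2) follow from the $\bP^1$-linkage machinery: given two minimal lc centers $S,S'\subset Y$ above $Z$, \autoref{prop:minimal_centers_are_linked} shows they are weakly $\bP^1$-linked, so \autoref{lemma:P1_link_preserves_minimality} produces a crepant birational map $(S,\Delta_S)\dashrightarrow (S',\Delta_{S'})$ over $X$; since both map to $Z$, it is a map over $Z$. The induced identification of function fields restricts to an isomorphism of the algebraic closures of $k(Z)$ inside $k(S)$ and $k(S')$, hence to a $Z$-isomorphism $Z_S\cong Z_{S'}$ by uniqueness of Stein factorization.

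Part (4) is the delicate step. After passing to a separable closure of $k(Z)$ (equivalently, to the strict henselization of $X$ at the generic point $\eta_Z$), the refinement \autoref{claim:linked_lc_centers_above_a_fixed_point} of the $\bP^1$-link argument shows that the irreducible components of $S$ lying over $\eta_Z$ are pairwise weakly $\bP^1$-linked, yielding elements of $\Bir^c_Z(S,\Delta_S)$ that permute them transitively. This gives normality of the extension $k(Z)\subset k(Z_S)$ together with the surjection onto the Galois group. Separability must be addressed separately: since $\src(Z,X,\Delta)$ has dimension at most $2$ and $K_S+\Delta_S\sim_{\bQ,Z}0$, the classification of lc surface singularities in characteristic $>5$ prevents any purely inseparable extension from appearing in the Stein factorization $S\to Z_S\to Z^n$. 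The second isomorphism in (5) then follows by taking $\Bir^c_Z(S,\Delta_S)$-invariants in the first one and identifying them with Galois invariants via the surjection just constructed.

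Finally, for (6), given two crepant $\bQ$-factorial dlt blow-ups $Y$ and $Y'$ of $(X,\Delta)$, I would take a common log resolution dominating both and run a suitable crepant MMP relative to $X$ to produce a $\bQ$-factorial dlt model $Y''$ with crepant birational maps to $Y$ and $Y'$ that are isomorphisms at the generic points of all components of the reduced boundary dominating $Z$. The minimal lc centers of $Y''$ above $Z$ then match the minimal lc centers of $Y$ and of $Y'$ above $Z$ via \autoref{corollary:Crepant_map_induces_crepant_map_on_lc_centers}, and applying (1)--(2) inside $Y''$ identifies the sources and springs. The main obstacle is the Galois property in Step 4: the transitivity of $\Bir^c_Z$ comes from the $\bP^1$-link chains, but separability is the genuinely delicate point and relies on the low-dimensional classification of lc singularities, which uses $\Char k>5$ in an essential way; higher-dimensional generalizations, as noted in the introduction, will require a substantially finer approach.
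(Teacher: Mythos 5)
Your overall architecture coincides with the paper's: part (3) from higher adjunction plus minimality, parts (1)--(2) from weak $\bP^1$-linkage of minimal lc centers, part (4) by passing to a separable closure, using the link chains for transitivity and the classification of lc surface singularities to rule out inseparability, and part (6) via a common log resolution. The genuine gap is in part (5), specifically in the word \emph{well-defined}. The isomorphism $\omega_Y^{[m]}(m\Delta_Y)|_S\cong\omega_S^{[m]}(m\Delta_S)$ supplied by \autoref{cor:Higher_Poincaré_residues} depends a priori on the choice of the minimal lc center $S$, and the content of the statement is that this dependence can be eliminated. The paper handles this in two stages: first one checks that for two choices $S,S'$ the residue maps are intertwined by the crepant map $\phi\colon(S,\Delta_S)\dashrightarrow(S',\Delta_{S'})$ coming from the $\bP^1$-link --- this reduces to an explicit computation on a standard weak $\bP^1$-link over a field, where the residue of $dx/x$ at the two sections is $+1$ and $-1$ respectively, so the diagram only commutes for $m$ even; second, one shows that $\Bir^c_Z(S,\Delta_S)$ acts on the generic stalk of $\omega_S^{[m]}(m\Delta_S)$ through a finite group of roots of unity (the only nontrivial case, by the dimension count $\dim Z=0$ and $\dim S=1$, being \autoref{proposition:pluricanonical_representations_for_curves_I}), so that after replacing $m$ by a suitable multiple the residue map becomes canonical. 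Your proposal addresses neither point, and the second isomorphism in (5) --- taking $\Bir^c_Z(S,\Delta_S)$-invariants --- cannot even be formulated until the action and the ambiguity have been pinned down.

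A smaller omission sits in part (4): the $\bP^1$-link chains constructed over the separable closure and descended by taking Galois orbits only produce birational self-maps of $S$ that are \emph{generically} crepant over $Z$. When $\dim Z=1$ one still has to upgrade these to genuinely crepant maps, which the paper does by observing that $\Delta^{=1}$ is regular at the generic point of $Z$ (classification of lc surface singularities), so that $\Delta_S$ is forced to be the pullback along the finite map $f^n_S$ of the adjunction boundary on $Z^n$, making any Galois automorphism of $Z_S$ over $Z^n$ automatically crepant. This step should be made explicit rather than absorbed into the transitivity argument.
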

\begin{proof}
For clarity, we divide the proof into four steps and several claims. Parts $(a),(b),(c)$ and $(f)$ are proved in Step 1. Part $(e)$ is proved in Step 2. Steps 3 and 4 are devoted to the proof of $(d)$.

\bigskip
\textsc{Step 1: Sources, springs and invariance.} If $S'\subset Y$ is another minimal lc center above $Z$, then by \autoref{prop:P1_link_from_crepant_structure} and \autoref{lemma:P1_link_preserves_minimality} there is a crepant birational map $(S,\Delta_S)\dashrightarrow (S',\Delta_{S'})$ over $Z$ given by a composition of weak direct $\bP^1$-links. Since $Z_S=\Spec_{Z^n}f_*\sO_S$, we obtain a $Z$-isomorphism $Z_S\cong Z_{S'}$. This shows the uniqueness of $(S,\Delta_S)$ up to crepant birational maps over $Z$, and the uniqueness of $Z_S$ up to isomorphisms over $Z$.

The fact that $(S,\Delta_S)$ is dlt follows from \autoref{cor:Higher_Poincaré_residues}, and that fact that it is klt on the generic fiber above $Z$ holds by \autoref{proposition: lc centers on dlt 3-folds} and the fact that $S$ is minimal.

To obtain birational invariance, we apply \cite[4.44]{Kollar_Singularities_of_the_minimal_model_program} to a common log resolution of $(Y,\Delta_Y)$ and $(Y',\Delta_{Y'})$. (The proof of \cite[4.44]{Kollar_Singularities_of_the_minimal_model_program} uses \cite[10.45.2]{Kollar_Singularities_of_the_minimal_model_program}, which we replace by \autoref{fact:resolution_of_singularities_for_threefolds}). 

The following observation will be crucial for the rest of the proof:
	\begin{claim}\label{claim:dimension_of_sources}
		If $\dim Z=2$ then $\dim \src(Z,Y,\Delta_Y)=2$. If $\dim Z\leq 1$ then $\dim \src(Z,Y,\Delta_Y)\leq 1$.
	\end{claim}
\begin{proof}
\renewcommand{\qedsymbol}{$\lozenge$}
Indeed, if $Z$ is a divisor, then $S$ can be chosen to be the strict transform $Z_Y$ of $Z$. If $Z$ is a curve or a single point, contained in a component $\Gamma$ of $\Delta^{=1}$, then by \autoref{cor:adjunction_for_lc_centers} we can choose $S$ to be contained in $\Gamma_Y$. Since $\Gamma_Y$ is $2$-dimensional and not $f$-exceptional, $S$ is at most $1$-dimensional. Since the dimension of $S$ depends only on $Z$, our claim is proved.
\end{proof}

\bigskip
\textsc{Step 2: Poincaré residue.} To obtain the Poincaré residue map, let $m>0$ be even and sufficiently divisible such that $\omega_Y^{[m]}(m\Delta_Y)\sim f^*L$ for some line bundle $L$ on $X$. By \autoref{cor:Higher_Poincaré_residues}, for every $S$ we have a canonical isomorphism
		$$\sR^m_{Y,S}\colon f^*L|_S\cong \omega_Y^{[m]}(m\Delta_Y)\overset{\sim}{\longrightarrow} \omega_S^{[m]}(m\Delta_S).$$
	However the choice of $S$ is in general not unique. The following claim shows how the maps $\sR^m_{Y,S}$ relate for different choices of $S$.
	
	\begin{claim}\label{claim:Poincare_residue_map}
	Let $(S,\Delta_S)$ and $(S',\Delta_{S'})$ be minimal lc centers above $Z$. Then there is a crepant birational map $\phi\colon (S,\Delta_S)\dashrightarrow (S',\Delta_{S'})$ such the diagram
		\begin{equation}\label{diagram:Poincare_residue_diagram}			
			\begin{tikzcd}
			\omega_Y^{[m]}(m\Delta_Y) 
			\arrow[r, "\sim"] \arrow[d, "\sR_{Y,S}^m"]
			& f^*L
			\arrow[r, "\sim"]
			& \omega_Y^{[m]}(m\Delta_Y)
			\arrow[d, "\sR_{Y,S'}^m"]
			\\
			\omega_S^{[m]}(m\Delta_S)
			\arrow[rr, "\phi^*"]
			&& \omega_{S'}^{[m]}(m\Delta_{S'})
			\end{tikzcd}
		\end{equation}
	is commutative.
	\end{claim}
	\begin{proof}
	\renewcommand{\qedsymbol}{$\lozenge$}
	Indeed, we may assume that $S$ and $S'$ are directly weakly $\bP^1$-linked. Then there is a lc center $T\subset Y$ containing both $S$ and $S'$, which is birational to the total space of a weak standard $\bP^1$-link $T'\to W$ whose sections map birationally to $S$ and $S'$. Moreover, the map $T\dashrightarrow W$ factors the morphism $T\to f(T)$. The induced projections $S\dashrightarrow W \dashleftarrow S'$ are birational, and induce a birational map $\phi\colon S\dashrightarrow S'$, which we claim is the map we are looking for.

	Since $\phi$ is obtained from a $\bP^1$-link, by \autoref{lemma:P1_link_preserves_minimality} it is crepant.
	
	To prove the commutativity of the diagram, since $\sR^m_{T,S}\circ\sR^m_{Y,T}=\sR^m_{Y,S}$, we may assume that $Y=T$. In this case, note that $S,S'\subset \Delta_Y^{=1}$. Moreover we are dealing with torsion-free sheaves, so it is enough to check commutativity generically. Thus we may assume that we have a standard weak $\bP^1$-link $X\to W$ factorizing $f$, with sections $S$ and $S'$. Localizing at the generic point of $W$, we may furthermore assume that $W$ is the spectrum of a field $L$ and that $X=\bP^1_L$ \cite[25.3]{Hartshorne_Deformation_Theory}. In this case $\Delta_Y=S+S'$ and we may choose coordinates $x,y$ on $X$ such that $S=[0;1]$ and $S'=[1;0]$. Then a generator of $H^0(\bP^1_L,\omega_{\bP^1_L}(S+S'))$ is $dx/x$, and
			$$\sR^1_{Y,S}(dx/x)=1,\quad \sR^1_{Y,S'}(dx/x)=-1$$
while $\phi^*$ is the identity map on $L$. Thus \autoref{diagram:Poincare_residue_diagram} indeed commutes for $m$ even. 
	\end{proof}
	
	\begin{claim}\label{claim:roots_of_unity}
	The group $\Bir^c_Z(S,\Delta_S)$ acts on $\omega_S^{[m]}(m\Delta_S)$ as a finite group of $r^\text{th}$-roots of unity.
	\end{claim}
	\begin{proof}
	\renewcommand{\qedsymbol}{$\lozenge$}
	We are dealing with torsion-free sheaves and with a group action that commutes with the projection to $Z$. So to understand the action, we can localize over the generic point of $Z$. Then we obtain a proper $k(Z)$-pair $(S_{k(Z)},\Delta_{S_{k(Z)}})$ such that $\omega_{S_{k(Z)}}^{[m]}(m\Delta_{S_{k(Z)}})$ is trivial, and we must show that the action of $\Bir^c_{k(Z)}(S_{k(Z)},\Delta_{S_{k(Z)}})$ on the $1$-dimensional $k(Z)$-vector space $H^0(S_{k(Z)},\omega_{S_{k(Z)}}^{[m]}(m\Delta_{S_{k(Z)}}))$is finite. If $\dim S=\dim Z$ then $S_{k(Z)}$ is the spectrum of a finite field extension of $k(Z)$, and so $\Aut_{k(Z)}(S_{k(Z)})$ is finite. By \autoref{claim:dimension_of_sources}, the only case left is when $Z=\{x\}$ is a closed point of $X$ and $(S,\Delta_S)$ is a proper $1$-dimensional klt pair  over $k(x)$ such that $\omega_S^{[m]}(m\Delta_S)$ is trivial. Since $k(x)$ is a perfect field, it follows from \autoref{proposition:pluricanonical_representations_for_curves_I} that $\Bir^c_{k(x)}(S,\Delta_S)$ acts finitely on the $1$-dimensional vector space $H^0(S,\omega_S^{[m]}(m\Delta_S))$. 
	
	So in every case $\Bir^c_Z(S,\Delta_S)$ acts finitely on the generic stalk of $\omega_S^{[m]}(m\Delta_S)$, hence through the multiplication with some $r^\text{th}$-root of unity in $k(Z)$.
	\end{proof}
	
	If we think about $(S,\Delta_S)$ as a crepant birational class, then \autoref{claim:Poincare_residue_map} shows that we can define a Poincaré residue map $\sR^m_{Y,S}\colon \omega_Y^{[m]}(m\Delta_Y)\to \omega_S^{[m]}(m\Delta_S)$, up to the action of the group $\Bir^c_Z(S,\Delta_S)$ on the target. We can remedy to this ambiguity by replacing $m$ with $mr$, where $r$ is given by \autoref{claim:roots_of_unity}.

\bigskip
\textsc{Step 3: the Galois property.} We wish to prove that the field extension $k(Z_S)/k(Z)$ is Galois. 

The case where $Z$ is a divisor is the easiest, we treat it first. By \cite[1.2]{Nakamura_Tanaka_Witt_Nadel_vanishing_for_threefolds} there is a unique lc center $S$ above $Z$ (namely, its strict transform) and the morphism $S\to Z$ has connected fibers in a neighborhood of the generic point of $Z$. Thus $S\to Z$ is birational, $\Bir^c_Z(S,\Delta_S)$ is trivial and $k(Z_S)=k(Z)$.

From now on, we assume that $\dim Z\leq 1$. To prove that the finite morphism $Z_S\to Z$ induces a Galois extension on the function fields, we may localize at the generic point of $Z$. Then the situation is the following: $(x\in X,\Delta)$ is a local lc pair of dimension $2$ or $3$, $(Y,\Delta_Y)\to (X,\Delta)$ is a crepant dlt blow-up and $(S,\Delta_S)\subset Y$ is an lc center of dimension $\leq 1$ such that the morphism $S\to X$ factorizes through the closed point, and $S$ is minimal for this property.

\begin{claim}\label{claim:fields_of_definition_are_equal}
In this set-up, the fields of definition of the geometric connected components of $S$ are the same, and it is a Galois extension of $k(x)$.
\end{claim}
\begin{proof}
\renewcommand{\qedsymbol}{$\lozenge$}
Indeed, let $K^s$ be a separable closure of $k(x)$. Then $(Y,\Delta_Y)\times_{k(x)}K^s$ is a dlt $\bQ$-factorial pair, and every component of $S_{K^s}$ is an lc center that is minimal. Let $W$ be one of them, with field of definition $F$. Then every lc center containing $W$ is also defined over $F$ \cite[4.17]{Kollar_Singularities_of_the_minimal_model_program}, and so any lc center that is weakly $\bP^1$-linked to $W$ is also defined over $F$. By \autoref{prop:minimal_centers_are_linked} (see also \autoref{remark:minimal_centers_are_linked_in_smaller_dimensions}) we obtain that all the components of $S_{K^s}$ are defined over $F$. If $\sigma$ is an element of the Galois group of the Galois closure of the field extension $k(x)\subset F$, then $W^\sigma$ is a minimal lc center defined over the conjugate field $F^\sigma$. Therefore $F^\sigma=F$, so we see that $k(x)\subset F$ is a Galois field extension. 
\end{proof}

\begin{claim}\label{claim:global_sections_is_separable}
$H^0(S,\sO_S)$ is a separable field extension of $k(x)$.
\end{claim}
\begin{proof}
\renewcommand{\qedsymbol}{$\lozenge$}
If $(x\in X)$ has dimension $3$, this holds because $k(x)$ is perfect (since in this case we localized at a closed point). If $(x\in X)$ has dimension $2$, then $(x\in X,\Delta)$ is a local lc surface singularity with the property that $\Delta^{=1}\neq 0$. (By \cite[2.28]{Kollar_Singularities_of_the_minimal_model_program}, these $(x\in X)$ are rational surface singularities.) The possible dual graphs of the minimal resolutions $(T,\Gamma)$ of such pairs are classified, see for example \cite[3.31]{Kollar_Singularities_of_the_minimal_model_program}. Inspecting them, we see that if $C\subset T$ is an exceptional proper curve above $x$, then $\dim_{k(x)}H^0(C,\sO_C)\leq 4$, and if $C'$ is another exceptional proper curve, then $C\cdot C'=\length_{k(x)}C\cap C'\leq 4$. Hence if $E\subset T$ is a lc place of $(x\in X,\Delta)$ then $H^0(E,\sO_E)$ is a separable extension of $k(x)$ provided that $\Char k(x)\geq 5$.
\end{proof}

\begin{claim}\label{claim:global_sections_is_Galois}
$H^0(S,\sO_S)$ is a Galois extension of $k(x)$.
\end{claim}
\begin{proof}
\renewcommand{\qedsymbol}{$\lozenge$}
By \autoref{lemma:Galois_closure_is_a_field_of_definition} and \autoref{claim:global_sections_is_separable}, the field of definition of any connected component of $S_{K^s}$ contains $H^0(S,\sO_S)$ and is contained in the Galois closure $K$ of the field extension $k(x)\subset H^0(S,\sO_S)$. On the other hand, 
%if $k(x)\subset L$ is any separable extension and $S_{L}^{(i)}$ is any connected component of $S_{L}$, then $H^0(S_{L}^{(i)},\sO_{S_{L}^{(i)}})$ contains $H^0(S,\sO_S)$. Since 
the field of definition of these components is a Galois extension of $k(x)$ by \autoref{claim:fields_of_definition_are_equal}. So we deduce that $K$ is the field of definition of every connected component of $S_{K^s}$.

It remains to show that $H^0(S,\sO_S)=K$. Consider the
Cartesian diagram
		$$\begin{tikzcd}
		S\arrow[d] & S_K=\bigcup_i S_K^{(i)}\arrow[l]\arrow[d] \\
		k(x) & K\arrow[l]
		\end{tikzcd}$$
where the $S_K^{(i)}$ are the irreducible components of $S_K$. As before, $(Y,\Delta_Y)\times_{k(x)} K$ is a dlt pair above $K$, and each $S_K^{(i)}$ is an lc center. Thus the intersection of the $S_K^{(i)}$ are also lc centers. By minimality of $S$, we obtain that the $S_K^{(i)}$ are disjoint. So the $S_K^{(i)}$ are also the connected components of $S_K$. Since $K$ is Galois over $k(x)$, the Galois group $G=\Gal(K/k(x))$ acts on $S_K$. Assume that some $S_K^{(j)}$ is stable under the action of a non-trivial subgroup of $G$. Then by \autoref{lemma:Galois_descent_of_schemes}, $S_K^{(j)}$ would be defined over a proper sub-extension $K'$ of $k(x)\subset K$. Then every $S_K^{(i)}$ would be defined over $K'$, which is a contradiction with the previous paragraph. Thus $G$ acts freely on the set of $S_K^{(i)}$'s. The action is also transitive, since $S_K\to S$ is the geometric quotient by $G$.

By flat base-change, we have
		$$H^0(S_K,\sO_{S_K})=H^0(S,\sO_S)\otimes_{k(x)}K$$
and the $G$-action is given by the action on $K$. On the other hand, since $S_K$ is the disjoint union of the $S_K^{(i)}$ and $G$ permutes them freely, we also have
		$$H^0(S_K,\sO_{S_K})=\prod_{\sigma\in G} K$$
where $G$ acts by permuting the factors. Taking $G$-invariants, we obtain
		$$H^0(S,\sO_S)=H^0(S,\sO_S)\otimes_{k(x)}K^G=\left(H^0(S,\sO_S)\otimes_{k(x)}K\right)^G=\left(\prod_{\sigma\in G}K\right)^G=K$$
as desired. 
\end{proof}

\bigskip
\textsc{Step 4: Galois group and crepant birational maps.}
It remains to show that every element of the Galois group $\Gal(Z_S/Z)$ is induced by a crepant birational self-map of $(S,\Delta_S)$. 

There is nothing to prove if $\dim Z=2$, as noticed at the beginning of the previous step.

From now on assume that $\dim Z\leq 1$. We have proved in the previous step that $K:=k(Z_S)$ is Galois over $k(Z)$. Let $W$ be a component of $S\times_{k(Z)}K$ and pick $\sigma\in \Gal(K/k(Z))$. The proof of \autoref{claim:global_sections_is_Galois} shows that $W$ is defined over $K$, so it has a conjugate $W^\sigma\subset S\times_{k(Z)}K$. Fix a weak $\bP^1$-link between $W$ and $W^\sigma$ inside $Y\times_X K$: then the $\Gal(K/k(Z))$-orbit of this link descends to an element of $\Bir^c_{k(Z)}((S,\Delta_S)\times_{Z}k(Z))$, and in turn this crepant birational map induces $\sigma$ on $H^0(S\times_{k(Z)}K,\sO)=K$. 

This proves that Galois automorphisms of $k(Z)\subset k(Z_S)$ are induced by birational self-maps of $S$ which are generically $(K_S+\Delta_S)$-crepant over $Z$. We need to show that these maps are crepant, not only generically crepant. If $\dim Z=0$ there is nothing to show, so assume $\dim Z=1$. Then we make the following observation: $Z$ is contained in a component $D$ of $\Delta^{=1}$ and by adjunction any preimage of $Z$ in $D^n$ is a codimension one lc center of $(D^n,\Diff_{D^n}(\Delta-D))$. By the classification of lc surface singularities \cite[2.31]{Kollar_Singularities_of_the_minimal_model_program}, we see that $D$ is regular at the generic point of $Z$. Hence the strict transform of $Z$ must appear in $\lfloor \Diff_{D^n}(\Delta-D)\rfloor$, and by adjunction we get a natural boundary $\Gamma$ on $Z^n$. Moreover the equality $\dim S=\dim Z$ implies $S=Z_S$, so $f_S^n\colon S\to Z^n$ is a finite morphism. By the definition of $(S,\Delta_S)$ we must have
		\begin{equation}\label{eqn:source_boundary_in_finite_case}
		(f^n_S)^*(K_{Z^n}+\Gamma)=K_S+\Delta_S.
		\end{equation}
On the other hand, the fact that $S=Z_S$ shows that the birational self-maps of $S$ we found are just the Galois automorphisms of $Z_S$ over $Z^n$. Since $\Delta_S$ can be defined by \autoref{eqn:source_boundary_in_finite_case}, we see that Galois automorphisms are crepant. 

The proof is complete.
\end{proof}

%\begin{remark}
%There is an inaccuracy in the statement of \cite[4.45.5]{Kollar_Singularities_of_the_minimal_model_program}, where it is claimed that $\Bir^c_Z(S,\Delta_S)\to \Gal(Z_S/Z)$ is surjective. Koll\'{a}r's proof shows that $\Bir^c_{k(Z)}((S,\Delta_S)\times_Zk(Z))\to\Gal(Z_S/Z)$ is surjective, but there is no lifting statement in general. It might fail already in the case where $S\to Z$ is finite. For example, take $X=\bP^1_{x,y}$ over a field of characteristic different from $2$, let $\Delta=[1,1]+[2,1]$ and let $f\colon X\to \bP^1$ be the finite morphism defined by the global section $x^2\in\sO_X(2)$. Moreover, let $\varphi\in \Aut(X)$ be the automorphism given by $[x,y]\mapsto [-x,y]$. Then $f=f\circ\varphi$, and thus $\varphi\in\Aut_{k(\bP^1)}(X_{k(\bP^1)},\Delta_{k(\bP^1)}=0)$. Moreover $K_X+\Delta\sim 0$. But on the other hand $\varphi$ is not an automorphism of the log pair $(X,\Delta)$.
%\end{remark}

\begin{corollary}[Adjunction]\label{cor:adjunction_for_reduced_boundary}
Let $(X,D+\Delta)$ be a quasi-projective lc threefold pair above a perfect field of characteristic $>5$, where $D$ is a reduced divisor with normalization $n\colon D^n\to D$. Let $Z\subset D$ be an lc center of $(X,D+\Delta)$, and $Z_D\subset D^n$ be an irreducible variety such that $n(Z_D)=Z$. Then:
	\begin{enumerate}
		\item $Z_D$ is an lc center of $(D^n,\Diff_{D^n}\Delta)$,
		\item there is a commutative diagram
				$$\begin{tikzcd}
				\src(Z_D, D^n,\Diff_{D^n}^*\Delta) \arrow[rr, dashed, "\emph{cbir}"] \arrow[d] && \src(Z,X,D+\Delta)\arrow[d] \\
				D^n \arrow[rr, "n"] && D
				\end{tikzcd}$$
		\item There is an isomorphism $\spr(Z_D,D^n,\Diff_{D^n}^*\Delta)\cong \spr(Z,X,D+\Delta)$.
	\end{enumerate}
\end{corollary}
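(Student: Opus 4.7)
The plan is to fix a single crepant $\bQ$-factorial dlt blow-up $f\colon (Y,\Delta_Y)\to (X,D+\Delta)$ (available by \autoref{fact:crepant_dlt_blow_up_ind_dim_2_3}) and exploit the divisorial adjunction to the strict transform $D_Y$ of $D$. By \autoref{proposition: lc centers on dlt 3-folds}, $D_Y$ is normal, and \autoref{cor:Higher_Poincaré_residues} endows it with a dlt structure $(D_Y,\Delta_{D_Y})$ where $\Delta_{D_Y}:=\Diff^*_{D_Y}\Delta_Y$. Normality factors the natural map as $D_Y\overset{n_Y}{\longrightarrow}D^n\overset{n}{\longrightarrow}D$, and combining the crepancy of $f$ with the standard adjunction along the reduced boundary yields $K_{D_Y}+\Delta_{D_Y}=n_Y^*(K_{D^n}+\Diff_{D^n}\Delta)$; thus $(D_Y,\Delta_{D_Y})\to (D^n,\Diff_{D^n}\Delta)$ is a crepant dlt model.

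Statement (1) then follows from \autoref{cor:adjunction_for_lc_centers} applied to the lc center $D_Y$ of $(Y,\Delta_Y)$: the Stein factorization of $f|_{D_Y}$ over $X$ is $D_Y\to D^n\to D$, and every irreducible component of $n^{-1}(Z)\subset D^n$---in particular $Z_D$---is the image of an lc center of $(D_Y,\Delta_{D_Y})$, hence an lc center of $(D^n,\Diff_{D^n}\Delta)$ by crepancy of $n_Y$.

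The heart of the proof, for (2) and (3), is to produce a single lc center $S$ that simultaneously computes both sources and both springs. Starting from an lc center $T\subset D_Y$ of $(D_Y,\Delta_{D_Y})$ with $n_Y(T)=Z_D$ (provided by (1)), I apply \autoref{claim:linked_lc_centers_above_a_fixed_point} with $W=T$ and $x$ the generic point of $Z$ to extract a minimal lc center $S\subset T$ of $(Y,\Delta_Y)$ above $Z$. Since $f(S)\supseteq Z$ forces $n(n_Y(S))=Z$ while $n_Y(S)\subseteq n_Y(T)=Z_D$, a dimension argument under the finite morphism $n|_{Z_D}\colon Z_D\to Z$ gives $n_Y(S)=Z_D$. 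Using that the lc centers of $(D_Y,\Delta_{D_Y})$ are exactly the lc centers of $(Y,\Delta_Y)$ contained in $D_Y$---both being intersection components of components of $\Delta_Y^{=1}$ by \autoref{proposition: lc centers on dlt 3-folds}---a minimality transfer shows that $S$ is also minimal as an lc center of $(D_Y,\Delta_{D_Y})$ above $Z_D$: any strictly smaller such would remain an lc center of $(Y,\Delta_Y)$ above $Z$, contradicting the minimality of $S$ in $Y$. Transitivity of adjunction (\autoref{cor:Higher_Poincaré_residues}(3)) then yields $\Diff^*_S\Delta_Y=\Diff^*_S\Delta_{D_Y}$, so the pair $(S,\Delta_S)$ is the same from both viewpoints.

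With $S$ in hand, \autoref{theorem:spring_and_sources} applied to $(Y,\Delta_Y)\to (X,D+\Delta)$ realizes $(S,\Delta_S)$ as a representative of $\src(Z,X,D+\Delta)$ and the intermediate term $Z_S$ in the Stein factorization $S\to Z_S\to Z^n$ as $\spr(Z,X,D+\Delta)$. To obtain the analogous conclusion on the normalization side, I pass to a crepant $\bQ$-factorial dlt blow-up $(\tilde{D},\tilde{\Delta})\to (D_Y,\Delta_{D_Y})$ (available by \autoref{fact:birational_geometry_of_surfaces}) and use \autoref{remark:minimal_centers_are_linked_in_smaller_dimensions} to show that a minimal lc center of $(\tilde{D},\tilde{\Delta})$ above $Z_D$ is weakly $\bP^1$-linked to $S$, preserving both the crepant birational class of $(S,\Delta_S)$ and the Stein factorization. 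It follows that $(S,\Delta_S)$ and the Stein factorization of $S\to Z_D$ also represent $\src(Z_D,D^n,\Diff_{D^n}\Delta)$ and $\spr(Z_D,D^n,\Diff_{D^n}\Delta)$. Uniqueness of the Stein factorization applied to the composition $S\to Z_D\hookrightarrow Z^n$ identifies the two springs, giving (3); the identity on $(S,\Delta_S)$ furnishes the crepant birational map of (2), which commutes with $n$ because $S\to D$ factors through $S\to D^n$. The main difficulty is this last paragraph---reconciling the source/spring formalism of \autoref{theorem:spring_and_sources}, formulated via $\bQ$-factorial dlt blow-ups, with the non-$\bQ$-factorial crepant dlt model $(D_Y,\Delta_{D_Y})$ through weak $\bP^1$-links at the surface level.
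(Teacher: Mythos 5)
Your proof is correct and follows essentially the same route as the paper: both pass to a crepant $\bQ$-factorial dlt blow-up of $(X,D+\Delta)$, use the strict transform $D_Y$ as a crepant dlt model of $(D^n,\Diff_{D^n}\Delta)$, and identify a single minimal lc center $S$ that represents the source on both sides (your version is in fact more careful than the paper's about the comparison with a genuine dlt blow-up of the surface pair and about the minimality transfer). The only step you skip is the preliminary reduction, by enlarging $\Delta$, to the case where $D$ is irreducible, which is needed for $D_Y$ to be a single lc center carrying the $\Diff^*$ structure you use.
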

\begin{proof}
By enlarging $\Delta$, we may assume that $D$ is irreducible. Choose a $\bQ$-factorial dlt crepant blow-up $(Y,\Delta_Y)$ of $(X,D+\Delta)$ and let $D_Y$ be the strict transform of $D$. By \autoref{proposition: lc centers on dlt 3-folds} the pair $(D_Y,\Diff^*_{D_Y}\Delta_Y)$ is dlt. In particular it is normal, so the Stein factorization of $D_Y\to D$ is the normalization $D^n$. So by construction $(D_Y,\Diff_{D_Y}^*\Delta_Y)\to (D^n,\Diff_{D^n}\Delta)$ is a crepant dlt blow-up. By \autoref{cor:adjunction_for_lc_centers} there is a lc center $W\subset D_Y$ of $(Y,\Delta_Y)$ that dominates $Z_D\subset D^n$. By \autoref{proposition: lc centers on dlt 3-folds} $W$ is also an lc center of $(D_Y,\Diff_{D_Y}^*\Delta_Y)$, which proves the first item. Moreover $W$ is a representative for both $\src(Z_D, D^n,\Diff_{D^n}\Delta)$ and $\src(Z,Y,\Delta_Y)$, and the second item follows by uniqueness of the source up to crepant birational map. The third item is a consequence of the second item.
\end{proof}

\begin{remark}\label{remark:sources_for_general_lc_centers}
The same method gives sources, springs, crepant log structure, adjunction and birational invariance of arbitrary lc centers of $(X,\Delta)$. However:
	\begin{enumerate}
		\item The Galois property is problematic: if $Z$ is $1$-dimensional and not contained in any component of $\Delta^{=1}$, then $\sO_{X,Z}$ might by an elliptic or a cusp singularity. Since the residue field $k(Z)$ is not perfect, the degrees of the exceptional curves on the resolution can in theory be arbitrarily large \cite[3.27.3]{Kollar_Singularities_of_the_minimal_model_program}, and inseparable field extensions might appear. It is interesting whether a variant of \autoref{claim:global_sections_is_Galois} still holds (for example: is the maximal separable intermediate extension of $k(X)\subset H^0(S,\sO_S)$ Galois over $k(x)$?), but we shall not study this question here.
		\item The Poincaré residue map can be defined, but it is not clear how to get rid of the $\Bir^c_Z(S,\Delta_S)$-ambiguity. Indeed, in constrast to \autoref{claim:dimension_of_sources}, one more case can show up, namely $\dim S=2$ and $\dim Z=1$, and I was not able to show finiteness of representations with sufficient generality in this case. 
		
		%\textbf{However:} In this case $S$ is unique: if not there would be another $S'$ and a weak standard $\bP^1$-link between the two, so necessarily $Y$ would be the total space mapping to $Z$, contradiction. Probably the same happens when taking further resolutions: there is always one possible $S$. Thus no ambiguity (by default).
	\end{enumerate}
\end{remark}

\subsection{Gluing theorems for threefolds in characteristic $>5$}\label{section: proof of main result}

We now prove the gluing theorems for lc threefolds. Our proofs follow closely those of Koll\'{a}r \cite[\S 5]{Kollar_Singularities_of_the_minimal_model_program}.

\begin{lemma}\label{lemma:isomorphism_preserves_sources}
Let $(X,\Delta)$ and $(X',\Delta')$ be quasi-projective lc threefold pairs over a perfect field of characteristic $>5$, and $\phi\colon (X,\Delta)\cong (X',\Delta')$ a log isomorphism. Let $Z\subset X$ be an lc center of $(X,\Delta)$. Then:
	\begin{enumerate}
		\item $Z':=\phi(Z)$ is an lc center of $(X',\Delta')$, and
		\item we have a commutative square
				$$\begin{tikzcd}
				\src(Z,X,\Delta) \arrow[d]\arrow[r, dotted, "\emph{cbir}"] & \src(Z',X',\Delta')\arrow[d] \\
				Z\arrow[r, "\phi"] & Z'
				\end{tikzcd}$$
	\end{enumerate}
\end{lemma}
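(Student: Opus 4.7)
The first claim, that $Z':=\phi(Z)$ is an lc center of $(X',\Delta')$, is immediate: a log isomorphism preserves the discrepancy of every divisorial valuation and satisfies $\phi_*\Delta=\Delta'$, so if $E$ is any divisor over $X$ with $a(E;X,\Delta)=-1$ having image $Z$, then $E$ (pushed forward to $X'$ via $\phi$) also has discrepancy $-1$ with respect to $(X',\Delta')$ and image $\phi(Z)=Z'$.

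For the compatibility of sources I would exploit the birational invariance statement of \autoref{theorem:spring_and_sources}. Start with a $\bQ$-factorial crepant dlt blow-up $f\colon (Y,\Delta_Y)\to (X,\Delta)$ as in \autoref{notation:crepant_dlt_blow_up_of_threefold}. The composition $f':=\phi\circ f\colon (Y,\Delta_Y)\to (X',\Delta')$ is then tautologically a $\bQ$-factorial crepant dlt blow-up of $(X',\Delta')$ as well, for the same dlt pair $(Y,\Delta_Y)$. Choose a lc center $S\subset Y$ of $(Y,\Delta_Y)$ that is minimal for dominating $Z$ via $f$; since $\phi$ is an isomorphism, we have $f'(S)=\phi(f(S))=\phi(Z)=Z'$, and minimality of $S$ over $Z'$ via $f'$ is inherited from minimality over $Z$ via $f$. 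By the definition of sources and springs in \autoref{theorem:spring_and_sources}, the pair $(S,\Diff^*_S\Delta_Y)$ represents $\src(Z,X,\Delta)$ when viewed over $Z$ via $f$, and simultaneously represents $\src(Z',X',\Delta')$ when viewed over $Z'$ via $f'$.

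The Stein factorizations $S\to Z_S\to Z^n$ of $f|_S$ and $S\to Z'_S\to (Z')^n$ of $f'|_S$ are related by post-composition with the isomorphism $\phi|_Z\colon Z\cong Z'$ and its canonical lift to normalizations, which yields the commutative square with the top horizontal crepant birational map being the identity on $S$. There is no substantive obstacle: the lemma simply records the formal principle that the source is intrinsic to the germ of the pair $(X,\Delta)$ along $Z$, so it must be functorial under log isomorphisms. The one point worth articulating, though immediate, is the preservation of minimality of $S$ when one reinterprets the same dlt blow-up through $\phi$.
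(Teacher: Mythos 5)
Your proposal is correct and follows essentially the same route as the paper: the paper likewise disposes of (1) by noting that a log isomorphism preserves lc centers, and deduces (2) from the birational invariance of sources in \autoref{theorem:spring_and_sources}, of which your transport of a crepant dlt blow-up along $\phi$ (with the same minimal stratum $S$ representing both sources) is just the explicit instantiation. Your extra remarks on preservation of minimality and on the Stein factorizations are accurate but not needed beyond what the invariance statement already provides.
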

\begin{proof}
Since $\phi$ is a log isomorphism, $Z'$ is an lc center of $(X',\Delta')$. The second point follows from the birational invariance property proved in \autoref{theorem:spring_and_sources}. The source is defined up to crepant birational map over the lc center, so there is no ambiguity about the commutativity of the diagram.
\end{proof}

First we show that the geometric quotient exists.

\begin{proposition}\label{prop:quotient_of_lc_pairs_dim_3}
Let $(X,D+\Delta)$ be a projective lc threefold pair over a perfect field $k$ of characteristic $>5$. Let $\tau\colon (D^n, \Diff_{D^n}\Delta)\cong (D^n, \Diff_{D^n}\Delta)$ be a generically fixed point free involution. Assume that $K_X+D+\Delta$ is ample. Then the induced equivalence relation $R(\tau)\rightrightarrows X$ is finite, the geometric quotient $X/R(\tau)$ exists, and $X/R$ is proper over $k$.
\end{proposition}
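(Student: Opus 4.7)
The plan is to reduce to showing that $R(\tau)$ is a finite equivalence relation on $X$; once this is done, the geometric quotient $X \to X/R(\tau)$ exists by \cite[Theorem 6]{Kollar_Quotients_by_finite_equivalence_relations}, and $X/R(\tau)$ is proper over $k$ by \cite[09MQ, 03GN]{Stacks_Project}. I would follow the general outline of \cite[5.33]{Kollar_Singularities_of_the_minimal_model_program}, using the theory of sources and springs developed in \autoref{theorem:spring_and_sources} as the substitute for the characteristic zero inputs.

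Outside of $D$ the relation is trivial. On $D$, since $\tau$ is a log involution of $(D^n, \Diff_{D^n}\Delta)$ it permutes the lc stratification, and by \autoref{cor:adjunction_for_reduced_boundary} the lc centers of $(D^n, \Diff_{D^n}\Delta)$ correspond to preimages in $D^n$ of lc centers of $(X, D+\Delta)$ contained in $D$. Since there are only finitely many such centers, the action of the $\tau$-generated equivalence on this set is automatically finite. I would then induct on the dimension of the minimal lc center $Z \subset D$ containing a point, reducing the finiteness of $R(\tau)$ to the following statement: for each such $Z$, every $R(\tau)$-equivalence class of a closed point $z \in Z$ (taken inside $X$, but not already covered by a deeper lc stratum) is finite.

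To control these classes I would use the theory of sources. By \autoref{theorem:spring_and_sources} and \autoref{cor:adjunction_for_reduced_boundary}, all preimages in $D^n$ of $Z$ share the same source $(S, \Delta_S)=\src(Z, X, D+\Delta)$ and spring $Z_S=\spr(Z, X, D+\Delta)$, and $\tau$ induces, via \autoref{lemma:isomorphism_preserves_sources}, a collection of compatible crepant birational $Z$-self-maps of $(S, \Delta_S)$ that together generate a subgroup $G_\tau \subseteq \Bir^c_Z(S, \Delta_S)$. The equivalence class of $z$ is then an image of a $G_\tau$-orbit on the fiber of $S \to Z$ over $z$, quotiented by the (finite) Galois action $\Gal(Z_S/Z)$; finiteness of the class thus reduces to finiteness of the action of $G_\tau$ on global sections of $\omega_S^{[m]}(m\Delta_S)$.

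The main obstacle, and where ampleness of $K_X + D + \Delta$ enters, is the finiteness of this group action. By the Poincaré residue isomorphism of \autoref{theorem:spring_and_sources}.5 and the descent to the spring, $\omega_S^{[m]}(m\Delta_S)$ is identified with the pullback of an ample line bundle from $Z_S$. By \autoref{claim:dimension_of_sources}, $\dim S \leq 2$, and I would split into three cases according to $\dim Z$. If $\dim Z = 2$, then $S \to Z^n$ is birational, the generic class has exactly two elements, and deeper points are treated by the induction hypothesis. If $\dim Z = 1$, then $\dim S = 1$ and $S \to Z_S$ is birational, so $G_\tau \subseteq \Aut_Z(Z_S) = \Gal(Z_S/Z)$ is finite. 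If $\dim Z = 0$, then over the perfect residue field $k(Z)$ the source $S$ is either a point or a proper dlt curve $(C, E)$ with $K_C + E \sim_{\bQ} 0$, and finiteness of the pluricanonical action follows from \autoref{proposition:pluricanonical_representations_for_curves_I}. Assembling the cases yields finiteness of $R(\tau)$ and completes the proof.
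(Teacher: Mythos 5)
Your overall architecture matches the paper's: reduce to finiteness of $R(\tau)$ on $\Supp D$, stratify by lc centers, pull the relation back to the springs, and control the resulting group of self-maps of the source via pluricanonical representations, with ampleness entering at that last step. However, there is a genuine gap in how you identify that group. You assert that the lifts of $\tau$ generate a subgroup $G_\tau\subseteq \Bir^c_Z(S,\Delta_S)$, i.e.\ crepant self-maps \emph{over} $Z$, and you use this decisively in the case $\dim Z=1$, where you conclude $G_\tau\subseteq\Aut_Z(Z_S)=\Gal(Z_S/Z)$ and hence finiteness for free. This containment is false in general. A single lc center $Z_j\subset X$ may be dominated by several subvarieties $Z_{jh}\subset D^n$, and $\tau$ may send two of them, $Z_{jh}$ and $Z_{jh'}$ with $h\neq h'$, onto subvarieties dominating the \emph{same} center $Z_l$. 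Each individual lift $\tau_{jhl}\colon\spr(Z_j)\to\spr(Z_l)$ is compatible with the projections, but the composite $\tau_{jh'l}^{-1}\circ g\circ\tau_{jhl}$ (for $g$ in the Galois group of $\spr(Z_l)/Z_l$) covers the self-correspondence of $Z_j$ obtained by lifting to $Z_{jh}$, applying $\tau$, descending to $Z_l$, lifting through the \emph{other} branch, and applying $\tau^{-1}$ --- which is in general not the identity on $Z_j$. So the stabilizer of $\spr(Z_j)$ lands only in $\im\bigl[\Bir^c_k\src(Z_j)\to\Aut_k\spr(Z_j)\bigr]$, not in the finite group $\Aut_{Z_j}\spr(Z_j)$. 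If your containment were correct, the ampleness hypothesis would be superfluous (the spring is finite over $Z$), which should have been a warning sign.

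This is precisely where the paper has to work: for $\dim Z_j=1$ the source is a curve pair whose log canonical divisor is the pullback of the ample $K_X+D+\Delta$ under a finite morphism, hence of log general type, and one invokes \autoref{proposition:pluricanonical_representations_for_curves_II} to get finiteness of $\Aut_k$ of that pair; for $\dim Z_j=2$ one needs \autoref{proposition:pluricanonical_representation_for_surfaces} for the same reason (even there, $\Bir^c_k$ of the source over $k$ is not a priori finite, only its image on pluricanonical sections is, and only because $K+\Delta$ is big). Your $\dim Z=0$ case survives, since there ``over $Z$'' and ``over the perfect residue field'' coincide up to a finite ambiguity and \autoref{proposition:pluricanonical_representations_for_curves_I} applies. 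To repair the proof you should replace $G_\tau\subseteq\Bir^c_Z(S,\Delta_S)$ by the correct statement that the stabilizer is generated by the conjugates $\tau_{jh'l}^{-1}G_l\tau_{jhl}$ inside $\Aut_k\spr(Z_j)$, and then supply the finiteness of the $k$-linear pluricanonical representation in each dimension, which is exactly where the ampleness of $K_X+D+\Delta$ is consumed.
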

\begin{proof}
By \cite[Theorem 6]{Kollar_Quotients_by_finite_equivalence_relations}, the geometric quotient exists as soon as $R:=R(\tau)$ is finite. Notice that, by construction, a point of $\Supp(D)$ and a point of $X\setminus \Supp(D)$ cannot be in the same equivalence class. Moreover, $R$ restricts to the identity relation away from $\Supp(D)$. Thus we only need to prove that $R$ is finite on $\Supp(D)$.

For every lc center $Z$ of $(X,D+\Delta)$ contained in $D$, let $Z^0:= Z\setminus \text{(lower dimensional lc centers)}$ and let $\spr^0(Z,X,D+\Delta)$ be the preimage of $Z^0$ through the finite morphism $\spr(Z,X,D+\Delta)\to Z$. Set
		$$p\colon \spr^0(X,D+\Delta,\subseteq D):=\bigsqcup_{Z\subseteq D}\spr^0(Z,X,D+\Delta)\longrightarrow X,$$
	it is a quasi-finite morphism mapping surjectively onto $D$.
	
	The equivalence relation $R\rightrightarrows D$ pullbacks to an equivalence relation $(p\times p)^*R\rightrightarrows\spr^0(X,D+\Delta,\subseteq D)$ that commutes with the projections to $D$, and since $p$ is surjective onto $D$ it is sufficient to show that $(p\times p)^*R$ is finite. We describe how the generators of $R$ pullbacks to $\spr^0(X,D+\Delta,\subseteq D)$. To make book-keeping easier, we let $\{Z_j\}_j$ be the set of lc centers of $(X,D+\Delta)$ contained in $D$.
	
	By the Galois property of \autoref{theorem:spring_and_sources}, over the normal locus of $Z_j^0$ the morphism $\spr^0(Z_j,X,D+\Delta)\to Z_j^0$ is the quotient by the Galois group $G_j:=\Gal(\spr(Z_j,X,D+\Delta)/Z_j)$. Thus the preimage of the diagonal $Z_j^0\times Z_j^0$ under $p$ is a union of the graphs of the $G_j$-action, together with other components that do not dominate $Z_j^0$ (their images are contained in the locus where $Z_j^0$ is not normal).
	
	Next we understand the pullback of $\tau$.
	
	\begin{claim}\label{claim:involution_lifts_to_crepant_map_of_sources}
	Let $Z_{jh}\subset D^n$ be a subvariety dominating $Z_j\subset X$. Let $Z_l:=n(\tau(Z_{jh}))$. Then $Z_{jh}$ is a lc center of $(D^n,\Diff_{D^n}\Delta)$. Moreover
	\begin{enumerate}
		\item $\tau$ induces a crepant birational map
				$$\tilde{\tau}_{jhl}\colon\src(Z_j,X,D+\Delta)\overset{\emph{cbir}}{\sim} \src(Z_l,X,D+\Delta)$$
			determined up to the left and right action of $\Bir^c_{Z_j}\src(Z_j,X,D+\Delta)$ and $\Bir^c_{Z_l}\src(Z_l,X,D+\Delta)$;
		\item $\tilde{\tau}_{jhl}$ induces an isomorphism
				$$\tau_{jhl}\colon \spr^0(Z_j,X,D+\Delta)\cong \spr^0(Z_l,X,D+\Delta)$$
			determined up to left and right multiplication by $G_j$ and $G_l$.
	\end{enumerate}		
	\end{claim}
	\begin{proof}
	\renewcommand{\qedsymbol}{$\lozenge$}
	By \autoref{cor:adjunction_for_reduced_boundary}, $Z_{jh}$ is an lc center of $(D^n,\Diff_{D^n}\Delta)$ and
			\begin{equation}\label{eqn:adjunction_for_sources}
			\src(Z_{jh},D^n,\Diff_{D^n}\Delta)\overset{\text{cbir}}{\sim}\src(Z_j,X,D+\Delta).
			\end{equation}
			By \autoref{lemma:isomorphism_preserves_sources} the automorphism $\tau$ of $(D^n,\Diff_{D^n}\Delta)$ induces a crepant birational map between $\src(Z_{jh},D^n,\Diff_{D^n}\Delta)$ and $\src(\tau(Z_{jh}),D^n,\Diff_{D^n}\Delta)$. Then by \autoref{eqn:adjunction_for_sources} we obtain crepant birational maps
			$$\tilde{\tau}_{jkl}\colon \src(Z_j,X,D+\Delta)\overset{\text{cbir}}{\sim} \src(Z_l,X,D+\Delta).$$
		Since $\tilde{\tau}_{jhl}$ preserves the non-klt locus and since $\spr^0(Z_j,X,D+\Delta)$ is precisely the image of the klt locus of $\src(Z_j,X,D+\Delta)$, the $\tilde{\tau}_{jhl}$ descend to an isomorphism
		$$\tau_{jhl}\colon \spr^0(Z_j,X,D+\Delta)\cong \spr^0(Z_l,X,D+\Delta)$$
	as claimed. However $\tau_{jhl}$ is not uniquely defined, since $\tilde{\tau}_{jhl}$ is determined up to left and right multiplication by $\Bir^c_{Z_j}\src(Z_j,X,D+\Delta)$ and $\Bir^c_{Z_l}\src(Z_l,X,D+\Delta)$. By \autoref{theorem:spring_and_sources} we obtain that $\tau_{jhl}$ is determined up to left and right multiplication by $G_j$ and $G_l$.
	\end{proof}

	Since $n(n^{-1}Z_j)=\bigcup_i D_i\cap Z_j$ and each $D_i\cap Z_j$ is a union of lc centers by \autoref{corollary:intersection_of_lc_centers}, we see that each component of $n^{-1}Z_j$ dominates a lc center of $(X,D+\Delta)$. So, thanks to \autoref{claim:involution_lifts_to_crepant_map_of_sources}, we have found all the generators of $(p\times p)^*R$.
	
	To show that $(p\times p)^*R$ is finite, by \cite[9.55]{Kollar_Singularities_of_the_minimal_model_program} it is sufficient to show that it is finite over the generic point of every $Z_j^0$. Therefore we may assume that $(p\times p)^*R$ is the groupoid generated by the $G_j$ and the $\tau_{jhl}$, and the stabilizer of $\spr^0(Z_j,X.D+\Delta)$ is generated by the sets $\{\tau^{-1}_{jh'l}G_l\tau_{jhl}\}_{h,h',l}$.
	
	The Galois property of \autoref{theorem:spring_and_sources} shows that $G_j$ is a subgroup of
			$$\Aut^s_k\spr(Z_j,X,D+\Delta):= \im\left[ \Bir^c_{k}\src(Z_j,X,D+\Delta)\to \Aut_k\spr(Z_j,X,D+\Delta)\right].$$
	By \autoref{claim:involution_lifts_to_crepant_map_of_sources}, the $\tau^{-1}_{jh'l}G_l\tau_{jhl}$ are also subgroups of $\Aut^s_k\spr(Z_j,X,D+\Delta)$. To complete the proof we will show that these groups of $k$-automorphisms are finite. This is where the ampleness assumption on $K_X+D+\Delta$ will be used.	
		\begin{enumerate}
			\item If $\dim Z_j=2$ then $\src(Z_j,X,D+\Delta)\to Z_j$ is birational and therefore $\Aut^s_k(Z_j,X,D+\Delta)$ is finite by \autoref{proposition:pluricanonical_representation_for_surfaces}.
			\item If $\dim Z_j=1$ then $\dim \src(Z_j,X,D+\Delta)=1$ by \autoref{claim:linked_lc_centers_above_a_fixed_point}. Hence $\src(Z_j,X,D+\Delta)$ is a $1$-dimensional pair of general type, since its log canonical divisor is the pullback of the ample divisor $K_X+D+\Delta$ through the composition of finite morphisms
					$$\src(Z_j,X,D+\Delta)\to Z_j\hookrightarrow X.$$
			Thus $\Bir^c\src(Z_j,X,D+\Delta)$ is finite by \autoref{proposition:pluricanonical_representations_for_curves_II}.
			\item If $\dim Z_j=0=\dim \src(Z_j,X,D+\Delta)$ then finiteness is clear.
			\item The only case left, according to \autoref{claim:dimension_of_sources}, is $\dim Z_j=0$ and $\dim \src(Z_j,X,D+\Delta)=1$. Then $\src(Z_j,X,D+\Delta)$ is a Calabi-Yau curve and finiteness follows from \autoref{proposition:pluricanonical_representations_for_curves_I}.
		\end{enumerate}
Thus $X/R$ exists and is a $k$-scheme. Since $X\to X/R$ is finite, $X/R$ is proper over $k$ \cite[09MQ,03GN]{Stacks_Project}. This completes the proof.
\end{proof}

\begin{remark}
With the notations of the proof of \autoref{prop:quotient_of_lc_pairs_dim_3}, let us observe that the stabiliser of $\spr(Z_j^0,X,D+\Delta)$ is not contained in the smaller group
		\begin{equation*}\label{eqn:source_automorphisms_over_the_base}
		\Aut_{Z_j}\spr(Z_j,X,D+\Delta):=\im\left[ \Bir^c_{Z_j}\src(Z_j,X,D+\Delta)\to\Aut_{Z_j}\spr(Z_j,X,D+\Delta)\right]
		\end{equation*}
	even though $G_j$ belongs to it and each $\tau_{jhl}$ commutes with the projections to $Z_j$ and $Z_l$. Indeed the stabiliser is generated by the groups $\tau_{jh'l}^{-1}G_l\tau_{jhl}$ where it may happen that $h\neq h'$. In this case the corresponding automorphisms of $\spr(Z_j,X,D+\Delta)$ may not commute with the projection to $Z_j$. This happens if $Z_j$ is dominated by several lc centers of $(D^n,\Diff_{D^n}\Delta)$ whose images under $\tau$ dominate the same lc center $Z_l\subset X$.
	
	Notice that $\Aut_{Z_j}\spr(Z_j,X,D+\Delta)$ is a finite group since $\spr(Z_j,X,D+\Delta)\to Z_j$ is a finite morphism. On the other hand, it is not clear that $\Aut^s_k\spr(Z_j,X,D+\Delta)$ should be finite, and this is where the ampleness assumption comes into the picture.
\end{remark}

%In the proof we used the following result:
%\begin{lemma}\label{lemma:Fibers_of_Galois_morphism}
%Let $\varphi\colon X\to Y$ be a dominant finite morphism of integral schemes such that the field extension $K(Y)\subset K(X)$ is Galois, with Galois group $G$. Assume that $X$ is normal. Then $G$ acts on $X$, and the fibers of $\varphi$ are unions of $G$-orbits.
%\end{lemma}
%\begin{proof}
%We may replace $Y$ with its normalization in $K(Y)$, and assume that $Y=\Spec A$ and $X=\Spec B$. Let $\sigma\in G$. Then $\sigma(B)$ is a subring of $K(B)$ that is integral over $\sigma(A)=A$. Thus $\sigma(B)=B$. This shows that $G$ acts on $B$ over $A$. Moreover $B^G=B\cap K(B)^G=B\cap K(A)$ is contained in the integral closure of $A$ in $K(A)$, so $B^G\subseteq A$. The inverse inclusion is clear, so $B^G=A$. It follows that $\Spec B\to \Spec A$ is the geometric quotient by $G$, and its fibers are the $G$-orbits.
%\end{proof}

Now we show that the log canonical divisor descends to the geometric quotient.

\begin{proposition}\label{prop:descent_of_canonical_bundle_in_dim_3}
Let $(X,D+\Delta)$ be a quasi-projective lc threefold pair over a perfect field $k$ of characteristic $>5$. Let $\tau\colon (D^n, \Diff_{D^n}\Delta)\cong (D^n, \Diff_{D^n}\Delta)$ be an involution. Assume that $R(\tau)\rightrightarrows X$ is finite, let $q\colon X\to Y:=X/R(\tau)$ be the geometric quotient, and let $\Delta_Y:=q_*\Delta$. Then $K_Y+\Delta_Y$ is $\bQ$-Cartier.
\end{proposition}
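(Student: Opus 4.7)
The approach adapts \cite[5.38]{Kollar_Singularities_of_the_minimal_model_program} to positive characteristic. Being $\bQ$-Cartier is local on $Y$, and $q\colon X\to Y$ is finite, so we may restrict to a small affine neighborhood of any fixed $y\in Y$ over which $m(K_X+D+\Delta)$ is Cartier for some sufficiently divisible even integer $m$. Set $L:=\omega_X^{[m]}(mD+m\Delta)$, a line bundle, and let $\pi\colon T\to X$ denote its total space. The strategy is to descend $L$ to a line bundle $L_Y$ on $Y$, which, by reflexivity, will force $L_Y\cong \omega_Y^{[m]}(m\Delta_Y)$ and thus give $\bQ$-Cartierness of $K_Y+\Delta_Y$.

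We lift $R(\tau)$ to a finite equivalence relation $R_T\rightrightarrows T$ that is compatible with the $\bG_m$-action on $T$. The key input is that, since $\tau$ is a log involution of $(D^n,\Diff_{D^n}\Delta)$, combining the Poincaré residue isomorphism $L|_{D^n}\cong \omega_{D^n}^{[m]}(m\Diff_{D^n}\Delta)$ with \autoref{corollary:descent_in_codim_2} provides a canonical $\tau$-equivariant structure on $L|_{D^n}$; after possibly replacing $m$ by a multiple we may moreover assume that $\tau^2$ acts as the identity. For each pair $(d_1,d_2)\in D^n\times_k D^n$ with $\tau(d_1)=d_2$, this gives a canonical identification of the corresponding stalks of $L$ on $X$, determining a unique lift to $T\times_k T$ of each pair in $X\times_k X$ represented by $(d_1,d_2)$. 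Off the conductor we set $R_T$ to be the diagonal.

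The first projection $R_T\to T$ is finite: above each $t\in T$, the $R_T$-class projects to the finite $R(\tau)$-class of $\pi(t)$, and for each point $x'$ in this class there is a unique element of $T_{x'}$ identified with $t$ by the prescribed isomorphism. Hence $R_T$ is a finite set-theoretic equivalence relation, and its geometric quotient $q_T\colon T\to T/R_T$ exists by \cite[Theorem 6]{Kollar_Quotients_by_finite_equivalence_relations}. The $\bG_m$-action on $T$ descends, presenting $T/R_T$ as the total space of a line bundle $L_Y$ on $Y$ with $q^*L_Y=L$. To match $L_Y$ with $\omega_Y^{[m]}(m\Delta_Y)$, observe that both are reflexive rank-one sheaves on the $S_2$ scheme $Y$ (see \cite[1.8]{Hartshorne_Generalized_divisors_and_biliaison}) and agree over codimension one points: on the open locus where $q$ is an isomorphism this is immediate, while at the generic points of $q(D)$ it follows from \autoref{lemma: basic properties of normalization of demi-normal scheme} together with \autoref{corollary:descent_in_codim_2}.

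The main technical point is to verify that the prescribed lifts assemble into a bona fide set-theoretic equivalence relation on $T$, compatible with the transitive closure built into $R(\tau)$. This ultimately reduces to checking that $\tau^2$ acts trivially on $L|_{D^n}$ and to analyzing the behavior of $R_T$ along the higher-codimension lc centers of $(X,D+\Delta)$, and it is precisely the parity of $m$ together with \autoref{corollary:descent_in_codim_2} that guarantees the construction goes through.
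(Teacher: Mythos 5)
Your overall strategy --- descending the total space of $L=\omega_X^{[m]}(mD+m\Delta)$ by lifting $R(\tau)$ to a finite equivalence relation on the total space and then identifying the quotient with the total space of $\omega_Y^{[m]}(m\Delta_Y)$ --- is the same as the paper's, and your treatment of the codimension-one and codimension-two behavior (parity of $m$, the $\tau$-equivariant residue, \autoref{corollary:descent_in_codim_2}) is fine. But there is a genuine gap in your finiteness argument for $R_T$. You assert that ``for each point $x'$ in this class there is a unique element of $T_{x'}$ identified with $t$ by the prescribed isomorphism.'' This is false in general: $R_T$ is the equivalence relation \emph{generated} by the lifted identifications, and two different chains of generators leading from $x$ back to the same point $x'$ can compose to two different isomorphisms $T_x\to T_{x'}$. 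The $R_T$-class of $t$ above a fixed $x'$ is an orbit of the group of all such compositions acting on the one-dimensional fiber, and finiteness of $R_T$ is exactly the statement that this monodromy group is finite. The parity of $m$ and \autoref{corollary:descent_in_codim_2} control only what happens at the generic points of $D$ and in codimension two; they say nothing about the loops through the deeper lc centers, which is where the relation could fail to be finite.

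The paper's proof spends essentially all of its effort on precisely this point. After localizing at a closed point $y$ (where the bundle already descends on $Y\setminus\{y\}$ by the codimension-two result) and reducing via \cite[9.55]{Kollar_Singularities_of_the_minimal_model_program} to the case where the preimages $x_i$ of $y$ are zero-dimensional lc centers, one must show that the group $\Stab(V_i)$ of all compositions $\tau_{ij_1l_1}\circ\dots\circ\tau_{j_nil_n}$ acting on the fiber $V_i$ of $L$ at $x_i$ is finite. This is done by identifying $V_i\cong H^0(S_i,\omega_{S_i}^{[m]}(m\Delta_i))$ via the Poincar\'e residue maps of \autoref{theorem:spring_and_sources}, showing via \autoref{claim:involution_lifts_to_crepant_map_of_sources} that each $\tau_{ijl}$ is induced by a crepant birational map of sources, so that $\Stab(V_i)$ lands in the image of $\Bir^c(S_i,\Delta_i)$ acting on that one-dimensional space, and finally invoking the dimension bound $\dim S_i\le 1$ (\autoref{claim:dimension_of_sources}) together with the pluricanonical representation result for Calabi--Yau curves (\autoref{proposition:pluricanonical_representations_for_curves_I}). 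None of this machinery appears in your proposal, and without it the construction of $T/R_T$ does not go through; your closing paragraph gestures at the issue but attributes its resolution to the wrong inputs.
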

\begin{proof}
By \autoref{corollary:descent_in_codim_2} the $\bQ$-divisor $K_Y+\Delta_Y$ is $\bQ$-Cartier in codimension $2$. Hence we may localize over a closed point of $Y$, and assume that $Y$ is local with closed point $y$, and that $K_Y+\Delta_Y$ is $\bQ$-Cartier on $Y^0:=Y\setminus\{y\}$. Since $q\colon X\to Y$ is an isomorphism away from $D\to q(D)$, we may assume that $y$ belongs to the nodal locus. Since an $\sO_Y$-module is locally free if and only if it is locally free after an \'{e}tale base-change, we may base-change along the strict henselization of $Y$, and assume that $k(y)$ is separably closed. Since $k$ is perfect, we may therefore assume that $k(y)$ is algebraically closed.

We are going to descend the total space of a multiple of $K_X+D+\Delta$ to $Y$, and use the theory of Seifert bundles (see \cite[\S 9.3]{Kollar_Singularities_of_the_minimal_model_program}) to conclude that it defines a line bundle on $Y$. 

Choose an even integer $m>0$ such that $m\Delta$ is a $\bZ$-divisor, and both $\omega_X^{[m]}(mD+m\Delta)$ and $\omega_{Y^0}^{[rm]}(rm\Delta_Y|_{Y^0})$ are invertible sheaves. We consider the $\bA^1$-bundle over $X$ given by
		$$X_L:=\Spec_X\sum_{r\geq 0}\omega_X^{[rm]}(rmD+rm\Delta)\overset{p}{\longrightarrow} X.$$
	Set $D_L:=p^{-1}D$ and $\Delta_L:=p^{-1}\Delta$. Clearly $(X_L,D_L+\Delta_L)$ is lc. Since $X_L\to X$ is an $\bA^1$-bundle, we see that the normalization $D^n_L\to D_L$ is equal to $D^n\times_DD_L$. By adjunction and functoriality of the relative spectrum, this gives the alternative description
			$$D^n_L=\Spec_{D^n}\sum_{r\geq 0} \omega_{D^n}^{[rm]}(rm\Diff_{D^n}\Delta).$$
	The fiber product description shows that $\Diff_{D^n_L}\Delta_L=p^*\Diff_{D^n}\Delta$ and that the lc centers of $(D^n_L,\Diff_{D^n_L}\Delta_L)$ are the preimages of the lc centers of $(D^n,\Diff_{D^n}\Delta)$. As $\Diff_{D^n}\Delta$ is $\tau$-invariant, the relative spectrum description shows that $\tau$ lifts to an involution $\tau_L$ of the pair $(D^n_L,\Diff_{D^n_L}\Delta_L)$.
	
	Now we wish to show that the induced equivalence relation $R_L:=R(\tau_L)\rightrightarrows X_L$ is finite, so that we can form the quotient $X_L/R_L$.
	
	Denote by $p'\colon Y^0_L\to Y^0$ the total space of the invertible sheaf $\omega_{Y^0}^{[rm]}(rm\Delta_Y|_{Y^0})$, $X^0:=q^{-1}Y^0$ and  $X^0_L:=(p\circ q)^{-1}Y^0$. Then we have a natural finite morphism of $\bA^1$-bundles $q'\colon X^0_L\to Y^0_L$ making the diagram
			$$\begin{tikzcd}
			X^0_L\arrow[r, "p"]\arrow[d, "q'"] & X^0\arrow[d, "q"] \\
			Y^0_L\arrow[r, "q'"]& Y^0
			\end{tikzcd}$$
	commutative. Since $X^0_L$ is the total space of the line bundle $\omega_{X^0}^{[m]}(mD|_{X^0}+m\Delta|_{X^0})$ which descends to $Y^{0}$, we have that $Y^0_L=X^0_L/R_L^0$ where $R^0_L$ is the restriction of $R_L$ to $X^0_L$ \cite[9.48]{Kollar_Singularities_of_the_minimal_model_program}. Therefore $R^0_L$ is finite, and we only need to prove the finiteness of $R_L$ over the complement of $X^0_L$.
	
	Let $x_1,\dots,x_s\in X$ be the preimages of $y$. Since $y$ belongs to the nodal locus of $Y$, we have $x_1,\dots,x_s\in D$. If none of the $x_i$ are lc centers of $(X,D+\Delta)$, then every lc center of $(X_L,D_L+\Delta_L)$ intersects $X^0_L$ and therefore $R_L$ is finite by \cite[9.55]{Kollar_Singularities_of_the_minimal_model_program}.
	
	Assume that one of the $x_i$ is an lc center. Then every $x_i$ is an lc center: for the $x_i$ form an equivalence class of $\tau$, by adjunction one of them corresponds to an lc center of $(D^n,\Diff_{D^n}\Delta)$ and $\tau$ permutes these lc centers.
	
	Since $k(y)$ is algebraically closed, we have $k(x_i)=k(y)$ for each $i$. The fiber of $p\colon X_L\to X$ above $x_i$ is the spectrum of the symmetric algebra of the $1$-dimensional $k(y)$-vector space
			$$V_i:=\omega_X^{[m]}(mD+m\Delta)\otimes_{\sO_X} k(x_i)$$
	and $X_L\setminus X^0_L=\bigcup_i\Spec(\Sym V_i)$. If $x'_i\in D^n$ is a preimage of $x_i$, then the data of $\tau(x_i')=x_j'$ defines an isomorphism $\tau_{ijl}\colon V_i\to V_j$ (the index $l$ accounts for the fact that $x_i$ might have several preimages in $D^n$). The collection of isomorphisms $\{\tau_{ijl}\colon V_i\to V_j\}$ generates a groupoid, and $R_L$ is finite if and only if each group $\Stab(V_i)\subset \Aut(V_i)$ of all possible compositions $\tau_{ij_1l_1}\circ\dots\circ\tau_{j_nil_n}\colon V_i\to V_i$, is finite.
	
	To show this property, consider the sources $(S_i,\Delta_i):=\src(x_i,X,D+\Delta)$. These are proper Calabi-Yau varieties over $k(y)$. If we pullback $\omega_X^{[m]}(mD+m\Delta)$ to a crepant dlt blow-up, restrict it to (a model of) $S_i$ and take global sections, we obtain $V_i$. Thus the Poincaré residue maps constructed in \autoref{theorem:spring_and_sources} give canonical isomorphisms
			$$V_i\cong H^0(S_i,\omega_{S_i}^{[m]}(m\Delta_i)).$$
	Moreover \autoref{claim:involution_lifts_to_crepant_map_of_sources} shows that each isomorphism $\tau_{ijl}\colon V_i\to V_j$ is induced by a crepant birational map $\phi_{ijl}\colon (S_j,\Delta_j)\dashrightarrow (S_i,\Delta_i)$. Hence we conclude that
		\begin{equation}\label{eqn:sources_and_descent_of_canonical_bundle}
		\Stab(V_i)\subseteq \im\left[ \Bir^c(S_i,\Delta_i)\to \Aut_{k(y)}H^0(S_i,\omega_{S_i}^{[m]}(m\Delta_i))\right].
		\end{equation}
	Now observe that the $S_i$ are at most $1$-dimensional by \autoref{claim:dimension_of_sources}, and therefore by \autoref{proposition:pluricanonical_representations_for_curves_I} the right-hand side in \autoref{eqn:sources_and_descent_of_canonical_bundle} is finite. 
	
	It follows that $R_L$ is finite, and thus the quotient $X_L/R_L$ exists. By \cite[9.48]{Kollar_Singularities_of_the_minimal_model_program} the complement of the zero section is a Seifert bundle over $Y$, and by \cite[9.53]{Kollar_Singularities_of_the_minimal_model_program} this implies that its define a $\bQ$-line bundle on $Y$. By construction this $\bQ$-line bundle is equal to $\omega_{Y^0}^{[m]}(m\Delta_Y|_{Y^0})$ over $Y^0$. Hence some reflexive power of $\omega_Y^{[m]}(m\Delta_Y)$ is invertible, as was to be shown.
\end{proof}

\begin{theorem}\label{theorem:gluing_for_threefolds}
Let $k$ be a perfect field of characteristic $> 5$. Then normalization gives a one-to-one correspondence
	\begin{equation*}
		(\Char k > 5) \quad
		\begin{pmatrix}
		\text{Proper slc threefold pairs}\\
		(X,\Delta) \text{ such that}\\
		K_X+\Delta\text{ is ample}
		\end{pmatrix}
		\overset{1:1}{\longrightarrow}
		\begin{pmatrix}
		\text{Proper lc threefold pairs } (\bar{X},\bar{D}+\bar{\Delta})  \\
		\text{plus a generically fixed point free}\\
		\text{involution }\tau\text{ of }(\bar{D}^n,\Diff_{\bar{D}^n}\bar{\Delta}) \\
		\text{such that }K_{\bar{X}}+\bar{D}+\bar{\Delta} \text{ is ample.}
		\end{pmatrix}
		\end{equation*}
\end{theorem}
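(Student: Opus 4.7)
The plan is to assemble the three main technical results already established in the preceding sections: the gluing theory for sources (\autoref{theorem:spring_and_sources}), the finiteness of the equivalence relation (\autoref{prop:quotient_of_lc_pairs_dim_3}), and the descent of the log canonical divisor (\autoref{prop:descent_of_canonical_bundle_in_dim_3}). Given a triple $(\bar{X},\bar{D}+\bar{\Delta},\tau)$ on the right-hand side, I will construct its preimage $(X,\Delta)$ under normalization, and show that the two constructions are mutually inverse.

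For the forward direction, given $(X,\Delta)$ slc with $K_X+\Delta$ ample, the normalization $\bar{X}\to X$ is finite, $(\bar{X},\bar{D}+\bar{\Delta})$ is lc by the definition of slc, and \autoref{lemma:normalization_gives_involution} produces the generically fixed point free involution $\tau$ on $(\bar{D}^n,\Diff_{\bar{D}^n}\bar{\Delta})$ (separability of the conductor morphism is automatic in characteristic $>2$). By \autoref{lemma: basic properties of normalization of demi-normal scheme}, $K_{\bar{X}}+\bar{D}+\bar{\Delta}$ pulls back $K_X+\Delta$, so it is ample as the finite pullback of an ample divisor.

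For the inverse direction, given $(\bar{X},\bar{D}+\bar{\Delta},\tau)$, I first apply \autoref{prop:quotient_of_lc_pairs_dim_3}: the ampleness of $K_{\bar{X}}+\bar{D}+\bar{\Delta}$ together with the theory of sources and springs forces $R(\tau)\rightrightarrows\bar{X}$ to be finite, so the geometric quotient $q\colon\bar{X}\to X:=\bar{X}/R(\tau)$ exists and is proper over $k$. Then \autoref{prop:finite_equivalence_relation_give_nodes} tells me that $X$ is demi-normal, that $q$ is the normalization morphism, and that the involution on $\bar{D}^n$ induced by the normalization $\bar{X}\to X$ coincides with $\tau$. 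Setting $\Delta:=q_*\bar{\Delta}$, I invoke \autoref{prop:descent_of_canonical_bundle_in_dim_3} to conclude that $K_X+\Delta$ is $\bQ$-Cartier with $q^*(K_X+\Delta)\sim_\bQ K_{\bar{X}}+\bar{D}+\bar{\Delta}$. Since the normalization $(\bar{X},\bar{D}+\bar{\Delta})$ is lc, $(X,\Delta)$ is slc by definition. For ampleness of $K_X+\Delta$, I use that $q$ is finite and surjective and $q^*(K_X+\Delta)$ is ample, which implies $K_X+\Delta$ is ample by the standard finite descent criterion \cite[1.41]{Kollar_Mori_Birational_geometry_of_algebraic_varieties}.

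Finally, these constructions are mutually inverse: starting from $(X,\Delta)$ and taking normalization then gluing returns $(X,\Delta)$ by the universal property of the geometric quotient (since the relation induced on $\bar{X}$ by its normalization morphism to $X$ coincides with $R(\tau)$); starting from $(\bar{X},\bar{D}+\bar{\Delta},\tau)$ and normalizing $X$ returns $(\bar{X},\bar{D}+\bar{\Delta},\tau)$ again by \autoref{prop:finite_equivalence_relation_give_nodes}. The boundary divisor and the involution are determined uniquely because $q$ is an isomorphism away from codimension one and $\bar{\Delta}$ shares no component with $\bar{D}$. The main obstacle, which is already absorbed into \autoref{prop:quotient_of_lc_pairs_dim_3} and \autoref{prop:descent_of_canonical_bundle_in_dim_3}, is showing finiteness of the stabilizers $\Stab(V_i)\subset\Aut^s_k\spr(Z,X,D+\bar{\Delta})$ that control the equivalence classes on lc centers and the total space of $m(K_{\bar{X}}+\bar{D}+\bar{\Delta})$; this is precisely where the ampleness hypothesis enters, via the finiteness results of \autoref{section:pluricanonical_representations} applied to the sources (which are at most one-dimensional by \autoref{claim:dimension_of_sources}) and to the divisorial sources (via \autoref{proposition:pluricanonical_representation_for_surfaces}).
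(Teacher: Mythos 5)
Your proposal is correct and follows essentially the same route as the paper: finiteness of $R(\tau)$ via \autoref{prop:quotient_of_lc_pairs_dim_3}, demi-normality of the quotient via \autoref{prop:finite_equivalence_relation_give_nodes}, descent of the log canonical divisor via \autoref{prop:descent_of_canonical_bundle_in_dim_3}, and the mutual-inverse check via the structure of the normalization morphism. You are in fact slightly more careful than the paper's own (very terse) proof in spelling out the forward direction and the descent of ampleness along the finite quotient map, which the paper leaves implicit.
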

\begin{proof}
Given $(\bar{X},\bar{D}+\bar{\Delta},\tau)$ as in the right-hand side, by \autoref{prop:quotient_of_lc_pairs_dim_3} the equivalence relation $R(\tau)$ is finite and we can form the geometric quotient $q\colon \bar{X}\to X:=\bar{X}/R(\tau)$. Set $\Delta:=q_*\bar{\Delta}$. By \autoref{prop:finite_equivalence_relation_give_nodes} the scheme $X$ is demi-normal, and by \autoref{prop:descent_of_canonical_bundle_in_dim_3} the $\bQ$-divisor $K_X+\Delta$ is $\bQ$-Cartier. Therefore $(X,\Delta)$ is slc. This gives a map $\{(\bar{X},\bar{D}+\bar{\Delta},\tau)\}\to \{(X,\Delta)\}$. It is an inverse to the normalization map by \autoref{prop:finite_equivalence_relation_give_nodes} and \cite[5.3]{Kollar_Singularities_of_the_minimal_model_program}.
\end{proof}

\section{Application to the moduli theory of stable surfaces}\label{section:applications_to_moduli_spaces}
In this section we apply the theory of gluing to the moduli functor of stable varieties. Our discussion will be conditional, since some technical results are not known yet in positive characteristic.

Let $k$ be an algebraically closed field. We define families of stable log varieties and the moduli functor of stable varieties following \cite{Patakfalvi_Projectivity_moduli_space_of_surfaces_in_pos_char}.

\begin{definition}\label{definition:stable_surfaces}
A projective connected pure-dimensional $k$-scheme $X$ together with a $\bQ$-divisor is a \textbf{stable log pair} if $(X,D)$ is slc and $K_X+D$ is and ample. If $D=0$, we simply say that $X$ is a \textbf{stable variety}.

Let $T$ be a $k$-scheme. A \textbf{family of pairs} over $T$ is a flat morphism of $k$-schemes $X\to T$ together with a $\bQ$-divisor $D$ on $X$ such that: for every $t\in T$, the fiber $X_t$ is geometrically reduced and pure dimensional, none of the irreducible components of $X_t$ is contained in $\Supp D$, and none of the irreducible components of $X_t\cap \Supp D$ is contained in $\Sing X_t$. This allows us to define a restricted divisor $D_t$ on $X_t$.

Let $T$ be a $k$-scheme. A \textbf{family of stable log pairs} over $T$ is a family of pairs $f\colon (X,D)\to T$ such that $K_{X/T}+D$ is $\bQ$-Cartier and the geometric fiber $(X_{\bar{t}}, D_{\bar{t}})$ is a stable log pair for every $t\in T$.

The \textbf{moduli functor} $\overline{\sM}_{n,v,k}$, where $v\in \bQ_+$, is defined on $\text{Sch}_k$ by the values
		$$
		\overline{\sM}_{n,v,k}(T)=\left\{
		\begin{tabu}{c | l l}
		\begin{tikzcd}
		X \arrow[dd, "f" left] \\  \\T
		\end{tikzcd}
		& &
		\begin{aligned}
		1) \; &f \text{ is a flat morphism of }k\text{-schemes},\\
		2) \; &\left(\omega_{X/T}^{[m]}\right)_S\cong \omega_{X_S/S}^{[m]} \text{ for every }S\to T\text{ and }m\in \bN,\\
		3) \; & \text{for every }t\in T, X_{\bar{t}} \text{ is a stable variety of dimension } n\\ & \text{ with } \Vol(K_{X_{\bar{t}}})=v.
		\end{aligned}
		\end{tabu}
		\right\}
		$$
and for $T'\to T$, the corresponding map $\overline{\sM}_{n,v}(T)\to \overline{\sM}_{n,v}(T')$ is given by pullbacks.
\end{definition}

\begin{remark}\label{remark:different_definitions_of_families}
There are subtle differences between families of stable varieties and families parametrized by the functor $\overline{\sM}_{n,v,k}$:
	\begin{enumerate}
		\item At least when $n=2$, if $(X\to T)\in \overline{\sM}_{2,v,k}(T)$ and $T$ is normal, then $X\to T$ is a family of stable surfaces, see \cite[Lemma 2.3]{Patakfalvi_Projectivity_moduli_space_of_surfaces_in_pos_char}.
		\item If $(X\to T)\in \overline{\sM}_{n,v,k}(T)$, then $X\to T$ need not be a family of stable varieties, see \cite[Remark 1.6]{Patakfalvi_Projectivity_moduli_space_of_surfaces_in_pos_char}.
	\end{enumerate}
\end{remark}

\begin{remark}
We have only defined the moduli functor in the boundary-free case. To define a moduli functor of stable log pairs we need a good notion of family of divisors above arbitrary bases. A good notion, at least in characteristic $0$, is developed in \cite{Kollar_Families_of_divisors}. However, to avoid technical difficulties, we will restrict ourselves to the boundary-free case in what follows. This way, the only pairs we will have to deal with are the one arising from the normalization of demi-normal varieties.
\end{remark}

From now on we consider the case of stable surfaces (that is, $n=2$) over an algebraically closed field $k$ of characteristic $p>5$. Then it is known that $\overline{\sM}_{2,v,k}$ is a separated Artin stack of finite type over $k$ with finite diagonal \cite[Theorem 9.7]{Patakfalvi_Projectivity_moduli_space_of_surfaces_in_pos_char}. We discuss the valuative criterion of properness using the methods of \cite[\S 2.4]{Kollar_Families_of_varieties_of_general_type}. 

We are interested in the following situation. Let $T$ be an affine one-dimensional regular scheme of finite type over $k$, $t\in T$ a closed point and $T^0:=T\setminus\{t\}$. Suppose we are given a family $(f^0\colon X^0\to T^0)\in \overline{\sM}_{2,v}(T^0)$. Then we are looking for a finite morphism $\pi\colon T'\to T$ and a family $(f'\colon X'\to T')\in \overline{\sM}_{2,v}(T')$ such that the pullback family $f^0\times_T \pi\colon X^0\times_T T'\to T'$ is isomorphic to $X'\to T'$ over $\pi^{-1}T^0$.

The method of \cite[\S 2.4]{Kollar_Families_of_varieties_of_general_type} in characteristic $0$ can be outlined as follows:
	\begin{enumerate}
		\item[(1)] Let $X_1\to T$ be the flat compactification of $X^0\to T^0$, then take a log resolution $(Y_1,E_1)\to X_1$ such that $(Y_1,E_1+\red(Y_{1,t}))$ is snc for every $t\in T$.
		\item[(2)] After a finite base-change $T'\to T$, we may assume that the fibers of $Y_1\to T$ are reduced and that $(Y_1,E_1+Y_{1,t})$ is lc for every $t\in T$. This is based on local toric computations, see \cite[2.52]{Kollar_Families_of_varieties_of_general_type}.
		\item[(3)] Now take the relative canonical model of $(Y_1,E_1)$ over $T'$. Over the preimage of $T^0$, we get back the original family $X^0\to T^0$. By adjunction, the new central fibers are stable varieties.
	\end{enumerate}
In general the scheme $X^0$ is only demi-normal, which is inconvenient for many steps in the proof. Thus we normalize, and drag along the conductor divisor. In the end we get a canonical model $(X_\text{can},D_\text{can})\to T'$ together with an involution $\tau^0$ of $D_\text{can}^n\times_T T^0$. By separatedness of the moduli, the involution extends to an involution $\tau$ of $D_\text{can}^n$. It turns out that gluing along the equivalence classes of $\tau$ is not a problem because none of the lc centers is contained in the new central fibers (see \cite[9.55]{Kollar_Singularities_of_the_minimal_model_program}), but the descent of the log canonical $\bQ$-line bundle to the quotient is not as easy. One has to apply \cite[5.38]{Kollar_Singularities_of_the_minimal_model_program}, for which the full strength of the theory of sources and springs is necessary.

\bigskip\noindent One must solve several problems to carry out this program in positive characteristic:
	\begin{enumerate}
		\item The adjunction dictionnary between fiberwise and global properties works well in one direction. Suppose $f\colon (X,D)\to T$ is a family of stable log surfaces over a one-dimensional normal base. Since $T$ and every fiber $X_t$ are $S_2$, we see that $X$ is $S_2$. Points of codimension one of $X$ that do not dominate $T$ are regular, and those which dominate $T$ are Gorenstein since the generic fiber is demi-normal. Thus $X$ is demi-normal. Now inversion of adjunction implies that $(X,D+X_t)$ is slc for every $t\in T$.
		
		Notice that if $(f\colon X\to T)\in\overline{\sM}_{2,v,k}(T)$ and $(\bar{X},\bar{D})\to X$ is the normalization, then it follows from \autoref{remark:different_definitions_of_families} and \autoref{lemma: basic properties of normalization of demi-normal scheme} that the morphism $(\bar{X},\bar{D})\to T$ is a family of stable log surfaces (with $\bar{D}$ dominating $T$ if $X$ is not already normal).
		
		Problems appear with the converse implication. Assume that $(X,D+X_t)$ is slc. Then the deformation theory of nodes show that $X_t$ is Gorenstein in codimension one \cite[2.33]{Kollar_Singularities_of_the_minimal_model_program}, and adjunction would then imply that $(X_t,D_t)$ is slc, provided that $X_t$ is $S_2$. That $X_t$ is $S_2$ in characteristic $0$ follows from a non-trivial result of Alexeev (see \cite[7.21]{Kollar_Singularities_of_the_minimal_model_program}). This is not known at the moment in positive characteristic, so let us formulate the condition
		
		\begin{tabular}{p{1cm} l p{12cm}}
		\textbf{(S2)} & & If $(X,D)\to T$ is a flat family of geometrically reduced surface pairs over a one-dimensonal normal base such that $(X,D+X_t)$ is slc for every $t\in T$, then every $X_t$ is $S_2$.
		\end{tabular} 
		
		\item To produce the completed family, one first extends $X^0\to T^0$ to a flat family $X_1\to T$. In general the central fiber is not even reduced: so one looks for a base-change along a finite $T'\to T$ such that the fibers of $X'_1:=X_1\times_TT'\to T'$ are reduced. If we find one such base-change, we have to make sure that $(X_1',(X_1')_{t'})$ is still slc for every $t'\in T'$. In positive characteristic, this is a problem if $T'\to T$ is wildly ramified or inseparable, see \cite[2.14.5-6]{Kollar_Singularities_of_the_minimal_model_program}. So we formulate the following condition of semi-stable reduction:
		
		\begin{tabular}{p{1cm} l p{12cm}}
		\textbf{(SSR)} & & Let $X\to T$ is a flat morphism where $X$ is a regular threefold and $T$ a one-dimensional curve. Let $E$ be a reduced effective divisor on $X$ such that $(X,E+\red X_t)$ is snc for every closed $t\in T$. Then there exists a finite morphism $T'\to T$ such that: if $Y$ is the normalization of $X\times_T T'$ and $E_Y$ is the pullback divisor, then every closed fiber $Y_{t'}$ is reduced and every $(Y, E_Y+Y_{t'})$ is lc.
		\end{tabular}
	\end{enumerate}
Modulo these two conditions, we can prove the valuative criterion of properness for $\overline{\sM}_{2,v,k}$, following the method of \cite[2.49]{Kollar_Families_of_varieties_of_general_type}.

\begin{lemma}\label{lemma:adjunction_and_stability}
Let $(X,D+\Delta)\to T$ be a family of stable log surfaces over a normal one-dimensional base, where $D$ is a reduced divisor with normalization $n\colon D^n\to D$. Then every componenent of $D^n$ dominates $T$ and $(D^n,\Diff_{D^n}\Delta)\to T$ is a (disjoint union of) families of stable log curves.
\end{lemma}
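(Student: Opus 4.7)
The plan is to verify each condition in the definition of a family of stable log curves piece by piece: dominance, flatness, $\bQ$-Cartier-ness of the relative log canonical, and fiberwise stability. First I would dispatch the easy claims. That every component of $D^n$ dominates $T$ is immediate from the definition of a family of pairs, which forbids any component of $X_t$ from being contained in $\Supp(D)$, so no component of $D$ is trapped in a single fiber. Normality of $D^n$ implies its connected components are irreducible and mutually disjoint, so $(D^n,\Diff_{D^n}\Delta)\to T$ splits as a disjoint union over the irreducible components of $D^n$, proving the last assertion. Flatness of $D^n\to T$ then follows at once because $T$ is a regular one-dimensional scheme and $D^n$ is normal, hence without embedded associated points.

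For the $\bQ$-Cartier property of $K_{D^n/T}+\Diff_{D^n}\Delta$ I would invoke adjunction along the reduced divisor $D$: since $K_{X/T}+D+\Delta$ is $\bQ$-Cartier by the family-of-stable-pairs assumption, its pullback along $D^n\to D\hookrightarrow X$ is $\bQ$-linearly equivalent to $K_{D^n/T}+\Diff_{D^n}\Delta$, which therefore is $\bQ$-Cartier.

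The substantive part is verifying fiberwise stability. Fixing a closed point $t\in T$, I would use that $X_t$ is a Cartier divisor and that $(X_t,D_t+\Delta_t)$ is slc (as the fiber of a family of stable log pairs); inversion of adjunction for threefold log pairs in characteristic $>5$ (cited earlier in the paper) then gives that $(X,D+\Delta+X_t)$ is slc near $X_t$. Adjunction along $D$ yields that $(D^n,\Diff_{D^n}\Delta+(D^n)_t)$ is lc. To apply adjunction one more time and descend to the fiber, I first need $(D^n)_t$ to be reduced: this follows because the slc coefficient bound on the boundary $D_t+\Delta_t$ forces $D_t$ itself to be reduced, while $D^n$ being a normal (hence Cohen--Macaulay) surface makes $(D^n)_t$ satisfy $S_1$, and the birational morphism $(D^n)_t\to D_t$ gives generic reducedness. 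Adjunction then produces slc-ness of $((D^n)_t,\Diff_{(D^n)_t}\Diff_{D^n}\Delta)$, and a direct comparison shows that this different agrees with the restriction $(\Diff_{D^n}\Delta)|_{(D^n)_t}$ up to $\bQ$-linear equivalence. Finally, $K_{(D^n)_t}+(\Diff_{D^n}\Delta)|_{(D^n)_t}$ is $\bQ$-linearly equivalent to the pullback of the ample divisor $K_{X_t}+D_t+\Delta_t$ along the finite surjective morphism $(D^n)_t\to X_t$, hence is ample.

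The main obstacle is the fiberwise analysis, which rests on two delicate positive-characteristic inputs: inversion of adjunction for threefold log pairs in characteristic $>5$, and the reducedness of $(D^n)_t$, which requires combining the formal slc-coefficient constraint on $D_t$ with the Cohen--Macaulay property of the normal surface $D^n$. Both are available in our setting, but it is important to spell out why the characteristic hypothesis is needed only here and not for the flatness or $\bQ$-Cartier parts.
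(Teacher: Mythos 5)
Your overall strategy matches the paper's: inversion of adjunction to get $(X,D+\Delta+X_t)$ slc, adjunction to $D^n$, then adjunction to the fiber, with ampleness coming from $K_{D^n/T}+\Diff_{D^n}\Delta=n^*(K_{X/T}+D+\Delta)$. The preliminary steps (dominance, flatness over the one-dimensional regular base, $\bQ$-Cartierness) are fine and agree with the paper.

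The gap is in your justification that $(D^n)_t$ is reduced. You deduce generic reducedness of $(D^n)_t$ from reducedness of $D_t$ via ``the birational morphism $(D^n)_t\to D_t$.'' This morphism is the restriction to fibers of the normalization $D^n\to D$, and it need not be birational: if the non-normal locus of $D$ contains a curve lying inside $X_t$, that curve is a component of $D_t$ over which $(D^n)_t\to D_t$ has degree $\geq 2$. More importantly, even over points where the map is finite, reducedness of the divisor $D|_{X_t}$ does not formally transfer to reducedness of $X_t|_{D^n}$ -- there is no symmetry of restriction multiplicities available here, since $D$ is only a Weil divisor and $X$ may be singular along $D\cap X_t$. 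The correct source of reducedness is already in your own construction: adjunction applied to the slc pair $(X,D+\Delta+X_t)$ gives that $\bigl(D^n,\Diff_{D^n}\Delta+X_t|_{D^n}\bigr)$ is lc, and since $X_t|_{D^n}$ is an effective integral divisor appearing in an lc boundary, all its coefficients equal $1$; combined with $S_1$ (from $D^n$ normal) this yields reducedness of the fiber as a scheme. This is how the paper argues. Relatedly, you pass directly from reducedness to ``adjunction produces slc-ness of the fiber pair,'' but for the fiber to be a stable log \emph{curve} you must know $(D^n)_t$ is demi-normal, i.e.\ at worst nodal; reduced plus $S_1$ is not enough. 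The paper extracts this from the classification of lc surface singularities: a reduced curve appearing with coefficient $1$ in the boundary of the lc surface pair $\bigl(D^n,\Diff_{D^n}\Delta+(D^n)_t\bigr)$ is Gorenstein (nodal). You should add that step before invoking adjunction to the fiber.
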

\begin{proof}
By definition of families of pairs, every irreducible component of $D$ dominates $T$. In particular every component of $D^n$ dominates $T$ and $D^n\to T$ is flat. So the fibers $D^n_t:=(D^n)_t$ are of pure dimension one. By inversion of adjunction the pair $(X,D+\Delta+X_t)$ is slc. Passing to the normalization (recall that $X$ is normal at the generic points of $D$), we may assume it is lc. Since $X_t$ is Cartier we have
		$$\Diff_{D^n}(\Delta+X_t)=\Diff_{D^n}(\Delta)+X_t|_{D^n}=\Diff_{D^n}+D^n_t.$$
	By adjunction, this implies that $(D^n,\Diff_{D^n}\Delta+D^n_t)$ is lc. Since $D^n$ is a surface, classification of surface lc singularities show that $D^n_t$ is Gorenstein. Moreover, as $D^n$ is $S_2$ (it is normal), the $D^n_t$ are $S_1$. Thus $D^n_t$ is a demi-normal curve, and by adjunction we deduce that $(D^n,\Diff_{D^n}\Delta)\to T$ is a family of log curves. As 
			$$K_{D^n/T}+\Diff_{D^n}\Delta=n^*(K_{X/T}+D+\Delta),$$
	we obtain that $K_{D^n/T}+\Diff_{D^n}\Delta$ is ample over $T$.
\end{proof}

\begin{theorem}\label{theorem:Properness_of_moduli_of_stable_surfaces}
We work over an algebraically closed field $k$ of characteristic $>5$. Assume that the conditions \textbf{(S2)} and \textbf{(SSR)} hold. Then $\overline{\sM}_{2,v,k}$ is proper.
\end{theorem}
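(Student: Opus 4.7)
Since $\overline{\sM}_{2,v,k}$ is already known to be a separated Artin stack of finite type over $k$ with finite diagonal, it suffices to verify the existence part of the valuative criterion. So let $T=\Spec R$ with $R$ a DVR essentially of finite type over $k$, generic point $\eta$ and closed point $t$, and let $(f^0\colon X^0\to T^0)\in\overline{\sM}_{2,v,k}(T^0)$ with $T^0=T\setminus\{t\}$. We must exhibit, after a finite base change $T'\to T$, an extension $(f'\colon X'\to T')\in\overline{\sM}_{2,v,k}(T')$ whose restriction to $(T')^0$ is isomorphic to $X^0\times_T T'\to (T')^0$.

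My plan is to reduce to the normal case via the gluing theory for threefolds, and then recover $X'$ by gluing. First, normalize: let $\nu^0\colon (\bar X^0,\bar D^0)\to X^0$ be the normalization, where $\bar D^0$ is the conductor. By \autoref{lemma:adjunction_and_stability} applied over $T^0$, the pair $(\bar X^0,\bar D^0)\to T^0$ is a family of stable log surfaces, and by \autoref{lemma:normalization_gives_involution} we have a generically fixed point free involution $\tau^0$ of $(\bar D^{0,n},\Diff_{\bar D^{0,n}}0)$ over $T^0$. Using Nagata compactification together with \autoref{thm:embedding_in_demi_normal} and \autoref{fact:resolution_of_singularities_for_threefolds}, I can extend $(\bar X^0,\bar D^0)\to T^0$ to a flat projective family $(\bar W,\bar E)\to T$ such that $\bar W$ is a regular threefold and $(\bar W,\bar E+\red\bar W_t)$ is snc. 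Now apply \textbf{(SSR)} to obtain a finite cover $T'\to T$ and a normalization $(\bar Y,\bar E_Y)\to T'$ with reduced lc central fiber. After refining $T'\to T$ we may assume it is separable, so that the involution $\tau^0$ pulls back to a generically fixed point free involution $\tau'$ on the normalization of the conductor over $(T')^0$. Next, run the relative $(K_{\bar Y}+\bar E_Y)$-MMP over $T'$, available for threefolds in characteristic $>5$ by \cite{Hashizume_Nakamura_Tanaka_MMP_for_lc_threefolds_in_char_p} and \cite{Waldron_MMP_for_3_folds_in_char_>5}, and take the relative log canonical model. This produces a family $(\bar X',\bar D')\to T'$ of lc projective threefold pairs with $K_{\bar X'}+\bar D'$ ample over $T'$; since $T'$ is a DVR and the model is unique, $\tau'$ extends to an involution of $(\bar D'^n,\Diff_{\bar D'^n}0)$ that is generically fixed point free on every component. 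By \autoref{lemma:adjunction_and_stability} every closed fiber $(\bar X'_{t'},\bar D'_{t'})$ is a stable log surface.

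Now I glue back. Applying the threefold gluing theorem \autoref{theorem:gluing_for_threefolds} to the triple $(\bar X',\bar D',\tau')$ produces a proper slc threefold pair $(X',0)$ over $T'$ with $K_{X'}$ ample over $T'$, whose normalization recovers $(\bar X',\bar D',\tau')$. Restricted to $(T')^0$, this gluing reproduces $X^0\times_T T'$ by the uniqueness in the surface gluing theorem \autoref{theorem:gluing_for_surfaces_p_different_from_2}. It remains to verify that $X'\to T'$ belongs to $\overline{\sM}_{2,v,k}(T')$. Flatness over the regular base $T'$ follows because $X'$ is demi-normal with no embedded points and every geometric fiber is pure two-dimensional. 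For each closed $t'\in T'$, inversion of adjunction on the normal pair $(\bar X',\bar D'+\bar X'_{t'})$ gives sliteness of $(\bar X',\bar D'+\bar X'_{t'})$, and descending via gluing shows $(X',X'_{t'})$ is slc; by \textbf{(S2)} the fiber $X'_{t'}$ is $S_2$, hence slc, and has ample canonical class. The Kollár condition $(\omega_{X'/T'}^{[m]})_S\cong\omega_{X'_S/S}^{[m]}$ for all $S\to T'$ follows from the Poincaré residue / descent machinery developed in \autoref{proposition:descend_of_sections_along_normalization} and \autoref{corollary:descent_in_codim_2} applied to the base-changed families, combined with the fact that the corresponding statement on the normalized family $(\bar X',\bar D')\to T'$ holds since $(\bar X',\bar D')\to T'$ is a flat family of lc pairs over a regular one-dimensional base whose fibers are $S_2$. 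The volume is locally constant in flat families of stable surfaces by \cite[Theorem 9.7]{Patakfalvi_Projectivity_moduli_space_of_surfaces_in_pos_char}, so $\Vol(K_{X'_{t'}})=v$.

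The most delicate step will be the MMP/gluing interaction: one must show that the relative log canonical model $(\bar X',\bar D')$ is genuinely compatible with the involution $\tau'$ (which the surface gluing theorem over $(T')^0$ controls, but uniqueness of the log canonical model over $T'$ is what lets it extend). The conditions \textbf{(SSR)} and \textbf{(S2)} are used exactly where the characteristic zero proof invokes standard semistable reduction and Alexeev's theorem; everything else either works in characteristic $>5$ by the results cited in \autoref{fact:birational_geometry_of_surfaces}--\autoref{fact:crepant_dlt_blow_up_ind_dim_2_3}, or is supplied by the gluing machinery of this paper.
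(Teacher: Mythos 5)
Your overall strategy matches the paper's: normalize, extend and resolve, invoke \textbf{(SSR)}, run the relative MMP to get a log canonical model $(\bar X',\bar D')\to T'$, extend the involution, glue, and use \textbf{(S2)} to check the fibers. However, there is a genuine gap at the gluing step. You invoke \autoref{theorem:gluing_for_threefolds} for the triple $(\bar X',\bar D',\tau')$, but that theorem applies to \emph{proper} lc threefold pairs over $k$ with $K_{\bar X}+\bar D+\bar\Delta$ ample \emph{over $k$}. Here $\bar X'$ is only projective over the affine curve $T'$, hence not proper over $k$, and the ampleness hypothesis is not a formality: the proof of \autoref{prop:quotient_of_lc_pairs_dim_3} establishes finiteness of $R(\tau)$ by bounding the pluricanonical representations of the \emph{proper} sources of lc centers (\autoref{proposition:pluricanonical_representations_for_curves_I}, \autoref{proposition:pluricanonical_representations_for_curves_II}, \autoref{proposition:pluricanonical_representation_for_surfaces} all require properness/projectivity of the source), and in your setting the sources of horizontal lc centers are non-proper curves or surfaces over $k$. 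The paper sidesteps this entirely: it does not apply the global gluing theorem, but proves finiteness of $R(\tau_{\can})$ directly by combining (i) finiteness over $\pi^{-1}T^0$, which is automatic because there the relation arises from the normalization of $X^0\times_T T'$, (ii) the fact that no lc center of $(\bar X_{\can},\bar D_{\can})$ is contained in the central fiber (by \cite[2.14, 2.15.3]{Kollar_Families_of_varieties_of_general_type}), and (iii) \cite[9.55]{Kollar_Singularities_of_the_minimal_model_program}, which reduces finiteness to the generic points of the lc centers, i.e.\ to the locus where the relation is already known to be finite. The descent of $K$ to a $\bQ$-Cartier divisor is then \autoref{prop:descent_of_canonical_bundle_in_dim_3}, which is a local statement needing no ampleness.

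A secondary, smaller issue: your justification that $\tau'$ extends over the central fiber (``since $T'$ is a DVR and the model is unique'') is not an argument. The extension is obtained in the paper by first showing, via \autoref{lemma:adjunction_and_stability}, that $(\bar D^n_{\can},\Diff_{\bar D^n_{\can}}(0))\to T'$ is a family of \emph{stable} log curves, and then invoking separatedness of such families (\cite[Lemma 9.4]{Patakfalvi_Projectivity_moduli_space_of_surfaces_in_pos_char}) to extend the generically defined log involution. You should make that step explicit, since without stability of the conductor curves the involution need not extend.
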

\begin{proof}
We consider anew an affine one-dimensional regular $k$-scheme of finite type $T$, a closed point $t\in T$ and a family $(f_0\colon X^0\to T^0:=T\setminus\{t\})\in \overline{\sM}_{2,v,k}(T^0)$. Since separatedness holds, we have to show that there exists a finite morphism $\pi\colon T'\to T$ such that the pullback family $f^0\times_T\pi\colon X^0\times_TT'\to \pi^{-1}T^0$ extends to a family in $\overline{\sM}_{2,v,k}(T')$. By \autoref{remark:different_definitions_of_families}, we may think of $f^0$ as a family of stable surfaces.

\bigskip
\textsc{Step 1: Normal case.} First assume that $X^0$ is actually normal. We can extend $f^0$ to a flat morphism $f_1\colon X_1\to T$. Let $g_1\colon Y_1\to X_1$ be a log resolution with $E_1:=\Exc(g_1)$ such that $(Y_1,E_1+\red (Y_1)_t)$ is snc for every $t\in T$. Such a resolution exists for threefolds: the proof is the same as in \cite[10.46]{Kollar_Singularities_of_the_minimal_model_program}, using \autoref{fact:resolution_of_singularities_for_threefolds} at the appropriate places.

By \textbf{(SSR)}, there is a finite morphism $\pi\colon T'\to T$ such that: if $Y_2$ is the normalization of $Y_1\times_TT'$ with induced morphism $f_2\colon Y_2\to T'$ and $E_2$ is the pullback divisor of $E_1$, then every fiber $(Y_2)_{t'}$ is reduced and $(Y_2,E_2+(Y_2)_{t'})$ is lc for every closed $t'\in T'$.

By \cite{Waldron_MMP_for_3_folds_in_char_>5}, the family $(Y_2,E_2)\to T'$ admits a relative canonical model $(X_{\can}, E_{\can})\to T'$. Notice that the pullback family $X^0\times_TT'\to \pi^{-1}T^0$ is a relative canonical model of $(Y_2,E_2)\to T'$ over a dense open susbset of $T'$. By uniqueness of canonical models, these two families are generically isomorphic. By separatedness \cite[Lemma 9.4]{Patakfalvi_Projectivity_moduli_space_of_surfaces_in_pos_char}, the isomorphism extends to the whole $\pi^{-1}T^0$, and this implies that $E_{\can}=0$. Using \cite{Kollar_Singularities_of_the_minimal_model_program} we see that the pair $(X_{\can}, (X_{\can})_{t'})$ is lc for every closed $t'\in T'$. The condition \textbf{(S2)} then ensures that $(X_{\can}\to T')\in\overline{\sM}_{2,v,k}(T')$.

\bigskip
\textsc{Step 2: Demi-normal case.} Now we consider the case where $X^0$ is only demi-normal. Let $(\bar{X}^0,\bar{D}^0)\to T^0$ be the normalization, with induced involution $\tau^0$. Then we can apply the same argument as before (all the results we used are available for log pairs). We obtain a finite morphism $\pi\colon T'\to T$ and a family of surface pairs $\bar{f}_{\can}\colon (\bar{X}_{\can},\bar{D}_{\can})\to T'$ which extends the pullback of $(\bar{X}^0,\bar{D}^0)\to T^0$ and such that $(\bar{X}_{\can},\bar{D}_{\can}+(\bar{X}_{\can})_{t'})$ is lc for every closed $t'\in T'$.

By \autoref{lemma:adjunction_and_stability}, the morphism $(\bar{D}^n_{\can},\Diff_{\bar{D}^n_{\can}}(0))\to T'$ is a family of stable log curves. Hence by \cite[Lemma 9.4]{Patakfalvi_Projectivity_moduli_space_of_surfaces_in_pos_char}, the pullback of the involution $\tau^0$ extends to an involution $\tau_{\can}$ on $(\bar{D}^n_{\can},\Diff_{\bar{D}^n_{\can}}(0))$. 

By \cite[2.14, 2.15.3]{Kollar_Families_of_varieties_of_general_type}, none of the lc centers of $(\bar{X}_{\can},\bar{D}_{\can})$ are disjoint from $\bar{X}^0\times_T T'$. Moreover, since $R(\tau^0)\rightrightarrows \bar{X}^0$ is a finite equivalence relation and $T'\to T$ is finite as well, then $R(\tau_{\can})\rightrightarrows \bar{X}_{\can}$ is finite on $\bar{f}_{\can}(\pi^{-1}T^0)$. Thus \cite[9.55]{Kollar_Singularities_of_the_minimal_model_program} implies that $R(\tau_{\can})$ is finite. Thus there exists a geometric quotient $\bar{X}_{\can}\to X_{\can}$. Since $\tau^0$ commutes with the projection to $T$, the involution $\tau_{\can}$ commutes with the projection to $T'$, and therefore $\bar{f}_{\can}$ factorises through a morphism $f_{\can}\colon X_{\can}\to T'$.

By \autoref{prop:finite_equivalence_relation_give_nodes} the scheme $X_{\can}$ is demi-normal, and by \autoref{prop:descent_of_canonical_bundle_in_dim_3} the divisor $K_{X_{\can}}$ is $\bQ$-Cartier. Since $(X_{\can},(X_{\can})_{t'})$ pullbacks to $(\bar{X}_{\can}, \bar{D}_{\can}+(\bar{X}_{\can})_{t'})$, we see that $(X_{\can},(X_{\can})_{t'})$ is slc for every closed $t'\in T'$. Since $\bar{X}^0\times_T T'\to \bar{X}^0$ is flat and the formation of geometric quotient commutes with flat base-change \cite[9.11]{Kollar_Singularities_of_the_minimal_model_program}, we see that the pullback of $X^0$ along $\pi^{-1}T^0\to T^0$ is isomorphic to $f_{\can}^{-1}(\pi^{-1}T^0)$. The condition \textbf{(S2)} ensures that $(f_{\can}\colon X_{\can}\to T')\in \overline{\sM}_{n,v,k}(T')$. This finishes the proof.
\end{proof}

%\textbf{Can I prove the gluing for germs of surfaces?} The problem is \autoref{prop:finite_equivalence_relation_give_nodes}: we need a local version. The gluing theorem of Kollar is true for schemes that are essentially of finite type over a field. 

%\textbf{Can I classify slc surface singularities?}

%\textbf{Can I prove that minimal lc centers are normal?}

%\textbf{Can we prove sub-adjunction?} That is: if $W\subset X$ is a minimal lc center, then $W$ is normal and if $(K_X+\Delta)|_W=K_W+\Delta_W$ then $(W,\Delta_W)$ is klt. Main issue seems to be: $W$ is normal. Maybe this is already proved by other people?

%\textbf{Can we prove existence of log canonical closures?}

%\textbf{Can we prove a local gluing statement, using existence of lc closures?}

\bibliographystyle{alpha}
\bibliography{Bibliography}

\end{document}